\newtheorem{theorem}{Theorem}
\newtheorem{lemma}{Lemma}
\newtheorem{proposition}{Proposition}
\newtheorem{definition}{Definition}
\newtheorem{corollary}{Corollary}
\newtheorem{remark}{Remark}
\newtheorem{problem}{Problem}
\theoremstyle{remark}
\DeclareMathOperator{\supp}{supp}
\newcommand{\R}{{\bf R}}
\newcommand{\C}{{\bf C}}
\newcommand{\dB}{{\mathrm{d}^\mathcal{B}}}
\newcommand{\deB}{{\delta^\mathcal{B}}}
\newcommand{\LaB}{{\Delta^\mathcal{B}}}
\newcommand{\sym}{\hbox{Sym}} 
\newcommand{\Id}{\mbox{Id}}
\newcommand{\be}[1]{\begin{equation}\label{#1}} 
\newcommand{\ee}{\end{equation}} 
\newcommand{\N}{\mathbf{N}} 
\newcommand{\Z}{\mathbf{Z}}
\newcommand{\p}{\partial}
\title[Mixed ray transform]{Generic uniqueness and stability for the mixed ray transform}
\subjclass[2010]{53C22, 53C65} 
\keywords{integral geometry, tensor fields, simple manifold, travel-time tomography, anisotropic elasticity}
\author[M. V. de Hoop]{Maarten V. de Hoop}
\address{M. V. de Hoop: Department of Computational and Applied Mathematics, Rice University, Houston,
    TX 77005, USA (\tt{mdehoop@rice.edu}).}
\author[T. Saksala]{Teemu Saksala}
\address{T. Saksala: Department of Computational and Applied Mathematics,
  Rice University, Houston, TX, 77005, USA
   (\tt{teemu.s.saksala@gmail.com})}
  \author[G. Uhlmann]{Gunther Uhlmann}
\address{G. Uhlmann: Department of Mathematics, University of Washington, Seattle, WA 98195, USA; Institute for Advanced Study, 
The Hong Kong University of Science and Technology, Kowloon, Hong Kong, China (\tt{gunther@math.washington.edu})
}
  \author[J. Zhai]{Jian Zhai}
\address{J. Zhai: Institute for Advanced Study,
  The Hong Kong University of Science and Technology, Kowloon, Hong Kong, China
  (\tt{iasjzhai@ust.hk}, \tt{jian.zhai@outlook.com}).}
\begin{document}

\begin{abstract}
We consider the mixed ray transform of tensor fields on a three-dimensional compact simple Riemannian manifold with boundary. We prove the injectivity of the transform, up to natural obstructions, and establish stability estimates for the normal operator on generic three dimensional simple manifold in the case of $1+1$ and $2+2$ tensors fields. 

We show how the anisotropic perturbations of 
averaged
isotopic travel-times of 
$qS$-polarized
elastic waves provide partial information about the mixed ray transform of $2+2$ tensors fields. If in addition we include the measurement of the shear wave amplitude, the complete mixed ray transform can be recovered. We also show how one can obtain the mixed ray transform from an anisotropic perturbation of the \text{Dirichlet-to-Neumann} map of an isotropic elastic wave equation on a smooth and bounded domain in three dimensional Euclidean space. 
\color{black}
\end{abstract}

\maketitle 
\section{Introduction}
\label{Se:intro}


In this paper we study an inverse problem of recovering a $4$-tensor field from a family of certain line integrals. 
This family 
is called the \textit{mixed ray transform}, and it was first considered in \cite[Chapter 7]{Shara}.
We characterize the kernel of the mixed ray transform for $1+1$ and $2+2$ tensor fields for generic simple $3$-dimensional Riemannian manifolds and provide a stability estimate for the corresponding $L^2$-normal operator. 

We begin by introducing the mixed ray transform in the Euclidean space.
Let $f$ be a smooth compactly supported two tensor field on $\R^3$. We choose a point $x \in \R^3$ and a unit vector $\xi$. Thus $x$ and $\xi$ define a line  $\{x+t\xi \in \R^3: t\in \R\}$. Then we choose a vector $\eta$, that is orthogonal to $\xi$. The mixed ray transform $L_{1,1}f$ of $f$ for $(x,\xi,\eta)$ is given by
\begin{equation}
\label{eq:euc_mrt}
L_{1,1}f(x,\xi,\eta):=\int_{-\infty}^\infty f_{ij}(x+t\xi)\eta^i\xi^j \; \mathrm{d}t.
\end{equation}
We note that if we had chosen $\eta=\xi$ in \eqref{eq:euc_mrt}, then we would have obtained the (longitudinal) ray transform of $f$. 
%
We recall that any $2$-tensor $f$ has a unique decomposition $f_{ij}(x)=f_{ij}^\mathcal{B}(x)+c(x)\delta_{ij}$, with a zero \textit{trace} $\mu f^\mathcal{B}:=\sum_{i=1}^3f^\mathcal{B}_{ii}=0$. Since $\xi$ and $\eta$ were chosen to be orthogonal to each other we get from \eqref{eq:euc_mrt} that $L_{1,1}(c(x)\delta_{ij})=0$. Thus for the mixed ray transform, the only relevant tensor fields are the \textit{trace-free} ones, for which $\mu f=0$. 
%
Notice that if $f=(\nabla v)^\mathcal{B}$ for some $1$-form $v\in C_0^\infty(\R^3)$ then the fundamental theorem of calculus implies that $L_{1,1}f=0.$ Therefore  $L_{1,1}$ always has a non-trivial \textit{natural kernel}, consisting of \textit{potential tensor fields} $(\nabla v)^\mathcal B, \: v\in C_0^\infty(\R^3)$.
In this paper, we will consider the mixed ray transform on certain Riemannian manifolds  and study its injectivity up to the natural obstruction.\\

Let $(M,g)$ be a simple $3$-dimensional Riemannian manifold with boundary $\partial M$. 
We recall that a compact Riemannian manifold is simple if it has  a strictly convex boundary and 
any two points $x,y \in M$ can be connected by a unique geodesic, contained in $M$, depending smoothly on $x$ and $y$. We use
the notation $TM$ for the tangent bundle of $M$, 
$T^\ast M$ for the contangent bundle,
\color{black}
and  $S M$ for the unit sphere bundle, defined as
$SM=\{(x,\xi)\in TM;\,|\xi|_g=1\}$. Let $\partial_+(SM)=\{(x,\xi)\in
SM;\, x\in\partial M,\,\langle \xi,\nu\rangle_g < 0\}$ be the inward
pointing unit sphere bundle on $\partial M$, where $\nu$ is the
outward pointing unit normal vector field to the boundary. We use the
notation $S^k \tau'_M \otimes S^\ell \tau'_M , \: k,\ell\geq 1$ for
the space of $k+\ell$ tensor fields on $M$ that are symmetric with respect to
first $k$ and last $\ell$ indices.
Note that \textit{a priori} we do not pose any regularity properties for the tensor fields. To emphasize the regularity we use the standard notations $C^m, C^\infty, L^2,$ or $H^m$ in front of the vector
space of the corresponding tensor fields. 

The mixed ray transform $L_{k,\ell}f $
of a smooth tensor field $f\in C^\infty(S^k \tau'_M \otimes S^\ell \tau'_M)$ is given by the following formula
\begin{equation}
\label{eq:mixed_ray_trans_act}
\left(L_{k,\ell}f\right)(x,\xi,\eta)=\int_{0}^{\tau(x,\xi)}  f_{i_1\ldots i_kj_1\ldots j_\ell}(\gamma(t))\eta(t)^{i_1} \cdots\eta(t)^{i_k} \dot\gamma(t)^{j_1}\cdots\dot\gamma(t)^{j_\ell}\mathrm{d}t,
\end{equation}
where $(x,\xi)\in\p_+(SM)$ and $\gamma(t)=\gamma_{x,\xi}(t)$ is the unit-speed geodesic
given by the initial conditions $(x,\xi)$. 
The vector $\eta\in T_xM$ is perpendicular to $\xi$, and $\eta(t)$ is the parallel translation of $\eta$ along the geodesic $\gamma(t)$. We note that $\eta(t)\perp\dot{\gamma}(t)$ for any $t$. By $\tau(x,\xi)$ we mean the exit time of $\gamma$, which is the first positive time in which $\gamma$ hits the boundary again. Since $(M,g)$ is simple the exit time function $\tau$ is smooth on $\p_+SM$ \cite[Lemma 4.1.1.]{Shara}.


\begin{figure}[h]
\begin{picture}(250,125)
\put(0,5){\includegraphics[width=10cm]{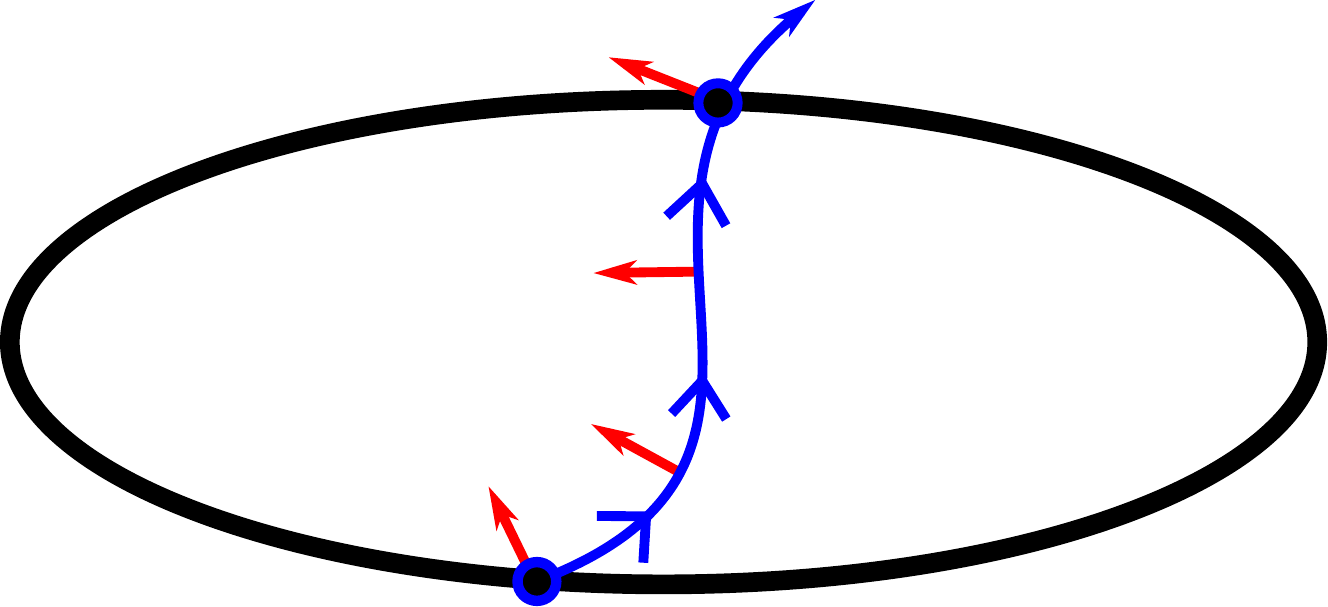}}
\put(-20,45){$\p M$}
\put(103,0){$x$}
\put(95,25){$\eta$}
\put(145,18){$\xi$}
 \end{picture}
\caption{In this figure we illustrate the notations used in the definition of the mixed ray transform  \eqref{eq:mixed_ray_trans_act}. We choose an initial point $x \in \p M$ and an initial velocity $\xi \in S_xM$, blue arrow. The blue line is the geodesic $\gamma$ given by these initial conditions. Finally we choose $\eta \in T_xM, \: \eta \perp \xi$ and compute its parallel translation along $\gamma$, this is illustrated by red arrows on $\gamma$. 
}
\end{figure}



If $k=0$ in (\ref{eq:mixed_ray_trans_act}), the operator
$L_{0,\ell}$ is the (longitudinal) \textit{geodesic ray transform} $I_\ell$ for a symmetric
$\ell$-tensor field $f$. 
%
The most interesting case is $\ell=2$ which arises from the linearization of the \textit{boundary rigidity problem},
that concerns the
recovery of the Riemannian metric from its boundary distance
function. 
It was conjectured by Michel \cite{michel1981rigidite} that simple metrics are \textit{boundary rigid}, which means that they are uniquely determined, up to a diffeomorphism fixing the boundary, by the boundary distance function.
 Significant
progress has been made in studying this problem
\cite{burago2010boundary, croke1991rigidity, lassas2003semiglobal,michel1981rigidite,
  otal1990spectre,pestov2005two,stefanov2016boundary,
  stefanov2017local}. 
The linearization of the boundary rigidity problem leads to an integral geometry problem of recovering a symmetric $2$-tensor field $f$ from its geodesic ray transform $I_2f$ (see, for instance, \cite{sharafutdinov2004ray}). 
%
%
\noindent 
The problem of reconstructing a symmetric $4$-tensor field $f$ from $I_4f$ arises from the linearization of elastic \textit{qP}-wave travel-times \cite{vcerveny1982linearized, de2019inverting}. 

Using the fundamental theorem of calculus, it is straightforward to see that if $f=\sym\nabla u$ with $u\in S^{\ell-1}\tau'_M$ and $u\vert_{\partial M}=0$, then $I_\ell f=0$. Here $\sym$ is the symmetrization operator and $\nabla$ is the Levi-Civita connection. We recall that the  operator $I_\ell$ is called $s$-injective if its kernel coincides with the image of the operator $\sym\nabla\colon H^1_0(S^{\ell-1}\tau'_M) \to L^2(S^{\ell}\tau'_M)$.
We list here some cases where $s$-injectivity of $I_\ell$ is known:

\begin{itemize}
\item $(M,g)$ simple, $\mathrm{dim}\geq 2$, $\ell=0$ \cite{Muk2,Muk1},
  $\ell=1$ \cite{AR}.
\item $(M,g)$ simple, $\mathrm{dim}\geq 2$, $\ell\geq 2$ under curvature conditions
  \cite{Dair, Pestov, PS, Shara}.
\item $(M,g)$ simple, $\mathrm{dim}=2$, $\ell$ arbitrary \cite{PSU}.
\item $(M,g)$ simple, $\mathrm{dim}\geq 2$, $\ell=2$: generic $s$-injectivity
  \cite{SU}.
\item $(M,g)$ admits a strictly convex foliation, $\mathrm{dim}\geq
  3$, $\ell=0$ \cite{UV}, $\ell=1,2$ \cite{SUV2}, $\ell=4$
  \cite{de2019inverting}.
\end{itemize}

\medskip

In this paper we consider the mixed ray transform $L_{k,\ell}$ as a generalization of the  geodesic ray transform $I_\ell$, and study its kernel. As for the Euclidean case, we only need to consider $L_{k,\ell}$ acting on ``trace-free" tensors. First, we introduce the operator (\textit{symmetrized tensor product with the metric})
$\lambda\colon S^{k-1} \tau'_M
\otimes S^{\ell-1} \tau'_M \to S^{k} \tau'_M \otimes S^{\ell} \tau'_M$
defined by
\begin{equation}
\label{eq:map_lambda}
(\lambda w)_{i_1\ldots i_kj_1\ldots j_\ell}:= \sym(i_1\ldots i_k)\sym(j_1\ldots j_\ell)(g_{i_kj_l}w_{i_1\ldots i_{k-1}j_1\ldots j_{\ell-1}}),
\end{equation}
where  $\sym(\cdot)$ is the symmetrization with respect to indices listed in the argument. The algebraic dual of the operator $\lambda$, is the \textit{trace operator}
\begin{equation}
\label{eq:map_mu}
\mu\colon S^{k} \tau'_M \otimes S^{\ell} \tau'_M
\to S^{k-1} \tau'_M \otimes S^{\ell-1} \tau'_M, \quad (\mu u)_{i_1\ldots i_{k-1}j_1\ldots j_{\ell-1}}:=u_{i_1\ldots i_{k}j_1\ldots j_{\ell}}g^{i_kj_\ell}.
\end{equation}
Therefore we see that  
\[
 S^{k}\tau'_M\otimes S^{\ell} \tau'_M=\ker \mu \oplus \hbox{Im}\lambda.
\]
The tensors in $\ker \mu$ are called \textit{trace free}. We use the notation $\mathcal B$ for the projection onto the trace-free class and write
\[
 S^k\tau'_M\otimes^\mathcal{B} S^\ell \tau'_M:= \mathcal{B}(\{S^k\tau'_M\otimes S^\ell \tau'_M\})=\ker \mu.
\]
We note here that $L_{k,\ell}(\hbox{Im}\lambda)=0$ and $L_{k,\ell}(\mathcal{B}f)=L_{k,\ell}(f)$. Therefore, from now on we assume $f\in S^k\tau'_M\otimes^\mathcal{B} S^\ell \tau'_M$.

To describe the natural kernel of $L_{k,\ell}$, acting on $S^k\tau'_M\otimes^\mathcal{B} S^\ell \tau'_M$, we introduce
the \textit{symmetrized gradient operator} $\mathrm{d}'\colon S^{k}\tau'_M\otimes S^{\ell-1 }\tau'_M \to  S^{k}\tau'_M\otimes S^{\ell}\tau'_M$ defined by
\begin{equation}
\label{eq:map_d'}
(\mathrm{d}'v)_{i_1\ldots i_kj_1\ldots j_\ell}:=\sym(j_1\ldots j_\ell) v_{i_1\ldots i_kj_1\ldots j_{\ell-1};j_\ell}.
\end{equation}
In \eqref{eq:map_d'} the index after the semicolon, stands for the corresponding index of the covariant derivative
of a tensor field $v$. It was shown in \cite[Chapter 7]{Shara} that
\[
L_{k,\ell}(\mathcal{B}\mathrm{d}'u)=L_{k,\ell}(\mathrm{d}'u)=0, \quad  \hbox{for } u\in S^{k}\tau'_M\otimes^\mathcal{B} S^{\ell-1
}\tau'_M, \quad \hbox{ with } u\vert_{\partial M}=0.
\]
After these preparations we are ready set the following definition of solenoidal-injectivity ($s$-injectivity) for the mixed ray transform:
 $L_{k,\ell}$ is called $s$-injective if $L_{k,\ell}f=0$ and $f\in L^2(S^k\tau'_M\otimes^\mathcal{B} S^\ell \tau'_M)$ imply $f=\mathrm{d}^\mathcal{B}v:=\mathcal{B}\mathrm{d}'v$ with some tensor field $v\in H^1_0(S^k\tau'_M\otimes^\mathcal{B} S^{\ell-1} \tau'_M)$. 
 \noindent Here
\[
\dB\colon H^1(S^{k}\tau'_M\otimes^\mathcal{B} S^{\ell-1 }\tau'_M) \to L^2(S^{k}\tau'_M\otimes^\mathcal{B} S^{\ell}\tau'_M).
\]
We also introduce the formal adjoint of $\mathrm{d}'$
\[
-\delta'\colon S^{k}\tau'_M\otimes S^{\ell }\tau'_M \to S^{k}\tau'_M\otimes S^{\ell-1}\tau'_M,
\]
where $\delta'$ is the \textit{divergence operator}
\[
(\delta'u)_{i_1\ldots i_kj_1\ldots j_{\ell-1}}:=g^{j_\ell j_{\ell+1}}u_{i_1\ldots i_kj_1\ldots j_\ell;j_{\ell+1}}.
\]
We define $\deB:=\delta'\vert_{S^{k}\tau'_M\otimes^\mathcal{B} S^{\ell}\tau'_M}$. One can readily check that $\hbox{Im}(\deB)\subset S^{k}\tau'_M\otimes^\mathcal{B} S^{\ell-1}\tau'_M$, and therefore
\[
\deB\colon S^{k}\tau'_M\otimes^\mathcal{B} S^{\ell}\tau'_M \to S^{k}\tau'_M\otimes^\mathcal{B} S^{\ell-1}\tau'_M.
\]
However we will verify later in Lemma \ref{Le:dif_ops_B} that $\mathrm{d}^\mathcal{B}$ and $-\deB$ are well defined and formally adjoint to each other.

The following tensor decomposition plays an essential role in the analysis of the mixed ray transform.
\begin{theorem}\label{Th:tensor_decomp}
For any $f\in H^m(S^k\tau'_M\otimes^\mathcal{B} S^\ell \tau'_M)$, $k=\ell\in \{1,2\}, \: m \in \{0,1,2,\ldots\}$, there exists a unique decomposition 
\begin{equation}\label{decomp_higher}
f=f^s+\mathrm{d}^\mathcal{B}v,
\end{equation}
with $f^s\in H^m(S^k\tau'_M\otimes^\mathcal{B} S^\ell \tau'_M)$, $\delta^\mathcal{B}f^s=0$, and $v \in  H^{m+1}( S^k\tau'_M\otimes^\mathcal{B} S^{\ell-1} \tau'_M)$, $v|_{\p M}=0$. In addition there exists a constant $C>0$ such that
\begin{equation}
\label{eq:estimates_for_decomp}
\|f^s\|_{H^m}\leq C\|f\|_{H^m}, \quad \|v\|_{H^{m+1}}\leq C\|\deB f\|_{H^{m-1}}.
\end{equation}
\end{theorem}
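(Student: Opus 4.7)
The plan is a Hodge-type decomposition obtained by solving a boundary value problem for the second-order operator $P := \deB \dB$ on the trace-free bundle $S^k\tau'_M\otimes^\mathcal{B} S^{\ell-1}\tau'_M$. Given $f$, I would seek $v\in H^{m+1}$ with $v|_{\partial M}=0$ such that $Pv = \deB f$ in $M$; then $f^s := f-\dB v$ is automatically trace-free, lies in $H^m$, and satisfies $\deB f^s = 0$, yielding the decomposition. Uniqueness is built into the setup: any two decompositions differ by $w\in H^1_0$ with $\dB w\in\ker\deB$, and the Green-type identity
$$\langle Pw,w\rangle_{L^2}=\|\dB w\|_{L^2}^2,$$
which follows from the formal adjointness in Lemma \ref{Le:dif_ops_B} together with $w|_{\partial M}=0$, forces $\dB w=0$; combined with the boundary condition and an injectivity statement for $\dB$ on $H^1_0$ this yields $w=0$.

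The crux is showing that $P$ is a strongly elliptic, formally self-adjoint second-order operator on the trace-free bundle, which then provides both (i) a unique weak solution $v\in H^1_0$ of $Pv=\deB f$ via Lax--Milgram applied to the form $(u,v)\mapsto \langle\dB u,\dB v\rangle_{L^2}$, and (ii) elliptic regularity $\|v\|_{H^{m+1}}\le C\|\deB f\|_{H^{m-1}}$ for the Dirichlet problem of a strongly elliptic system. The principal symbol of $P$ at $\xi\in T^*_xM\setminus 0$ is
$$\sigma(P)(\xi)\,v = \mathcal{B}\,\sigma(\delta')(\xi)\,\mathcal{B}\,\sigma(\mathrm{d}')(\xi)\,v,$$
and after pairing against $v$ it reduces to $|\sigma(\dB)(\xi)v|^2$. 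So strong ellipticity reduces to the purely algebraic claim that the map
$$v\;\longmapsto\; \mathcal{B}\bigl(\sym(j_1\ldots j_\ell)\,v_{i_1\ldots i_k j_1\ldots j_{\ell-1}}\,\xi_{j_\ell}\bigr)$$
is injective on the trace-free bundle for every $\xi\ne 0$.

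This algebraic injectivity is the main obstacle, and it is precisely where the hypothesis $k=\ell\in\{1,2\}$ enters: the projection $\mathcal{B}$ interacts nontrivially with the symmetrization over the last $\ell$ indices, and for generic $k,\ell$ the combined symbol can develop a kernel inside $\ker\mu$. I would handle $k=\ell=1$ and $k=\ell=2$ separately in a coordinate frame adapted to $\xi$ (say $\xi=e_1$), and verify by a direct linear-algebra computation that the only trace-free $v$ annihilated by this map is $v=0$. Once this is in hand, the remaining ingredients are entirely standard: a Poincar\'e-type inequality $\|v\|_{L^2}\le C\|\dB v\|_{L^2}$ on $H^1_0$ (a consequence of symbol injectivity and vanishing trace, via a Gårding estimate), coercivity of the bilinear form on $H^1_0$, existence and uniqueness of the weak solution, and elliptic regularity up to the boundary for the Dirichlet problem of the strongly elliptic system $P$. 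The bound $\|f^s\|_{H^m}\le C\|f\|_{H^m}$ then follows from the triangle inequality together with the continuity of $\dB\colon H^{m+1}\to H^m$ and the estimate on $\|v\|_{H^{m+1}}$.
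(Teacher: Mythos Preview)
Your overall architecture matches the paper's: solve the Dirichlet problem for $\LaB=\deB\dB$, set $v=(\LaB)^{-1}\deB f$, and put $f^s=f-\dB v$. The paper also reduces the ellipticity of $\sigma(\LaB)$ to exactly the pointwise injectivity of $\sigma(\dB)(\xi)=i^\mathcal{B}_\xi$ on trace-free tensors that you isolate, and verifies it by hand in the relevant cases.

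There is, however, a real gap in your outline. You claim the Poincar\'e-type inequality $\|v\|_{L^2}\le C\|\dB v\|_{L^2}$ on $H^1_0$ (equivalently, the coercivity you need for Lax--Milgram) as ``a consequence of symbol injectivity and vanishing trace, via a G\aa rding estimate.'' That inference is not valid: G\aa rding only yields $\|v\|_{H^1}^2\le C(\|\dB v\|_{L^2}^2+\|v\|_{L^2}^2)$, and eliminating the lower-order term by a compactness argument requires as input that $\dB$ has \emph{trivial kernel} on $H^1_0$. You invoke exactly this ``injectivity statement for $\dB$ on $H^1_0$'' for uniqueness as well, but never indicate how to prove it, and it is not a symbol-level fact. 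In the paper this is the heart of the proof of Lemma~\ref{existence2}: from $\dB u=0$ and $u|_{\partial M}=0$ one integrates the identity
\[
\frac{\mathrm{d}}{\mathrm{d}t}\bigl[u_{ijk}(\gamma)\eta^i\eta^j\dot\gamma^k\bigr]=(\dB u)_{ijk\ell}\,\eta^i\eta^j\dot\gamma^k\dot\gamma^\ell
\]
along geodesics hitting $\partial M$ to obtain $u_{ijk}(x_0)\eta^i\eta^j\xi^k=0$ for an open set of directions $\xi$ and $\eta\perp\xi$, and then performs a sequence of perturbations of $(\xi,\eta,\tilde\eta)$ combined with the trace-free condition $\mu u=0$ to manufacture enough independent linear relations (the systems in \eqref{eq:3systems}) to force $u(x_0)=0$. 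This geometric/algebraic step is where the restriction $k=\ell\in\{1,2\}$ is actually used on the operator level; your symbol computation alone does not supply it.

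A secondary point: for boundary regularity you appeal to ``strong ellipticity of the Dirichlet problem'' as if it were automatic. For a system in the factored form $\deB\dB$ one can indeed deduce the Lopatinskij condition by the same Green's-formula trick, reducing it to showing that the frozen-coefficient first-order ODE $\mathrm{d}^\mathcal{B}_0(\xi',D_t)v=0$, $v(0)=0$, has only the trivial decaying solution; the paper carries this out explicitly (equations \eqref{eq_vijk1}--\eqref{eq_vijk3}). Your proposal should at least flag that this verification is needed rather than bundle it under ``standard.''
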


This theorem will be proved in Section \ref{sec:Unique_dec}. We note that a decomposition equivalent to \eqref{decomp_higher} has been provided earlier by Sharafutdinov \cite[Lemma 7.2.1]{Shara}: for any $f\in
L^2(S^k\tau'_M\otimes S^\ell \tau'_M), \: k, \ell \geq 1$, there is a
decomposition
\begin{equation}\label{decomp_threeparts}
   f = f^s + \mathrm{d}'v + \lambda w,
\end{equation}
with $ \mu f^s=0, \: \delta'f^s=0,\: \hbox{ for some } v\in
H^1_0(S^{k}\tau'_M\otimes S^{\ell-1} \tau'_M),
\: w\in L^2(S^{k-1}\tau'_M\otimes S^{\ell-1} \tau'_M)$. Moreover if
$\ell \geq 2$ we can choose $v$ such that $\mu v=0$.
The equivalence of the decompositions \eqref{decomp_higher} and \eqref{decomp_threeparts} can be observed by noticing that
\[
\mathrm{d}^\mathcal{B}v-\mathrm{d}'v
\in \mathrm{Im}{\lambda},\quad \hbox{ and }\quad \mathcal B f-f\in \mathrm{Im}{\lambda},\quad v \in L^2(S^k\tau'_M\otimes^\mathcal{B} S^{\ell-1}\tau'_M), \:  f \in L^2(S^k\tau'_M\otimes S^\ell \tau'_M)
\]
and rearranging terms. We remark that it was shown in \cite{Shara} that the solenoidal part $f^s$, in \eqref{decomp_threeparts} is uniquely determined by $f$, but the uniqueness of $v$ and $w$ was not proven. However the uniqueness of a quite similar decomposition has been proved in \cite{dairbekov2011conformal}.

\subsection*{Main result}
The main purpose of this paper is to establish the $s$-injectivity of $L_{1,1}$ and $L_{2,2}$ for $g$ in a generic subset of all simple metrics on $M$. We also provide a stability estimate for the corresponding normal operators. The analogous result for $I_{2}=L_{0,2}$ is given in \cite[Theorem 1.5]{SU}.
We will present a detailed proof for $L_{1,1}$. The proof is similar for $L_{2,2}$, modulo some key calculations which we will also provide.

We then introduce some necessary notations in order to state the main result of this paper.  
\color{black}
We write $L_{g}=L_{k,\ell}$ to emphasize the dependence on the metric
$g$. We denote 
the $L^2$-normal operator $L_g^\ast L_g$ 
\color{black}
of the mixed ray transform by
$\mathcal{N}_g$ (see Section \ref{Se:normal_op} for the rigorous
definitions).  Since $(M,g)$ is simple we can without loss of
generality assume that $M \subset \R^3$ with a simple metric $g$ that is smoothly extended in whole $\R^3$. 
%
%
\color{black}
Thus we can find a small open neighborhood $M_1$ of $M$, such that $(\overline{M_1},g)$ is a simple, (see \cite[page 454]{stefanov2004stability}). A tensor
field $f$ defined on $M$ will be extended by a zero field to
$M_1\setminus M$ . We note that this creates jumps at the boundary $\p
M$. To tackle this, the $\tilde{H}_2$-norm was introduced in
\cite{stefanov2004stability} (see also Section
\ref{Se:Parametrix}). 
As the decomposition \eqref{decomp_higher} depends on the domain, we use the notation $f_M^s$ for the solenoidal part of $f$ on $M$ to emphasize this. Our main result is

\begin{theorem}\label{maintheorem}
Let $(k,\ell)=(1,\,1)$ or $(2,\,2)$. There exists an integer $m_0$
such that for each $m\geq m_0$, the set $\mathcal{G}^m(M)$ of simple
$C^m$-regular metrics in $M$, for which $L_{g}$ is s-injective, is open and
dense in the $C^m$-topology. Moreover, for any $g\in\mathcal{G}^m$,
\[
  \|f^s_M\|_{L^2(M)} \leq C\|\mathcal{N}_gf\|_{\tilde{H}_2(M_1)},
  \quad\quad
  f \in H^1(S^k\tau'_M\otimes^\mathcal{B} S^\ell \tau'_M),
\]
with a constant $C>0$ that can be chosen locally uniformly in
$\mathcal{G}^m(M)$ in the $C^m(M)$-topology.
\end{theorem}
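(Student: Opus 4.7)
The strategy is the pseudodifferential parametrix approach of Stefanov--Uhlmann for $I_2$ in \cite{SU}, adapted to $L_{1,1}$ and $L_{2,2}$. The four main steps are: (i) realize $\mathcal N_g = L_g^\ast L_g$ as a classical $\Psi$DO of order $-1$ on the open extension $M_1$; (ii) compute its principal symbol and show that it is elliptic on the solenoidal subspace, i.e.\ on the pointwise kernel of $\sigma_p(\deB)$; (iii) construct a parametrix which, together with Theorem \ref{Th:tensor_decomp}, yields the stability estimate up to a compact remainder; (iv) remove that remainder on a dense open set of metrics by combining analytic-microlocal s-injectivity at a real-analytic reference metric with a perturbation/Fredholm argument.

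For Steps (i)--(ii), I would use the geodesic parametrization near the diagonal to express $\mathcal N_g$ as an oscillatory integral and verify the classical $\Psi$DO structure by standard stationary phase, exactly as in \cite{stefanov2004stability}. Its principal symbol is obtained by integrating suitable outer products of the parallel-transported vectors $\eta$ over the co-sphere in $\xi^\perp$, producing a tensor-valued multiplier which one must check is injective on trace-free tensors lying in $\ker\sigma_p(\deB)(x,\xi)$. Once symbolic ellipticity on the solenoidal subspace is proven, the $\Psi$DO calculus delivers a parametrix $\mathcal P_g$ of order $+1$ and a smoothing, hence $L^2$-compact, operator $K_g$ with
\[
\mathcal P_g\mathcal N_g f = f^s_M + K_g f.
\]
Combined with the decomposition bound \eqref{eq:estimates_for_decomp} and the $\tilde H_2$ extension trick that absorbs the jump of $f$ across $\partial M$, this gives the a priori estimate
\[
\|f^s_M\|_{L^2(M)} \le C\bigl(\|\mathcal N_g f\|_{\tilde H_2(M_1)} + \|K_g f\|_{L^2}\bigr).
\]

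For Step (iv), I would exhibit at least one simple metric $g_0$ for which $L_{g_0}$ is s-injective. Choosing $g_0$ real-analytic, the parametrix above can be upgraded to the analytic category, so that $\mathcal N_{g_0} f = 0$ forces the analytic wavefront set of $f^s_M$ to be empty in $M_1^{\mathrm{int}}$; analytic continuation and vanishing of $f^s_M$ near $\partial M$ then force $f^s_M \equiv 0$. Since the map $g \mapsto \mathcal N_g$ is continuous in the $C^m$-topology for $m$ sufficiently large and $K_g$ depends continuously as a compact operator on $L^2$, the Fredholm alternative applied to the a priori estimate propagates s-injectivity at $g_0$ to an open $C^m$-neighborhood $\mathcal G^m(M)$ on which the $\|K_g f\|_{L^2}$ term is absorbed, producing the clean estimate with $C$ locally uniform in $g$. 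Density of $\mathcal G^m(M)$ then follows because real-analytic simple metrics are dense in simple $C^m$ metrics.

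The main obstacle is Step (ii): the explicit verification that $\sigma_p(\mathcal N_g)(x,\xi)$ is injective on trace-free tensors annihilated by $\sigma_p(\deB)(x,\xi)$. For $L_{1,1}$ this is a tractable computation on a $2$-dimensional fiber, but for $L_{2,2}$ the relevant fiber is higher dimensional and the interplay between the trace-free condition and the $\eta\otimes\eta$ symmetries produces many terms whose cancellations must be tracked with care; this is where the restriction $\dim M = 3$ and $k=\ell$ genuinely enters. A secondary technical point is extending the analytic-microlocal argument of \cite{SU} from symmetric $2$-tensors to the mixed tensor bundles $S^k\tau'_M\otimes^\mathcal{B} S^\ell \tau'_M$, but the general architecture carries over once the analytic $\Psi$DO calculus on these bundles is set up.
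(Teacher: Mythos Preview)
Your proposal is correct and follows essentially the same Stefanov--Uhlmann architecture as the paper: $\mathcal N_g$ as a $\Psi$DO of order $-1$, ellipticity on solenoidal trace-free tensors (verified by explicit symbol computation, which the paper indeed singles out as the crux for $2+2$), a parametrix giving stability modulo a compact error, analytic s-injectivity at real-analytic reference metrics, and a Fredholm/continuity argument to propagate s-injectivity to an open dense set with locally uniform constants. One refinement worth noting is that the paper's analytic step does not show $f^s_M$ vanishes near $\partial M$ directly; rather it first gauges $f^s_{M_1}$ by some $\dB v$ so that the resulting tensor vanishes to infinite order at the boundary (their Lemma~\ref{lemm42}), and then runs the analytic continuation in global semi-geodesic coordinates---but this is exactly the boundary-determination lemma you would expect to need, and your outline leaves room for it.
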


The $s$-injectivity of $L_{k,\ell}$, $k,\ell\geq 1$, has been proved
for two-dimensional simple manifolds in \cite{de2018mixed}. On higher dimensional manifolds, the $s$-injective was
established in \cite[Theorem 7.2.2]{Shara} under restrictions on the sectional
curvature of $(M,g)$. In both of these aforementioned papers, the sharper tensor decomposition \eqref{decomp_higher} is not needed. However, in this paper the decomposition \eqref{decomp_higher} is a key component of the proof of Theorem \ref{maintheorem}.

We also refer to \cite{krishnan2018s} for the study of a related problem in dimension two. 
In a recent paper \cite{feizmohammadi2019light} the authors showed that on globally hyperbolic stationary Lorentzian manifolds, \textit{the light ray transform} is injective up to a similar natural obstruction that $L_{1,1}$ has.
In the Appendix \ref{Se:app_1} we relate the mixed ray transform $L_{2,2}$ to the averaged travel-times of $qS$-polarized elastic waves. 
However we note that the travel-time data alone only gives us partial information about the mixed ray transform. If in addition we include the measurement of the shear wave amplitude, the complete mixed ray transform can be recovered. In the Appendix \ref{dnmap}, we will show how one can obtain the mixed ray transform from a linearization of the \text{Dirichlet-to-Neumann} map of an elastic wave equation on a smooth bounded domain $M\subset \R^3$. Here we relay on the observation that both the travel-time and the amplitude are encoded in the Dirichlet-to-Neumann map. 
We refer to \cite[Chapter 7]{Shara} for an alternative approach to obtain $L_{2,2}$.

\subsection*{Outline of the proof}
In the beginning of the Section \ref{sec:Unique_dec} we find an explicit representation for the projection $\mathcal B$ onto the space of trace-free tensors.
Then we prove Theorem \ref{Th:tensor_decomp} in the case $k=\ell=2$. The rest of the paper is devoted to prove Theorem \ref{maintheorem}. We give detailed proof for the case
$k=\ell=1$ in Sections \ref{sec:Unique_dec}-\ref{Se:generic_s_inj},
and discuss the required modifications for the $k=\ell=2$ in the
final section. 

In sections \ref{Se:normal_op}-\ref{Se:generic_s_inj} we study
the mixed ray transform on $1+1$ tensor fields $f$ satisfying the
trace-free condition. Section \ref{Se:normal_op} is dedicated to the
study of the normal operator 
$\mathcal{N}_g$ of the mixed ray
transform on $1+1$ tensor fields. First we show that $\mathcal{N}_g$ is an integral operator and find its Schwartz kernel. In the second part of the section we prove that the normal operator is
a pseudo-differential operator ($\Psi$DO) of order $-1$. We also give
an explicit coordinate-dependent formula for the principal symbol of
this operator. 

Since in Theorem \ref{maintheorem} we assumed that the metric is only finitely smooth we start Section \ref{Se:Parametrix} by recalling some basics of the theory of $\Psi$DO's whose amplitudes are only finitely smooth. This is needed to establish the continuity of $\mathcal{N}_g$, and several other operators, with respect to metric $g$ in $C^m$-topology. We prove that $\mathcal{N}_g$ is elliptic
acting on the solenoidal tensor fields. 
This manifests the Fredholm nature of the normal operator on some extended simple manifold. Then we can recover the solenoidal part (on the extended manifold) of the tensor field $f$ from
$\mathcal{N}_gf$ 
modulo a finitely smooth term.
In the second part of Section
\ref{Se:Parametrix} we compare the solenoidal parts 
on original manifold and on the extended manifold. Then we establish a
reconstruction formula for the solenoidal part of tensor fields 
on the original manifold. 
We also give a stability estimate for the normal operator (see Theorem \ref{th:stability}).

In Section \ref{Se:analytic_s_injective}, we prove the $s$-injectivity of the mixed ray transform on analytic simple
Riemannian manifolds (see Theorem
\ref{sinjectivity_analytic}). Since analytic metrics are $C^m$-dense
in the space of all simple metrics, Theorem
\ref{sinjectivity_analytic} can be used to prove Theorem \ref{maintheorem}  in Section
\ref{Se:generic_s_inj}.

\section{Decomposition of the trace-free tensor fields}
\label{sec:Unique_dec}
We begin this section by finding an explicit formula for the projection $\mathcal{B}$ from $S^{k}\tau'_M\otimes S^{\ell} \tau'_M$ onto $\ker \mu=S^{k}\tau'_M\otimes^{\mathcal{B}} S^{\ell} \tau'_M$.
 \color{black}
  
\subsection{Domain of the mixed ray transform}
We choose some $f\in S^k\tau'_M\otimes S^\ell \tau'_M$  and write
\begin{equation}\label{decomp_B}
f=\mathcal{B}f+\lambda w, \quad w\in S^{k-1}\tau'_M\otimes S^{\ell-1} \tau'_M.
\end{equation}
In the following lemma we find a representation for $w$ in \eqref{decomp_B} under the assumption $k\geq \ell \geq 1$.  

\begin{lemma}\label{Le:mu_comp_B=0}
Any tensor field $f\in S^k\tau'_M\otimes S^\ell \tau'_M, \: k\geq \ell \geq  1$ admits the decomposition \eqref{decomp_B}, where $w\in S^{k-1}\tau'_M\otimes S^{\ell-1} \tau'_M$ is given by the formula 
\[
w=\mathcal{A}f:=\left(\sum_{K=1}^{\ell}\left((-1)^{K+1}\frac{\prod_{h=1}^{K-1}b_h}{\prod_{h=1}^Ka_h}\right)\lambda^{K-1}\mu^K\right)f,
\]
and 
\[
\mathcal{B}f=(\mathrm{Id}-\lambda \mathcal{A})f, 
\]
where
\[
a_h= \frac{h(k+\ell+2-h)}{k\ell}, \quad b_h =\frac{(k-h)(\ell-h)}{k\ell}.
\]
\end{lemma}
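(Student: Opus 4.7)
The plan is to reduce the lemma to a single commutation identity between $\mu$ and $\lambda$ and then iterate it. Writing $\lambda_{m,n}$ for the operator $S^{m-1}\tau'_M\otimes S^{n-1}\tau'_M \to S^m\tau'_M\otimes S^n\tau'_M$ and $\mu_{m,n}$ for the corresponding trace (subscripts suppressed when unambiguous), the sought decomposition $f = \mathcal{B}f + \lambda w$ with $\mathcal{B}f \in \ker\mu$ is equivalent to requiring $\mu\lambda w = \mu f$. Uniqueness of $w$ will follow from injectivity of $\lambda$, so the main task is to show that the explicit $w = \mathcal{A}f$ in the statement satisfies $\mu\lambda w = \mu f$.

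\textbf{Step 1 (key commutation identity).} I would first establish $\mu\lambda u = a_1 u + b_1 \lambda(\mu u)$ for every $u \in S^{k-1}\tau'_M\otimes S^{\ell-1}\tau'_M$, with $a_1 = (k+\ell+1)/(k\ell)$ and $b_1 = (k-1)(\ell-1)/(k\ell)$. Using the symmetry of $u$,
\[
(\lambda u)_{i_1\ldots i_k j_1\ldots j_\ell} = \frac{1}{k\ell}\sum_{a=1}^k\sum_{b=1}^\ell g_{i_a j_b}\, u_{(i_1\ldots\widehat{i_a}\ldots i_k)(j_1\ldots\widehat{j_b}\ldots j_\ell)},
\]
and contracting with $g^{i_k j_\ell}$ separates into three cases: (i) $a=k,\,b=\ell$ contributes $(\dim M)/(k\ell)\,u = (3/(k\ell))u$; (ii) $a=k,\,b<\ell$ or $a<k,\,b=\ell$ each yield $(1/(k\ell))u$ via $g^{i_kj_\ell}g_{i_kj_b} = \delta^{j_\ell}_{j_b}$ and the symmetry of $u$, giving $((k+\ell-2)/(k\ell))u$ altogether; (iii) $a<k,\,b<\ell$ produces $(1/(k\ell))g_{i_aj_b}(\mu u)_\cdot$, and summing over such $(a,b)$ reassembles exactly $((k-1)(\ell-1)/(k\ell))\lambda_{k-1,\ell-1}(\mu u) = b_1\lambda(\mu u)$. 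Adding (i) and (ii) gives $a_1 u$.

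\textbf{Step 2 (iteration and telescoping).} Applying the identity of Step 1 recursively at the shifted ranks $(k,\ell),(k-1,\ell-1),\ldots$ to the outermost $\mu\lambda$ in $\mu\lambda^K u$, for $u\in S^{k-K}\tau'_M\otimes S^{\ell-K}\tau'_M$, yields
\[
\mu\lambda^K u = \Bigl(\sum_{j=0}^{K-1} r_j\, a_1^{(k-j,\ell-j)}\Bigr)\lambda^{K-1}u + r_K\lambda^K\mu u, \qquad r_j := \prod_{i=0}^{j-1} b_1^{(k-i,\ell-i)}.
\]
The $b_1$-products telescope to $r_j = (k-j)(\ell-j)/(k\ell) = b_j$ (with $b_0 = 1$), and $\sum_{j=0}^{K-1}(k+\ell+1-2j) = K(k+\ell+2-K)$ collapses the $a$-sum to $a_K$. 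Taking $u=\mu^K f$ then gives the clean identity $\mu\lambda^K\mu^K f = a_K\lambda^{K-1}\mu^K f + b_K\lambda^K\mu^{K+1}f$.

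\textbf{Step 3 (solve the recursion).} Substituting $\mathcal{A}f = \sum_{K=1}^\ell c_K \lambda^{K-1}\mu^K f$ and using $b_\ell = 0$ to kill the boundary term,
\[
\mu\lambda\mathcal{A}f = c_1 a_1\,\mu f + \sum_{K=2}^{\ell}\bigl(c_K a_K + c_{K-1}b_{K-1}\bigr)\lambda^{K-1}\mu^K f.
\]
Matching with $\mu f$ forces $c_1 = 1/a_1$ and the recursion $c_K a_K = -c_{K-1}b_{K-1}$, whose closed form is exactly $c_K = (-1)^{K+1}\prod_{h=1}^{K-1}b_h/\prod_{h=1}^K a_h$ as stated. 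Then $\mathcal{B}f := (\Id - \lambda\mathcal{A})f$ obeys $\mu\mathcal{B}f = 0$, and uniqueness of $w$ comes from injectivity of $\lambda$. The mechanical heart of the argument is Step 1: one must carefully distinguish the three contraction patterns arising from $\mu\lambda$ and verify that the off-diagonal piece reassembles into $\lambda_{k-1,\ell-1}(\mu u)$ with the correct normalization $(k-1)(\ell-1)/(k\ell)$. Once that identity is in hand, the telescoping in Step 2 and the coefficient matching in Step 3 are essentially routine arithmetic.
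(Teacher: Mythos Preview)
Your argument is correct and rests on the same commutator identity $\mu\lambda u = a_1 u + b_1\lambda\mu u$ that the paper proves first (their equation \eqref{eq:commutator_formula}). From there the two proofs organize the algebra differently: the paper applies successive powers of $\mu$ to the assumed decomposition $f=\mathcal{B}f+\lambda w$ to obtain the recursion $\mu^K f = a_K\mu^{K-1}w + b_K\lambda\mu^K w$ for the unknown $w$, treats the terminal step $K=\ell$ with a separate calculation of $\mu\lambda$ on $S^{k-\ell}\tau'_M$, and then unwinds the recursion to arrive at the formula for $w$; you instead push the commutator through powers of $\lambda$ to get $\mu\lambda^K\mu^K f = a_K\lambda^{K-1}\mu^K f + b_K\lambda^K\mu^{K+1}f$ and then verify directly that the stated $\mathcal{A}f$ satisfies $\mu\lambda\mathcal{A}f=\mu f$, with the boundary absorbed automatically by $b_\ell=0$. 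The two computations are algebraically equivalent, but your verification-style route is a bit more economical and avoids the paper's separate endpoint case.
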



\color{black}

\begin{proof}
To begin we derive  the following formula for the commutator of  $\lambda$ and  $\mu $
\begin{equation}
\label{eq:commutator_formula}
\begin{split}
\mu\lambda w=&\left(\sym(i_1\ldots  i_k)\sym(j_1\ldots  j_\ell)(w_{i_1\ldots i_{k-1}j_1\ldots j_{\ell-1}})g_{i_kj_l}\right)g^{i_kj_l}
\\
=&\frac{1}{k\ell}(w_{i_1\ldots i_{k-1}j_1\ldots j_{\ell-1}})g_{i_kj_l}g^{i_kj_l}
\\
+&\frac{k-1}{k\ell}w_{i_1\ldots i_{k-2},i_kj_1\ldots j_{\ell-1}}g_{i_{k-1}j_\ell}g^{i_kj_\ell}
\\
+&\frac{\ell-1}{k\ell}w_{i_1\ldots i_{k-1}j_1\ldots j_{\ell-2}j_\ell}g_{i_{k}j_{\ell-1}}g^{i_kj_\ell}
\\
+&\sym(i_1\ldots  i_{k-1})\sym(j_1\ldots  j_{\ell-1})w_{i_1\ldots i_{k-2}i_kj_1\ldots j_{\ell-2}j_\ell}g_{i_{k-1}j_{\ell-1}}g^{i_kj_\ell}
\\
=&\frac{k+\ell+1}{k\ell}w+\frac{(k-1)(\ell-1)}{k\ell}\lambda\mu w.
\end{split}
\end{equation}
In the case $k=\ell=1$, $w$ is a function, and formulas \eqref{decomp_B} and \eqref{eq:commutator_formula} imply
\begin{equation}
\label{eq:w_in_1,1_case}
w=\frac{\mu f}{3}.
\end{equation}

To proceed for the higher order tensors we assume $\max\{k, \ell\}\geq 2$ and use the commutator formula \eqref{eq:commutator_formula} to prove that for $m\in \{2, \ldots, \min\{k,\ell\}\}$ we have 
\begin{equation}
\label{eq:commutator_higher_order}
\mu^{m-1}\lambda w =  a_{h}\mu^{m-2}w+b_{h}\mu^{m-1-h}\lambda\mu^{h} w,  \quad h\in \{1, \ldots, m -1\},
\end{equation}
where
\begin{equation}
\label{eq:coef_of_commutator}
a_{h}=\frac{1}{k\ell}\sum_{r=1}^{h}x_r, \quad x_r:=k+\ell+3-2r, \quad b_{h}=\frac{(k-h)(\ell-h)}{k\ell}.
\end{equation}
We note that the case $m=2$ is the same as \eqref{eq:commutator_formula}. If $m>2$ we do an induction over $h$. 
The initial step of the induction follows from  \eqref{eq:commutator_formula}.  For the induction we note that  
\\
$\mu^h w \in S^{k-h-1}\tau'_M\otimes S^{\ell-h-1} \tau'_M$. Due to \eqref{eq:commutator_formula} we have
\[
\mu^{m-1-h}\lambda\mu^{h} w=\mu^{m-2-h} \left(\frac{k+\ell-2h+1}{(k-h)(\ell-h)}\mu^h w+\frac{(k-h-1)(\ell-h-1)}{(k-h)(\ell-h)}\lambda\mu^{h+1}w\right).
\]
Therefore if \eqref{eq:coef_of_commutator} holds for $h \in \{1,\ldots,  m-2\}$, it also holds for $h+1$.

Next we note that  for any $m \leq \min\{k,\ell\}$  the formulas $(\ref{decomp_B})$ and \eqref{eq:commutator_higher_order} imply
\[
\mu^{m-1}f=b_{m-1}\lambda\mu^{m-1} w+a_{m-1}\mu^{m-2}w.
\]
We denote $K=m-1$. Thus for any $K\in \{1, \ldots, \min\{k,\ell\}- 1\}$, it holds
\begin{equation}
\label{eq:recursive_formula_for_w}
\mu^{K}f=b_K\lambda\mu^{K} w+a_K \mu^{K-1}w, \quad a_K= \frac{K(k+\ell+2-K)}{k\ell}, \quad b_K =\frac{(k-K)(\ell-K)}{k\ell}.
\end{equation}

Now we assume that $\ell\leq k$ since we are mostly interested in the case $k=\ell=2$. The case $\ell>k$ can be dealt with similarly. 


We choose $K=\ell-1$ and apply $\mu$ to both sides of the equation \eqref{eq:recursive_formula_for_w} to  get
\[
\mu^{\ell}f=b_{\ell-1}\mu \lambda\mu^{\ell-1}w+a_{\ell-1}\mu^{\ell-1}w.
\]
We note that for any $v\in S^{m}\tau'_M$,
$
\mu \lambda v=
\frac{m+3}{m+1}v.
$
This implies
\[
\mu \lambda\mu^{\ell-1}w=\frac{k-\ell+3}{k-\ell+1}\mu^{\ell-1}w,
\]
and we have found the formula
\[
\begin{split}
\mu^{\ell-1}w=
\left( b_{\ell-1}\frac{k-\ell+3}{k-\ell+1}+a_{\ell-1}\right)^{-1}\mu^\ell f
=\frac{k}{k+2}\mu^\ell f=\frac{1}{a_\ell}\mu^\ell f. 
\end{split}
\]

%

By the recursion formula \eqref{eq:recursive_formula_for_w} we get
\begin{equation}
\label{eq_v}
\begin{split}
w=&\frac{\mu f}{a_{1}}-\frac{b_{1}}{a_{1}} \lambda \mu w=\frac{\mu f}{a_{1}}-\frac{b_{1}}{a_{1}}\lambda \left(\frac{\mu^2 f}{a_{2}}- \frac{b_{2}}{a_{2}} \lambda \mu ^2w \right)
\\
=&\frac{\mu f}{a_{1}}- \frac{b_{1}}{a_{1}a_{2}}\lambda\mu^2 f+ \frac{b_{1}b_{2}}{a_{1}a_{2}} \lambda^2 \mu ^2w
\\
=& \cdots
\\
=&\sum_{K=1}^{\ell-1}\left((-1)^{K+1}\frac{\prod_{h=1}^{K-1}b_h}{\prod_{h=1}^Ka_h}\right)\lambda^{K-1}\mu^K f+\left((-1)^{\ell+1}\frac{\prod_{h=1}^{\ell-1}b_h}{\prod_{h=1}^{\ell-1}a_h}\right)\lambda^{\ell-1}\mu^{\ell-1}w
\\
=&\left(\sum_{K=1}^{\ell}\left((-1)^{K+1}\frac{\prod_{h=1}^{K-1}b_h}{\prod_{h=1}^Ka_h}\right)\lambda^{K-1}\mu^K\right)f.
\end{split}
\end{equation}
The last row is the representation we were looking for.
\end{proof}

We recall that at the Section \ref{Se:intro} we had given the formal definitions for the gradient operator
\[
\dB:=\mathcal{B}\mathrm{d}'\colon H^1(S^{k}\tau'_M\otimes^\mathcal{B} S^{\ell-1 }\tau'_M) \to L^2(S^{k}\tau'_M\otimes^\mathcal{B} S^{\ell}\tau'_M),
\]
and divergence operator
\[
-\deB:=-\delta'
\colon H^1(S^{k}\tau'_M\otimes^\mathcal{B} S^{\ell}\tau'_M) \to L^2(S^{k}\tau'_M\otimes^\mathcal{B} S^{\ell-1}\tau'_M)
\]
on the trace-free class.
\color{black}

\begin{lemma}
\label{Le:dif_ops_B}
The differential operators $\dB$ and $-\deB$ are well defined, formally adjoint to each other and 
\begin{equation}
\label{eq:dB}
\mathrm{d}^\mathcal{B} v=\mathrm{d}'v-\frac{1}{a_1} \lambda \mu  \mathrm{d}'v, \quad v \in S^{k}\tau'_M\otimes^\mathcal{B} S^{\ell-1 }\tau'_M, \quad \hbox{ when } k\geq \ell\geq 1.
\end{equation}
\end{lemma}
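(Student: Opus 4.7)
The plan is to handle the three assertions in sequence, taking the explicit formula (\ref{eq:dB}) for $\dB$ as the main computational point.

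\textbf{Step 1 (Formula for $\dB v$).} Since $\mathcal{B}=\Id-\lambda\mathcal{A}$ by Lemma \ref{Le:mu_comp_B=0}, the identity (\ref{eq:dB}) reduces to showing that for $v\in S^{k}\tau'_M\otimes^\mathcal{B} S^{\ell-1}\tau'_M$ only the $K=1$ term survives in the sum defining $\mathcal{A}\mathrm{d}'v$, i.e.\ $\mu^{K}\mathrm{d}'v=0$ for every $K\geq 2$. It is enough to check that $\mu^{2}\mathrm{d}'v=0$, since further powers of $\mu$ then kill the expression automatically. Starting from
\[
(\mathrm{d}'v)_{i_1\ldots i_k j_1\ldots j_\ell}=\tfrac{1}{\ell}\sum_{r=1}^{\ell}v_{i_1\ldots i_k j_1\ldots\widehat{j_r}\ldots j_\ell;\,j_r},
\]
applying $\mu$ (contracting $i_k$ with $j_\ell$) and using $\nabla g=0$ splits the sum into the term $r=\ell$, which gives a divergence in the $i_k$-slot, and terms $r<\ell$, each of which is $(\mu v)_{\ldots;j_r}=0$. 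Thus $\mu\mathrm{d}'v=\tfrac{1}{\ell}g^{i_k j_\ell}v_{i_1\ldots i_k j_1\ldots j_{\ell-1};j_\ell}$. Contracting a second time and commuting $\mu$ past the covariant derivative (again using $\nabla g=0$) and using the symmetry of $v$ in the first $k$ indices converts the inner contraction into another instance of $\mu v=0$, which gives $\mu^{2}\mathrm{d}'v=0$.

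\textbf{Step 2 (Well-definedness of $\deB$ and $\dB$).} The operator $\dB=\mathcal{B}\mathrm{d}'$ lands in $\ker\mu$ by construction. For $\deB$ one has to check that $\delta'u\in\ker\mu$ whenever $u\in\ker\mu$. A computation identical in spirit to Step 1 shows
\[
(\mu\delta'u)_{i_1\ldots i_{k-1}j_1\ldots j_{\ell-2}}
= g^{j_\ell j_{\ell+1}}\bigl[g^{i_k j_{\ell-1}}u_{i_1\ldots i_k j_1\ldots j_\ell}\bigr]_{;j_{\ell+1}},
\]
and the bracketed object is a trace of $u$ on the last $i$-index with one of the $j$-indices; by the full symmetry of $u$ in its second block this trace equals $\mu u$ (up to index renaming) and therefore vanishes. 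Hence $\mu\delta'u=0$, so $\deB$ is well defined on the trace-free class.

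\textbf{Step 3 (Formal adjointness).} It is classical that $\mathrm{d}'$ and $-\delta'$ are formally adjoint: for $v\in C_0^\infty$ and $u\in C^\infty$,
\[
\langle\mathrm{d}'v,u\rangle=-\langle v,\delta'u\rangle.
\]
For trace-free test fields $v$ and $u$, use the decomposition $\dB v=\mathrm{d}'v-\lambda\mathcal{A}\mathrm{d}'v$ established in Step 1; since $\mu$ is by definition the algebraic dual of $\lambda$, one has $\langle\lambda\mathcal{A}\mathrm{d}'v,u\rangle=\langle\mathcal{A}\mathrm{d}'v,\mu u\rangle=0$. Combined with $\deB u=\delta'u$ on $\ker\mu$, this yields $\langle\dB v,u\rangle=-\langle v,\deB u\rangle$, finishing the proof.

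The only real work is the index bookkeeping in Step 1; everything else is a short consequence of $\nabla g=0$ and the $\lambda$--$\mu$ duality.
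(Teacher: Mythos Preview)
Your proof is correct and follows essentially the same route as the paper's: the key observation $\mu^{2}\mathrm{d}'v=0$ (which reduces $\mathcal{A}\mathrm{d}'v$ to its $K=1$ term), the well-definedness of $\deB$ via commutation of $\mu$ with the covariant derivative, and the adjointness via the $\lambda$--$\mu$ duality are exactly what the paper invokes. The only difference is cosmetic: where you carry out the index computations explicitly, the paper simply cites that the Levi-Civita connection commutes with contractions and that $\mu$ and $\delta'$ commute.
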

\begin{proof}
The operator $\dB$ is clearly well defined by its definition, and the operator $\deB$ is well defined  since $\mu$ and $\delta'$ commute.

We note that for any $u \in S^{k}\tau'_M\otimes^\mathcal{B} S^{\ell-1 }\tau'_M$ we have $\mu^2 \mathrm{d}' u=0$.
Therefore operator $\dB$ has the representation \eqref{eq:dB}. The proof of this claim is a direct consequence of the fact that the Levi-Civita connection commutes with any contraction. 

The operators $\dB$ and $-\deB$ are formal adjoints to each other since $\mathrm{d}'$ and $-\delta'$, and also  $\lambda$ and $\mu$, are formal adjoints respectively.
\end{proof}

\subsection{Tensor decomposition in the kernel of $\mu$}
In the $L^2$-space of $m$-tensor fields on $M$ we use the standard
definition of the inner product
\[
\langle f,h\rangle_{g}=\int_M f_{i_1\cdots i_m}\overline{h}_{j_1\cdots j_m}g^{i_1j_1}\cdots g^{i_mj_m}(\det g)^{1/2}\mathrm{d}x.
\]
Assuming the result of the following Lemma we are ready to present the proof of Theorem \ref{Th:tensor_decomp}.
\color{black}
\begin{lemma}
\label{existence2}
Let $(M,g)$ be a smooth Riemannian manifold. There exists a unique solution 
\[
u\in H^m(S^k\tau'_M\otimes^\mathcal{B} S^{\ell-1} \tau'_M),
\:k=\ell\in \{1,2\}, \: m \in 1,2,\ldots
\] 
to the boundary value problem
\begin{equation}\label{eqnu1}
\LaB u:=\delta^\mathcal{B}\mathrm{d}^\mathcal{B}u=h\quad\text{in}~M^{int}, \quad u\vert_{\partial M}=w,
\end{equation}
for any $h\in H^{m-2}(S^k\tau'_M\otimes^\mathcal{B} S^{\ell-1} \tau'_M)$ and $w \in H^{m-\frac{1}{2}}(S^k\tau'_{M}\otimes^\mathcal{B} S^{\ell-1} \tau'_{ M}\vert_{\partial M})$. Moreover there exists $C>0$ such that the following energy estimate is valid
\begin{equation}
\label{eq:energy_estimate}
\|u\|_{H^m(M) }\leq C \left( \|h\|_{H^{m-2}(M) }+ \|w\|_{H^{m-\frac{1}{2}}(\p M)}\right).
\end{equation}
\end{lemma}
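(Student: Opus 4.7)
The plan is to treat (\ref{eqnu1}) as a second-order elliptic Dirichlet problem on the sub-bundle of trace-free tensors and then apply standard elliptic PDE theory (Lax--Milgram for existence, elliptic regularity for the $H^m$ estimate). First I would reduce to homogeneous boundary data: using the trace theorem, pick any extension $\tilde w\in H^m(S^k\tau'_M\otimes^\mathcal{B} S^{\ell-1}\tau'_M)$ of $w$ with $\|\tilde w\|_{H^m}\le C\|w\|_{H^{m-1/2}(\partial M)}$, and replace $u$ by $u-\tilde w$ and $h$ by $h-\LaB\tilde w$. It then suffices to solve $\LaB u=h$ in $M^{\rm int}$, $u|_{\partial M}=0$. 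Since $\dB$ and $-\deB$ are formally adjoint (Lemma \ref{Le:dif_ops_B}), Green's identity gives
\begin{equation}
\langle \LaB u,v\rangle_g=\langle \dB u,\dB v\rangle_g,\quad u,v\in H^1_0(S^k\tau'_M\otimes^\mathcal{B} S^{\ell-1}\tau'_M),
\end{equation}
so the natural weak formulation is to find $u\in H^1_0$ with $\langle \dB u,\dB v\rangle_g=\langle h,v\rangle_g$ for all $v\in H^1_0$.

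Next I would verify that $\dB$ is overdetermined elliptic, so that $\LaB=\deB\dB$ is elliptic as an operator on the sub-bundle $S^k\tau'_M\otimes^\mathcal{B} S^{\ell-1}\tau'_M$. Using (\ref{eq:dB}), the principal symbol at $\xi\neq 0$ is $v\mapsto \mathcal{B}\bigl(\sym_{j_1\ldots j_\ell}(v\otimes \xi)\bigr)$; for $k=\ell=1$ this reads $v_i\xi_j-\tfrac{1}{3}g_{ij}v^k\xi_k$, whose triviality forces $v=0$ (test with $v^i\xi^j$), and the analogous algebraic computation using (\ref{eq:map_lambda}), (\ref{eq:map_mu}) handles $k=\ell=2$. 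Thus $\dB$ has injective principal symbol and $\LaB$ is elliptic of order two; the Dirichlet boundary condition satisfies the Lopatinski--Shapiro condition in the standard way for $\deB\dB$.

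The heart of the argument is coercivity on $H^1_0$, i.e.\ a Korn--Poincar\'e-type inequality
\begin{equation}
\|u\|_{H^1(M)}\le C\|\dB u\|_{L^2(M)}, \quad u\in H^1_0(S^k\tau'_M\otimes^\mathcal{B} S^{\ell-1}\tau'_M).
\end{equation}
By G{\aa}rding's inequality for the elliptic operator $\LaB$, one has $\|u\|_{H^1}^2\le C(\|\dB u\|_{L^2}^2+\|u\|_{L^2}^2)$, and the customary Rellich--compactness argument upgrades this to the desired estimate provided $\dB$ is injective on $H^1_0$. For $k=\ell=1$, elements of $\ker \dB$ are conformal Killing vector fields, which are determined by finitely many jets at any boundary point (via the standard prolongation of the conformal Killing equation); the analogous finite-type overdetermined system for $k=\ell=2$ gives the same rigidity. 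Hence $u|_{\partial M}=0$ forces $u\equiv 0$, so $\dB$ is injective on $H^1_0$ and coercivity holds.

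With coercivity in hand, Lax--Milgram yields a unique weak solution $u\in H^1_0$ for any $h\in H^{-1}$, in particular for $h\in H^{m-2}$. Standard elliptic boundary regularity for the Dirichlet problem of the elliptic operator $\LaB$ then upgrades $u$ to $H^m$ and gives
\begin{equation}
\|u\|_{H^m}\le C(\|\LaB u\|_{H^{m-2}}+\|u|_{\partial M}\|_{H^{m-1/2}})\le C(\|h\|_{H^{m-2}}+\|w\|_{H^{m-1/2}(\partial M)}),
\end{equation}
which is (\ref{eq:energy_estimate}). I expect the main obstacle to lie in the injectivity of $\dB$ on $H^1_0$ in the case $k=\ell=2$: while the vector-field case is classical via the conformal Killing equation, the trace-free $2+1$ tensor case requires setting up the correct prolongation and verifying unique continuation from the boundary; this is where the trace-free projection $\mathcal{B}$ and the explicit formula (\ref{eq:dB}) from Lemma \ref{Le:dif_ops_B} will be essential.
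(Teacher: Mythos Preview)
Your plan is broadly the same as the paper's---reduce to an elliptic Dirichlet problem for $\LaB$ and invoke standard theory---but two steps that you treat as routine are exactly where the paper does real work.

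First, the Lopatinskij condition is not automatic ``in the standard way for $\deB\dB$''. The paper verifies it explicitly (Lemma~\ref{le:elliptic_prob}): in boundary normal coordinates one must show that the frozen-coefficient ODE $\sigma(\LaB)(\xi',D_t)v=0$, $v(0)=0$, has only the trivial decaying solution. Using Green's formula this reduces to $\mathrm{d}_0^{\mathcal B}(\xi',D_t)v=0$ with $v(0)=0$, and the paper solves this component by component (equations \eqref{eq_vijk1}--\eqref{eq_vijk3}). This is not hard but it is not free, and the computation is specific to the trace-free structure.

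Second, your injectivity argument for $\dB$ on $H^1_0$ differs genuinely from the paper's. You propose prolongation and a finite-type/unique-continuation argument (conformal Killing for $k=\ell=1$, an analogue for $k=\ell=2$). The paper instead takes $u$ with $\dB u=0$, $u|_{\partial M}=0$, integrates the identity \eqref{dd22} along a geodesic from an interior point $x_0$ to the boundary to obtain $u_{ijk}(x_0)\eta^i\eta^j\xi^k=0$ for $\xi$ in an open cone and $\eta\perp\xi$, and then runs a purely algebraic argument (equations \eqref{eq:u_in_basis}--\eqref{eq:3systems}, using perturbations $\xi\mapsto\xi-\epsilon\eta$ together with $\mu u=0$) to conclude $u(x_0)=0$. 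Your approach would work in principle, but the ``analogous finite-type overdetermined system'' for $S^2\tau'_M\otimes^{\mathcal B}S^1\tau'_M$ is precisely what you flag as the main obstacle, and you would still need to argue that vanishing on $\partial M$ kills enough jets; the paper's geodesic-integration route sidesteps the prolongation machinery entirely at the cost of the explicit linear algebra in \eqref{eq:3systems}. The paper also separately checks the cokernel is trivial, which your Lax--Milgram route absorbs into coercivity.
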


\begin{proof}[Proof of Theorem \ref{Th:tensor_decomp}]
We consider the boundary value problem \eqref{eqnu1} with the zero boundary value $w\equiv 0$. Let $(\LaB)^{-1}$ be the corresponding solution operator. We denote  $v:=(\LaB)^{-1}\deB f$. Thus $v$ solves the problem 
\begin{equation}\label{eq_Blaplacian}
\LaB v=\delta^\mathcal{B}f,\quad\quad v\vert_{\partial M}=0,
\end{equation}
and the energy estimate \eqref{eq:energy_estimate} implies that the  \textit{projection operator onto the potential fields} $\mathcal{P}_M:=\dB(\LaB)^{-1}\deB $ is a bounded operator in $H^{m}(S^k\tau'_M\otimes^\mathcal{B} S^{\ell} \tau'_M)$. We define a second bounded operator by setting $\mathcal S_M:=I-\mathcal P_M$ and call this the  \textit{projection operator onto the solenoidal tensor fields}. Finally we denote $f^s:=\mathcal S_M f$ and obtain
\[
f=f^s+\dB v, \quad \hbox{ with } \quad  \deB f^s=0.
\]
The estimate \eqref{eq:estimates_for_decomp} follows from the boundedness of the operators $\mathcal S_M$ and $(\LaB)^{-1}$.
\color{black}
\end{proof}
\begin{remark}
\label{Re:projections}
The operators $\mathcal{S}_M$ and $\mathcal{P}_M$ are both projections, i.e., $\mathcal{S}_M^2=\mathcal{S}_M$, $\mathcal{P}_M^2=\mathcal{P}_M$. 
These projections are formally self-adjoint since $\LaB$ is formally self-adjoint and thus its inverse $(\LaB)^{-1}$ is also formally self-adjoint, see \cite[Theorem 10.2-2]{kreyszig1978introductory}.
\end{remark}
\color{black}

The rest of this section is devoted to the proof of Lemma \ref{existence2}. We first recall some facts about the solvability of boundary value problems for elliptic systems. See for instance \cite[Section 9]{wloka1995boundary} for a thorough review. 
Recall that we can without loss of generality assume that $M^{int}\subset \R^3$ is a domain with a smooth boundary. We use the notations $T^\ast M$ and $T^\ast \R^3$ for the cotangent bundles of $M$ and $\R^3$ respectively.

Let $\alpha \in \N^3$ be a multi-index and 
$
D^{\alpha}=(-\mathrm{i})^{|\alpha|}\p_{x_1}^{\alpha_1}\p_{x_2}^{\alpha_2}\p_{x_3}^{\alpha_3}.
$
We say that a differential operator $A=(\sum_{|\alpha| \leq 2}A^\alpha_{ij}(x)D^\alpha)_{i,j=1}^3$ is a second order (homogeneously) elliptic operator if the order of the operator $\sum_{|\alpha| \leq 2}A^\alpha_{ij}(x)D^\alpha$ is two for any $i,j$ and the\textit{ characteristic polynomial} $\chi(x,\xi)$ of the operator $L$ does not vanish outside the set $\R^3\times \{\xi =0\} \subset T^\ast \R^3$. Recall that the characteristic polynomial of $A$ is defined by
\[
\chi(x,\xi):=\det \left(\sigma_A(x,\xi)\right), \quad 
\sigma_A(x,\xi): =\left(\sum_{|\alpha|=2} A_{ij}^\alpha(x) \xi^\alpha\right)_{i,j=1}^3, \quad (x,\xi) \in T^\ast \R^3 .
\]
We note that this is equivalent for the \textit{principal symbol} $\sigma_A(x,\xi)$ of $A$ to be a bijective linear operator for every cotangent vector $(x,\xi)\in T^\ast \R^3 \setminus \{0\}$. 

Next we define the Lopatinskij condition. Let $z \in \p M$ and $(x',t)$ be boundary coordinates near $z$, that is $t^{-1}\{0\}\subset \p M$.
\begin{definition}
\label{De:Lopa}
We say that the operator $A$ satisfies the Lopatinskij at a point $z\in \p M$ if the constant coefficient initial value problem 
\[
\sigma_A\left(z,0,\xi',-\mathrm{i}\frac{\mathrm{d}}{\mathrm{d}t}\right)v(t)=0, \quad t \in \R_+, \quad v(0)=0\in \R^3,  \quad \xi'\in T^\ast_z\p M\setminus \{0\}.
\]
has only the trivial solution in  
$
\left\lbrace u\in C^2(\R_+): 
 u(0)=0, \: \lim_{t\to \infty} u(t)=0\right\rbrace.
$
\end{definition}
\noindent For the rest of the paper we use the notations $\sigma(A)$ to denote the principal symbol of an operator $A$. Often we do not emphasize the point in which the principal symbol is evaluated. 
\begin{definition}
\label{de:elliptic_prob}
We say that the boundary value problem 
\begin{equation}
\label{eq:BVB_for_L}
A u =f, \hbox{ in } M, \quad u|_{\p M}=w, \quad u\in H^{m}(M), \: f \in H^{m-2}(M), \: w \in H^{m-\frac{1}{2}}(\p M), \: m\in 1,2\ldots 
\end{equation}
is \textit{elliptic}  if:
\[
\hbox{(I) The operator $A$ is elliptic.  \quad (II) The Lopatinskij condition holds for any $z \in \p M$. } 
\]
\end{definition}
\color{black}

We aim to use techniques for the elliptic problems to prove Lemma \ref{existence2}. To do so we first find the principal symbols of the operators $\dB$, $\deB$ and $\LaB$.  
We introduce the notation 
\[
S^k T'_xM \otimes^\mathcal{B} S^\ell  T'_xM :=\{f(x)\vert \: f\in S^k \tau'_M \otimes^\mathcal{B} S^\ell \tau'_M\},
\]
for the evaluation of $f\in S^k \tau'_M \otimes S^\ell \tau'_M$ at $x\in M$. This is just the space of all tensors acting on the fiber $T_xM$ that are symmetric with respect to the first $k$ and the last $\ell$ indices and trace free. 
\color{black}

\begin{lemma}
\label{Le:symbols}
Let $(x,\xi) \in T^\ast M$. Define operators
\begin{equation}
\label{eq:i_xi}
\begin{split}
&i^{\mathcal{B}}_\xi\colon  S^k T'_xM \otimes^{\mathcal{B}} S^{\ell-1}  T'_xM \to S^k T'_xM \otimes^{\mathcal{B}} S^{\ell}  T'_xM,\quad i^{\mathcal{B}}_\xi:=\mathcal B  i_\xi,  
\\
&(i_\xi  u)_{i_1\ldots i_k j_1\ldots  j_\ell}:= \mathrm{Sym}( j_1\ldots  j_\ell) u_{i_1\ldots  i_k j_1\ldots  j_{\ell-1}}\xi_{j_\ell}
\end{split}
\end{equation}
and
\begin{equation}
\label{eq:j_xi}
j_\xi^{\mathcal{B}}\colon  S^k T'_xM \otimes^{\mathcal{B}} S^{\ell}  T'_xM \to S^k T'_xM \otimes^{\mathcal{B}} S^{\ell-1}  T'_xM,\quad (j^\mathcal{B}_\xi  v)_{i_1\ldots i_k j_1,\ldots ,j_\ell}:= v_{i_1\ldots i_k j_1\ldots  j_{\ell}}\xi^{j_\ell}
\end{equation}

In the case $k=\ell=2$ the principal symbols of $\frac{1}{\mathrm{i}}\dB$ and $\frac{1}{\mathrm{i}}\deB$ are $i_\xi^{\mathcal{B}}$ and $j_\xi^{\mathcal{B}}$ respectively. The principal symbol of $\LaB $ is
\[
\sigma(\LaB)\colon  S^2 T'_xM \otimes^{\mathcal{B}} S^{1}T'_xM  \to S^2 T'_xM \otimes^{\mathcal{B}} S^{1}T'_xM
\] 
such that
\begin{equation}
\label{eq:prin_symb_of_B_Lap}
\begin{split}
-(\sigma(\LaB)u)_{i_1i_2j_1}=&\frac{1}{2}  \left( u_{i_1i_2j_1}|\xi|_g^2+ u_{i_1i_2h}\xi^{h}\xi_{j_1}\right)
\\
 -&\frac{1}{10} \left( u_{ri_2j_1}\xi^{r}\xi_{i_1}+u_{ri_1j_1}\xi^{r}\xi_{i_2}+ g_{i_1j_1}\xi^{r}\xi^{h}u_{ri_2h}+g_{i_2j_1}\xi^{r}\xi^{h}u_{ri_1h}\right).
\end{split}
\end{equation}
\end{lemma}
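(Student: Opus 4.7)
The plan is to leverage the fact that $\mathrm{d}^{\mathcal{B}} = \mathcal{B}\mathrm{d}'$ and $\delta^{\mathcal{B}} = \delta'$ on the trace-free class, combined with standard symbol calculus and the explicit form of $\mathcal{B}$ from Lemma \ref{Le:mu_comp_B=0}. First I would observe that the covariant derivative $\nabla_j$ has principal symbol $\mathrm{i}\xi_j$ (Christoffel terms being of lower order), so after the symmetrization in \eqref{eq:map_d'} the operator $\tfrac{1}{\mathrm{i}}\mathrm{d}'$ has principal symbol $i_\xi$. Multiplying by the zeroth-order algebraic projection $\mathcal{B}$ gives $\sigma(\tfrac{1}{\mathrm{i}}\mathrm{d}^{\mathcal{B}}) = \mathcal{B}\circ i_\xi$, which agrees with $i_\xi^{\mathcal{B}}$ on trace-free inputs. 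Analogously $\sigma(\tfrac{1}{\mathrm{i}}\delta') = j_\xi$, and one checks that $j_\xi$ preserves trace-freeness: if $v\in S^2T'_xM\otimes^{\mathcal{B}}S^2T'_xM$ then the symmetry of $v$ in its last two indices turns the $i_2 j_1$ trace of $j_\xi v$ into the $i_2 j_2$ trace of $v$, which vanishes. Hence $\sigma(\tfrac{1}{\mathrm{i}}\delta^{\mathcal{B}}) = j_\xi^{\mathcal{B}}$.

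For the Laplacian I would use $\sigma(\Delta^{\mathcal{B}}) = \sigma(\delta^{\mathcal{B}})\circ\sigma(\mathrm{d}^{\mathcal{B}}) = -\,j_\xi^{\mathcal{B}}\circ i_\xi^{\mathcal{B}}$, so the task reduces to an explicit algebraic computation of $j_\xi^{\mathcal{B}}i_\xi^{\mathcal{B}} u$ for $u\in S^2 T'_xM\otimes^{\mathcal{B}} S^1 T'_xM$. Specializing Lemma \ref{Le:mu_comp_B=0} to $k=\ell=2$, the coefficients are $a_1=\tfrac{5}{4}$, $a_2=1$, $b_1=\tfrac14$, $b_2=0$, so
\[
\mathcal{B} = I - \tfrac{4}{5}\lambda\mu + \tfrac{1}{5}\lambda^2\mu^2.
\]
I would then proceed in four small steps: (i) write $(i_\xi u)_{i_1 i_2 j_1 j_2} = \tfrac12(u_{i_1 i_2 j_1}\xi_{j_2} + u_{i_1 i_2 j_2}\xi_{j_1})$; (ii) take one trace to obtain $\mu(i_\xi u)_{i_1 j_1} = \tfrac12 u_{i_1 r j_1}\xi^r$, where the second term vanishes because the $(i_2,j_2)$-contraction of $u$ is zero by trace-freeness; (iii) observe that $\mu^2(i_\xi u) = 0$, since contracting $\mu(i_\xi u)$ with $g^{i_1 j_1}$ produces a different trace of $u$ that also vanishes by symmetry of $u$ in $(i_1,i_2)$; (iv) expand $\lambda w$ using the four-term symmetrization $\tfrac14(g_{i_2 j_2}w_{i_1 j_1}+g_{i_1 j_2}w_{i_2 j_1}+g_{i_2 j_1}w_{i_1 j_2}+g_{i_1 j_1}w_{i_2 j_2})$, so that $\mathcal{B}(i_\xi u) = i_\xi u - \tfrac{4}{5}\lambda w$.

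Finally I would contract with $\xi^{j_2}$ to obtain $j_\xi^{\mathcal{B}}i_\xi^{\mathcal{B}}u$, giving a $\tfrac12$-coefficient for the principal diagonal contributions $u_{i_1 i_2 j_1}|\xi|_g^2 + u_{i_1 i_2 h}\xi^h\xi_{j_1}$, and a coefficient $\tfrac{4}{5}\cdot\tfrac12\cdot\tfrac14 = \tfrac{1}{10}$ for the four lower-order terms. Using the symmetry $u_{i_1 r j_1} = u_{r i_1 j_1}$ to rewrite the contractions, these four terms match precisely the four $\tfrac{1}{10}$ terms in \eqref{eq:prin_symb_of_B_Lap}, completing the identification. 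The main obstacle is the combinatorial bookkeeping in step (iv): one must verify that after the symmetrized tensor product with $g$ and the subsequent contraction with $\xi$, exactly the pattern of four terms shown — two with a $\xi_{i_\alpha}$ factor and two with a $g_{i_\alpha j_1}\xi^r\xi^h$ factor — emerges with the correct coefficient, and that no further projection onto trace-free tensors is needed since $j_\xi$ already preserves this class.
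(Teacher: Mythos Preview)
Your proposal is correct and follows essentially the same approach as the paper: identify the principal symbols of $\mathrm{d}'$ and $\delta'$ from their local coordinate expressions, compose with the algebraic projection $\mathcal{B}$ (the paper invokes Lemma~\ref{Le:dif_ops_B} directly, whereas you derive the simplified form $i_\xi-\tfrac{4}{5}\lambda\mu i_\xi$ from Lemma~\ref{Le:mu_comp_B=0} together with $\mu^2 i_\xi u=0$), and then carry out the explicit contraction $j_\xi^{\mathcal{B}}i_\xi^{\mathcal{B}}u$. The only cosmetic difference is that the paper writes the final chain of equalities in one block, while you break it into four labeled steps; the algebra and the resulting coefficients $\tfrac12$ and $\tfrac{1}{10}$ are identical.
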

\begin{proof}
We refer to \cite[Section 8]{wloka1995boundary} for the definitions of the principal symbols of $\Psi$DOs over vector bundles. Recall that in local coordinates differential $d'u$ of a tensor field $u \in S^2 T'_xM \otimes S^{1}T'_xM $ has a representation 
\[
(\mathrm{d}'u)_{i_1i_2j_1j_2}=\sym(j_1,j_2)\left(\frac{\p u_{i_1i_2j_1}}{\p x_{j_2}}-\left(u_{ri_2j_1}\Gamma^r_{i_1 j_{2}}+u_{i_1rj_1}\Gamma^r_{i_2 j_{2}}+u_{i_1i_2r}\Gamma^r_{j_1 j_{2}}\right)\right).
\]
Here $\Gamma^k_{ij}$ are the Christoffel symbols 
of the metric $g$\color{black}. Therefore the principal symbol of $d'$ is exactly the map $(x,\xi)\mapsto i_{\xi}$.
Since $\mu ^2 i_\xi u=0$ for any $u \in S^2 T'_xM \otimes^{\mathcal{B}} S^{1}T'_xM $ we have due to \eqref{eq:dB} that
\[
\frac{1}{\mathrm{i}}\sigma(\dB)=i_\xi-\frac{4}{5}\lambda \mu i_\xi=i_\xi^\mathcal{B}.
\]

Similarly for $\delta' u, \: u \in S^2 T'_xM \otimes S^{2}T'_xM $ we have
\[
\begin{split}
(\delta' u)_{i_1i_2j_1}=&\left(\frac{\p u_{i_1i_2j_1j_2}}{\p x_{h}}-\left(u_{ri_2j_1j_2}\Gamma^r_{i_1 h}+u_{i_1rj_1j_2}\Gamma^r_{i_2 h}+u_{i_1i_2rj_2}\Gamma^r_{j_1h}+u_{i_1i_2j_1r}\Gamma^r_{j_2h}\right)\right)g^{hj_2}.
\end{split}
\]
Thus the principal symbol of $\delta'$ is given by
\[
\frac{1}{\mathrm{i}}\sigma(\delta')=\frac{1}{\mathrm{i}}\sigma(\deB)=j_\xi=j_\xi^\mathcal{B}.
\]

By \cite[Theorem 8.44]{wloka1995boundary} we have $-\sigma(\LaB)=j_\xi^{\mathcal{B}}i_\xi^{\mathcal{B}}$. The proof of \eqref{eq:prin_symb_of_B_Lap} is a direct computation recalling that $\mu u=0$. However we give it here as we need the computations later.
\[
\begin{split}
-\sigma(\LaB u)_{i_1i_2j_1}=&(j_\xi^{\mathcal{B}}i_\xi^{\mathcal{B}}u)_{i_1i_2j_1}=
\frac{1}{2} \xi^{h}\left( u_{i_1i_2j_1}\xi_{h}+u_{i_1i_2h}\xi_{j_1} -\frac{4}{5}\lambda \mu \left( u_{i_1i_2j_1}\xi_{h}+u_{i_1i_2h}\xi_{j_1}\right)\right)
\\
=&\frac{1}{2} \xi^{h}\left( u_{i_1i_2j_1}\xi_{h}+u_{i_1i_2h}\xi_{j_1} -\frac{4}{5}\lambda \left( u_{ri_2t}\xi_{h}+u_{ri_2h}\xi_{t}\right)g^{rt}\right)
\\
=&\frac{1}{2} \xi^{h}\left( u_{i_1i_2j_1}\xi_{h}+u_{i_1i_2h}\xi_{j_1} -\frac{4}{5}\sym(i_1i_2)\sym(j_1h) \left(g_{i_1j_1}u_{ri_2h}\xi^{r}\right)\right)
\\
=&\frac{1}{2} \xi^{h}\left( u_{i_1i_2j_1}\xi_{h}+u_{i_1i_2h}\xi_{j_1} -\frac{1}{5} \left(g_{i_1j_1}u_{ri_2h}+g_{i_1h}u_{ri_2j_1}+g_{i_2j_1}u_{ri_1h}+g_{i_2h}u_{ri_1j_1}\right)\xi^{r}\right)
\\
=&\frac{1}{2}  \left( u_{i_1i_2j_1}|\xi|_g^2+ u_{i_1i_2h}\xi^{h}\xi_{j_1}\right)
\\
 &-\frac{1}{10} \left( u_{ri_2j_1}\xi^{r}\xi_{i_1}+u_{ri_1j_1}\xi^{r}\xi_{i_2}+ g_{i_1j_1}\xi^{r}\xi^{h}u_{ri_2h}+g_{i_2j_1}\xi^{r}\xi^{h}u_{ri_1h}\right)
\end{split}
\]
\end{proof}

In the following lemma we show that the problem \eqref{eqnu1} is elliptic.

\begin{lemma}
\label{le:elliptic_prob}
The problem \eqref{eqnu1} is elliptic in the sense of Definition \ref{de:elliptic_prob}.
\end{lemma}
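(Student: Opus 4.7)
The plan is to verify the two conditions of Definition \ref{de:elliptic_prob} for the Laplace-type operator $\LaB$ with Dirichlet boundary data: (I) that $\LaB$ is homogeneously elliptic, i.e., $\sigma(\LaB)(x,\xi)$ is invertible on the fiber $S^k T'_xM \otimes^\mathcal{B} S^{\ell-1} T'_xM$ for every $\xi\neq 0$; and (II) that the Lopatinskij condition of Definition \ref{De:Lopa} holds at every boundary point.

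For step (I), I would exploit the factorization $-\sigma(\LaB)=j_\xi^\mathcal{B}\circ i_\xi^\mathcal{B}$ established in Lemma \ref{Le:symbols} (together with the analogous, easier computation for $k=\ell=1$). Since $\mathcal{B}$ is the orthogonal projection onto $\ker\mu$ with respect to the fiberwise inner product, and $i_\xi,j_\xi$ are mutually adjoint in the ambient tensor space, the restriction $j_\xi^\mathcal{B}$ is the fiber adjoint of $i_\xi^\mathcal{B}$ on the trace-free subspace. Hence $-\sigma(\LaB)=(i_\xi^\mathcal{B})^{*}\,i_\xi^\mathcal{B}$ is non-negative and vanishes precisely on $\ker i_\xi^\mathcal{B}$, so ellipticity reduces to the pointwise algebraic statement that $i_\xi^\mathcal{B}$ is injective for $\xi\neq 0$. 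In normal coordinates at $x$ with $g_{ij}(x)=\delta_{ij}$ and $\xi$ aligned with a coordinate axis, the condition $\mathcal{B}(i_\xi u)=0$ says $i_\xi u\in\mathrm{Im}\,\lambda$; matching components against the explicit formula $\mathcal{B}=\mathrm{Id}-\lambda\mathcal{A}$ from Lemma \ref{Le:mu_comp_B=0} forces $u=0$. For $k=\ell=1$ this is a two-line check (the rank-one tensor $u\otimes\xi$ cannot equal a scalar multiple of the metric unless $u=0$); for $k=\ell=2$ it is a longer but direct inspection using \eqref{eq:i_xi}--\eqref{eq:j_xi}. Along the way one concludes that $-\sigma(\LaB)$ is strictly positive definite, i.e.\ $\LaB$ is strongly elliptic.

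For step (II), fix $z\in\partial M$ and boundary coordinates $(x',t)$ with $\partial M=\{t=0\}$ near $z$ and $g_{ij}(z)=\delta_{ij}$, and fix a nonzero $\xi'\in T_z^{*}\partial M$. I need to show that
\[
\sigma(\LaB)\bigl(z,0,\xi',-\i\tfrac{\d}{\d t}\bigr)v(t)=0,\quad v(0)=0,\quad v(t)\to 0 \text{ as } t\to\infty
\]
admits only the trivial solution. This is the Dirichlet Lopatinskij problem for a strongly elliptic second-order operator, for which verification follows the Agmon--Douglis--Nirenberg pattern (see \cite{wloka1995boundary}): by step (I) the characteristic polynomial $\det\sigma(\LaB)(z,0,\xi',\tau)$ in $\tau$ has no real roots, and by the standard parity argument for strongly elliptic operators its roots split equally between the upper and lower half-planes, so the space of solutions decaying at $+\infty$ has complex dimension equal to that of the fiber. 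The Dirichlet constraint $v(0)=0$ then gives a square linear system on the coefficients whose invertibility is exactly the Lopatinskij condition. One can alternatively give a direct proof by an energy identity: pair the ODE with $v$ in $L^2(\R_+)$, integrate by parts so that the boundary contribution at $t=0$ vanishes because of $v(0)=0$, and exploit the non-negativity $(i_\xi^\mathcal{B})^{*}i_\xi^\mathcal{B}\ge 0$ to force $v\equiv 0$.

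The main obstacle I anticipate is the injectivity of $i_\xi^\mathcal{B}$ in the $k=\ell=2$ case. The trace-free projection $\mathcal{B}$ on $S^2\tau'_M\otimes S^2\tau'_M$ is non-trivial — by Lemma \ref{Le:mu_comp_B=0} it involves both $\lambda\mu$ and $\lambda^2\mu^2$ contributions — so unpacking the condition $\mathcal{B}\bigl(\sym(j_1 j_2)(u_{i_1 i_2 j_1}\xi_{j_2})\bigr)=0$ to conclude $u=0$ requires careful index bookkeeping. Once this algebraic step is in hand, the ODE analysis and the appeal to the standard Dirichlet/strong-ellipticity theory of \cite{wloka1995boundary} are routine.
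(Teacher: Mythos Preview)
Your proposal is correct but follows a different route from the paper in both parts. For (I), the paper obtains strong ellipticity by a direct algebraic estimate: it introduces three auxiliary non-negative quadratic forms $p_1(\xi),p_2(\xi),p_3(\xi)$ built from the explicit expression \eqref{eq:prin_symb_of_B_Lap} and deduces $\langle -\sigma(\LaB)u,u\rangle\ge \tfrac{1}{10}|\xi|_g^2\langle u,u\rangle$. Your factorization $-\sigma(\LaB)=(i_\xi^{\mathcal B})^{*}i_\xi^{\mathcal B}$ and the reduction to injectivity of $i_\xi^{\mathcal B}$ is conceptually cleaner, but, as you anticipate, the $k=\ell=2$ injectivity check is a non-trivial index computation; the paper's route sidesteps it entirely. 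For (II), your primary appeal to the general fact that strong ellipticity plus Dirichlet data forces the Lopatinskij condition is more economical than what the paper does: the paper carries out precisely the energy reduction you sketch as your alternative---pairing with $v$ and integrating by parts via a Green identity to obtain $\mathrm{d}_0^{\mathcal B}(\xi',D_t)v=0$---and then \emph{explicitly} solves this first-order system component by component in boundary-normal coordinates to conclude $v=0$. Note that your alternative, as stated, stops one step short: non-negativity only delivers the first-order equation, not $v\equiv 0$ directly; the paper closes exactly that gap by hand. One payoff of the paper's explicit first-order analysis is that essentially the same computation is reused later, in Lemma~\ref{lemm42} and in the $2+2$ adaptations of Section~\ref{Se:2+2_case}.
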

\begin{proof}
We check first the ellipticity of the principal symbol of $\LaB$. If we denote 
\[
\begin{split}
&(p_1(\xi)u)_{i_1i_2j_1}= u_{i_1i_2h}\xi^{h}\xi_{j_1}, \quad (p_2(\xi)u)_{i_1i_2j_1}=|\xi|_g^2u_{i_1i_2j_1}- u_{i_1i_2h}\xi^{h}\xi_{j_1},
\\
&(p_3(\xi)u)_{i_1i_2j_1}=|\xi|_g^2u_{i_1i_2j_1}-g_{i_2j_1}\xi^{r}\xi^{h}u_{ri_1h},
\end{split}
\]
then straightforward calculations show that $p_\alpha(\xi)$, $\alpha=1,2,3$ are non-negative in the sense of
\[
\langle p_\alpha(\xi)u,u\rangle_{L^2}\geq 0, \quad u \in L^2\left(S^2 T'_xM \otimes^{\mathcal{B}} S^{1}T'_xM  \right).
\]

Equation \eqref{eq:prin_symb_of_B_Lap} implies
\[
\\
\left\langle-\sigma(\LaB)u-\frac{1}{10}|\xi|_g^2u,u\right\rangle_{L^2} \geq\frac{1}{2}\langle p_1(\xi)u,u\rangle_{L^2}+\frac{1}{5}\langle p_2(\xi)u,u\rangle_{L^2}+\frac{1}{5}\langle p_3(\xi)u,u\rangle_{L^2}\geq 0.
\]
Hence we obtain
\[
\langle -\sigma(\LaB)u,u\rangle_{L^2} \geq \frac{1}{10}|\xi|_g^2\langle u,u\rangle_{L^2},
\]
which proves the ellipticity of $\sigma(\LaB)$. 

%
%

\medskip
Next, we verify the Lopatinskij condition. For that, we choose local coordinates $(x^1,x^2,x^3=t)$, $t\geq 0$ in a neighborhood of a point $x_0\in \partial M$, such that the boundary $\partial M$ is locally represented by $t=0$, and $g_{ij}(x_0)=\delta_{ij}$. We set a differential operator 
$D=(D_j)_{j=1}^3$, $D_j=-\mathrm{i}\frac{\partial}{\partial x^j}, \: j\in \{1,2\}$ and $D_3=D_t=-\mathrm{i}\frac{\mathrm{d}}{\mathrm{d}t}$.
Then we denote
\[
\mathrm{d}_0^\mathcal{B}(D)=\sigma(\mathrm{d}^\mathcal{B})(x_0,D),\quad \delta_0^\mathcal{B}(D)=\sigma(\delta^\mathcal{B})(x_0,D).
\]
We need to show that the only solution for the system of ordinary differential equations 
\begin{equation}\label{equation0}
\begin{split}
\delta_0^\mathcal{B}(\xi',D_t)\mathrm{d}_0^\mathcal{B}(\xi',D_t)v(t)&=0,\quad t\in \R_+
\\
v(0)&=0,
\end{split}
\end{equation}
which satisfies $v(t)\rightarrow 0$ as $t\rightarrow +\infty$, is the zero field. 

Let 
\[
u \in \mathcal{S}(S^2\tau'_{\R_+}\otimes^{\mathcal{B}_0} S^{2} \tau'_{\R_+}), \quad v \in \mathcal{S}(S^2\tau'_{\R_+}\otimes^{\mathcal{B}_0} S^{1} \tau'_{\R_+}),
\] 
where $ \mathcal{S}$ means that the function $u(t)$ has a rapid decrease when $t$ tends to $+\infty$ and $\mathcal{B}_0$ means that $u$ belongs to the kernel of the operator $\mu$ associated to the Euclidean metric. If $v(0)=0$ then  Lemma \ref{Le:dif_ops_B} implies the following Green's formula
\begin{equation}\label{Green0}
\int_0^\infty\langle\delta_0^\mathcal{B}(\xi',D_t)u,v\rangle\mathrm{d}t=-\int_0^\infty\langle u,\mathrm{d}_0^\mathcal{B}(\xi',D_t)v\rangle\mathrm{d}t.
\end{equation}
Due to denseness of rapidly decreasing tensor fields, the formula \eqref{Green0} holds for any $u$ and $v$ both vanishing in the infinity and $v(0)=0$.

\medskip
Let $v(t)$ be a solution of \eqref{equation0}. Taking $u(t):=\mathrm{d}_0^\mathcal{B}(\xi',D_t)v(t)$ in \eqref{Green0} we obtain
\begin{equation}
\label{eq:ODE}
\mathrm{d}_0^\mathcal{B}(\xi',D_t)v(t)=0, \quad v(0)=0.
\end{equation}
Finally we show that only zero field solves this initial value problem.


We note that \eqref{eq:ODE} implies the following equation in coordinates 
\[
\begin{split}
&\frac{1}{\mathrm{i}}(\mathrm{d}_0^\mathcal{B}(\xi)v)_{i_1i_2j_1j_2}=
\\
&\frac{1}{2} v_{i_1i_2j_1}\xi_{j_2}+\frac{1}{2} v_{i_1i_2j_2}\xi_{j_1} -\frac{1}{10} \left(\delta_{i_1j_1}v_{ri_2j_2}+\delta_{i_1j_2}v_{ri_2j_1}+\delta_{i_2j_1}v_{i_1rj_2}+\delta_{i_2j_2}v_{i_1rj_1}\right)\xi^{r}=0.
\end{split}
\]
In the previous formula we set $j_2=3$ and $\xi_{3}=-\mathrm{i}\frac{\mathrm{d}}{\mathrm{d}t}$. Then we obtain the following system of ordinary differential equations.

\begin{equation}\label{eq_vijk1}
\begin{split}
\frac{1}{\mathrm{i}}\left(\mathrm{d}_0^\mathcal{B}(\xi',D_t)v(t)\right)_{i_1i_233}
=&D_{t}v_{i_1i_23}-\frac{1}{5}\left(D_t v_{i_133}\delta_{i_23}+D_tv_{i_233}\delta_{i_13}\right)
\\
&-\frac{1}{5}\sum_{r\neq 3}(v_{i_1r3}\xi^r\delta_{i_23}+v_{i_2r3}\xi^r\delta_{i_13})
\\
=&0
\end{split}
\end{equation}
and, for $j_1\neq 3$,
\begin{equation}\label{eq_vijk2}
\begin{split}
&\frac{1}{\mathrm{i}}\left(\mathrm{d}_0^\mathcal{B}(\xi',D_t)v(t)\right)_{i_1i_2j_13}
\\
=&\frac{1}{2}D_{t}v_{i_1i_2j_1}-\frac{1}{10}\left(D_t v_{i_13j_1}\delta_{i_23}+D_tv_{i_23j_1}\delta_{i_13}+D_t v_{i_133}\delta_{i_2j_1}+D_tv_{i_233}\delta_{i_1j_1}\right)
\\
&+\frac{1}{2}v_{i_1i_23}\xi_{j_1}-\frac{1}{10}\sum_{m\neq 3}(v_{i_1mj_1}\xi^m\delta_{i_23}+v_{i_2mj_1}\xi^m\delta_{i_13}+v_{i_1m3}\xi^m\delta_{i_2j_1}+v_{i_2m3}\xi^m\delta_{i_1j_1})
\\
=&0.
\end{split}
\end{equation}

Finally we solve these equations with initial value $v(0)=0$, which is done in the following sequence:
\begin{itemize}
\item
If  $i_1,i_2 \neq 3$, the equation \eqref{eq_vijk1} gives
\[
D_{t}v_{i_1i_23}=0, t>0, \quad v(0)=0.
\]
This implies $v_{i_1i_23}=0$ for $i_1,i_2\neq 3$. 
\item
If  $i_1\neq 3, i_2=3$ or $i_2\neq 3, i_1=3$, the equation \eqref{eq_vijk1} gives
\[
\frac{4}{5}D_{t}v_{i_133}=\frac{4}{5}D_{t}v_{3i_23}=0, t>0, \quad v(0)=0,
\]
which implies $v_{i33}$ and $v_{3i3}$ for $i\neq 3$. 
\item 
Finally the equation \eqref{eq_vijk1} gives
\[
\frac{3}{5}D_{t}v_{333}=0, t>0, \quad v(0)=0,
\]
and thus $v_{333}=0$. 
\end{itemize}

We have proved that $v_{i_1i_23}=0$ and therefore the equation \eqref{eq_vijk2} simplifies to 
\begin{equation}\label{eq_vijk3}
\begin{split}
&D_{t}v_{i_1i_2j_1}-\frac{1}{5}\left(D_t v_{i_13j_1}\delta_{i_23}+D_tv_{i_23j_1}\delta_{i_13}-\sum_{m\neq 3}(v_{i_1mj_1}\xi^m\delta_{i_23}+v_{i_2mj_1}\xi^m\delta_{i_13})\right)=0,
\\
&j_1\neq 3.
\end{split}
\end{equation}

\begin{itemize}
\item 
We take $i_1,i_2\neq 3$ in \eqref{eq_vijk3}, and get
\[
D_{t}v_{i_1i_2j_1}=0, t>0, \quad v(0)=0.
\]
Thus $v_{i_1i_2j_1}=0$ for $i_1,i_2,j_1\neq 3$. 
\item
The choices  $i_1\neq 3, i_2=3$ and $i_2\neq 3, i_1=3$ in \eqref{eq_vijk3}, imply
\[
\frac{4}{5}D_{t}v_{i_13j_1}=\frac{4}{5}D_{t}v_{3i_2j_1}=0, t>0, \quad v(0)=0,
\]
and then $v_{i_1j_2j_1}=0$ if at most one of $i_1,i_2$ equals $3$.
\item 
Finally we take $i_1=i_2=3$ in \eqref{eq_vijk3}, to get
\[
\frac{3}{5}D_tv_{33j_2}=0, t>0, \quad v(0)=0,
\]
and then $v_{33j_2}=0$ for $j_2\neq 3$. 
\end{itemize}
Thus we have proven that the zero field is the only solution of \eqref{eq:ODE} and we have verified the Lopatsinkij condition for \eqref{eqnu1}. Alas we have proved the ellipticity of the boundary value problem \eqref{eqnu1}.
\end{proof}
\color{black}
Now we are ready to give a proof for Lemma \ref{existence2}.

\begin{proof}[Proof of Lemma \ref{existence2}]
We use the notation $\hbox{Tr}\colon H^{m}(M)\to H^{m-\frac{1}{2}}(\p M)$ for the trace operator. 
As we have verified in Lemma \ref{le:elliptic_prob} that the problem \eqref{eqnu1} is elliptic, it holds due to \cite[Theorem 9.32]{wloka1995boundary} that the operator $(\LaB,\hbox{Tr})\colon H^{m}(M)\to (H^{m-2}(M)\times H^{m-\frac{1}{2}}(\p M))$ is a Fredholm operator (a bounded operator with finite dimensional kernel and co-kernel). Moreover there exists a uniform constant $C>0$ such that the following \textit{a priori} estimate holds for any $u \in H^{m}(M)$
\begin{equation}
\label{eq:apriori_estimate_1}
\|u\|_{H^{m}(M)}\leq C\left(\|\LaB u\|_{H^{m-2}(M)}+\|\hbox{Tr} u\|_{H^{m-\frac{1}{2}}(\p M)} +\|u\|_{H^{m-1}(M)}\right).
\end{equation}
As the embedding $H^{m}(M) \hookrightarrow H^{m-1}(M)$ is compact it holds due to \cite[Lemma 2]{stefanov2004stability}  that we can write \eqref{eq:apriori_estimate_1} in the form
\[
\|u\|_{H^{m}(M)}\leq C\left(\|\LaB u\|_{H^{m-2}(M)}+\|\hbox{Tr} u\|_{H^{m-\frac{1}{2}}(\p M)}\right)
\]
for some uniform constant $C$, if \eqref{eqnu1} is uniquely solvable. This is the estimate \eqref{eq:energy_estimate}.
\color{black}
In order to verify the unique solvability of the boundary value problem \eqref{eqnu1}, and to conclude the proof, we show that $(\LaB, \hbox{Tr})$ has a trivial kernel and co-kernel. 

%
We show first the kernel of $(\LaB, \hbox{Tr})$ is trivial. Let $u$ solve \eqref{eqnu1} with a source $h\equiv 0$ and boundary value $w\equiv 0$. Since we have proved that $\LaB$ is elliptic, it holds that $u$ is smooth. As $\dB$ and $-\deB$ are formally adjoint we get
\[
 \int_M \langle\mathrm{d}^\mathcal{B}u,\mathrm{d}^\mathcal{B}u\rangle_g \mathrm{d}V=\int_M \langle -\LaB u, u\rangle_g \mathrm{d}V=0,
\]
and
\[
\mathrm{d}^\mathcal{B}u=\mathrm{d}'u-\frac{4}{5}\lambda\mu\mathrm{d}'u=0.
\]
Next we note that for any $x\in M$ and $v \in S^2\tau'_M\otimes^\mathcal{B} S^{1} \tau'_M$ holds
\[
(\dB v)_{ijkl}\eta^{i}\eta^{j}\xi^{k}\xi^{l}
=(\mathrm{d}'v)_{ijkl}\eta^{i}\eta^{j}\xi^{k}\xi^{l}
\]
if $\xi,\eta \in T_xM$ are orthogonal. 
In the following we use the notation $\gamma$ for the geodesic $\gamma_{x,\xi}, \: \xi \in S_xM$ and $\eta(t)$ stands for the parallel transport of $\eta$ along $\gamma$.
\color{black}
By straightforward computations we obtain
\begin{equation}\label{dd22}
\begin{split}
\frac{\mathrm{d}}{\mathrm{d}t}[v_{ijk}(\gamma(t))\eta^i(t)\eta^j(t)\dot{\gamma}^k(t)]
=&(\mathrm{d}^\mathcal{B}v)_{ijkl}\eta^i(t)\eta^j(t)\dot{\gamma}^k(t)\dot{\gamma}^l(t).
\end{split}
\end{equation}

Let $x_0\in M\setminus \partial M$ and $z_{0}$ be a closest boundary point to $x_0$. We use the notation $\xi_0 \in S_{x_0}M \setminus \{0\}$ for the direction of the unit speed geodesic $\gamma_0$ connecting $x_0$ to $z_0$.  
Since the geodesic $\gamma_0$ intersects $\p M$ transversally 
\color{black}
there exists a neighborhood $U\subset S_{x_0}M$ of $\xi_0$ such that the exit time function  $\tau$ is finite and smooth in $U$. Then for any $\xi \in U$ and $\eta\perp \xi$, equation \eqref{dd22} and the fundamental theorem of calculus imply
\[
u_{ijk}(x_0)\eta^i\eta^j\xi^k=-
\int^{\tau(\xi)}_0[\mathrm{d}^\mathcal{B}u]_{ijkl}(\gamma_{x_0,\xi}(t))\eta^i(t)\eta^j(t)\xi^k(t)\xi^l(t)\mathrm{d}t
=0.
\]
Here $\eta(t)$ is a parallel field along the geodesic $\gamma_{x_0,\xi}$ with $\eta(0)=\eta$ and $\xi(t)=\dot{\gamma}_{x_0,\xi}(t)$. In the following we use a short hand notation $u=u(x_0)$.
Therefore 
\begin{equation}
\label{eq:u_in_basis}
u_{ijk}\eta^{i}\eta^{j}\xi^k=0, \quad  \xi\in U, \: \eta\perp \xi.
\end{equation}
We choose an orthonormal basis $B=\{\xi,\eta, \widetilde \eta\}$ for the three dimensional space $T_{x_0}M$, where $\xi \in U$.
By polarization, \eqref{eq:u_in_basis} implies
\begin{equation}
\label{eq:u_in_polari}
u_{ijk}\eta^i\widetilde \eta^j\xi^k
=0.
\end{equation}
For any $\epsilon >0$ that is small enough, the equation \eqref{eq:u_in_basis} gives
\begin{equation}
\label{eq:u_in_perturbed_basis}
u_{ijk}(\eta+\epsilon \xi)^{i}(\eta+\epsilon \xi)^{j}(\xi-\epsilon \eta)^k=0.
\end{equation}
Therefore the coefficients of the $\epsilon^3, \epsilon^2, \epsilon, 1$ of  the expansion of \eqref{eq:u_in_perturbed_basis} have to vanish.  Clearly the same holds if $\eta$ is replaced by $\widetilde \eta$ in \eqref{eq:u_in_perturbed_basis}. Now we have proven
\begin{equation}
\label{eq:u_in_perturbed_basis_2}
\begin{split}
u_{ijk}\xi^{i}\xi^{j}\eta^k=0,
& \quad 
u_{ijk}\left(\xi^i\xi^j\xi^k-2\eta^{i}\xi^{j}\eta^k\right)=0,
\quad 
u_{ijk}\left(2\eta^i\xi^j\xi^k-\eta^i\eta^j\eta^k\right)=0,
\\
u_{ijk}\xi^{i}\xi^{j}\widetilde \eta^k=0,
& \quad 
u_{ijk}\left(\xi^i\xi^j\xi^k-2\widetilde\eta^{i}\xi^{j}\widetilde\eta^k\right)=0,
\quad 
u_{ijk}\left(2\widetilde\eta^i\xi^j\xi^k-\widetilde\eta^i\widetilde\eta^j\widetilde\eta^k\right)=0.
\end{split}
\end{equation}
Also we have
$
u_{ijk}\eta^i\eta^j(\xi+\epsilon\widetilde \eta)^k=0,
$
from which we derive
\begin{equation}
\label{eq:u_in_perturbed_basis_3}
\begin{split}
u_{ijk}\eta^i\eta^j\tilde{\eta}^k=0.
\end{split}
\end{equation}
Next, we note 
\[
u_{ijk}(\tilde{\eta}+\eta+\epsilon\xi)^i(\tilde{\eta}+\eta+\epsilon\xi)^j(\xi-\epsilon\eta)^k=0,
\]
and the roles of $\eta$ and $\widetilde \eta$ can be interchanged.
Collecting the coefficients for $1,\epsilon,\epsilon^2$ terms we get by \eqref{eq:u_in_polari}--\eqref{eq:u_in_perturbed_basis_3} 
\begin{equation}
\label{eq:u_in_perturbed_basis_4}
\begin{split}
u_{ijk}\tilde{\eta}^i\xi^j\xi^k-u_{ijk}\tilde{\eta}^i\eta^j\eta^k=0,\quad \quad 
u_{ijk}\tilde{\eta}^i\xi^j\eta^k=0
\\
u_{ijk}\eta^i\xi^j\xi^k-u_{ijk}\eta^i\tilde \eta^j\tilde \eta^k=0,\quad \quad 
u_{ijk}\eta^i\xi^j\tilde \eta^k=0.
\end{split}
\end{equation}

To continue we note that since $B$ is an orthogonal basis, it follows that 
\[
(\mu u)_j(x_0)=\delta^{ik}u_{ijk}=\sum_{k=1}^3u_{kjk}=\sum_{k=1}^3u_{jkk}=0, \quad j \in \{1,2,3\},
\]
or equivalently
\begin{equation}
\label{eq:last_system}
\left\lbrace
\begin{array}{l}
u_{ijk}\eta^i\xi^j\xi^k+u_{ijk}\eta^i\eta^j\eta^k+u_{ijk}\eta^i\tilde{\eta}^j\tilde{\eta}^k=0,
\\
u_{ijk}\tilde{\eta}^i\xi^j\xi^k+u_{ijk}\tilde{\eta}^i\eta^j\eta^k+u_{ijk}\tilde{\eta}^i\tilde{\eta}^j\tilde{\eta}^k=0,
\\
u_{ijk}\xi^i\xi^j\xi^k+u_{ijk}\xi^i\eta^j\eta^k+u_{ijk}\xi^i\tilde{\eta}^j\tilde{\eta}^k=0.
\end{array}\right.
\end{equation}
It remains to show that each term in \eqref{eq:last_system} vanishes. As \eqref{eq:u_in_perturbed_basis_2} and \eqref{eq:u_in_perturbed_basis_4}  give 6 additional equations, the following linear systems hold true:
\begin{equation}\label{eq:3systems}
\begin{split}
&
\begin{cases}
u_{ijk}\xi^i\xi^j\xi^k-2u_{ijk}\xi^i\eta^j\eta^k=0,
\\
u_{ijk}\xi^i\xi^j\xi^k-2u_{ijk}\xi^i\tilde{\eta}^j\tilde{\eta}^k=0,
\\
u_{ijk}\xi^i\xi^j\xi^k+u_{ijk}\xi^i\eta^j\eta^k+u_{ijk}\xi^i\tilde{\eta}^j\tilde{\eta}^k=0;\\
\end{cases}
\\
&
\begin{cases}
2u_{ijk}\xi^{i}\eta^{j}\xi^k-u_{ijk}\eta^i\eta^j\eta^k=0,
\\
u_{ijk}\eta^i\xi^j\xi^k+u_{ijk}\eta^i\eta^j\eta^k+u_{ijk}\eta^i\tilde{\eta}^j\tilde{\eta}^k=0,
\\
u_{ijk}\eta^i\xi^j\xi^k-u_{ijk}\eta^i\tilde{\eta}^j\tilde{\eta}^k=0;
\end{cases}
\\
&
\begin{cases}
2u_{ijk}\xi^{i}\tilde{\eta}^{j}\xi^k-u_{ijk}\tilde{\eta}^i\tilde{\eta}^j\tilde{\eta}^k=0,
\\
u_{ijk}\tilde{\eta}^i\xi^j\xi^k+u_{ijk}\tilde{\eta}^i\eta^j\eta^k+u_{ijk}\tilde{\eta}^i\tilde{\eta}^j\tilde{\eta}^k=0,
\\
u_{ijk}\tilde{\eta}^i\xi^j\xi^k-u_{ijk}\tilde{\eta}^i\eta^j\eta^k=0.
\end{cases}
\end{split}
\end{equation}
Each system consists of three linearly independent equations for three variables, and thus can only have trivial solutions.

\medskip

Finally, we show that the boundary value problem \eqref{eqnu1} has a trivial cokernel. If $f \in H^{-1}$ we can choose a series of $f_j \in L^2, \: j\in \N$ that converges to $f$ in $H^{-1}$ sense. For the existence of such sequence see for instance \cite[Section 3]{adams2003sobolev}.
Let $\phi_{j_k}$ be a sequence of smooth tensor fields approximating $f_j$ in $L^2$. Let $\epsilon>0$ and $k,j\in \N$ be so large that
\[
\|f-f_k\|_{H^{-1}}< \epsilon/2, \quad \|f_k-\phi_{k_j}\|_{L^2}< \epsilon/2
\]
Then
\[
\|f-\phi_{k_j}\|_{H^{-1}}\leq \|f-f_k\|_{H^{-1}}+\|f_k-\phi_{k_j}\|_{L^2}< \epsilon.
\]
Thus smooth tensor fields are dense in $H^{-1}$. 

Suppose then that
\[
(f,h)\in C^{\infty}(S^2\tau'_M\otimes^\mathcal{B} S^{1} \tau'_M)\times C^{\infty}(S^2\tau'_{\p M}\otimes^\mathcal{B} S^{1} \tau'_{\p M})
\]
is in the co-kernel of the operator $(\LaB, \hbox{Tr})$. We show that $(f,h)\equiv (0,0)$. To verify this we first note that the choice of $(f,h)$ implies in particularly that
\begin{equation}
\label{eq:co-kernel}
\int_M \langle\LaB u,f\rangle_g\mathrm{d}V=0, \quad \hbox{ for any } u\in C^\infty(S^2\tau'_M\otimes^\mathcal{B} S^{1} \tau'_M), \: u|_{\p M}=0.
\end{equation}
To show that \eqref{eq:co-kernel} implies $f\equiv 0$ is very similar to the proof of an analogous claim in \cite[Theorem 3.3.2]{Shara}, and thus omitted here. The second claim $h\equiv 0$ follows from the fact that the trace map is onto.  Finally due to denseness of smooth tensor fields we conclude that $(\LaB, \hbox{Tr})$ has a trivial co-kernel.
\color{black}
%
\end{proof}

\section{The normal operator of mixed ray transform of $1+1$ tensors}
\label{Se:normal_op}
In this section, we 
show that the $L^2$-normal operator $\mathcal{N}_{L}$ of the mixed ray transform $L=L_{1,1}$ is an integral operator and find its Schwartz kernel.  
\color{black}
We also show that $\mathcal{N}_L$  is a pseudo-differential operator ($\Psi$DO) of order $-1$ and give a representation for the principal symbol.  In order to do this we will assume without loss of generality that $M\subset \R^3$ is a smooth domain and the metric tensors $g$ extends to $\R^3$ in such a way that any geodesic exiting $M$ never returns to $M$. We make a standing assumption, for the rest of this paper, 
that any tensor field, excluding the metric $g$, defined in $M$ is extended to any larger domain with zero extension.

We begin this section by giving a formal definition of the mixed ray transform, equivalent to \eqref{eq:mixed_ray_trans_act}, on the trace-free tensors.
 For a vector
$(x,v)\in TM$, we define
the \textit{contraction map}
\color{black}
\begin{equation}
\label{eq:Lambda}
\Lambda_v\colon S^k T'_xM \otimes^\mathcal{B} S^\ell  T'_xM\rightarrow S^k T'_xM, \quad
(\Lambda_v f)_{i_1\ldots i_m}=f_{i_1\ldots  i_mj_1\ldots j_\ell}v^{j_1}\cdots v^{j_\ell}.
\end{equation}
Let
\[
p_v\colon T_xM \to v^\perp, \quad (p_v)_i^k\xi^i
=\left(\delta_i^k-\frac{v_iv^k}{|v|_g^2}\right)\xi^i, \quad k \in \{1,2,3\},
\]
be the \textit{projection map} onto the orthocomplement of the vector $v$. The second linear operator is the \textit{restriction map} 
\begin{equation}
\label{eq:P_v}
P_v\colon S^mT'_xM\rightarrow S^mT'_xM, \quad 
(P_v f)_{i_1\ldots i_m}=f_{j_1\ldots j_m}(p_v)_{i_1}^{j_1}\ldots(p_v)_{i_m}^{j_m}.
\end{equation}


\color{black}
Then we can give the following definition of the mixed ray transform by 
the following ``distributional" representation (see for instance  \cite[Chapter 7.2]{Shara} )
\begin{equation}
\label{eq:mixed_ray_trans}
L_{k,\ell}f(x,\xi)=\int_0^{\tau(x,\xi)}\mathcal{T}_\gamma^{0,t}(P_{\dot{\gamma}(t)}\Lambda_{\dot{\gamma}(t)}f_{\gamma(t)})\mathrm{d}t, \quad f \in C^\infty(S^k \tau'_M \otimes^\mathcal{B} S^\ell\tau'_M), \: (x,v)\in \p_+SM.
\end{equation}
Here we used the notation
$\mathcal{T}_\gamma^{t,s}$ for the parallel translation along $\gamma$ from the point $\gamma(s)$ to $\gamma(t)$. Symbol $\tau(x,\xi)$ stands for the exit time of the geodesic $\gamma=\gamma_{x,\xi}$ and $f_{\gamma(t)}$ is the evaluation of the tensor field $f$ at $\gamma(t)$.
Let us still clarify the action of the mixed ray transform. 
Let $f \in C^\infty(S^k \tau'_M \otimes^\mathcal{B} S^\ell \tau'_M)$ and $(x,\xi) \in \p_+SM$.  We set the action of $L_{k,\ell}f(x,\xi)$ on vector $v=a\xi+\eta \in T_xM$, where $\eta \perp \xi, \: a \in \R$ and $\eta(t)$ is the parallel transport of $\eta$, to be given by integrating the following quantity over the interval $[0,\tau(x,\xi)]$
\[
\begin{split}
\langle \mathcal{T}_\gamma^{0,t}P_{\dot{\gamma}(t)}\Lambda_{\dot{\gamma}(t)}f_{\gamma(t)}, v^k \rangle=
& \: \langle P_{\dot{\gamma}(t)}\Lambda_{\dot{\gamma}(t)}f_{\gamma(t)}, (\mathcal{T}_\gamma^{t,0}v)^k \rangle
\\
=& \: f_{i_1\ldots i_kj_1\ldots  j_\ell}(\gamma(t))\eta(t)^{i_1} \cdots\eta(t)^{i_k} \dot\gamma(t)^{j_1}\cdots\dot\gamma(t)^{j_\ell},
\end{split}
\] 
where $v^k=(\underbrace{v,\ldots, v}_{k})$ and in the last equation we used the formulas \eqref{eq:Lambda} and \eqref{eq:P_v}. This implies the equivalence for  \eqref{eq:mixed_ray_trans_act} and \eqref{eq:mixed_ray_trans}.

Finally we define the target space of the mixed ray transform. Let $\pi\colon\partial_+(SM) \to M$ be the restriction of the natural projection map from tangent bundle to the base manifold. Using the pullback map $\pi^\ast$ we construct the symmetric pullback bundle of $k$-sensors on $\partial_+(SM)$, and reserve the notation
$\beta_k(\partial_+(SM))$ for the sections of this bundle. These sections act as follows: Let $f\in \beta_k(\partial_+(SM))$ and $(x,\xi)\in \partial_+(SM)$,
then, using the pairing notation, the following map is symmetric and $k$-linear
\[
\langle f(x,\xi); v_1,\ldots, v_k \rangle\in \R, \quad v_1\ldots v_k \in T_xM.
\]
Since the exit-time function $\tau$ is smooth in $(x,\xi)\in\p_+SM$ by \cite[Lemma 4.1.1.]{Shara}, the formula \eqref{eq:mixed_ray_trans} implies that the mixed ray transform maps the elements of $C^\infty(S^k \tau'_M \otimes^\mathcal{B} S^\ell \tau'_M)$ into $C^\infty(\beta_k(\partial_+(SM)))$. 
\color{black}

\subsection{The normal operator of $L_{1,1}$ is an integral operator}
For now on we denote $L_{1,1}= L$ for brevity and work only in the space of $S\tau'_M\otimes^\mathcal{B} S\tau'_M$. 


Let us now describe the measure we use on the target space of the mixed ray transform. For the measure $\mathrm{d}\sigma$ we mean the Riemannian volume of $\p SM$, however we choose  a more suitable measure for $\partial_+(SM)$,  
\[
\mathrm{d}\mu(z,\omega)=|\langle \omega,\nu(z)\rangle_g|\mathrm{d}\sigma=|\langle \omega, \nu(z) \rangle_g|\mathrm{d}S_z\mathrm{d}S_\omega,
\] 
where $\mathrm{d}S_z$ is the surface measure of $\p M$ and $\mathrm{d}S_\omega$ is the surface measure of $S_zM$. 
That is if $(z',z^3)$ is a boundary coordinate system we have
\[
\mathrm{d}S_z=(\det g(z))^{1/2}\mathrm{d}z^1\mathrm{d}z^{2} \quad \hbox{ and } \mathrm{d}S_\omega=(\det g(z))^{1/2}\mathrm{d}S_{\omega_0},
\]
where $\mathrm{d}S_{\omega_0}$ 
is the Euclidean measure of the unit sphere $\mathbb S^{2}\subset \R^3$. 
The $L^2$-inner product on $\beta_1(\partial_+(SM))$ is given by
\[
\int_{\partial_+(SM)} f_{i}(z,\omega)\overline{h}_{j}(z,\omega)g^{ij} \mathrm{d}\mu(z,\omega).
\]
%
%
It is shown in \cite[Chapter 7]{Shara} that $L$, originally defined on smooth tensor fields, has a bounded extension
\[
L\colon H^m(S\tau'_M\otimes^\mathcal{B} S\tau'_M)\to H^m(\beta_1(\partial_+(SM))), \quad m\geq 0.
\]
In this section we consider $L$ as an operator
\[
L\colon L^2(S\tau'_M\otimes^\mathcal{B} S\tau'_M)\rightarrow L^2(\beta_1(\partial_+(SM));\mathrm{d}\mu),
\]
and compute its normal operator. 
\color{black}
\begin{remark}
Since $L$ is a bounded operator, its adjoint 
\[
L^\ast\colon L^2(\beta_1(\partial_+(SM));\mathrm{d}\mu)\to L^2(S\tau'_M\otimes^\mathcal{B} S\tau'_M)
\]
exists and is bounded. Thus the normal operator
$
\mathcal{N}_L:=L^\ast L
$
is bounded on $ L^2(S\tau'_M\otimes^\mathcal{B} S\tau'_M)$.
\end{remark}
In the following we denote by $(x(t),\omega(t))\in SM, \: t \in \R$, the lift of the geodesic $x(t)$ in $SM$, that is issued from $(z,\omega)\in \partial_+(SM)$.
\color{black}
Let  $f,h \in S\tau'_M\otimes^\mathcal{B} S\tau'_M$. 
Then we have
 \[
 \begin{split}
 \langle Lf,Lh\rangle_{L^2(\beta_1(\partial_+(SM)))} \:\:\:\:&\\
 =&\int_{\partial_+(SM)}\left(\int_0^{\tau(z,\omega)}(\mathcal{T}_{\gamma}^{0,t})_o^u(P_{\omega(t)})_u^if_{ij}(x(t))\omega^j(t)\mathrm{d}t\right)\\
 &\quad\quad\quad \left(\int_0^{\tau(z,\omega)}(\mathcal{T}_{\gamma}^{0,s})_{o'}^{u'}(P_{\omega(s)})_{u'}^{i'}\bar{h}_{i'j'}(x(s))\omega^{j'}(s)\mathrm{d}s\right)
 g^{oo'}(z)\mathrm{d}\mu(z,\omega)
 \\
=:&I_++I_-.
 \end{split}
 \]
Here we wrote
\[
 \begin{split}
 I_\pm=
 \int_{\partial_+(SM)}\int_{\R} &\int_0^\infty g^{ou'}(x(s))(P_{\omega(s)})_{u'}^{i'}\bar{h}_{i'j'}(x(s))\omega^{j'}(s) (\mathcal{T}_{\gamma}^{s,s\pm t})_o^{u}\\
 &\quad\quad\quad\quad\quad\quad (P_{\omega(s\pm t)})_u^if_{ij}(x(s\pm t))\omega^j(s\pm t)\; \mathrm{d}t \mathrm{d}s\mathrm{d}\mathrm{\mu}(z,\omega),
 \end{split}
\]
and used the fact that $g^{-1}$ is parallel.

We introduce new variables $x:=x(s,z,\omega)\in M$,
that is the point obtained by following the geodesic $x(s)$ given by the initial conditions $(z,\omega) \in \p_+SM$ until the time $s$,
\color{black}
and $\xi:=\xi(s,t,z,\omega):=t\omega(s)\in T_xM$,
which is the scaling of the velocity $\omega(s)$, of the geodesic $x(s)$, by the positive factor $t$. 
%
%
Since $(M,g)$ is simple, the map
\[
\R \times (0,\infty)  \times \p_{+}(SM) \ni(s,t,(z,\omega)) \mapsto (x,\xi) \in TM
\] 
can be restricted to a diffeomorphism
onto a set  $U\subset TM \setminus (0 \cup T\p M)$, where $U=\bigsqcup_{x \in M} U_x$ and $U_x:=(\exp_x)^{-1}M \subset T_xM$.
\color{black}
Moreover since the geodesic flow preserves the measure $(\det g)\mathrm{d}\xi \mathrm{d}x$ of $TM$, we get
\[
\mathrm{d}t \mathrm{d}s\mathrm{d}\mathrm{\mu}(z,\omega)
=(- 1)^3|\xi|_g^{2}(\det g)\mathrm{d}\xi \mathrm{d}x.
\]
For more details about this change of coordinates we refer to \cite{holman2009generic}.

We denote
\[
y=\exp_{x}\xi, \: x=x(s,z,\omega), \: \xi=\xi(s,\pm t, z,\omega) \quad \hbox{ and } \quad \widehat{\xi}=\frac{\xi}{|\xi|_g} .
\]
It straightforward to see that
\[
\omega^j(s)= \widehat{\xi}^j, \quad  \hbox{and} \quad \omega^j(s\pm t)= (\hbox{grad}^g_{y} \rho(x,y))^j=g^{ij}(y)\frac{\p\rho}{\p y ^i},
\]
where $\rho(x,y)$ is the Riemannian distance function of $g$ on $M\times M$. Thus
\[
 \begin{split}
 I_\pm=
& \int_M\int_{U_x} g^{ou'}(x)(P_{\widehat{\xi}})_{u'}^{i'}\bar{h}_{i'j'}(x)\widehat{\xi}^{j'}
(\mathcal{T}_{\gamma_{x,\widehat{\xi}}} ^{0,|\xi|_g})_o^{u}(P_{\partial_{y}\rho})_u^if_{ij}(y)g^{kj}(y)\frac{\p\rho}{\p y^k}\; \frac{1}{|\xi|^2_g}(\det g)\mathrm{d}\xi \mathrm{d}x
 \end{split},
\]
and we get
\[
\begin{split}
I_+ = I_-
 \end{split}.
\]

Therefore
\begin{equation}
\label{eq:normal_1}
\begin{split}
\langle \mathcal N_L f,h&\rangle_{L^2(S\tau'_M\otimes S\tau'_M)}=
\\
&2 \int_M\int_{U_x} g^{ou'}(x)(P_{\widehat{\xi}})_{u'}^{i'}\bar{h}_{i'j'}(x)\widehat{\xi}^{j'}
(\mathcal{T}_{\gamma_{x,\widehat{\xi}}} ^{0,|\xi|_g})_o^{u}(P_{\partial_{y}\rho})_u^if_{ij}(y)g^{kj}(y)\frac{\p\rho}{\p y^k}\; \frac{1}{|\xi|^{2}_g}(\det g)\mathrm{d}\xi \mathrm{d}x.
\end{split}
\end{equation}
Since $(M,g)$ is simple the map $(T_xM\setminus \{0\})\ni \xi \mapsto \exp_x \xi=:y$ is a diffeomorphism with inverse
\[
\xi^i=-\frac{1}{2}(\hbox{grad}_x^g(\rho(x,y))^2)^i=-\frac{1}{2}g^{ij}(x)\frac{\p \rho(x,y)^2}{\p x^j},
\]
and moreover
\[
|\xi|_g=\rho(x,y), \quad \widehat{\xi}_m=-\frac{\p \rho(x,y)}{\p x^m} \quad \hbox{and} \quad \mathrm{d}\xi=(\det g^{-1})\bigg|\det\frac{\p^2 \rho(x,y)^2/2}{\p x\p y} \bigg|\mathrm{d}y.
\]
To simplify the integral \eqref{eq:normal_1} even more we compute
\[
(P_{\widehat{\xi}})_{u'}^{i'}\bar{h}_{i'j'}(x)=\bar{h}^{i'}_{j'}(x)\bigg(g_{i'u'}(y)-\frac{\p \rho}{\p x^{u'}}\frac{\p \rho}{\p x^{i'}}\bigg)
\]
and 
\[
(P_{\partial_{y}\rho})_u^if_{ij}(y)g^{kj}(y)=(P_{\partial_{y}\rho})_u^if_{i}^k(y)=f^{ik}(y)\bigg(g_{iu}(y)-\frac{\p \rho}{\p y^u}\frac{\p \rho}{\p y^i}\bigg).
\]

Finally we have 
\begin{equation}
\label{eq:action_of_normal_op}
\begin{split}
&\langle \mathcal N_L f,h\rangle_{L^2(S\tau'_M\otimes S\tau'_M)}=
\\
&\quad\quad\quad-2\int_M \bar h^{k\ell}(x)  \frac{1}{\sqrt{\det g(x)}} \int_M\bigg(g_{ku'}(x)-\frac{\p \rho}{\p x^{u'}}\frac{\p \rho}{\p x^{k}}\bigg)\frac{\p \rho}{\p x^\ell}g^{ou'}(x)\left(\mathcal{T}_{\gamma_{x,-\hbox{\tiny{grad}}_x^g\rho} } ^{0,\rho(x,y)}\right)_o^{u} 
\\
&\quad\quad\quad
\quad\times \frac{f^{ij}(y)}{\rho(x,y)^{2}} \bigg(g_{iu}(y)-\frac{\p \rho}{\p y^u}\frac{\p \rho}{\p y^i}\bigg)\frac{\p\rho}{\p y^j}\;  \bigg|\det\frac{\p^2 \rho(x,y)^2/2}{\p x\p y} \bigg|\mathrm{d}y \sqrt{\det g(x)} \mathrm{d}x.
\end{split}
\end{equation}
Therefore the normal operator of mixed ray transform can be written as 

\[
\begin{split}
(\mathcal N_L f)_{k\ell}(x)=
 \frac{-2 }{\sqrt{\det g(x)}}   &\int_M \bigg(g_{ku'}(x)-\frac{\p \rho}{\p x^{u'}}\frac{\p \rho}{\p x^{k}}\bigg)A^{uu'}(x,y)
\bigg(g_{iu}(y)-\frac{\p \rho}{\p y^u}\frac{\p \rho}{\p y^i}\bigg)\frac{f^{ij}(y)}{\rho(x,y)^{2}} \frac{\p\rho}{\p y^j}
\\
&\quad\quad\quad
\times \frac{\p \rho}{\p x^\ell}\; \bigg|\det\frac{\p^2 \rho(x,y)^2/2}{\p x\p y} \bigg|\mathrm{d}y.
\end{split}
\]
Here 
\begin{equation}
\label{Auu}
A(x,y)^{uu'}:=g^{ou'}(x)\left(\mathcal{T}_{\gamma_{x,-\hbox{\tiny{grad}}_x^g\rho(x,y)}}^{0,\rho(x,y)}\right)^{u}_o,
\end{equation}
with $A^{uu'}(x,x)=g^{uu'}(x)$. We note here that $\mathcal{T}_{\gamma_{x,-\hbox{\tiny{grad}}_x^g\rho(x,y)}}^{0,\rho(x,y)}$ is the parallel transport along the geodesic connecting $y$ to $x$. Thus $\mathcal{N}_L$ is an integral operator with an integral kernel 
\begin{equation}
\label{eq:Kernel}
\begin{split}
K_{ijk\ell}(x,y)=
& \frac{-2  \bigg|\det\frac{\p^2 \rho(x,y)^2/2}{\p x\p y} \bigg|}{\sqrt{\det g(x)}\rho(x,y)^{2}} \bigg(g_{ku'}(x)-\frac{\p \rho}{\p x^{u'}}\frac{\p \rho}{\p x^{k}}\bigg)A^{uu'}(x,y)
\bigg(g_{iu}(y)-\frac{\p \rho}{\p y^u}\frac{\p \rho}{\p y^i}\bigg) \frac{\p\rho}{\p y^j}
\frac{\p \rho}{\p x^\ell}.
\end{split}
\end{equation}
\subsection{Normal operator as a $\Psi$DO}

From now on, we rely on the fact that $M\subset \R^3$ and $g$ is extended to whole $\R^3$ to apply the theory of pseudo-differential operators. In this subsection we  show that the normal operator is a $\Psi$DO of order $-1$ and find its principal symbol. Since $M$ is closed we consider certain open neighborhoods of it. 

Since $(M,g) \subset \R^3$ is simple and $g$ is extended to whole $\R^3$, we can find open domains $M_1,M_2 \subset \R^3$ such that $M\subset M_1\subset\subset M_2$ and $(\overline{M_i},g), \: \{1,2\}$ is simple (see \cite[page 454]{stefanov2004stability}). We need an open extension of $M$ in order to show that $\mathcal{N}_L$ is a $\Psi$DO. In the next section we explain why do we need the aforementioned double extension.  We note that by this extension the normal operator $\mathcal{N}_L$  is defined for $1+1$ tensor fields over $M_2$, and $\mathcal{N}_Lf(x)$ remains the same for $x\in M$ if $\supp f\subset M$.

First we find more convenient representation for the kernel $K$ near the diagonal of $M_2\times M_2$.  To do so we use the following relations introduced in \cite[Lemma 1]{stefanov2004stability}:

\begin{lemma}
\label{Le:Gs}
There exists $\delta>0$ such that in $U:= \{(x,y)\in M_2\times M_2: |x-y|_e<\delta\}$ the following hold
\begin{equation}
\label{eq:Gs}
\begin{split}
\rho^2(x,y)&=G^{(1)}_{ij}(x,y)(x-y)^i(x-y)^j,\\
\frac{\partial \rho^2(x,y)}{\partial x^j}&=2G^{(2)}_{ij}(x,y)(x-y)^i,\\
\frac{\partial^2\rho^2(x,y)}{\partial x^i\partial y^j}&=-2G_{ij}^{(3)}(x,y),
\end{split}
\end{equation}
where $G_{ij}^{(1)}(x,y),\,G_{ij}^{(2)}(x,y),\,G_{ij}^{(3)}(x,y)$ are smooth and on the diagonal
\begin{equation}
\label{eq:G_on_diagonal}
G_{ij}^{(1)}(x,x)=G_{ij}^{(2)}(x,x)=G_{ij}^{(3)}(x,x)=g_{ij}(x).
\end{equation}
\end{lemma}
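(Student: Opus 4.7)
\medskip
\noindent\textbf{Proof proposal.} The starting point is that since $(\overline{M_2},g)$ is simple, for every $x \in M_2$ the exponential map $\exp_x$ is a diffeomorphism onto $M_2$; in particular there is no cut locus, and the squared distance function $\rho^2 \colon M_2 \times M_2 \to \R$ is smooth everywhere (not merely near the diagonal). The restriction $|x-y|_e < \delta$ merely ensures that the Euclidean straight segment $\{y + t(x-y)\}_{t\in [0,1]}$ stays inside a fixed coordinate chart in $\R^3$ so that expressions like $(x-y)^i$ make sense, and $\delta>0$ is chosen small enough so that this is guaranteed for all $(x,y)\in U$.

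The three identities then follow by iterated application of Hadamard's lemma, using the two diagonal vanishing facts: $\rho^2(y,y)=0$ and $\partial_{x^j}\rho^2(x,y)\big|_{x=y}=0$ (the latter because $y\mapsto \rho^2(x,y)$ attains its minimum value $0$ at $y=x$). First, writing
\[
\partial_{x^j}\rho^2(x,y) = \partial_{x^j}\rho^2(x,y) - \partial_{x^j}\rho^2(y,y) = (x-y)^i\int_0^1 \partial_{x^i}\partial_{x^j}\rho^2\bigl(y+t(x-y),y\bigr)\,\mathrm{d}t,
\]
yields the second identity with
\[
G^{(2)}_{ij}(x,y) := \tfrac{1}{2}\int_0^1 \partial_{x^i}\partial_{x^j}\rho^2\bigl(y+t(x-y),y\bigr)\,\mathrm{d}t,
\]
which is smooth in $U$. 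Feeding this back into
$\rho^2(x,y) = \int_0^1 (x-y)^j\partial_{x^j}\rho^2(y+s(x-y),y)\,\mathrm{d}s$
and interchanging the order of integration gives the first identity, with
\[
G^{(1)}_{ij}(x,y) := 2\int_0^1 s\,G^{(2)}_{ij}\bigl(y+s(x-y),y\bigr)\,\mathrm{d}s.
\]
For the third identity one simply sets $G^{(3)}_{ij}(x,y):=-\tfrac{1}{2}\partial_{x^i}\partial_{y^j}\rho^2(x,y)$; smoothness is immediate from the global smoothness of $\rho^2$.

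To identify all three tensors with $g_{ij}(x)$ on the diagonal, one computes the second-order Taylor expansion of $\rho^2$ at the diagonal. Using Riemannian normal coordinates centered at a point $p\in M_2$ (or equivalently the formula $\rho^2(p,y) = g_{ij}(p)(y-p)^i(y-p)^j + O(|y-p|^3)$ in any coordinate system), one reads off
\[
\partial_{x^i}\partial_{x^j}\rho^2(x,y)\bigl|_{y=x} = 2g_{ij}(x),\qquad \partial_{x^i}\partial_{y^j}\rho^2(x,y)\bigl|_{y=x} = -2g_{ij}(x).
\]
Substituting $y=x$ in the integral formulas above then gives $G^{(1)}_{ij}(x,x)=G^{(2)}_{ij}(x,x)=g_{ij}(x)$, and the definition of $G^{(3)}$ gives the corresponding equality directly.

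The only nontrivial step is the Taylor identification of the Hessian of $\rho^2$ on the diagonal; everything else is smoothness plus bookkeeping. The cleanest way to handle that step is to observe that in normal coordinates centered at $p$ one has $\rho(p,y)=|y|_{\mathrm{euc}}$ exactly, so $\rho^2(p,y)=\delta_{ij}y^iy^j$ in those coordinates, and then transform back; alternatively, one may use the known second-order expansion of $\rho^2$ in arbitrary coordinates. Either approach makes the identification of $G^{(1)},G^{(2)},G^{(3)}$ with $g$ on the diagonal transparent.
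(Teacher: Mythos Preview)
Your proof is correct. The paper itself does not give a self-contained argument for this lemma; it simply refers the reader to \cite[Lemma 3.1]{SU} (and the statement is credited to \cite[Lemma 1]{stefanov2004stability}). Your Hadamard-lemma construction of $G^{(1)}$ and $G^{(2)}$, together with the direct definition of $G^{(3)}$ and the Taylor identification $\partial_{x^i}\partial_{x^j}\rho^2|_{x=y}=2g_{ij}$, $\partial_{x^i}\partial_{y^j}\rho^2|_{x=y}=-2g_{ij}$, is exactly the standard route and supplies the details the paper omits. One small remark: since $M_2\subset\R^3$ is already a single open Euclidean domain, the role of $\delta$ is not a chart issue but simply to guarantee that the straight segment $t\mapsto y+t(x-y)$ used in your integral formulas stays inside $M_2$.
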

\begin{proof}
See \cite[Lemma 3.1]{SU}.
\end{proof}
\noindent
As $G^{(m)}(x,y)$ is a matrix depending on the points $(x,y)\in U$ we use the short-hand notations $G^{(m)}:=G_{ij}^{(m)}(x,y)$, $\tilde{G}_{ij}^{(2)}:=G_{ij}^{(2)}(y,x)$, $G^{(m)}z:=G_{ij}^{(m)}(x,y)(x-y)^i,\:z:=x-y$. These imply
\begin{equation}
\label{eq:Gs_2}
\rho^{2} (x,y)=G^{(1)}z\cdot z,
\quad \!\!
\frac{\p \rho}{\p x^j}=\frac{\bigg[G^{(2)}z\bigg]_j}{(G^{(1)}z \cdot z)^{1/2}}, \quad \!\! \frac{\p \rho}{\p y^j}=\frac{\bigg[\widetilde G^{(2)}z\bigg]_j}{(G^{(1)}z \cdot z)^{1/2}}.
\end{equation}

%
\color{black}
%
%
%
%
\noindent
Thus the following formula holds for the integral kernel of $\mathcal{N}_L$
on the neighborhood $U$ of the diagonal of the extension $M_2 \times M_2$.
\color{black}
\[
\begin{split}
K_{ijk\ell}(x,y)=&-2 \left(g_{iu}(y)-\frac{\left[\tilde{G}^{(2)}z\right]_i\left[\tilde{G}^{(2)}z\right]_u}{G^{(1)}z\cdot z}\right)\frac{A^{uu'}(x,y)}{\left(G^{(1)}z\cdot z\right)^2} 
\\
&\times\left(g_{ku'}(x)-\frac{\left[G^{(2)}z\right]_k\left[G^{(2)}z\right]_{u'}}{G^{(1)}z\cdot z}\right) \left[\widetilde G^{(2)}z\right]_j\left[{G}^{(2)}z\right]_\ell\frac{ \big|\det G^{(3)} \big|}{\sqrt{\det g(x)}}.
\end{split}
\]
From this and \eqref{eq:Kernel} we see that the integral kernel $K_{ijk\ell}$ is smooth in $M_2 \times M_2$ outside the diagonal, at which it has a singularity of the type $|x-y|_e^{-2}$.

Let $\chi \in C^\infty_0(U)$ equal to $1$ near the diagonal of $M_2\times M_2$. We write
\begin{equation}
\label{eq:spilt_of_K}
K_{ijk\ell}=\chi K_{ijk\ell}+(1-\chi K_{ijk\ell})=:K_{ijk\ell}^1+K_{ijk\ell}^2.
\end{equation}
Since $K_{ijk\ell}^2 \in C^\infty(M_2 \times M_2)$ the corresponding integral operator is a $\Psi$DO of order $-\infty$, with an amplitude of order $-\infty$. 
\color{black}

\begin{lemma}
\label{Le:M_tilde}
Let set $U$ be as in lemma \ref{Le:Gs}. For any $((x,y),z)\in U \times \R^3$ we define 
\begin{equation}
\label{eq:tildeM}
\begin{split}
\widetilde M_{ijk\ell}(x,y,z):=
&-2
\left(g_{iu}(y)-\frac{\left[\tilde{G}^{(2)}z\right]_i\left[\tilde{G}^{(2)}z\right]_u}{G^{(1)}z\cdot z}\right)\frac{A^{uu'}(x,y)}{\left(G^{(1)}z\cdot z\right)^2} 
\\
&\times\left(g_{ku'}(x)-\frac{\left[G^{(2)}z\right]_k\left[G^{(2)}z\right]_{u'}}{G^{(1)}z\cdot z}\right) \left[\widetilde G^{(2)}z\right]_j\left[{G}^{(2)}z\right]_\ell\frac{ \big|\det G^{(3)} \big|}{\sqrt{\det g(x)}}.
\end{split}
\end{equation}
The distribution $\widetilde M_{ijk\ell}$ belongs to $L^1_{loc}(\R^3)$ with respect to $z$ variable and  is positively homogeneous of order $-2$. Moreover $\widetilde M_{ijk\ell}$ is smooth in $U\times (\R^3\setminus\{0\})$.
\end{lemma}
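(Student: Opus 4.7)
The plan is to verify the three claims in sequence, using the explicit formula \eqref{eq:tildeM} together with the structural information about the $G^{(m)}$ provided by Lemma \ref{Le:Gs}.

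First, I would observe that by shrinking $\delta$ in the definition of $U$ if necessary, one may assume that $G^{(1)}(x,y)$ is positive definite on $U$. Indeed, by \eqref{eq:G_on_diagonal} we have $G^{(1)}(x,x)=g(x)$, which is positive definite, and $G^{(1)}$ is continuous on $\overline{M_2}\times \overline{M_2}$ which is compact. Consequently there is a uniform constant $c>0$ with
\[
G^{(1)}(x,y)z\cdot z \geq c|z|_e^2,\qquad (x,y)\in U,\ z\in \R^3,
\]
so the only potential singularity in the formula \eqref{eq:tildeM} is at $z=0$.

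Second, for the homogeneity I would simply count degrees in $z$. The building blocks depending on $z$ are $G^{(1)}z\cdot z$ (degree $2$), and each of $[G^{(2)}z]_\bullet$, $[\widetilde G^{(2)}z]_\bullet$ (degree $1$). The two bracketed factors
\[
g_{iu}(y)-\frac{[\tilde G^{(2)}z]_i[\tilde G^{(2)}z]_u}{G^{(1)}z\cdot z},\qquad g_{ku'}(x)-\frac{[G^{(2)}z]_k[G^{(2)}z]_{u'}}{G^{(1)}z\cdot z}
\]
are each positively homogeneous of degree $0$ in $z$; the remaining $z$-dependence is $(G^{(1)}z\cdot z)^{-2}\,[\widetilde G^{(2)}z]_j[G^{(2)}z]_\ell$, which is homogeneous of degree $-4+2=-2$. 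All other factors ($A^{uu'}$, $\det G^{(3)}$, $\det g(x)$, $g_{iu}(y)$) are $z$-independent. Hence $\widetilde M_{ijk\ell}(x,y,\cdot)$ is positively homogeneous of degree $-2$ on $\R^3\setminus\{0\}$.

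Third, smoothness on $U\times (\R^3\setminus\{0\})$ is immediate: every function appearing in \eqref{eq:tildeM} is smooth in $(x,y)\in U$ by Lemma \ref{Le:Gs} and by smoothness of the parallel transport encoded in $A^{uu'}$; the denominators $G^{(1)}z\cdot z$ and $\det g(x)$ do not vanish there (the former by the lower bound above, the latter since $g$ is a Riemannian metric); and $\det G^{(3)}$ is close to $\det g(x)$ for $(x,y)$ near the diagonal, hence nonzero on $U$ after possibly shrinking $\delta$ again.

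Finally, for local integrability in $z$, fix $(x,y)\in U$. Combining smoothness away from the origin with homogeneity of degree $-2$, there exists $C=C(x,y)>0$ such that $|\widetilde M_{ijk\ell}(x,y,z)|\leq C|z|_e^{-2}$ for all $z\neq 0$. In $\R^3$ with volume element $r^2\,dr\,dS$ this yields
\[
\int_{|z|_e\leq R} |z|_e^{-2}\,dz = 4\pi R<\infty,
\]
so $\widetilde M_{ijk\ell}(x,y,\cdot)\in L^1_{\mathrm{loc}}(\R^3)$. The only non-routine point in the entire argument is the uniform positive-definiteness of $G^{(1)}$ on $U$, which is what forces us to allow a possible further reduction of $\delta$.
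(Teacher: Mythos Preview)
Your proof is correct and follows essentially the same approach as the paper: degree-counting for homogeneity, smoothness of the constituent functions away from $z=0$, and integrability of a $|z|^{-2}$ singularity in $\R^3$. The only cosmetic difference is that the paper handles the quadratic form $G^{(1)}z\cdot z$ via the change of variables $z\mapsto H^{-1}z$ with $H=(G^{(1)})^{1/2}$ before passing to spherical coordinates, whereas you use the equivalent uniform lower bound $G^{(1)}z\cdot z\geq c|z|_e^2$; your route is slightly more direct.
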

\begin{proof}
We note that the equation \eqref{eq:Gs} imply that $G^{(2)}z$ and $\widetilde G^{(2)}z$ are 1-homogeneous with respect to $z$. Let $K_r$ be the compact set that is the image of the closed ball $B_r(0), \: r>0$ under the diffeomorphism $z'=H^{-1}z$, where $H$ is the square root of $G^{(1)}(x,y)$.
By a change to spherical coordinates we obtain
\[
\begin{split}
\int_{K_r}|\widetilde M_{ijk\ell}(x,y,z')|\: \mathrm{d}z'=&\int_{B_r}|\widetilde M_{ijk\ell}(x,y,H^{-1}z)|\: |\det H^{-1}| \mathrm{d}z
\\
=& 
rC  \int_{S^2}\bigg|\bigg(g_{iu}(y)-\bigg[\widetilde G^{(2)}H^{-1} \omega\bigg]_u\bigg[\widetilde G^{(2)}H^{-1} \omega\bigg]_i\bigg) A^{uu'}(x,y)
\\
\times &\bigg(g_{ku'}(x)-\bigg[G^{(2)}H^{-1}\omega \bigg]_{u'}\bigg[G^{(2)}H^{-1} \omega\bigg]_{k}\bigg) 
\bigg[\widetilde G^{(2)}H^{-1} \omega\bigg]_j\bigg[G^{(2)}H^{-1} \omega\bigg]_\ell \bigg| \:\mathrm{d}\omega,
\end{split}
\]
where $C=2|\det H^{-1}|\frac{ \big|\det G^{(3)} \big|}{\sqrt{\det g(x)}} $. Since the last integrand is continuous we have proven the first claim.

\medskip 
The second claim follows since $\widetilde{M}_{ijk\ell}(x,y,H^{-1}tz)=t^{-2}\widetilde{M}_{ijk\ell}(x,y,H^{-1}z), \: t>0$ implies
\[
\int_{\R^3}\widetilde M_{ijk\ell}(x,y,z) \varphi(z)\: \mathrm{d}z=t\int_{\R^3}\widetilde M_{ijk\ell}(x,y,z) \varphi(tz)\: \mathrm{d}z
\]
for every test function $\varphi$ and $t>0$. 
%
\end{proof}

Due to the previous lemma and \eqref{eq:spilt_of_K} we can write for $(x,y)\in M_2\times M_2$ that 
\[
\begin{split}
K^1_{ijk\ell}(x,y)=\chi(x,y) \widetilde M_{ijk\ell}(x,y,x-y)=\mathcal{F}^{-1}_\xi\left(M_{ijk\ell}(x,y,\xi)\right)\bigg|_{x-y}
\end{split}
\]
where
\begin{equation}\label{eq:Msymbol}
\begin{split}
M_{ijk\ell}(x,y,\xi)&=\chi(x,y)\int e^{-i \xi\cdot z}  \widetilde M_{ijk\ell}(x,y,z)\mathrm{d}z.
\end{split}
\end{equation}
Therefore $M_{ijk\ell}$ is homogeneous of order 
$
-1$ in $\xi$.  Since $M_{ijk\ell}$ is smooth in $M_2\times M_2 \times (\R^3 \setminus \{0\})$ and homogeneous of order $-1$, it is an amplitude of order $-1$. Thus the decomposition \eqref{eq:spilt_of_K} implies that $\mathcal{N}_L$ is a $\Psi$DO of order $-1$. 
\color{black}
To conclude the study of the normal operator, we state the main result of this section. 
\begin{proposition}
\label{Pr:normal_is_pseudo}
The normal operator $\mathcal{N}_L$ of mixed ray transform is a 
pseudo-differential operator of order $-1$ in $M_2$ with the
principal symbol 
\begin{equation}
\label{eq:Msymbol_2}
\begin{split}
\sigma_p(\mathcal{N}_L)^{ijk\ell}(x,\xi)=&
-2\sqrt{\det g(x)}\int_{\R^3} e^{-i \xi\cdot z}\bigg(g^{ki}(x)-\frac{z^{i}z^{k}}{|z|_g^2}\bigg)
\frac{z^jz^\ell}{|z|_g^4} \;\mathrm{d}z
\\
\\
=&-\frac{\sqrt{\pi}}{3\cdot 2^{5/2}} \bigg(-12g^{ik}|\xi|_g^{-1}(g^{jl}-|\xi|_g^{-2}\xi^j\xi^l)\\
&+9|\xi|_g^{-5}\xi^i\xi^j\xi^k\xi^l +3|\xi|_g^{-1}(g^{jk}g^{il}+g^{ik}g^{jl} + g^{kl}g^{ij}\color{black} )
\\
&-3|\xi|_g^{-3}(g^{ij}\xi^k\xi^l+g^{ik}\xi^j\xi^l+g^{il}\xi^j\xi^k+
g^{jk}\xi^i\xi^l+g^{jl}\xi^i\xi^k  +g^{kl}\xi^i\xi^j \color{black} )\bigg).
\end{split}
\end{equation}
Here 
$z_i=g_{ij}(x)z^j$ and 
$|z|^2_g=g_{ij}(x)z^iz^j$.
\end{proposition}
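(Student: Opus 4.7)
The plan is to leverage the representation already developed in the paragraphs preceding the proposition. There we split the Schwartz kernel as $K = K^1 + K^2$ with $K^2$ smooth (hence contributing a smoothing operator), and
\[
K^1_{ijk\ell}(x,y) = \mathcal{F}^{-1}_{\xi}\bigl(M_{ijk\ell}(x,y,\xi)\bigr)\Big|_{z=x-y},
\]
where $M_{ijk\ell}$ is an amplitude that is homogeneous of degree $-1$ in $\xi$ and smooth outside $\xi=0$. By the lemma just above, $\widetilde M$ is locally integrable and positively homogeneous of degree $-2$ in $z$, which guarantees $M$ is a classical degree $-1$ amplitude. This already establishes that $\mathcal{N}_L$ is a $\Psi$DO of order $-1$, and only the principal symbol remains to be computed.

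First I would evaluate the amplitude on the diagonal, since $\sigma_p(\mathcal{N}_L)(x,\xi) = M_{ijk\ell}(x,x,\xi)$. By Lemma \ref{Le:Gs}, $G^{(1)}(x,x) = G^{(2)}(x,x) = G^{(3)}(x,x) = g(x)$, so that $[G^{(2)}z]_i = [\widetilde G^{(2)}z]_i = z_i$ and $G^{(1)}z\cdot z = |z|_g^2$; and $A^{uu'}(x,x)=g^{uu'}(x)$ because the geodesic degenerates and parallel transport is the identity. Substituting into \eqref{eq:tildeM} and using that $P_v$ is a projection,
\[
\Bigl(g_{iu} - \frac{z_iz_u}{|z|_g^2}\Bigr) g^{uu'} \Bigl(g_{ku'} - \frac{z_kz_{u'}}{|z|_g^2}\Bigr) = g_{ik} - \frac{z_iz_k}{|z|_g^2},
\]
so that, after raising indices through $g(x)$ to match the index convention of the statement, $\widetilde M_{ijk\ell}(x,x,z)$ reduces exactly to $-2\sqrt{\det g(x)}\bigl(g^{ki}-z^iz^k/|z|_g^2\bigr) z^j z^\ell/|z|_g^4$. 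Taking the Fourier transform in $z$ gives the first equality of \eqref{eq:Msymbol_2}.

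For the closed form, I would work in linear coordinates at $x$ in which $g_{ij}(x) = \delta_{ij}$, which simplifies the integrand to its Euclidean counterpart; by covariance the resulting formula, re-expressed in terms of $g^{ij}$, $\xi^i$ and $|\xi|_g$, is valid in any coordinate system. The problem then splits into evaluating the two tempered-distribution Fourier transforms
\[
F^{jl}(\xi) = \int_{\R^3} e^{-i\xi\cdot z}\frac{z^jz^l}{|z|^4}\,\mathrm{d}z, \qquad H^{ijkl}(\xi) = \int_{\R^3} e^{-i\xi\cdot z}\frac{z^iz^jz^kz^l}{|z|^6}\,\mathrm{d}z,
\]
each of a homogeneous distribution and hence returning a homogeneous function of $\xi$ of degree $-1$. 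Using the identity $\mathcal{F}[z^m f(z)] = \mathrm{i}\,\partial_{\xi_m}\mathcal{F}[f]$ together with the standard Riesz-type formula for $\mathcal{F}[|z|^{-s}]$ in $\R^3$ (analytically continued to $s=4$ and $s=6$), $F^{jl}$ is a constant multiple of $\partial_j\partial_l|\xi|$ and $H^{ijkl}$ is a constant multiple of $\partial_i\partial_j\partial_k\partial_l|\xi|^3$. Assembling $\sigma_p^{ijk\ell} = -2\sqrt{\det g}\,\bigl(g^{ki}F^{jl} - H^{ijkl}\bigr)$ and grouping by the six linearly independent rank-four invariant tensors built from $g$ and $\xi$ produces the explicit expression in the statement.

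The main obstacle is the tensorial bookkeeping in the last step: differentiating $|\xi|$ and $|\xi|^3$ four times yields several symmetrized combinations of $\delta^{ab}$ and $\xi^c\xi^d$ which must be carefully collected so that all six independent tensor structures in \eqref{eq:Msymbol_2} (the three $g^{\cdot\cdot}g^{\cdot\cdot}$ terms, the six $g^{\cdot\cdot}\xi^\cdot\xi^\cdot$ terms, and $\xi^i\xi^j\xi^k\xi^\ell$) appear with the correct coefficients and a consistent overall constant. Everything else is essentially symbolic calculus driven by the reductions in Steps~1--2.
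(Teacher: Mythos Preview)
Your proposal is correct and follows essentially the same approach as the paper: set $x=y$ in the amplitude using the diagonal values of $G^{(m)}$ and $A^{uu'}$, then compute the Fourier transform by writing it as $C\bigl(-12g^{ik}\partial_{\xi_j}\partial_{\xi_l}|\xi|_g + \partial_{\xi_i}\partial_{\xi_j}\partial_{\xi_k}\partial_{\xi_l}|\xi|_g^3\bigr)$ via the Riesz-type formula $\mathcal{F}[|z|_g^{\alpha}](\xi)=c_{3,\alpha}(\det g)^{-1/2}|\xi|_g^{-\alpha-3}$ and the rule $\mathcal{F}[z^m f]=\mathrm{i}\,\partial_{\xi_m}\mathcal{F}[f]$. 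The only cosmetic difference is that the paper keeps the metric $g$ explicit throughout the Fourier computation rather than first passing to Euclidean coordinates at $x$; both routes produce the same closed-form expression.
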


\begin{remark}
We note that terms $ g^{kl}g^{ij}$ and $g^{kl}\xi^i\xi^j$ in \eqref{eq:Msymbol_2} do not contribute to the action of the symbol as we are working on the kernel of the map $\mu$. Thus we ignore those two terms in the following calculations.
\end{remark}

\begin{proof}
We emphasize that $\mathcal{N}_L$ is not properly supported. However there exists properly supported $\Psi$DO $A_{ijk\ell}$ of order $-1$ such that $(\mathcal{N}_L)_{ijk\ell}-A_{ijk\ell}$ is smoothing. Neglecting this technicality we obtain the principal symbol of $\mathcal{N}_L$ by setting $x=y$ in \eqref{eq:Msymbol}. Then formula \eqref{eq:G_on_diagonal} implies
\[
G^{(2)}_{ij}(x,x)z^i=z_j, \quad \hbox{and} \quad G^{(1)}_{ij}(x,x)z^iz^j=|z|^2_g.
\]
Therefore after raising indices formulas  \eqref{eq:tildeM} and  \eqref{eq:Msymbol} imply the first equation of
\eqref{eq:Msymbol_2}.

%

We proceed on to compute the Fourier transform in  \eqref{eq:Msymbol_2}. We recall the following formula for the $n-$dimensional Fourier transform of the powers of a norm given by a positive definite matrix $g$:
\[
\mathcal{F}[|x|_g^{\alpha}](\xi)=c_{n,\alpha}\frac{1}{\sqrt{\det g}}|\xi|_g^{-\alpha-n}, \quad  c_{n,\alpha}=2^{n/2-\alpha}\frac{\Gamma(\frac{n-\alpha}{2})}{\Gamma(\frac{\alpha}{2})},\quad \alpha \neq n+2k, \: k \in \Z.
\]
Here $\Gamma$ is the Gamma function. In dimension $3$ we have
\[
\sqrt{\det g}\;\mathcal{F}_x[|x|_g^{-6}](\xi)=2^{-9/2}\frac{\Gamma(-\frac{3}{2})}{\Gamma(3)}|\xi|_g^3=\frac{\sqrt{\pi}}{3\cdot 2^{7/2}}|\xi|_g^3,
\]
\[
\sqrt{\det g}\;\mathcal{F}_x[|x|_g^{-4}](\xi)=2^{-5/2}\frac{\Gamma(-\frac{1}{2})}{\Gamma(2)}|\xi|_g=-\frac{\sqrt{\pi}}{2^{3/2}}|\xi|_g.
\]
Thus the Fourier transform in \eqref{eq:Msymbol_2} equals to
\[
C\left(
-12g^{ik}\frac{\partial^2|\xi|_g}{\partial\xi^j\partial\xi^l}+\frac{\partial^4|\xi|_g^3}{\partial\xi^i\partial\xi^j\partial\xi^k\partial\xi^l}\right)
=:CN^{ijkl}
, \quad  C=-\frac{\sqrt{\pi}}{3\cdot 2^{5/2}}.
\]
Finally we compute the derivatives in the formula above to find that $CN^{ijkl}$ is the right hand side of the equation \eqref{eq:Msymbol_2}.
\end{proof}
\begin{remark}
If $g$ is a constant coefficient metric, then \eqref{eq:Msymbol_2} gives the full symbol of the normal operator. The proof is similar to \cite[Section 4]{stefanov2004stability}.
\end{remark}

\section{Reconstruction formulas and stability estimates}
\label{Se:Parametrix}
For this section we set the assumption that the metric $g$ is only $C^k$-smooth in $\R^3$ for some $k\in \N$ that is large enough. Nevertheless we can still assume that the closed set $M\subset \R^3$ is extended to simple open domains $(M_1,g)$ and $(M_2,g)$ such that $M\subset \subset M_1\subset\subset M_2$. 

We begin this section by recalling some basic theory of $\Psi$DOs whose amplitudes are only finitely smooth. We also show that the solution operator of the boundary value problem \eqref{eqnu1} depends continuously on $g$ with respect to $C^k$-topology.
In the second part of this section we show that the normal operator $\mathcal{N}_L$ is elliptic on the subspace of solenoidal tensors on $M_1$ and construct a parametrix with respect to this subspace. In order to do this we need to have the second extension $M_2$ as we will use the projection operator $\mathcal{P}_{M_2}=\dB(\LaB_{\!\!\!\!M_2})^{-1} \deB$ onto the potential tensors, with respect to the largest domain $M_2$, in the construction of the parametrix. In addition we define the projection operator $\mathcal{S}_{M_2}:=\Id-\mathcal{P}_{M_2}$ onto solenoidal tensor fields on $M_2$.  In the final part of the section we study the stability of the normal operator. We also provide a reconstruction formula for the solenoidal component, with respect to $M$.

\subsection{Pseudo-differential operators with finitely smooth amplitudes}
Since the metric is assumed to pose only finite smoothness we need to set some finite regularity conditions also for the  amplitudes of the $\Psi$DOs we are interested in.  We use the notation $A^m, \: m \in \R$ for the space of $C^k$-smooth amplitudes of order $m$ in $M_2$. That is an amplitude $a(x,y,\xi), \: (x,y,\xi)\in M_2 \times M_2 \times \R^3$ in $A^m$ are set to satisfy only a finite amount of seminorm estimates: 
\[
|D_\xi^\alpha D_x^\beta D_y^\gamma a(x,y,\xi)|\leq C_{\alpha,\beta,\gamma}(\mathcal K)\left(1+|\xi|_e \right)^{m-|\alpha|}, \quad \hbox{$(x,y) \in \mathcal K \subset M_2$ is compact,} \quad C_{\alpha,\beta,\gamma}(\mathcal K)>0,
\] 
where $\alpha,\beta,\gamma \in \N^3$ are multi-indices that satisfy
$
|\alpha|,|\beta|,|\gamma|\leq k.
$
We repeat the proof of \cite[Theorem 2.1]{treves1} to observe that for any $m_0,\:s_0 >0$ there exists $k \in \N$ such that for any $|m|\leq m_0$ and $|s|\leq s_0$ the linear operator
\[
\mathcal A\colon H^s(\overline {M_1}) \to H^{s-m}(\overline {M_1})
\]
is bounded if $\mathcal A$ is a $\Psi$DO in $M_2$ with an amplitude in $A^m$ having the regularity $k$. We also note that the operators with amplitudes in $A^m$, for any $m \in \R$ are\textit{ finitely pseudo-local} in the following sense:  If $U \subset M_2$ is open and $u \in \mathcal E'(M_2)$, then for any $k' \in \N$ and $m \in \R$ there exists $k \in N$ such that if $\mathcal A$ is $\Psi$DO with a $C^k$-smooth amplitude in $A^m$ then the following holds:
\[
\mathcal Au \in C^{k'}(U), \quad \hbox{ if }u \in C^{k'}(U).
\]
This follows from the proof of \cite[Theorem 2.2.]{treves1} after minor modifications. 

Using the aforementioned machinery for finitely smooth  amplitudes we note that the integral kernel $K_{ijk\ell}$ of $\mathcal{N}_L$, as given in \eqref{eq:Kernel}, is well defined and depends continuously of the metric $g$ in $C^k$-topology, if $k$ is large enough. Thus the normal operator $\mathcal{N}_L$ depends continuously of the metric $g$ and moreover the same holds also for the functions $G^{(m)}$ in Lemma \ref{Le:Gs}. Thus the claim of Proposition \ref{Pr:normal_is_pseudo} is unchanged if $g$ is regular enough and the formula \eqref{eq:Msymbol_2} shows that also the principal symbol $\sigma(\mathcal{N}_L)$ depends continuously about $g$ in $C^k$-topology.

\color{black}

\medskip
We note that the volume form $\mathrm{d}V_g$ depends on the metric tensor $g$. However  for any $t\in \R$ there exists $k \in \N$ such that if we fix a simple $C^k$-metric $g_0$ of $M$, then there exists a $C^k$-neighborhood $U$ of $g_0$, consisting of simple metrics, on which we can choose a uniform bi-Lipschitz constant for the norms $\|\cdot\|_{H^t_{g_{i}}}, \: i \in \{1,2\}$ for any $g_1, g_2 \in U$. Recall that we are working with $1+1$-tensors that are in the kernel $S\tau'_M\otimes^\mathcal{B} S\tau'_M$ of the trace operator $\mu$ related to a metric tensor $g$. 
To avoid this issue, we introduce operator
\[
\kappa_g^\sharp\colon  S\tau'_M\otimes^{\mathcal{B}} S\tau'_M\rightarrow S\tau_M\otimes^{\mathcal{B}} S\tau'_M,
\]
such that
\[
(\kappa_g^\sharp f)^i_j=g^{ai}f_{aj}.
\]
Then the following subspace of $1$-covariant $1$-contravariant tensor fields 
\[
S\tau_M\otimes^\mathcal{B} S\tau'_M=\{f\in S\tau_M\otimes S\tau'_M, f^i_i=0\}
\]
coincides with the image of $\kappa_g^\sharp$, but is defined independent of the metric $g$. We let $\kappa_g^\flat$ be the inverse of $\kappa_g^\sharp$. The continuity of the maps $\kappa_g^\flat, \kappa_g^\sharp$ with respect to metric $g$ is evident.

\begin{lemma}
Let  $g_0 \in C^1(M)$. There exists a neighborhood $U \subset C^1(M)$ of $g_0$, such that the map
\[
\kappa_g^\sharp\circ(\LaB)_{M,g}^{-1}\circ\kappa_g^\flat\colon  H^{-1}(S\tau_M)\rightarrow H^{1}_0(S\tau_M),
\]
where $(\LaB)_{M,g}^{-1}$ is the solution operator of \eqref{eqnu1}, with vanishing boundary value, and the projections
 \[
 \kappa_g^\sharp\circ\mathcal{P}_M\circ\kappa_g^\flat,\: \kappa_g^\sharp\circ\mathcal{S}_M\circ\kappa_g^\flat\colon  L^2(S\tau_M\otimes^\mathcal{B} S\tau'_M)\rightarrow  L^2(S\tau_M\otimes^\mathcal{B} S\tau'_M)
 \] 
 depend continuously on $g \in U$.
\end{lemma}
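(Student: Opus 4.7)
The plan is to reduce everything to operators acting between fixed ($g$-independent) Banach spaces and then apply a standard resolvent perturbation argument. The key observation is that the algebraic trace condition $f^i_i=0$ defining $S\tau_M\otimes^\mathcal{B} S\tau'_M$ is metric-free, so the Sobolev spaces $H^s(S\tau_M\otimes^\mathcal{B} S\tau'_M)$ and $H^1_0(S\tau_M\otimes^\mathcal{B} S\tau'_M)$ can be normed using the fixed reference metric $g_0$; for $g$ in a sufficiently small $C^1$-neighborhood $U$ of $g_0$, the norms induced by different $g\in U$ are uniformly equivalent to those induced by $g_0$, so continuity questions can be phrased against a single fixed norm. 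After this reduction, the whole point of the conjugation by $\kappa_g^\sharp$ and $\kappa_g^\flat$ is precisely to make the domain and target spaces independent of $g$.

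First I would consider the conjugated operator
\[
\widetilde{L}_g:=\kappa_g^\sharp\circ\LaB_{M,g}\circ\kappa_g^\flat\colon H^1_0(S\tau_M\otimes^\mathcal{B} S\tau'_M)\to H^{-1}(S\tau_M\otimes^\mathcal{B} S\tau'_M).
\]
Writing $\LaB=\delta^\mathcal{B}\mathrm{d}^\mathcal{B}$ in local coordinates using the formula \eqref{eq:dB} shows that the coefficients of $\widetilde{L}_g$ are polynomial expressions in $g$, $g^{-1}$ and the Christoffel symbols. Combined with the finitely-smooth amplitude framework recalled at the start of Section \ref{Se:Parametrix}, this shows that the map $g\mapsto\widetilde{L}_g$ is continuous in the operator norm on $U$, provided the $C^k$-regularity of $g$ is taken large enough for these coefficients to make sense and depend continuously on $g$ in the $H^1_0\to H^{-1}$ topology. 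Likewise $\kappa_g^\sharp\circ\mathrm{d}^\mathcal{B}\circ\kappa_g^\flat$ and $\kappa_g^\sharp\circ\delta^\mathcal{B}\circ\kappa_g^\flat$ depend continuously on $g$ as bounded operators between the appropriate fixed Sobolev spaces.

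Next I would invoke Lemma \ref{existence2} at the base metric $g_0$: the operator $\widetilde{L}_{g_0}$ is invertible with bounded inverse. Since the set of invertible bounded operators between two Banach spaces is open in the norm topology and inversion is continuous there, a Neumann-series argument based on the resolvent identity
\[
\widetilde{L}_g^{-1}-\widetilde{L}_{g_0}^{-1}=\widetilde{L}_{g_0}^{-1}(\widetilde{L}_{g_0}-\widetilde{L}_g)\widetilde{L}_g^{-1}
\]
yields, after possibly shrinking $U$, a uniform bound on $\|\widetilde{L}_g^{-1}\|$ together with the continuity of $g\mapsto\widetilde{L}_g^{-1}=\kappa_g^\sharp\circ(\LaB)_{M,g}^{-1}\circ\kappa_g^\flat$ in $U$. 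For the projection operators I would simply write
\[
\kappa_g^\sharp\circ\mathcal{P}_M\circ\kappa_g^\flat=\bigl(\kappa_g^\sharp\circ\mathrm{d}^\mathcal{B}\circ\kappa_g^\flat\bigr)\circ\bigl(\kappa_g^\sharp\circ(\LaB)_{M,g}^{-1}\circ\kappa_g^\flat\bigr)\circ\bigl(\kappa_g^\sharp\circ\delta^\mathcal{B}\circ\kappa_g^\flat\bigr),
\]
a composition of continuous maps, and $\kappa_g^\sharp\circ\mathcal{S}_M\circ\kappa_g^\flat=\mathrm{Id}-\kappa_g^\sharp\circ\mathcal{P}_M\circ\kappa_g^\flat$ is then immediately continuous.

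The main obstacle is the bookkeeping in the first step: one has to verify that the actual order of regularity of $g$ assumed (here $C^1$ is written, but in practice one needs a larger $C^k$ consistent with the finitely-smooth $\Psi$DO calculus in Section \ref{Se:Parametrix}) is enough to guarantee both the boundedness and the continuous dependence of the coefficients of $\widetilde{L}_g$ as multipliers/differential operators between the relevant Sobolev spaces. Once this is done carefully, the resolvent identity and the \emph{a priori} estimate \eqref{eq:energy_estimate} of Lemma \ref{existence2} provide a uniform operator-norm bound for $\widetilde{L}_g^{-1}$ on $U$ and close the argument.
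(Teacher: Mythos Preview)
Your outline is essentially the same as the paper's proof: both conjugate to fixed spaces, show the forward operator $\widetilde L_g=\kappa_g^\sharp\circ\LaB_{M,g}\circ\kappa_g^\flat$ depends continuously on $g$ in $\mathcal L(H^1_0,H^{-1})$, and then use the resolvent identity
\[
\widetilde L_g^{-1}-\widetilde L_{g_0}^{-1}=\widetilde L_g^{-1}\bigl(\widetilde L_{g_0}-\widetilde L_g\bigr)\widetilde L_{g_0}^{-1}
\]
together with invertibility at $g_0$ to get uniform bounds and continuity of the inverse; the projections then follow by composition exactly as you wrote.

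Two points of execution where the paper is sharper than your sketch. First, the $C^1$ hypothesis is genuine, not a placeholder for a larger $C^k$: the paper avoids ever expanding $\LaB$ as a second-order operator (which would bring in derivatives of the Christoffel symbols) and instead estimates the difference $\widetilde L_{g_0}-\widetilde L_g$ through the variational identity $\langle\LaB u,v\rangle=-\langle\dB u,\dB v\rangle$, so only the coefficients of $\dB$ (polynomials in $g$, $g^{-1}$, $\Gamma$) enter. Second, Lemma \ref{existence2} is stated only for smooth metrics, so you cannot invoke invertibility of $\widetilde L_{g_0}$ directly for $g_0\in C^1$; the paper handles this by first proving the Lipschitz bound \eqref{eq:bounded_inverse_of_lap} for smooth $g,g_0$ and then using density of smooth metrics in $C^1(M)$ to extend the definition of $(\LaB)_{M,g}^{-1}$ (as a bounded operator $H^{-1}\to H^1_0$) to all $g\in C^1$.
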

\begin{proof}
We consider first a smooth  metric $g_0$, and note that Lemma \ref{existence2} implies that the solution operator $(\LaB)_{M,g_0}^{-1}\colon  H^{-1}(S\tau'_M)\rightarrow H^{1}_0(S\tau'_M)$ is bounded. Let $\epsilon>0$ and $g$ be any smooth metric such that $\|g_0-g\|_{C^1}<\epsilon$. We write
\begin{equation}\label{difference_lap}
\begin{split}
&\kappa_g^\sharp\circ(\LaB)_{M,g}^{-1}\circ\kappa_g^\flat-\kappa_{g_0}^\sharp\circ(\LaB)_{M,g_0}^{-1}\circ\kappa_{g_0}^\flat\\
=&\left(\kappa_g^\sharp\circ(\LaB)_{M,g}^{-1}\circ\kappa_g^\flat\right)\left(\kappa_{g_0}^\sharp\circ\LaB_{M,g_0}\circ\kappa_{g_0}^\flat-\kappa_g^\sharp\circ\LaB_{M,g}\circ\kappa_g^\flat\right)\left(\kappa_{g_0}^\sharp\circ(\LaB)_{M,g_0}^{-1}\circ\kappa_{g_0}^\flat\right),
\end{split}
\end{equation}
and choose $u,v\in H^1_0(S\tau_M)$. 
Then by Cauchy-Schwarz inequality we obtain
\begin{equation}
\label{eq:action_of_Laplacian_diff}
\begin{split}
\left|\Big\langle (\kappa_{g_0}^\sharp\circ\LaB_{M,g_0}\circ\kappa_{g_0}^\flat-\kappa_g^\sharp\circ\LaB_{M,g}\circ\kappa_g^\flat) u,v\Big\rangle\right|
=&\left|\langle\mathrm{d}_{g_0}^\mathcal{B}\kappa_{g_0}^\flat u,\mathrm{d}^\mathcal{B}_{g_0}\kappa_{g_0}^\flat v\rangle_{L^2_{g_0}}\!-\!\langle\mathrm{d}_{g}^\mathcal{B}\kappa_{g}^\flat u,\mathrm{d}^\mathcal{B}_{g}\kappa_{g}^\flat v\rangle_{L^2_{g}}\right|
\\
\leq &\:  C\|g-g_0\|_{C^1}(\|g\|_{C^1}+\|g_0\|_{C^1})\|u\|_{H^1_{g_0}}\|v\|_{H^1_{g_0}}.
\end{split}
\end{equation}
In the last inequality we used the uniform Lipschitz equivalence of the $H^1$-norms.

If $\varepsilon\ll 1$, then $\|g-g_0\|_{C^1}(\|g\|_{C^1}+\|g_0\|_{C^1})\leq C\varepsilon$, for some $C>0$, which can be chosen uniformly when ever $g$ is close enough to $g_0$. Consequently \eqref{eq:action_of_Laplacian_diff} implies
\[
\|\kappa_{g_0}^\sharp\circ\LaB_{M,g_0}\circ\kappa_{g_0}^\flat-\kappa_g^\sharp\circ\LaB_{M,g}\circ\kappa_g^\flat\|_{H^1\rightarrow H^{-1}}\leq C\varepsilon,
\]
from which after using \eqref{difference_lap} it follows that
\[
\|(\LaB)_{M,g}^{-1}\|_{H^{-1}\rightarrow H^1}\leq \|(\LaB)_{M,g_0}^{-1}\|_{H^{-1}\rightarrow H^1}\left(1+\varepsilon \|(\LaB)_{M,g}^{-1}\|_{H^{-1}\rightarrow H^1}\right),
\]
and moreover
\[
\|(\LaB)_{M,g}^{-1}\|_{H^{-1}\rightarrow H^1}\leq \left(1-C\varepsilon \|(\LaB)_{M,g_0}^{-1}\|_{H^{-1}\rightarrow H^1}\right)^{-1}\|(\LaB)_{M,g_0}^{-1}\|_{H^{-1}\rightarrow H^1}.
\]
Finally we use \eqref{difference_lap} again, to conclude
\begin{equation}
\label{eq:bounded_inverse_of_lap}
\|\kappa_g^\sharp\circ(\LaB)_{M,g}^{-1}\circ\kappa_g^\flat-\kappa_{g_0}^\sharp\circ(\LaB)_{M,g_0}^{-1}\circ\kappa_{g_0}^\flat\|_{H^{-1}\rightarrow H^1}\leq C_0\|g-g_0\|_{C^1},
\end{equation}
where  $C_0>0$  can be chosen uniformly in some small $C^1$-neighborhood $U$ of $g_0$. Since smooth metrics are dense in $C^1(M)$, the Lipschitz estimate \eqref{eq:bounded_inverse_of_lap} implies that we can define the solution operator $(\LaB)_{M,g}^{-1}$ as bounded map $H^{-1}(S\tau'_M)\rightarrow H^{1}_0(S\tau'_M)$ for $g \in C^1(M)$ with the estimate \eqref{eq:bounded_inverse_of_lap} still valid. Thus the first claim of the Lemma is proven. We note that the continuity of operators $ \kappa_g^\sharp\circ\mathcal{P}\circ\kappa_g^\flat,\, \kappa_g^\sharp\circ\mathcal{S}\circ\kappa_g^\flat$ follows from this. 
\end{proof}

The previous lemma has the following straightforward generalization. 
\begin{corollary}
\label{Co:continuity_of_the_ops}
Let $t>0$. There exists $k_0 \in \N$, such that for any $k\geq k_0$ and  $g_0 \in C^k(M)$ there exists a neighborhood $U \subset C^k(M)$ of $g_0$, such that the solution operator
\[
\kappa_g^\sharp\circ(\LaB)_{M,g}^{-1}\circ\kappa_g^\flat\colon  H^{t-1}(S\tau_M)\rightarrow (H^{t+1}\cap H^1_0)(S\tau_M),
\]
and the projections
 \[
 \kappa_g^\sharp\circ\mathcal{P}_M\circ\kappa_g^\flat,\: \kappa_g^\sharp\circ\mathcal{S}_M\circ\kappa_g^\flat\colon  H^t(S\tau_M\otimes^\mathcal{B} S\tau'_M)\rightarrow  H^t(S\tau_M\otimes^\mathcal{B} S\tau'_M)
 \] depend continuously on $g \in U$.
\end{corollary}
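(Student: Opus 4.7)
The plan is to reduce the corollary to the proof of the preceding lemma by replaying the same Neumann-type argument at an arbitrary Sobolev scale, with the only new ingredient being higher-order elliptic regularity. First I would upgrade Lemma \ref{existence2} to the statement that for every integer $m\geq 2$ there is $k_0=k_0(m)\in \mathbb{N}$ such that, for any simple metric $g\in C^{k_0}(M)$, the solution operator
\[
(\LaB)_{M,g}^{-1}\colon H^{m-2}(S^k\tau'_M\otimes^{\mathcal{B}} S^{\ell-1}\tau'_M)\longrightarrow (H^{m}\cap H^1_0)(S^k\tau'_M\otimes^{\mathcal{B}} S^{\ell-1}\tau'_M)
\]
is well defined and bounded. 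This is standard elliptic regularity applied to the boundary value problem already verified as elliptic in Lemma \ref{le:elliptic_prob}: the a priori estimate from \cite[Theorem 9.32]{wloka1995boundary}, combined with the triviality of the kernel and cokernel proved in Lemma \ref{existence2}, delivers such bounds once the coefficients of $\LaB$, which involve up to two derivatives of $g$, are regular enough to carry $m-2$ further derivatives.

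Setting $m=t+1$ and fixing $g_0\in C^{k_0}(M)$ simple, I would then imitate the perturbation argument of the preceding lemma. Writing $A_g=\kappa_g^\sharp(\LaB)_{M,g}^{-1}\kappa_g^\flat$ and $B_g=\kappa_g^\sharp\LaB_{M,g}\kappa_g^\flat$, the resolvent identity
\[
A_g-A_{g_0}=A_g\bigl(B_{g_0}-B_g\bigr)A_{g_0}
\]
reduces everything to estimating the second order differential operator $B_{g_0}-B_g$ as a map $H^{t+1}\to H^{t-1}$. Its coefficients are smooth nonlinear functions of $g$, $g^{-1}$ and up to two derivatives of $g$, so if $k_0$ is taken large enough (it suffices, for example, to take $k_0\geq t+3$ in order to accommodate $t-1$ additional derivatives together with the chain rule) the mean value theorem yields
\[
\|B_{g_0}-B_g\|_{H^{t+1}\to H^{t-1}}\leq C\|g-g_0\|_{C^{k_0}}.
\]
A Neumann-type argument identical to the one producing \eqref{eq:bounded_inverse_of_lap} then gives uniform boundedness of $A_g$ together with the Lipschitz estimate $\|A_g-A_{g_0}\|_{H^{t-1}\to H^{t+1}}\leq C\|g-g_0\|_{C^{k_0}}$ on a $C^{k_0}$-neighborhood $U$ of $g_0$, proving the first assertion.

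Continuity of the projections is then a direct consequence: one writes $\kappa_g^\sharp\mathcal P_M\kappa_g^\flat=(\kappa_g^\sharp\mathrm d^{\mathcal B}\kappa_g^\flat)\circ A_g\circ(\kappa_g^\sharp\deB\kappa_g^\flat)$ and uses that $\mathrm d^{\mathcal B}$ and $\deB$ are first order differential operators whose coefficients depend $C^{k_0-1}$-continuously on $g$, so that each factor is jointly continuous in $g$ as a bounded operator between the appropriate Sobolev spaces. The identity $\kappa_g^\sharp\mathcal S_M\kappa_g^\flat=\mathrm{Id}-\kappa_g^\sharp\mathcal P_M\kappa_g^\flat$ then transfers the continuity to the solenoidal projection. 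The main obstacle is not any new idea but a careful bookkeeping of $k_0(t)$: one must track how many derivatives of $g$ enter the symbols of $\LaB$, $\mathrm d^{\mathcal B}$ and $\deB$, and verify that the higher-order elliptic estimate still closes, all of which amounts to quantitative versions of standard facts once Lemma \ref{le:elliptic_prob} is in hand.
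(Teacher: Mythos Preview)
Your proposal is correct and follows exactly the approach the paper intends: the paper offers no separate proof of this corollary, simply presenting it as ``the following straightforward generalization'' of the preceding lemma, and you have spelled out precisely that generalization --- replaying the resolvent identity \eqref{difference_lap} and Neumann argument at Sobolev scale $H^{t-1}\to H^{t+1}$, invoking the higher-order elliptic estimate already contained in Lemma~\ref{existence2}, and then composing with $\dB,\deB$ for the projections. The only cosmetic point is that Lemma~\ref{existence2} is stated for integer $m$, so for non-integer $t$ one either interpolates or appeals directly to the $H^s$-version of the elliptic theory; this is routine and does not affect the argument.
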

\color{black}

\subsection{Reconstruction formula}
\label{Se:Reconstruction formula}
We fix a simple metric $g_0\in C^k(M)$, where $k \in \N$  and consider a simple metric $g \in C^k(M)$ in a small neighborhood of $g_0$ with respect to $C^k$-topology. We recall that any tensor field defined in $M$ is extended to any larger domain with the zero extension. Moreover we note that by conjugating all the operators, that are to be used in this section, with $ \kappa_g^\sharp$ from left and $\kappa_g^\flat$ from right, we can work in the space of  trace-free tensor fields $S\tau_M\otimes^\mathcal{B} S\tau'_M$, that is invariant of any metric structure. To reduce the notations we omit the conjugation from here onwards. 

Let $|D|_g$ be a $\Psi$DO with the full symbol $|\xi|_g$. We begin by constructing a parametrix for the $\Psi$DO 
\begin{equation}
\label{eq:operator_M}
\mathcal{M}f=
\left(\begin{array}{c}
|D|_g\mathcal{N}_Lf
\\
\mathcal{P}_{M_2}f
\end{array}\right), \quad \hbox{ in $M_1$.}
\end{equation}
We note that the right hand side of \eqref{eq:operator_M} extends $\mathcal{M}$ in  $M_2$, 
and due to Corollary \ref{Co:continuity_of_the_ops}, the operator $\mathcal M$ depends continuously on $g$.
In the following we use the notation $\circ$ for a finite asymptotic expansion for the symbol of a product of two $\Psi$DOs.
\color{black}
The principal symbol $\sigma(\mathcal{M})$ of $\mathcal{M}$ is given by
\[
\sigma(\mathcal{M})=
\left(\begin{array}{c}
|\xi|_g \circ\sigma(\mathcal{N}_L)\\
\sigma(\mathcal{P}_{M_2})\\
\end{array}\right).
\]
We show now that $\sigma(\mathcal{M})$ is elliptic near $M_1$. To do so we lower the $(ij)$-indices in \eqref{eq:Msymbol_2} to get:
\[
\begin{split}
\sigma(\mathcal{N}_L)^{kl}_{ij}=CN_{ij}^{kl}&=
C\bigg(-12|\xi|_g^{-1}\delta_i^{k}\delta^{l}_j+12|\xi|_g^{-2}\delta_i^{k}\xi_j\xi^l)\\
&+9|\xi|_g^{-5}\xi_i\xi_j\xi^k\xi^l +3|\xi|_g^{-1}(\delta^{k}_j\delta^{l}_i+\delta^{k}_i\delta^{l}_j 
)
\\
&-3|\xi|_g^{-3}(g_{ij}\xi^k\xi^l+\delta^{k}_i\xi_j\xi^l+\delta^{l}_i\xi_j\xi^k+
\delta^{k}_j\xi_i\xi^l+\delta^{l}_j\xi_i\xi^k  
)\bigg).
\end{split}
\]
\begin{remark}
\label{Re:parametrix_symbol}
A straightforward calculation shows that 
\[
g^{ij}N_{ij}^{kl}=0,  \quad \xi^{j}N_{ij}^{kl}=0, \quad \xi^{i}N_{ij}^{kl}=12|\xi|_g^{-1}\left(|\xi|_g^{-2}\xi^k\xi_j\xi^l-\xi^k\delta^{l}_j\right)
\]
and 
\[
N^{kl}_{ji}-N^{kl}_{ij}=12\left(|\xi|_g^{-1}(\delta^{k}_i\delta^{l}_j-\delta^{k}_j\delta^{l}_i)-|\xi|_g^{-3}(\delta^{k}_i\xi_j\xi^l-\delta^{k}_j\xi_i\xi^l)\right).
\]
Therefore
\[
\begin{split}
&|\xi|_g\left[N_{ij}^{kl}+\frac{1}{4}\left(N_{ji}^{kl}-N_{ij}^{kl}-|\xi|_g^{-2}\xi_j\xi^{j'}N_{j'i}^{kl}-|\xi|_g^{-2}\xi_i\xi^{i'}N_{i'j}^{kl}\right)\right]
\\
=&6|\xi|_g^{-2}\delta_i^{k}\xi_j\xi^l -6\delta_i^{k}\delta^{l}_j-3|\xi|_g^{-2}g_{ij}\xi^k\xi^l+3|\xi|_g^{-4}\xi_i\xi_j\xi^k\xi^l.
\\
\end{split}
\]
\end{remark}
\noindent Motivated by Remark \ref{Re:parametrix_symbol} we define
\[
E_{ij}^{mn}=-\frac{1}{6C}\left(\delta_i^m\delta_j^n+\frac{1}{4}\left(\delta_i^n\delta_j^m-\delta_i^m\delta_j^n-|\xi|_g^{-2}\xi_j\xi^m\delta_i^n-|\xi|_g^{-2}\xi_i\xi^m\delta_j^n\right)\right).
\]
Therefore
\begin{equation}\label{Econstruction}
E_{ij}^{mn}|\xi|_g\sigma(\mathcal{N}_L)_{mn}^{kl} =\delta_i^k\delta_j^l-\delta_i^k|\xi|_g^{-2}\xi_j\xi^l+\frac{1}{2}|\xi|_g^{-2}g_{ij}\xi^k\xi^l-\frac{1}{2}|\xi|_g^{-4}\xi_i\xi_j\xi^k\xi^l.
\end{equation}

Thus we need to characterize the remainder term in \eqref{Econstruction}. Since $-\LaB$ is elliptic near  $M_1$,
with principal symbol 
\[
-\sigma(\Delta^\mathcal{B})
=|\xi|_g^2\delta_\ell^i-\frac{1}{3}\xi_\ell\xi^i,
\]
it has a parametrix $(\LaB)^{-1}$ with principal symbol
\[
-\sigma((\LaB)^{-1})_{i}^k=\frac{1}{2}|\xi|_g^{-4}\xi_i\xi^k+|\xi|_g^{-2}\delta_i^k.
\]
We note that near $M_1$ the parametrix $(\LaB)^{-1}$ and the solution operator $(\Delta^\mathcal{B}_{M_2})^{-1}$ of \eqref{eqnu1}, with zero boundary value, coincide 
up to a finitely smoothing operator.
\color{black}
%
This implies
\[
\mathrm{i}\sigma((\Delta^\mathcal{B}_{M_2})^{-1}\delta^\mathcal{B})_{i}^{kl}
=
\frac{1}{2}|\xi|_g^{-4}\xi_i\xi^k\xi^l+\delta_i^k|\xi|_g^{-2}\xi^l,
\]
and
\[
\sigma(\mathcal{P}_{M_2})=\sigma(\mathrm{d}^\mathcal{B}(\Delta^\mathcal{B}_{M_2})^{-1}\delta^\mathcal{B})_{ij}^{kl}
=\frac{1}{2}|\xi|_g^{-4}\xi_i\xi_j\xi^k\xi^\ell+\delta_i^k|\xi|_g^{-2}\xi_j\xi^l-\frac{1}{2}g_{ij}|\xi|_g^{-2}\xi^k\xi^l.
\]
Therefore
\[
\left(\begin{array}{cc}E&\mathrm{Id}\end{array}\right)\circ\left(\begin{array}{c}|\xi|_g\circ\sigma(\mathcal{N}_L)\\
\sigma(\mathcal{P}_{M_2})
\end{array}\right)=\mathrm{Id},
\]
and we have shown that $\mathcal{M}$ is elliptic near  $M_1$.

\medskip
From now on we study the mapping properties of $\mathcal{M}$. We use the notation $\tilde \sigma$ for the full symbol of a $\Psi$DO and $S^m$ for the space of $C^k$-regular symbols $a(x,\xi)$ of order $m$. 
Let $m >0$ be given. We choose $k \in \N$ to be so large that there exists a  $\Psi$DO $\mathcal{A}$ of order $-2$, that is given by a finite asymptotic expansion of $(\LaB)^{-1}$ with homogeneous symbols in $\xi$-variable, and satisfies
\color{black} 
$\tilde \sigma( \mathcal A)\circ \tilde{\sigma}(\Delta^\mathcal{B})=\mathrm{Id}~\text{mod}~ S^{-m}$ near $M_1$. 
From here onwards we increase the regularity of the  $C^k$-smooth metric $g$ whenever needed without further mention. 
\color{black}
We get 
\[
\begin{split}
\tilde{\sigma}(\mathcal{P}_{M_2})&=\tilde{\sigma}(\mathrm{d}^\mathcal{B})\circ \tilde \sigma( \mathcal A)\circ \tilde{\sigma}(\delta^\mathcal{B})\quad\text{mod}~S^{-m},\\
\tilde{\sigma}(\mathcal{S}_{M_2})&=\mathrm{Id}-\tilde{\sigma}(\mathrm{d}^\mathcal{B})\circ \tilde \sigma( \mathcal A) \circ \tilde{\sigma}(\delta^\mathcal{B})\quad \text{mod}~ S^{-m}.
\end{split}
\]
Since $\LaB$ and  $(\LaB)^{-1}$ depend continuously on $g$ we can also choose $\mathcal A$ that depends continuously on $g$, with respect to   $C^k$-topology, if $g$ is near to $g_0$.
\color{black}
Since $\mathcal{M}$ is elliptic near $M_1$ there exists a pseudo-differential operator $\mathcal{L}=(\mathcal{L}_1,\mathcal{L}_2)$ of order $0$, with principle symbol $(E,\,\mathrm{Id})$, such that
\begin{equation}
\label{eq:inverse_of_order_m}
(\tilde{\sigma}(\mathcal{L}_1),\,\tilde{\sigma}(\mathcal{L}_2))\circ\left(\begin{array}{c}|\xi|_g \circ \tilde \sigma(\mathcal{N}_L)\\
\tilde{\sigma}(\mathcal{P}_{M_2})
\end{array}\right)=\tilde{\sigma}(\mathcal{L}_1)\circ|\xi|_g\circ \tilde{\sigma}(\mathcal{N}_L)+\tilde{\sigma}(\mathcal{L}_2)\circ\tilde{\sigma}(\mathcal{P}_{M_2})=\mathrm{Id}\quad\text{mod}~S^{-m},
\end{equation}
near $M_1$. We set two operators
\[
\Lambda=\mathrm{Id}-\dB \mathcal{A}  \deB, \quad \hbox{ and } \quad \mathcal{T}_1=\Lambda \mathcal L_1 |D|_g\Lambda, \quad \hbox{ in } M_2, 
\]
and note
\[
\mathcal{S}_{M_2}\mathcal{N}_L=\mathcal{N}_L\mathcal{S}_{M_2}=\mathcal{N}_L,\quad \mathcal{S}_{M_2}\mathcal{P}_{M_2}=\mathcal{P}_{M_2}\mathcal{S}_{M_2}=0. 
\]
Then we apply $\tilde \sigma(\Lambda)$ from right and left to \eqref{eq:inverse_of_order_m} to obtain
\[
\begin{split}
\tilde \sigma(\Lambda)=&\tilde\sigma(\Lambda)^2=\tilde\sigma(\Lambda)\circ\tilde{\sigma}(\mathcal{L}_1)\circ|\xi|_g\circ \tilde{\sigma}(\mathcal{N}_L)\circ \tilde\sigma(\Lambda)=\tilde\sigma(\Lambda)\circ\tilde{\sigma}(\mathcal{L}_1)\circ|\xi|_g\circ \tilde\sigma(\Lambda)\circ \tilde{\sigma}(\mathcal{N}_L)
\\
=&\tilde \sigma (\mathcal T_1)\circ \tilde{\sigma}(\mathcal{N}_L)\quad\text{mod}~S^{-m},
\end{split}
\]

We choose  $t>0$ and note that 
\color{black}
we have shown the existence of a bounded operator $\mathcal{K}_1\colon L^2(S\tau'_M\otimes^\mathcal{B} S\tau'_M)\to H^{t}(S\tau'_{M_1}\otimes^\mathcal{B} S\tau'_{M_1})$,  which satisfies
\begin{equation}
\label{eq:first_recons_formula}
 \mathcal{T}_1 \mathcal{N}_Lf=f-\mathrm{d}^\mathcal{B}\mathcal{A}\delta^\mathcal{B}f+\mathcal{K}_1f=f^s_{M_1}-\dB w+\mathcal{K}_1f
, \quad \hbox{ in } M_1 \quad f \in L^2(S\tau'_M\otimes^\mathcal{B} S\tau'_M),
\end{equation}
where $w:= \left((\Delta^\mathcal{B}_{M_1})^{-1}-\mathcal{A}\right)\deB f$. Since $\mathcal{T}_1,\: \mathcal{N}_L$ and $\mathcal A$ depend continuously on $g$, the formula $\eqref{eq:first_recons_formula}$ implies that also $\mathcal K_1$ depends continuously on $g$.

We conclude this subsection by finding a reconstruction formula for the solenoidal part $f^s_{M_1}$ modulo $H^t$-regular fields in $M_1$. To do this we show that the linear map $ L^2(S\tau'_M\otimes^\mathcal{B} S\tau'_M)\ni f \mapsto  \dB w \in H^{t+1}(S\tau'_M\otimes^\mathcal{B} S\tau'_M)$ is bounded and depends continuously in $g$. 
First we note  that the map $ L^2(S\tau'_M\otimes^\mathcal{B} S\tau'_M)\ni f \mapsto  w \in H^1(S\tau'_{M_1}\otimes^\mathcal{B} S\tau'_{M_1})$ is bounded since $\mathcal{A}$ is an operator of order $-2$. Since $f$ vanishes outside $M$ we have
due to the finite pseudo-local property that the distribution $-\mathcal{A}\deB f$ near $\p M_1$ is of regularity $t+2$. Thus the map
\[
L^2(S\tau'_{\p M}\otimes^\mathcal{B} S\tau'_{\p M}) \ni f \to w|_{\p M_1}=-\mathcal{A}\deB f|_{\p M_1}\in H^{t+\frac{3}{2}}(S\tau'_{\p M_1}\otimes^\mathcal{B} S\tau'_{\p M_1})
\]
is bounded and depends continuously about the metric $g$ in $C^k$-topology. 
%
This implies that $w$ solves the boundary value problem:
\[
\LaB w=\left(\mathrm{Id}-\LaB\mathcal{A}\right) \deB f,\: \hbox{in } M_1,\quad w\vert_{\partial M_1}=-\mathcal{A}\delta f\vert_{\partial M_1}.
\]
As 
the symbol of the $\Psi$DO $\left(\mathrm{Id}-\LaB\mathcal{A}\right) \deB$ is in $S^{-m}$ for $m>0$ and $f$ is $L^2$-regular we have that $\left(\mathrm{Id}-\LaB\mathcal{A}\right) \deB f \in H^{t}(S\tau'_{M_1}\otimes^\mathcal{B} S\tau'_{M_1})$. 
The Corollary \ref{Co:continuity_of_the_ops} implies that the map $L^2(S\tau'_M\otimes^\mathcal{B} S\tau'_M) \ni f\mapsto w\in H^{t+2}(S\tau'_{M_1}\otimes^\mathcal{B} S\tau'_{M_1})$ is bounded. 

Therefore we have verified that the map 
\[
L^2(S\tau'_M\otimes^\mathcal{B} S\tau'_M) \ni f\mapsto \dB w\in H^{t+1}(S\tau'_{M_1}\otimes^\mathcal{B} S\tau'_{M_1})
\] 
is bounded and depends continuously on $g$ with respect to $C^k$-topology if $k\in \N$ is large enough and $g$ is close to $g_0$. After setting
\[
\mathcal K_2f:=-\dB w+\mathcal K_1f, \quad f \in L^2(S\tau'_M\otimes^\mathcal{B} S\tau'_M),
\]
the equation \eqref{eq:first_recons_formula} implies the main result of this subsection:
\color{black}
\begin{proposition}
\label{Pr:1_reconst_result}
Let $t>0$. 
\color{black}
There exists $k_0\in \N$ such that for any simple metric $g\in C^k(M), \: k\geq k_0$ there exists a first order $\Psi$DO, $\mathcal{T}_1$ in $M_2$ and a bounded operator 
\[
\mathcal{K}_2\colon L^2(S\tau'_M\otimes^\mathcal{B} S\tau'_M)\to H^{t}(S\tau'_{M_1}\otimes^\mathcal{B} S\tau'_{M_1}), 
\]
such that the first reconstruction formula for the solenoidal part is valid:
\begin{equation}\label{seq1}
\mathcal{T}_1\mathcal{N}_Lf=f^s_{M_1}+\mathcal{K}_2f\quad\text{in}~M_1, \quad \hbox{ for } f \in L^2(S\tau'_M\otimes^\mathcal{B} S\tau'_M).
\end{equation}
Moreover if we fix a simple metric $g_0\in C^k(M)$, the operators $\mathcal T_1$ and $\mathcal K_2$ depend continuously about $g$ in some neighborhood of $g_0$  with respect to $C^k$-topology. 

If $g$ is infinitely smooth, there exists a smoothing operator  
\[
\widetilde{ \mathcal{K}}_2\colon L^2(S\tau'_M\otimes^\mathcal{B} S\tau'_M)\to C^\infty(S\tau'_{M_1}\otimes^\mathcal{B} S\tau'_{M_1}),
\]
such that
\[
\mathcal{T}_1\mathcal{N}_Lf=f^s_{M_1}+\widetilde{\mathcal{K}}_2f\quad\text{in}~M_1,
\]
for any $f \in L^2(S\tau'_M\otimes^\mathcal{B} S\tau'_M)$.
\end{proposition}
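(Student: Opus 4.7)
The plan is to combine the ellipticity of the normal operator on solenoidal tensors (already established through the explicit symbol calculation in Proposition~\ref{Pr:normal_is_pseudo} and the symbolic inversion in Remark~\ref{Re:parametrix_symbol}) with the continuity of the Laplacian parametrix provided by Corollary~\ref{Co:continuity_of_the_ops}. The starting point is the augmented operator $\mathcal M$ defined in \eqref{eq:operator_M}. Using the explicit form of $\sigma(\mathcal N_L)$, one checks that $(E,\,\mathrm{Id})$ is a left inverse of $\sigma(\mathcal M)$ on $M_1$, which is what the computation preceding the proposition achieves. Elliptic parametrix construction then yields a $\Psi$DO $\mathcal L=(\mathcal L_1,\mathcal L_2)$ of order $0$ satisfying \eqref{eq:inverse_of_order_m} modulo $S^{-m}$, with $m$ chosen in accordance with the desired regularity $t$.

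The next step is to get rid of the second slot $\sigma(\mathcal P_{M_2})$. For this I would introduce the projection-type operator $\Lambda=\mathrm{Id}-\mathrm d^{\mathcal B}\mathcal A\,\delta^{\mathcal B}$, where $\mathcal A$ is a finite asymptotic parametrix of $\LaB$, and set
\[
  \mathcal T_1:=\Lambda\,\mathcal L_1\,|D|_g\,\Lambda.
\]
Composing \eqref{eq:inverse_of_order_m} from both sides with $\Lambda$, and using $\mathcal S_{M_2}\mathcal N_L=\mathcal N_L\mathcal S_{M_2}=\mathcal N_L$ together with $\mathcal S_{M_2}\mathcal P_{M_2}=0$, one concludes
\[
  \tilde\sigma(\Lambda)=\tilde\sigma(\mathcal T_1)\circ\tilde\sigma(\mathcal N_L)\quad\text{mod } S^{-m}
\]
near $M_1$. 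Translating this symbolic identity into an operator identity gives, for any $f\in L^2(S\tau'_M\otimes^{\mathcal B}S\tau'_M)$,
\[
  \mathcal T_1\mathcal N_L f=f-\mathrm d^{\mathcal B}\mathcal A\,\delta^{\mathcal B} f+\mathcal K_1 f
  =f^s_{M_1}-\mathrm d^{\mathcal B}w+\mathcal K_1 f \quad\text{in }M_1,
\]
where $w=\bigl((\LaB_{M_1})^{-1}-\mathcal A\bigr)\delta^{\mathcal B} f$ and $\mathcal K_1$ is a residual operator that maps $L^2(S\tau'_M\otimes^{\mathcal B}S\tau'_M)$ boundedly into $H^{t}(S\tau'_{M_1}\otimes^{\mathcal B}S\tau'_{M_1})$ provided $m$ and $k$ are large enough.

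The step I expect to be the most delicate is controlling the boundary-layer error term $\mathrm d^{\mathcal B}w$. The function $w$ solves the Dirichlet problem
\[
  \LaB w=(\mathrm{Id}-\LaB\mathcal A)\,\delta^{\mathcal B} f\text{ in }M_1,\qquad
  w|_{\partial M_1}=-\mathcal A\,\delta^{\mathcal B} f|_{\partial M_1}.
\]
The source is smoothing modulo $S^{-m}$, so it lies in $H^{t}(M_1)$. For the boundary value one uses the zero extension of $f$ outside $M$: since $M\Subset M_1$ and amplitudes in $A^m$ are finitely pseudo-local, $\mathcal A\,\delta^{\mathcal B}f$ is $H^{t+2}$-regular near $\partial M_1$, so the trace belongs to $H^{t+3/2}(\partial M_1)$. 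The energy estimate of Lemma~\ref{existence2}, upgraded to the continuity statement of Corollary~\ref{Co:continuity_of_the_ops}, then gives $w\in H^{t+2}(M_1)$ with continuous dependence on $g\in C^k$ in a neighbourhood of $g_0$, hence $\mathrm d^{\mathcal B}w\in H^{t+1}(M_1)$. Setting $\mathcal K_2 f:=-\mathrm d^{\mathcal B}w+\mathcal K_1 f$ produces \eqref{seq1} with the required regularity and continuity. The final sentence for smooth $g$ is immediate: one chooses $\mathcal A$ as a genuine parametrix modulo smoothing operators and carries out the same argument with $S^{-\infty}$ instead of $S^{-m}$, so that $\widetilde{\mathcal K}_2$ is smoothing.
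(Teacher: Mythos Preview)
Your proposal is correct and follows essentially the same route as the paper: you build the parametrix $(\mathcal L_1,\mathcal L_2)$ of the augmented operator $\mathcal M$, sandwich with $\Lambda=\mathrm{Id}-\mathrm d^{\mathcal B}\mathcal A\,\delta^{\mathcal B}$ to define $\mathcal T_1$, obtain the preliminary identity \eqref{eq:first_recons_formula}, and then absorb the correction $\mathrm d^{\mathcal B}w$ into $\mathcal K_2$ by solving the Dirichlet problem for $w$ and invoking finite pseudo-locality near $\partial M_1$. The only cosmetic difference is that the paper justifies the key symbolic identity via $\tilde\sigma(\Lambda)^2=\tilde\sigma(\Lambda)$ mod $S^{-m}$ rather than the relations $\mathcal S_{M_2}\mathcal N_L=\mathcal N_L$, $\mathcal S_{M_2}\mathcal P_{M_2}=0$ directly, but this amounts to the same computation.
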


\subsection{Stability estimates for the normal operator} In the previous section we found a reconstruction formula for the solenoidal part $f^s_{M_1}$ with respect to the extended domain $M_1$. In this section we prove a stability estimate for the normal operator and find a reconstruction formula for the solenoidal part $f^s_{M}$. However as it turns out we need higher regularity for $f$ to do so. 

Let $g \in C^k(M)$ be a simple metric and $f \in L^2(S\tau'_M\otimes^\mathcal{B} S\tau'_M)$. 
We write $f=f^s_{M_1}+\dB v_{M_1}$, where $v_{M_1}$ solves a boundary value problem  \eqref{eq_Blaplacian}, on $M_1$.
\color{black}
Since $f=0$ on $M_1\setminus M$, the finite pseudo-local property of $(\Delta^\mathcal{B}_{M_1})^{-1}\deB$ yields $v_{M_1}\in C^{1}(M_1\setminus M)$. 
Moreover we have by (\ref{seq1}) that
\begin{equation}
\label{eq:potential_as_smoothing_op}
-\mathrm{d}^\mathcal{B}v_{M_1}=\mathcal{T}_1\mathcal{N}_Lf-\mathcal{K}_2f\quad\text{in}~M_1\setminus M.
\end{equation}

In the following we will find a $L^2$-estimate for $v_{M_1}$ on $M_1 \setminus M$. Let $x_0\in \p M$. Then for any $x\in M_1\setminus M$ in a small neighborhood $U$ of $x_0$, we choose a unit vector $\xi$ such that the geodesic $\gamma(t)=\gamma_{x,\xi}(t)$ in $M_1\setminus M$ issued from $x$ meets $\partial M_1$ before it meets $\partial M$. We use the notation $\tau=\tau(x,\xi)>0$ for the time this geodesic hits $\p M_1$. Since $v_{M_1}$ vanishes at $\p M_1$ we have as in the proof of Lemma \ref{existence2} that
\begin{equation}\label{integral1}
[v_{M_1}(x)]_i\eta^i=-\int_0^{\tau}[\mathrm{d}^\mathcal{B}v_{M_1}(\gamma(t))]_{ij}\eta^i(t)\dot{\gamma}^j(t)\mathrm{d}t,
\end{equation}
where $\eta(t)$ is a unit length vector field, parallel along $\gamma$ and $\eta(0)=\eta$ is perpendicular to $\xi$. 
The substitution \eqref{eq:potential_as_smoothing_op} and the continuity of the integrand give
\[
\left|[v_{M_1}(x)]_i\eta^i\right| \leq \int_0^{\tau(x,\xi)}\left|[\mathcal{T}_1\mathcal{N}_Lf-\mathcal{K}_2f]_{ij}\eta^i(t)\dot{\gamma}^j(t)\right| \mathrm{d}t  \leq C \left|(\mathcal{T}_1\mathcal{N}_Lf-\mathcal{K}_2f)(x)\right|_{g},
\]
where $C$ depends only from the distance to $\p M_1$. Perturbing the initial direction $\xi$, we see that the inequality above holds for linearly independent $\{\eta_{(k)}\}_{k=1}^3$. As $|v_{M_1}(x)|_g^2$ can be estimated by $C\sum_{k=1}^3\left|[v_{M_1}(x)]_i\eta^i_{(k)}\right|^2$,  where the constant $C$ is uniform in a neighborhood of $x_0$, we get
\[
\|v_{M_1}\|_{L^2(M_1\setminus M)}\leq C\|\mathcal{T}_1\mathcal{N}_Lf-\mathcal{K}_2f\|_{L^2(M_1\setminus M)},
\]
first in $U$ and then globally by using a finite covering for the pre-compact set $M_1 \setminus M$.

\medskip
Next we estimate the $H^1$-norm of $v_{M_1}$ in $M_1\setminus M$. As we can again estimate $|\nabla v_{M_1}|^2_g$ by
\[
C\sum_{k,\ell=1}^3\bigg|\alpha_{(k)}^j\nabla_j[v_{M_1}]_i\alpha_{(\ell)}^i\bigg| ^2, \quad \{\alpha_{(k)}\}_{k=1}^3 \hbox{  orthonormal },
\] 
it is enough to estimate $\alpha_{(k)}^j\nabla_j[v_{M_1}]_i\alpha_{(\ell)}^i$. Recall that
\[
\xi^j\nabla_j[v_{M_1}]_i\eta^i=\xi^j\left[\mathrm{d}^\mathcal{B}v_{M_1}\right]_{ij}\eta^i,
\]
for any $\eta,\xi$ perpendicular to each other. Then \eqref{eq:potential_as_smoothing_op} implies
\begin{equation}
\label{eq:mixed_estimate}
\left| \xi^j\nabla_j[v_{M_1}]_i\eta^i \right| \leq C \left| \mathcal{T}_1\mathcal{N}_Lf-\mathcal{K}_2f \right|_g,
\end{equation}
and it remains to estimate $\alpha_{(k)}^i\nabla_j[v_{M_1}]_i\alpha_{(k)}^j$. 


We choose $x_0\in\partial M$, and local coordinates $x'$ on $\partial M$ near $x_0$. 
Let $(x',x_3)$ be the boundary normal coordinates given in a neighborhood $U \subset M_1\setminus M$ of $x_0$. That is each point $(x',x_3)=x\in U$ is uniquely expressed as $x=\gamma_{(x',0),\nu}(t)$,  where $\nu$ is the exterior unit normal to $\p M$ and we have chosen $x_3=t$ as the third coordinate. We denote $\xi=\dot \gamma_{(x',0),\nu}(t)$. For $\eta \in T_xM_1$,  that is of unit length and perpendicular to $\xi $, the formula $(\ref{integral1})$, in the given coordinates, has the form
\begin{equation}
\label{eq:to_be_deri}
[v_{M_1}(x)]_i\eta^i=-\int_{x_3}^\infty [\mathrm{d}^\mathcal{B}v_{M_1}(\gamma(t))]_{i3}\eta^i(t)\mathrm{d}t.
\end{equation}
We can replace the exit time by $\infty$ in upper bound of integrationin \eqref{eq:to_be_deri} since $v_{M_1}$ has a line integrable zero extension outside $M_1$.

We denote the coordinate vector fields with respect to $x'$ variables as $\{X_{(k)}\}_{k=1}^2$. We note that these fields are orthogonal to the third coordinate frame $\frac{\mathrm{d}}{\mathrm{d}t}=\dot \gamma_{(x',0),\nu}(x_3)$ and $\eta$ can be given by a linear combination of $\{X_{(k)}\}_{k=1}^2$. We extend $\eta$ near $\gamma$ in such a way that $\nabla_{X_{(k)}}\eta=0$ at $\gamma(t)$. This can be done for instance with parallel transport using Fermi coordinates given by the coordinate frame  $\{\frac{\mathrm{d}}{\mathrm{d}t}, X_{(1)}, X_{(2)} \}$ along $\gamma$. Then we apply $X_{(k)}$ to both sides of \eqref{eq:to_be_deri}. Since $v_{M_1}\in H^1_0(M_1)$ we obtain
\begin{equation}
\label{eq:derivated_eq}
\begin{split}
X_{(k)}^j\nabla_j[v_{M_1}(x',x_3)]_i\eta^i
=&-\int_{x_3}^\infty X_{(k)}\left([\mathrm{d}^\mathcal{B}v_{M_1}(\gamma(t))]_{i3}\eta^i\right)\mathrm{d}t.
\end{split}
\end{equation}

Let $\chi$ be a smooth cut-off function such that $\chi=1$ near $\partial M$ and $\chi=0$ near $\partial M_1$ and outside $M_1$. Then $\mathcal{K}_3\colon f\mapsto (1-\chi)X_{(k)}\mathrm{d}^\mathcal{B}v_{M_1}$ is finitely smoothing operator by the fact $f=0$ in $M_1\setminus M$ and the finite pseudo local property of the operator
$
X_{(k)}\dB (\LaB_{\!\!\!\!\!M_1})^{-1}\deB
$. 
Equations \eqref{eq:potential_as_smoothing_op} and \eqref{eq:derivated_eq}  imply
\[
X_{(k)}^j\nabla_j[v_{M_1}(x',x_3)]_i\eta^i
=\int_{x_3}^\infty
\chi X_{(k)}\left([\mathcal{T}_1\mathcal{N}_Lf]_{i3}\eta^i\right)\;\mathrm{d}t+\mathcal{K}_4f,
\]
for some 
$\mathcal{K}_4\colon L^2(M_1)\rightarrow H^t(M_1)$, where $t>0$ is as in Proposition \ref{Pr:1_reconst_result}.  
\color{black}
Therefore the continuity implies the existence of $C$, depending only on the distance to $\p M_1$, such that the following pointwise estimate holds:
\begin{equation}
\label{eq:estimate_for_derivative_of_the_elliptic_solution}
\left|\eta^j\nabla_j[v_{M_1}]_i\eta^i\right|\leq C\left( \sum_{k=1}^2|\chi\nabla_{X_{(k)}}(\mathcal{T}_1\mathcal{N}_Lf)|_g
+|\mathcal{K}_4f|_g\right).
\end{equation}

%

It remains to estimate $\xi^j\nabla_j[v_{M_1}(x,x_3)]_i\xi^i$. We take $\tilde{\eta}$ such that $\{\eta,\tilde{\eta},\xi\}$ form an orthonormal parallel basis along $\gamma$. In this basis we can write
\[
\mu(\mathrm{d}'v_{M_1})=
\eta^j\nabla_j[v_{M_1}]_i\eta^i+\tilde{\eta}^j\nabla_j[v_{M_1}]_i\tilde{\eta}^i+\xi^j\nabla_j[v_{M_1}]_i\xi^i.
\]
Therefore we have
\[
\begin{split}
&\xi^j\left(-\mathrm{d}^\mathcal{B}v_{M_1}\right)\xi^i\\\
=&\xi^j\left(-\nabla_j[v_{M_1}]_i+\frac{1}{3}\mu(\mathrm{d}'v_{M_1})g_{ij}\right)\xi^i\\
=&-\frac{2}{3}\xi^j\nabla_j[v_{M_1}]_i\xi^i+\frac{1}{3}\left(\eta^j\nabla_j[v_{M_1}]_i\eta^i+\tilde{\eta}^j\nabla_j[v_{M_1}]_i\tilde{\eta}^i\right).
\end{split}
\]
By \eqref{eq:mixed_estimate} and \eqref{eq:estimate_for_derivative_of_the_elliptic_solution} we have proved
\begin{equation}
\label{eq:H1_estimate_for_the_solution}
\|v_{M_1}\|_{H^1(U)}\leq C \left(\sum_{k=1}^2\|\chi\nabla_{X_{(k)}}(\mathcal{T}_1\mathcal{N}_Lf)\|_{L^2(U)}+\|\mathcal{T}_1\mathcal{N}_Lf\|_{L^2(U)}+\|\mathcal{K}_4f\|_{H^{t}(U)}\right).
\end{equation}

\medskip 
To conclude this section we introduce a norm $\tilde{H}^2(M_1)$ originally given in \cite{SU,stefanov2004stability} to be implemented in the main result of this section. By shrinking $M_1$ if necessary we choose a finite open cover $(U_j)_{j=1}^J$ for $M_1 \setminus M$, such that in $U_j$ we have boundary normal coordinates $(x_j',x_j^3)$, as above. Let $(\chi_j)_{j=1}^J$ be a collection of functions that satisfy $\chi_j\in C_0^\infty(U_j)$, $\chi:=\sum_{j}^J\chi_j$ equals to $1$ near $\partial M$ and each $\chi_j$ vanishes near $\p M_1$. We set
\[
\|h\|^2_{\tilde{H}^1(M_1)}=\int_{M_1}\sum_{j=1}^J\chi_j\left(\sum_{i=1}^2|\nabla_{X_j^{(k)}}	h|_g^2+|x_j^3\nabla_{V_j}h|_g^2\right)+|h|_g^2\; \mathrm{d}x,
\]
where $V_j$ is the tangent vector to $\gamma_{(x'_j,0),\nu}(x^3_j)$. We note that here $x_3>0$ in $M_1\setminus M$. The norm $\tilde{H}^2(M_1)$ is then defined by
\begin{equation}
\label{eq:tilde_H_2_norm}
\|h\|_{\tilde{H}^2(M_1)}=\sum_{i=1}^3\|\nabla_{X_{(k)}}h\|_{\tilde{H}^1(M_1)}+\|h\|_{H^1(M_1)}.
\end{equation}
The equations \eqref{eq:potential_as_smoothing_op} and  \eqref{eq:H1_estimate_for_the_solution} imply the first estimate
\begin{equation}
\label{eq:H1_estimate_for_the_solution_2}
\|v_{M_1}\|_{H^1(M_1\setminus M)}\leq C \left(\|\mathcal{T}_1\mathcal{N}_Lf\|_{\widetilde H^1(M_1)}+\|\mathcal{K}_4f\|_{H^{t}(M_1)}\right).
\end{equation}

Finally we are ready to estimate the solenoidal part $f_{M}^s$. We write
\begin{equation*}
\begin{split}
f=f_{M_1}^s+\mathrm{d}^\mathcal{B}v_{M_1}\quad\quad\text{in}~M_1,
 \quad 
f=f_{M}^s+\mathrm{d}^\mathcal{B}v_{M}\quad\quad\text{in}~M,
\end{split}
\end{equation*}
and denote $u=v_{M_1}-v_{M}$. The construction of the potential parts implies
\begin{equation}
\label{eq_u_M}
\LaB u=0\quad\text{in}~M^{int}, \quad u\vert_{\partial M}=v_{M_1}\vert_{\partial M}.
\end{equation}
By Corollary \ref{Co:continuity_of_the_ops}, equations \eqref{eq:potential_as_smoothing_op}, \eqref{eq:H1_estimate_for_the_solution_2} and the trace theorem we obtain the second estimate
\[
\|v_{M_1}-v_{M}\|_{H^1(M)}\leq C \|v_{M_1}\|_{H^{1/2}(\p M)}\leq C \|v_{M_1}\|_{H^{1}(M_1\setminus M)} \leq  C \left( \|\mathcal{T}_1\mathcal{N}_Lf\|_{\widetilde H^1(M_1)}+\|\mathcal{K}_4f\|_{H^{t}(M_1)}\right).
\]
Then we use $f^s_M=f^s_{M_1}+\dB(v_{M_1}-v_M)$, \eqref{eq:first_recons_formula}, and $\eqref{eq:H1_estimate_for_the_solution}$ to establish our main estimate
\begin{equation}
\label{eq:main_estimate}
\begin{split}
\|f^s_M\|_{L^2(M)}\leq
&\;  \|\mathcal{T}_1\mathcal{N}_Lf-\mathcal{K}_2f\|_{L^2(M)}+ \|\dB(v_{M_1}-v_M)\|_{L^2(M)}
\\
\leq
&\;  \|\mathcal{T}_1\mathcal{N}_Lf-\mathcal{K}_2f\|_{L^2(M)}+ \|v_{M_1}-v_M\|_{H^1(M)}+ \|(v_{M_1})\|_{H^1(M_1\setminus M)}
\\
\leq &\;  C\bigg(\|\mathcal{T}_1\mathcal{N}_Lf\|_{\widetilde H^1(M_1)}+ \|\mathcal{T}_1\mathcal{N}_Lf\|_{L^2(M_1)}+\|\mathcal{K}_4f\|_{H^{t}(M_1)} \bigg)
\\
\leq & \; C\bigg(\|\mathcal{N}_Lf\|_{\widetilde H^2(M_1)}+\|\mathcal{K}_4f\|_{H^{t}(M_1)} \bigg).
\end{split}
\end{equation}
We note that the last estimate is valid since $\mathcal{T}_1$ is an operator of order $1$. 
The following lemma guarantees that $H^1$-regularity for $f$ implies the finiteness of  $\|\mathcal{N}_Lf\|_{\widetilde H^2(M_1)}$. This result has been presented earlier in \cite{stefanov2004stability}.

\begin{lemma}
If $f \in H^1(S\tau'_M\otimes^\mathcal{B} S\tau'_M)$, then $\|\mathcal{N}_Lf\|_{\widetilde H^2(M_1)}$ is finite.
\end{lemma}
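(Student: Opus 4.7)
The plan is to decompose the $\widetilde H^2$-norm into three types of contributions and handle each via a combination of the $\Psi$DO calculus from Proposition \ref{Pr:normal_is_pseudo} and a careful analysis near $\partial M$. First, since $\mathcal{N}_L$ is a $\Psi$DO of order $-1$ in $M_2$ and the zero extension of any $f\in L^2(M)$ lies in $L^2(M_2)$, standard mapping properties give $\mathcal{N}_L f\in H^1(M_1)$, which immediately controls the $\|\cdot\|_{H^1(M_1)}$ summand in \eqref{eq:tilde_H_2_norm} and the unweighted zero-order pieces appearing in $\widetilde H^1(M_1)$.

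Next, for the terms in $\|\nabla_{X_{(k)}}\mathcal{N}_L f\|_{\widetilde H^1(M_1)}$ that involve a second derivative with at least one direction tangent to $\partial M$, the key observation is that tangential vector fields commute with zero extension. Indeed, in a boundary-normal chart $U_j$ the coordinate fields $X_j^{(i)}$, $i=1,2$, satisfy $\langle X_j^{(i)},\nu\rangle=0$ along $\partial M$, so integration by parts against a test function yields no boundary contribution; the distributional derivative $X_j^{(i)}\tilde f$ on $M_1$ coincides with the zero extension of $X_j^{(i)}f\in L^2(M)$. Writing
\[
\nabla_{X_j^{(i)}}\mathcal{N}_L f \;=\; \mathcal{N}_L\bigl(\nabla_{X_j^{(i)}}\tilde f\bigr) \;+\; [\nabla_{X_j^{(i)}},\,\mathcal{N}_L]\,\tilde f,
\]
the commutator is a $\Psi$DO of order $-1$ and hence also maps $L^2\to H^1$, while the main term is $\mathcal{N}_L$ applied to an honest $L^2$-field. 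Therefore $\nabla_{X_j^{(i)}}\mathcal{N}_L f\in H^1(M_1)$, which controls all mixed second derivatives $\nabla_{V_j}\nabla_{X_j^{(i)}}\mathcal{N}_L f$ in $L^2(M_1)$ and, after one more commutation, also $\nabla_{X_j^{(i)}}\nabla_{X_{(k)}}\mathcal{N}_L f$ in $L^2(M_1)$ for any frame field $X_{(k)}$. For these contributions the weight $x_j^3$ is unnecessary.

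The only delicate term is the doubly normal second derivative $x_j^3\,\nabla_{V_j}^2\mathcal{N}_L f$, since now the relevant tangential-type commutation is unavailable and the distributional derivative $V_j\tilde f$ genuinely carries a surface delta on $\partial M$ proportional to the trace of $f$. This is the step where the weight $x_j^3$ must be used in an essential way. Working directly with the integral kernel \eqref{eq:Kernel}, which has a diagonal singularity of order $|x-y|^{-2}$, a second normal derivative produces a kernel of order $|x-y|^{-4}$; performing an integration by parts in the $y$-variable in boundary-normal coordinates—transferring one normal derivative onto $f\in H^1(M)$ and isolating the boundary contribution proportional to $f|_{\partial M}$—reduces the estimate to bounding an integral of type $\int_M |x-y|^{-3}|\nabla f(y)|\,dy$ together with a boundary-layer term. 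Multiplying by $x_j^3$ exactly absorbs the residual singularity of these terms near $\partial M$ and yields an $L^2(M_1)$-bound. The main obstacle is precisely this bookkeeping of the boundary-layer integration by parts for the doubly normal derivative; the argument parallels the one carried out for the X-ray transform in \cite{stefanov2004stability}, and once that estimate is in place the remaining contributions to $\|\mathcal{N}_L f\|_{\widetilde H^2(M_1)}$ follow from the $\Psi$DO calculus.
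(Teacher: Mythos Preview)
The paper does not supply its own proof of this lemma; it simply defers to \cite{stefanov2004stability}. Your outline is in line with the argument given there: the $H^1(M_1)$ part and all second derivatives containing at least one tangential factor follow from the order $-1$ $\Psi$DO calculus together with the observation that tangential differentiation commutes with the zero extension across $\partial M$, while the doubly normal contribution is handled by an integration by parts against the explicit kernel \eqref{eq:Kernel} and absorbed by the weight $x_j^3$. So your approach matches the one the paper points to, and you have correctly isolated the only genuinely delicate term.
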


In the following we use the notation $\mathcal{S}$ for the solenoidal projection on $M$. The main theorem of this section is:

\begin{theorem}
\label{th:stability}
Let $t>0$. 
\color{black}
There exists $k_0\in \N$ such that for any simple metric $g\in C^k(M), \: k\geq k_0$, the following claims hold:

\begin{enumerate}
\item There exists a bounded linear operator $\widetilde{\mathcal{K}}\colon L^2(S\tau'_M\otimes^\mathcal{B} S\tau'_M) \to H^{t}(S\tau'_{M_1}\otimes^\mathcal{B} S\tau'_{M_1})$ such that
\begin{equation}
\label{eq:estimate_for_solenoidal_part}
\|f^s_M\|_{L^2(M)}
\leq  \; C\bigg(\|\mathcal{N}_Lf\|_{\widetilde H^2(M_1)}+\|\widetilde{\mathcal{K}}f\|_{H^{t}(M_1)} \bigg), \quad \hbox{ if } 
f \in H^{1}(S\tau'_M\otimes^\mathcal{B} S\tau'_M).
\end{equation}
for some $C>0$.
\item 
There exist  bounded linear operators
\[
\mathcal{Q}\colon \widetilde H^2(S\tau'_{M_1}\otimes^\mathcal{B} S\tau'_{M_1})
\to \mathcal{S}\left(L^2(S\tau'_M\otimes^\mathcal{B} S\tau'_M)\right), \]
\[
 \mathcal{K}\colon L^2(S\tau'_{M}\otimes^\mathcal{B} S\tau'_M) \to  \mathcal{S}\left(H^{t}(S\tau'_{M_1}\otimes^\mathcal{B} S\tau'_{M_1})\right)
\]
such that 
\begin{equation}
\label{parametrix}
\mathcal{Q}\mathcal{N}_Lf=f^s_M+ \mathcal{K}f,
\quad\hbox{ if } f \in H^{1}(S\tau'_M\otimes^\mathcal{B} S\tau'_M).
\end{equation}
Moreover for any simple metric $g_0\in C^k(M)$, there exists a neighborhood $U \subset C^k(M)$, consisting of simple metrics, such that the operators  $\mathcal Q$ and $\mathcal{K}$, in \eqref{parametrix}, depend continuously on $g \in U$. 
\color{black}
\item  If $g \in C^\infty(M)$, then the vector space 
\[
\ker L \cap  \mathcal{S}\left(L^2(S\tau'_M\otimes^\mathcal{B} S\tau'_M)\right) \subset C^\infty(S\tau'_{M}\otimes^\mathcal{B} S\tau'_{M}),
\]
is finite dimensional.
\item 
\label{it:stability}
If $g \in C^\infty(M)$ and $L$ is $s$-injective then 
\[
\|f^s_M\|_{L^2(M)}
\leq  \; C\|\mathcal{N}_Lf\|_{\widetilde H^2(M_1)}, \quad \hbox{ for } 
f \in H^{1}(S\tau'_M\otimes^\mathcal{B} S\tau'_M),
\]
for some $C>0$.
\end{enumerate}
\end{theorem}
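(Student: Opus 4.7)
My plan is to address Parts (1)--(4) in order, leveraging the reconstruction formula of Proposition \ref{Pr:1_reconst_result} and the identity $f^s_{M_1}|_M=f^s_M-\dB u$ with $u=v_{M_1}|_M-v_M$ satisfying the homogeneous problem \eqref{eq_u_M}, both already used in deriving \eqref{eq:main_estimate}.

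Part (1) is essentially a restatement of \eqref{eq:main_estimate}: setting $\widetilde{\mathcal{K}}:=\mathcal{K}_4$, the bound on the right hand side is already proven, and the finite pseudo-local property shows $\mathcal{K}_4\colon L^2(M)\to H^t(M_1)$ is bounded. The preceding lemma supplies finiteness of $\|\mathcal{N}_L f\|_{\widetilde H^2(M_1)}$ under the $H^1$ hypothesis. Continuity in $g$ is inherited from the continuity of $\mathcal{T}_1,\mathcal A$ and $\mathcal{S}_M$ recorded in Corollary \ref{Co:continuity_of_the_ops} and Proposition \ref{Pr:1_reconst_result}.

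For Part (2) the natural candidate is $\mathcal{Q}:=\mathcal{S}_M\circ(\cdot)|_M\circ\mathcal{T}_1$. Restricting \eqref{seq1} to $M$, substituting $f^s_{M_1}|_M=f^s_M-\dB u$, and applying $\mathcal{S}_M$ gives
\[
\mathcal{Q}\mathcal{N}_L f=f^s_M+\mathcal{S}_M\bigl(\mathcal{K}_2 f|_M-\dB u\bigr)=:f^s_M+\mathcal{K}f.
\]
The remaining task is to verify $\mathcal{K}\colon L^2(M)\to H^t$ is bounded. The $\mathcal{K}_2$ piece already has this regularity by Proposition \ref{Pr:1_reconst_result}. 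For the $\dB u$ piece, since $f$ vanishes outside $M$, the finite pseudo-local property applied to $v_{M_1}=(\Delta^{\mathcal B}_{M_1})^{-1}\deB f$ in a neighborhood of $\partial M$ yields $v_{M_1}|_{\partial M}\in H^{t+1/2}(\partial M)$; elliptic regularity for \eqref{eq_u_M} then gives $u\in H^{t+1}(M)$ and hence $\dB u\in H^t(M)$. Continuity of $\mathcal{Q}$ and $\mathcal{K}$ in $g$ follows by composing the continuity statements for $\mathcal{T}_1,\mathcal{K}_2,\mathcal A$ and $\mathcal{S}_M$.

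Part (3) is a Fredholm-type finiteness argument. When $g\in C^\infty$, $\mathcal{K}$ becomes genuinely smoothing, and composition with the compact embedding $H^t\hookrightarrow L^2$ makes $\mathcal{K}$ a compact endomorphism of $\mathcal{S}(L^2(M))$. If $f\in\ker L\cap \mathcal{S}(L^2(M))$, then $\mathcal{N}_L f=0$, so Part (2) reduces to $f=-\mathcal{K}f$. Hence $\ker L\cap \mathcal{S}(L^2)\subset\ker(\mathrm{Id}+\mathcal{K})$, which is finite-dimensional by the Fredholm alternative, and bootstrapping $f=(-\mathcal{K})^N f$ shows each such $f$ is $C^\infty$.

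Part (4) is a contradiction argument. Replacing $f_n$ by $(f_n)^s_M$ (legitimate because $\mathcal{N}_L f=\mathcal{N}_L f^s_M$ and $f^s_M\in H^1$ by Theorem \ref{Th:tensor_decomp}), suppose there exist solenoidal $f_n\in H^1$ with $\|f_n\|_{L^2}=1$ and $\|\mathcal{N}_L f_n\|_{\widetilde H^2}\to 0$. Part (1) gives
\[
1\leq C\bigl(\|\mathcal{N}_L f_n\|_{\widetilde H^2}+\|\widetilde{\mathcal{K}}f_n\|_{H^t}\bigr).
\]
In the smooth case $\widetilde{\mathcal{K}}$ is smoothing, hence compact as a map $L^2\to H^t$. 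Passing to a subsequence with $\widetilde{\mathcal{K}}f_n$ Cauchy in $H^t$, the same estimate forces $f_n$ Cauchy in $L^2$ with limit $h$ satisfying $\|h\|_{L^2}=1$. By continuity of $\mathcal{S}_M$ and $\mathcal{N}_L$, $h$ is solenoidal and $\mathcal{N}_L h=0$; pairing with $h$ yields $\|Lh\|^2=\langle\mathcal{N}_L h,h\rangle=0$, i.e., $Lh=0$. Applying $s$-injectivity gives $h\equiv 0$, contradicting $\|h\|_{L^2}=1$.

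The main obstacle is Part (2): showing that the discrepancy $\mathcal{S}_M\dB u$ between $f^s_M$ and the restriction $f^s_{M_1}|_M$ produced by the $M_1$-parametrix is genuinely more regular than the input. This rests on carefully exploiting the zero extension of $f$ together with the finite pseudo-local property on the boundary trace of $v_{M_1}$, so that $u|_{\partial M}$ picks up the $H^{t+1/2}$ regularity required before elliptic regularity for \eqref{eq_u_M} can promote $u$ to $H^{t+1}$.
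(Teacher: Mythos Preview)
Your treatment of Parts (1), (3), and (4) is correct and aligns with the paper: Part (1) is literally \eqref{eq:main_estimate} with $\widetilde{\mathcal{K}}=\mathcal{K}_4$, and for (3)--(4) the paper simply refers to \cite[Theorem 2]{stefanov2004stability}, of which your Fredholm/compactness and contradiction arguments are the standard instantiations.

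Part (2), however, diverges from the paper and contains a genuine gap. You set $\mathcal{Q}=\mathcal{S}_M\circ(\cdot)|_M\circ\mathcal{T}_1$ and push the entire discrepancy $\dB u$ into $\mathcal{K}$; then, to get $\mathcal{K}\colon L^2\to H^t$, you invoke finite pseudo-locality to claim $v_{M_1}|_{\partial M}\in H^{t+1/2}(\partial M)$. But pseudo-locality does not give this: since $\partial M\subset\operatorname{supp} f$, the property only yields $C^{k'}$-regularity of $v_{M_1}=(\LaB_{M_1})^{-1}\deB f$ in the \emph{open} set $M_1\setminus M$, with no control on derivatives as one approaches $\partial M$ from the exterior. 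A function can be $C^\infty$ on $M_1\setminus M$ and globally $H^1$ while its trace on $\partial M$ lies in no $H^s$ for $s>1/2$; so your boundary-regularity step is unjustified, and with it the claimed mapping property of your $\mathcal{K}$.

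The paper avoids this by a different splitting. It uses the integral identity \eqref{integral1} (with geodesics from $\partial M$ out to $\partial M_1$, where $v_{M_1}$ vanishes) to build an explicit operator $\mathcal{T}_2$ with $\hbox{Tr}_M v_{M_1}=\mathcal{T}_2(\dB v_{M_1})=\mathcal{T}_2(\mathcal{T}_1\mathcal{N}_L-\mathcal{K}_2)f$, and then sets $u=\mathcal{R}\,\mathcal{T}_2(\mathcal{T}_1\mathcal{N}_L-\mathcal{K}_2)f$ with $\mathcal{R}$ the solution operator of \eqref{eq_u_M}. The crucial point is that this lets one place the $\mathcal{N}_L$-dependent part of $\dB u$ into $\mathcal{Q}$: the paper takes $\mathcal{Q}=\mathcal{S}(\mathrm{Id}+\dB\mathcal{R}\mathcal{T}_2)\mathcal{T}_1$, so that $\mathcal{K}=-\mathcal{S}(\mathrm{Id}+\dB\mathcal{R}\mathcal{T}_2)\mathcal{K}_2$ \emph{factors through the already $L^2\to H^t$ bounded operator $\mathcal{K}_2$}. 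This makes $\mathcal{K}\colon L^2\to H^t$ immediate and sidesteps any direct boundary-regularity claim for $v_{M_1}$ at $\partial M$. Your simpler $\mathcal{Q}$ forfeits exactly this mechanism.
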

\begin{proof}
We note that \eqref{eq:estimate_for_solenoidal_part} is the same inequality as \eqref{eq:main_estimate}, if we  set $\widetilde{\mathcal{K}}= \mathcal{K}_4$. The first claim follows from the construction done before this theorem.
\color{black}

\medskip
Let $ f \in H^{1}(S\tau'_M\otimes^\mathcal{B} S\tau'_M)$. To prove the reconstruction formula \eqref{parametrix} for $f^s_M$, we proceed as in the proof of  \cite[Proposition 5.1]{SU}. 
We fix a simple metric $g_0 \in C^k(M)$ and a neighborhood $U$ of $g_0$ that consists of simple metrics. During the proof we are implicitly shrinking $U$ and increasing $k$ without further mention. Let $g \in U$.
\color{black}
 If we vary the initial direction $\xi$ in \eqref{integral1}, we find three linearly independent  $\eta_i\in T_{x}M, \: i \in \{1,2,3\}$ such that the right hand side of \eqref{integral1} gives $v_{M_1}(x)$, for $x \in M_1\setminus M$. 
Due to the finite pseudo-local property $v_{M_1}$ can be assumed to be $C^1$-smooth in $M_1\setminus M$ and contained in $H^t(S\tau'_{M_1\setminus M})$. Moreover the map 
$H^1(S\tau'_{M}\otimes^\mathcal B S\tau'_{M})\ni f \mapsto v_{M_1} \in H^t(S\tau'_{M_1\setminus M})$ is bounded and due to Corollary \ref{Co:continuity_of_the_ops}  it depends continuously on $g$ in $C^k$-topology if $k$ is large enough. On the other hand we can use formula \eqref{integral1} and the trace theorem to define a linear operator 
\[
\mathcal{T}_2\colon H^{t-1}(S\tau'_{M_1\setminus M} \otimes^\mathcal BS\tau'_{M_1\setminus M}) \to H^{t-\frac{1}{2}}(S\tau'_{\p  M}),
\]
which depends continuously on $g$, and satisfies 
\begin{equation}
\label{eq:trace_of_v_M_1}
\hbox{Tr}_{M}\:v_{M_1}= \mathcal T_2(\mathrm{d}^\mathcal{B}v_{M_1})  =\mathcal{T}_2\left(\mathcal{T}_1\mathcal{N}_L-\mathcal{K}_2\right)f.
\end{equation}
In the last equation we used the substitution \eqref{eq:potential_as_smoothing_op}.
Thus the right hand side of \eqref{eq:trace_of_v_M_1} is a bounded map from $H^1(S\tau'_{M}\otimes^\mathcal B S\tau'_{M})$ into $H^{t-\frac{1}{2}}(S\tau'_{\p M})$.
After converting the problem $ \eqref{eq_u_M}$ into an elliptic problem with zero boundary value, the Corollary \ref{Co:continuity_of_the_ops} implies that the solution operator $\mathcal{R}$ of the boundary value problem \eqref{eq_u_M}, is a bounded operator $\mathcal{R}\colon H^{t-\frac{1}{2}}(S\tau'_{\p M}) \to H^{t}(S\tau'_{M})$, depending continuously on $g$ in some neighborhood of $g_0$ with respect to $C^k$-topology. 
\color{black} 
We get
\[
u:=v_{M_1}-v_M=\mathcal{R}(\hbox{Tr}_M(v_{M_1}))=\mathcal{R}\mathcal{T}_2(\mathcal{T}_1\mathcal{N}_L-\mathcal{K}_2)f.
\]
Then the first reconstruction formula \eqref{seq1} implies
\[
\begin{split}
f^s_M=&f^s_{M_1}+\mathrm{d}^\mathcal{B}u\\
=&(\mathcal{T}_1\mathcal{N}_L-\mathcal{K}_2)f+\mathrm{d}^\mathcal{B}\mathcal{R}\mathcal{T}_2(\mathcal{T}_1\mathcal{N}_L-\mathcal{K}_2)f\\
=&(\mathrm{Id}+\mathrm{d}^\mathcal{B}\mathcal{R}\mathcal{T}_2)\mathcal{T}_1\mathcal{N}_Lf+\mathcal{K}_5f,
\end{split}
\]
where $\mathcal{K}_5=-(\mathrm{Id}+\mathrm{d}^\mathcal{B}\mathcal{R}\mathcal{T}_2)\mathcal{K}_2$. We conclude the proof of \eqref{parametrix} by setting $\mathcal{Q}:=\mathcal{S}(\mathrm{Id}+\mathrm{d}^\mathcal{B}\mathcal{R}\mathcal{T}_2)\mathcal{T}_1$ and $\mathcal{K}:=\mathcal{S}\mathcal{K}_5$. We emphasize that by Theorem \ref{Th:tensor_decomp} the solenoidal projection $\mathcal{S}\colon H^t(S\tau'_M\otimes^\mathcal{B} S\tau'_M) \to H^t(S\tau'_M\otimes^\mathcal{B} S\tau'_M)$ is bounded, and due to Corollary \ref{Co:continuity_of_the_ops} the operators $\mathcal{Q}$ and $\mathcal K$ depend continuously about the metric $g$ in  some small neighborhood of a fixed simple metric $g_0 \in C^k(M)$ in $C^k$-topology, for $k \in \N$ large enough.   

%

\medskip 
The remaining parts of the theorem can be proven as in \cite[Theorem 2]{stefanov2004stability}.
\end{proof}

\section{S-injectivity for analytic metrics}
\label{Se:analytic_s_injective}
\subsection{The analytic parametrix}
In this section we assume that $(M,g)$ is a simple manifold with a (real) analytic metric $g$ 
on $M$ up to the boundary.
As in Section \ref{Se:Parametrix} we extend $M$ and $g$ to simple open domains $(M_1,g)$,  $(M_2,g)$ and $M\subset\subset M_1\subset\subset M_2 \subset \R^3$. We note that this can be done in such a way that, $g$ is analytic in a neighborhood of $M_2$ and $M_i, \: i\in \{1,2\}$ have analytic boundaries. Since analytic functions are dense this does not require the original boundary $\p M$ to be analytic (see \cite[Section 3]{SU}).

We construct an analytic parametrix for operator $\mathcal{M}$. We denote the set of analytic tensor fields on $M$ by $\mathbb{A}(M)$. That is every $f \in \mathbb{A}(M)$ has an analytic extension to some open domain containing $M$. For the basic theory of analytic $\Psi$DO we refer to \cite[Chapter V]{treves1}.
Recall that a continuous linear operator from $\mathcal{E}'(M)$ to $\mathcal{D}'(M)$ is analytic regularizing, if its range is contained in $\mathbb{A}(M)$.

%

Our first result in this section is a re-formulation of Proposition \ref{Pr:normal_is_pseudo} in the analytic setting.

\begin{proposition}
\label{pr:analytic_PsiDOs}
The operators $\mathcal{N}_L$ and $\mathcal{M}$, from \eqref{eq:operator_M}, are analytic $\Psi$DOs in $M_2$.
\end{proposition}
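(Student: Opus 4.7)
The strategy is to revisit the kernel calculations of Section \ref{Se:normal_op} and check that, because $g$ is now analytic up to and past $\partial M_2$, every ingredient that went into the Schwartz kernel of $\mathcal{N}_L$ is itself analytic. Concretely, when $g$ is analytic on a simple manifold, the Riemannian distance function $\rho(x,y)$ is analytic on $M_2\times M_2$ off the diagonal, the matrices $G^{(1)},G^{(2)},G^{(3)}$ in Lemma \ref{Le:Gs} are analytic on a neighborhood $U$ of the diagonal, and the parallel transport map defining $A^{uu'}(x,y)$ in \eqref{Auu} depends analytically on $(x,y)$. Consequently the expression \eqref{eq:Kernel} for the integral kernel $K_{ijk\ell}(x,y)$ is analytic in $M_2\times M_2$ away from the diagonal, and the function $\widetilde M_{ijk\ell}(x,y,z)$ defined in \eqref{eq:tildeM} depends analytically on $(x,y) \in U$ (with the $z$-dependence being a rational function homogeneous of degree $-2$). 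Therefore the amplitude $M_{ijk\ell}(x,y,\xi)$ obtained by the partial Fourier transform \eqref{eq:Msymbol} is analytic in $(x,y)$ for $\xi\neq 0$ and positively homogeneous of degree $-1$ in $\xi$.

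Next I would verify the analytic symbol bounds required by the analytic pseudo-differential calculus (see \cite[Chapter V]{treves1}): estimates of the form
\[
|\partial_\xi^\alpha \partial_x^\beta \partial_y^\gamma M_{ijk\ell}(x,y,\xi)| \leq C^{|\alpha|+|\beta|+|\gamma|+1}\,\alpha!\beta!\gamma!\,|\xi|^{-1-|\alpha|}
\]
on compact subsets of $M_2\times M_2$ and for $|\xi|\geq 1$. These follow from the analyticity just established together with Cauchy's estimates applied on fixed complex polydisks in $(x,y)$ around each real point; the $\xi$-derivative bounds come from the homogeneity of degree $-1$ and the smooth cutoff $\chi$ in \eqref{eq:Msymbol} plus the fact that $\widetilde M_{ijk\ell}$ is rational and homogeneous in $z$. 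The smoothing part $K^2_{ijk\ell}$ from the decomposition \eqref{eq:spilt_of_K}, being the kernel with support away from the diagonal of a composition of analytic maps, is analytic on $M_2\times M_2$, hence an analytic regularizing operator. This establishes that $\mathcal{N}_L$ is an analytic $\Psi$DO of order $-1$ in $M_2$.

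For $\mathcal{M}$ we need the two further ingredients. First, $|D|_g$ is an analytic $\Psi$DO of order $1$ because its full symbol $|\xi|_g = (g^{ij}(x)\xi_i\xi_j)^{1/2}$ is analytic in $(x,\xi)$ for $\xi\neq 0$. Second, we must argue that the projection $\mathcal{P}_{M_2} = \mathrm{d}^\mathcal{B}(\Delta^\mathcal{B}_{M_2})^{-1}\delta^\mathcal{B}$ is an analytic $\Psi$DO on $M_2$. The differential operators $\mathrm{d}^\mathcal{B}$ and $\delta^\mathcal{B}$ have analytic coefficients because $g$ is analytic, so the issue reduces to the Dirichlet solution operator $(\Delta^\mathcal{B}_{M_2})^{-1}$. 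Since $\Delta^\mathcal{B}_{M_2}$ is elliptic (Lemma \ref{le:elliptic_prob}) with analytic coefficients on the analytic domain $M_2$ whose boundary we have arranged to be analytic, the standard analytic regularity theory for boundary value problems of Morrey--Nirenberg type (cf.\ the analytic calculus in Treves and the construction of analytic parametrices for elliptic boundary problems used in \cite{SU,stefanov2004stability}) provides an analytic $\Psi$DO parametrix for $(\Delta^\mathcal{B}_{M_2})^{-1}$ modulo an analytic regularizing operator. Composing with the analytic operators $\mathrm{d}^\mathcal{B}$ and $\delta^\mathcal{B}$ yields that $\mathcal{P}_{M_2}$ is analytic pseudo-differential, and so $\mathcal{M}$ is an analytic $\Psi$DO.

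The main obstacle, as in \cite{SU}, is the treatment of the Dirichlet inverse $(\Delta^\mathcal{B}_{M_2})^{-1}$: one must produce a parametrix in the analytic category rather than merely in $C^\infty$, which requires the boundary of $M_2$ to be analytic (as we have arranged) and an appeal to the analytic elliptic boundary regularity machinery. Once this is in hand, everything else is a matter of checking that the ingredients entering the kernel of $\mathcal{N}_L$ and the symbol of $|D|_g$ retain their analyticity, which follows immediately from the analyticity of $g$ and of the geodesic flow.
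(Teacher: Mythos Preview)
Your overall strategy matches the paper's, but there is a genuine gap in how you handle the cutoffs. You invoke the decomposition \eqref{eq:spilt_of_K} and assert that $K^2_{ijk\ell}=(1-\chi)K_{ijk\ell}$ is analytic on $M_2\times M_2$ because $K$ is analytic off the diagonal. This is false: the cutoff $\chi(x,y)\in C^\infty_0(U)$ from Section~\ref{Se:normal_op} is smooth but not analytic (there are no compactly supported analytic functions), so $(1-\chi)K$ is only $C^\infty$, not analytic, and the corresponding operator is smoothing but not analytic regularizing. The same objection applies to your claim that the amplitude $M_{ijk\ell}$ of \eqref{eq:Msymbol}, which carries the factor $\chi(x,y)$, is analytic in $(x,y)$.

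The paper fixes this by working locally in a set $V$ with $V\times V\subset U$ and dropping the $(x,y)$-cutoff entirely; the amplitude is then taken to be the bare Fourier transform $\int e^{-i\xi\cdot z}\widetilde M_{ijk\ell}(x,y,z)\,\mathrm{d}z$, which is genuinely analytic in $(x,y)$. The necessary split is made in the \emph{frequency} variable instead: one writes $M=\phi(\xi)M+(1-\phi(\xi))M$ with $\phi\in C^\infty_0(\R^3)$. The first piece has an analytic kernel (the $\xi$-integral is over a compact set and the $(x,y)$-dependence of the integrand is analytic, so the smoothness of $\phi$ is harmless), while the second piece agrees with the homogeneous function $M_{ijk\ell}(x,y,\xi/|\xi|)|\xi|^{-1}$ for $|\xi|$ large and is therefore an analytic amplitude, with the required factorial bounds coming from Cauchy's integral formula exactly as you indicate. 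This local argument is then patched over $M_2$ following \cite{SU}.

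On $\mathcal{P}_{M_2}$: your appeal to Morrey--Nirenberg boundary regularity is heavier than needed. The paper simply takes an interior analytic parametrix $T$ for the elliptic analytic operator $\LaB$ on an open set containing $\overline{M_2}$, and observes that for $f$ supported in $M_2$ the difference $w=((\Delta^\mathcal{B}_{M_2})^{-1}-T)f$ satisfies $\LaB w\in\mathbb{A}(M_2)$; interior analytic hypoellipticity then gives $w\in\mathbb{A}(M_2)$. No boundary analysis is required since we only claim $\mathcal{P}_{M_2}$ is an analytic $\Psi$DO in the open set $M_2$.
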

\begin{proof}
Our proof follows the proof of \cite[Proposition 3.2]{SU}.

Since $g$ is  analytic in $M_2$, there exits $\delta>0$ such that the operator $A$ defined by \eqref{Auu} and functions $G^{(m)}, \: m\in\{1,2,3\}$ from \eqref{eq:Gs} are analytic in $U=\{(x,y)\in M_2\times M_2;\, |x-y|_e< \delta\}$. Let $V$ be an open set such that $V\times V\subset U$. Then $\widetilde M_{ijk\ell}$, given in  \eqref{eq:tildeM},  is analytic in $V \times V \times (\R^3\setminus \{0\}) $, and due to Lemma \ref{Le:M_tilde} distribution $\widetilde M_{ijk\ell}$ is positively homogeneous of order $-2$ in $z$ variable. Here we use the fact that $A$ is analytic in $U$, since the solution to an ODE with analytic coefficients is analytic.

 Thus $M_{ijk\ell}(x,y,\xi)$ is  analytic  in $V\times V\times (\R^3\setminus \{0\}) $ as a Fourier transform of $\widetilde M_{ijk\ell}$ in  $z$ variable. To see this, one only need to notice that $\widetilde M(x,y,z)$ is even in $z$, and \cite[Theorem 7.1.24]{hormander1} implies
\begin{equation}
\label{eq:M}
\begin{split}
M_{ijk\ell}(x,y,\xi)=&\int_{\R^3} e^{-i\xi \cdot z}\widetilde M_{ijk\ell}(x,y,z)\: \mathrm{d z}=\pi \int_{\mathbb{S}^2} \widetilde M_{ijk\ell}(x,y,\omega) \delta_0(\omega \cdot \xi)\; \mathrm{d \omega},
\end{split}
\end{equation}
where $\widetilde M$, in the last integrand, is an analytic function of all its variables.

\medskip
To prove that $M(x,y,\xi)$ is an analytic 
amplitude 
\color{black}
(see \cite[V, Definition 2.2]{treves1}) in $V \times V$ we proceed as follows. Let $\phi \in C^{\infty}_0(\mathbf{R}^3)$. We write
\[
M_{ijk\ell}(x,y,\xi)=\phi(\xi)M_{ijk\ell}(x,y,\xi)+(1-\phi(\xi))M_{ijk\ell}(x,y,\xi),
\]
and show that the first term is an amplitude of analytic regularizing operator and the second one is an analytic amplitude. This shows that $\mathcal{N}_L$ is an analytic $\Psi$DO in $V$. 

To prove that the operator of $\phi(\xi)M_{ijk\ell}(x,y,\xi)$ is analytic regularizing, we need to show that the corresponding integral kernel 
\[
\mathcal H_{ijk\ell}(x,y)=(2\pi)^{-3}\int_{\R^3} e^{i\xi \cdot (x-y)}\phi(\xi)M_{ijk\ell}(x,y,\xi) \; \mathrm{d}\xi, \quad (x,y) \in V \times V,
\]
is analytic. To do this we use the fact that $M_{ijk\ell}(x,y,\xi)$ is positively homogeneous of order $-1$ in $\xi$. Then a change to spherical coordinates gives 
\[
\mathcal H_{ijk\ell}(x,y)=(2\pi)^{-3} \int_{\mathbb S^2}\int_0^\infty e^{i(r\omega) \cdot (x-y)}\phi(r \omega)M_{ijk\ell}(x,y,\omega)r \; \mathrm{d}r\mathrm{d}\omega.
\]
Since a product of analytic functions is analytic and $\phi$ is compactly supported,
this proves that $\mathcal H_{ijk\ell}$ is analytic.

Let $R_0>0$ be a radius of a ball containing the support of $\phi$. 
Since $F(x,y,\xi):=(1-\phi(\xi))M_{ijk\ell}(x,y,\xi)$ equals  to $M_{ijk\ell}(x,y,\xi)$, when $|\xi|_e>R_0$ and $M_{ijk\ell}(x,y,\xi)$ is homogeneous of order $-1$ in $\xi$, we can write
\begin{equation}
\label{eq:analytic_ext}
F(x,y,\xi)=M_{ijk\ell}\left(x,y,\frac{\xi}{|\xi|_e}\right)|\xi|^{-1}_e, \quad |\xi|_e>R_0.
\end{equation}
Therefore we can use the right hand side of \eqref{eq:analytic_ext} to extend $F$ analytically on $V^\C \times V^\C \times (\C^3\setminus \overline B(0,R_0))$. Here $V^\C$ is an extension of $V$ to $\C^3$.
\color{black}
\color{black}
This implies that for any compact $K\subset V^\C\times  V^\C$ there exist $C>0$ 
such that       
\[
|F(x,y,z)|\leq C |\xi|^{-1}_e, \quad  \quad (x,y)\in K, \: |\xi|_e>R_0.
\]       

We choose $R>0$ so large that $\widetilde B_R(\xi):=\prod_{i=1}^3B_{i,R}(\xi) \subset (\C^3 \setminus B_{R_0}(0))$ if $\xi \in \R^3, \: |\xi|_e> R$ and $B_{i,R}(\xi):=\{z \in \C: \: |z-\xi_i|_e\leq \frac{1}{2R}|\xi|_e\}$. Then we apply Cauchy's integral formula on $\widetilde B_R(\xi)$  to find
\[
\begin{split}
|D^\alpha_\xi F (x,y,\xi)|
\leq &\: \alpha! \left(\frac{C}{2R}\right)^{|\alpha|}  |\xi|_e^{-|\alpha|-1} , \quad (x,y)\in K, \: |\xi|_e>R.
\end{split}
\]
Therefore $F$ is an analytic amplitude. 
Since $V \subset U$ was arbitrarily chosen, we have proven that for any $x_0 \in M_2^{int}$ there exists a neighborhood $V_{x_0}$ contained in $M_2^{int}$ in which $\mathcal{N}_L$ is an analytic $\Psi$DO.  
\color{black}
From here we can follow the lines in the proof of \cite[Proposition 3.2]{SU} to conclude that $\mathcal{N}_L$ is actually an analytic $\Psi$DO in the whole $M_2$.

\medskip
Next we proceed to prove that $\mathcal{P}_{M_2}=\deB (\Delta^\mathcal{B}_{M_2})^{-1} \dB$ is an analytic $\Psi$DO. Formula \eqref{eq:dB} implies that $\dB$ is an analytic operator.  Also $\deB$ is analytic. Therefore $\LaB$ is an elliptic analytic operator in the domain $\Omega$. Thus there exists an analytic parametrix $T$ of $\LaB $ in an open set $M'$ containing $\overline{M_2}$. We need to show that $(\Delta^\mathcal{B}_{M_2})^{-1}-T$ is analytic regularizing on $M_2$. Let the distribution $f$ be supported in $M_2$. We set $w:=((\Delta^\mathcal{B}_{M_2})^{-1}-T)f$. Then $\LaB w \in \mathbb{A}(M_2)$ and  the interior analytic regularity  implies $ w \in \mathbb{A}(M_2)$, and we have proven that $(\Delta^\mathcal{B}_{M_2})^{-1}$ is an analytic $\Psi$DO in $M_2$.

\medskip 
Since $|D|_g$ is analytic, we have proven that operator $\mathcal{M}$ in \eqref{eq:operator_M} is analytic.
\end{proof}

In the following lemma we extend the result of Lemma \ref{existence2} to an analytic case. The proof is similar to \cite[Lemma 3.3]{SU}.
\begin{lemma}\label{analytic_lemma}
Let $x_0\in\partial M$, and assume that the metric $g$ and the tensor fields $u$ and $v_0$ are analytic in a (two-sided) neighborhood of $x_0$ and that $\partial M$ is analytic near $x_0$. Let tensor field $v$ solve
\begin{equation}
\LaB v=u\quad\text{in}~M, \quad v\vert_{\partial M}=v_0.
\end{equation}
Then $v$ extends as an analytic function in some (two-sided) neighborhood of $x_0$.
\end{lemma}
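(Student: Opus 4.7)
The plan is to reduce Lemma \ref{analytic_lemma} to the classical Morrey--Nirenberg type theorem on boundary analyticity for elliptic boundary value problems, following the approach of \cite[Lemma 3.3]{SU}. Since $\p M$ is analytic near $x_0$, first I would introduce an analytic local coordinate chart that flattens $\p M$ to $\{x^3 = 0\}$, with $M$ locally corresponding to $\{x^3 \geq 0\}$. Under such an analytic diffeomorphism the metric $g$ pulls back to an analytic metric, its Christoffel symbols are analytic, and consequently the coefficients of $\mathrm{d}^{\mathcal{B}}$, $\delta^{\mathcal{B}}$, and $\LaB$ are all real-analytic in a one-sided Euclidean neighborhood of $x_0$. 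The data $u$ and $v_0$ are likewise analytic in this chart by hypothesis.

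Next I would invoke Lemma \ref{le:elliptic_prob}: the boundary value problem $(\LaB, \mathrm{Tr})$ is elliptic in the sense of Definition \ref{de:elliptic_prob}, i.e.\ $\sigma(\LaB)$ is elliptic and the Lopatinskij condition holds at every boundary point. Both properties depend only on the principal symbols and are preserved under smooth coordinate changes, so they persist in the straightened chart. At this stage the problem is that of an elliptic system with analytic coefficients on an analytic half-space, with analytic right-hand side and analytic Dirichlet data. The unique $C^\infty$ solution supplied by Lemma \ref{existence2} is therefore real-analytic up to the boundary by the analytic boundary regularity theorem for elliptic systems (see e.g.\ \cite[Chapter V]{treves1}). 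Real-analyticity at $x_0 \in \p M$ immediately produces a convergent Taylor expansion in a full Euclidean ball centered at $x_0$, which is the desired two-sided analytic extension.

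The main obstacle is that the Morrey--Nirenberg theorem is most often stated for scalar equations, while $\LaB$ defines an elliptic system acting on tensor-valued sections. However, the system version of the theorem requires precisely the structure we have already verified in Lemma \ref{le:elliptic_prob}: ellipticity of the principal symbol together with the Lopatinskij condition. In our setting the latter was reduced to the initial-value ODE system \eqref{equation0}, whose coefficients depend analytically on the tangential covector $\xi'$, so its analytic form is automatic. An alternative route, more in the spirit of the present paper, would be to construct an analytic parametrix for $(\LaB, \mathrm{Tr})$ near the boundary using analytic $\Psi$DOs as in Proposition \ref{pr:analytic_PsiDOs}, thereby reducing the claim to interior analytic hypoellipticity for an elliptic operator with analytic coefficients.
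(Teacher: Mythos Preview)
Your proposal is correct and follows the same route the paper indicates: the paper simply states that the proof is similar to \cite[Lemma 3.3]{SU}, which is precisely the Morrey--Nirenberg analytic boundary regularity argument you outline, relying on the ellipticity and Lopatinskij condition already verified in Lemma \ref{le:elliptic_prob}.
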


The main result of this subsection is the following.
\begin{proposition}\label{analytic_parametrix_thm}
There exists a bounded operator $\mathcal{W}:H^{1}(S\tau'_{M_1}\otimes^\mathcal{B} S\tau'_{M_1})\to L^2(S\tau'_{M_1}\otimes^\mathcal{B} S\tau'_{M_1})$ such that for any $2$-tensor $f\in L^2(S\tau'_{M}\otimes^\mathcal{B} S\tau'_{M})$ we have
\[
f_{M_1}^s=\mathcal{W}\mathcal{N}_Lf+\mathcal{K}f
\] 
with $\mathcal{K}f$ analytic in $M_1$.
\end{proposition}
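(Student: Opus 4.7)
The plan is to mirror the finite-regularity construction of Proposition \ref{Pr:1_reconst_result} inside the analytic pseudo-differential calculus. By Proposition \ref{pr:analytic_PsiDOs}, the operator $\mathcal{M}$ from \eqref{eq:operator_M} is an analytic $\Psi$DO in $M_2$, and the explicit principal-symbol left inverse $(E,\mathrm{Id})$ constructed just before \eqref{eq:inverse_of_order_m} shows it is elliptic. First I would invoke the standard analytic parametrix construction for elliptic analytic $\Psi$DOs (as in \cite[Chapter V]{treves1}) to produce a left parametrix $\mathcal{L}=(\mathcal{L}_1,\mathcal{L}_2)$ of order $0$ in a neighborhood of $M_1$, with principal symbol $(E,\mathrm{Id})$, such that $\mathcal{L}_1|D|_g\mathcal{N}_L+\mathcal{L}_2\mathcal{P}_{M_2}=\mathrm{Id}-\mathcal{R}$ where $\mathcal{R}$ is analytic regularizing there. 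Similarly I would take an analytic parametrix $\mathcal{A}$ of the analytic elliptic operator $\LaB$ in a neighborhood of $\overline{M_2}$, which exists by the argument already used in the proof of Proposition \ref{pr:analytic_PsiDOs}.

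Next I would set $\Lambda:=\mathrm{Id}-\dB\mathcal{A}\deB$ and define the candidate operator $\mathcal{W}:=\Lambda\mathcal{L}_1|D|_g\Lambda$. Since $|D|_g$ has order $1$ while the remaining factors have order $0$, $\mathcal{W}$ is a $\Psi$DO of order $1$ and hence bounded from $H^1(S\tau'_{M_1}\otimes^\mathcal{B}S\tau'_{M_1})$ into $L^2(S\tau'_{M_1}\otimes^\mathcal{B}S\tau'_{M_1})$. Multiplying the parametrix identity by $\Lambda$ on both sides and using that $\Lambda\mathcal{P}_{M_2}$ and $\Lambda^{2}-\Lambda$ are analytic regularizing (a consequence of $\mathcal{A}$ being an analytic parametrix of $\LaB$), I would arrive, just as in the derivation of \eqref{eq:first_recons_formula}, at
\[
\mathcal{W}\mathcal{N}_Lf=f-\dB\mathcal{A}\deB f+\mathcal{K}_1 f,
\]
with $\mathcal{K}_1 f$ analytic in $M_1$.

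Then I would compare $\dB\mathcal{A}\deB f$ to the true projection $\dB(\LaB_{\!\!M_1})^{-1}\deB f$ onto the potential fields. Setting $w:=\bigl((\LaB_{\!\!M_1})^{-1}-\mathcal{A}\bigr)\deB f$, so that $f^s_{M_1}=f-\dB\mathcal{A}\deB f-\dB w$, the field $w$ satisfies
\[
\LaB w=(\mathrm{Id}-\LaB\mathcal{A})\deB f\ \text{in}\ M_1,\qquad w\vert_{\partial M_1}=-\mathcal{A}\deB f\vert_{\partial M_1}.
\]
Because $\mathcal{A}$ is an analytic parametrix, $\mathrm{Id}-\LaB\mathcal{A}$ is analytic regularizing, so the right-hand side is analytic in $M_1$; because $f$ vanishes in a neighborhood of $\partial M_1$, the distribution $\mathcal{A}\deB f$ is analytic on $M_1\setminus M$, so the boundary datum is analytic on $\partial M_1$. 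Lemma \ref{analytic_lemma} applied at each point of $\partial M_1$ then yields analyticity of $w$ up to $\partial M_1$, which combined with interior analytic elliptic regularity for $\LaB$ gives $w\in\mathbb{A}(M_1)$ and therefore $\dB w\in\mathbb{A}(M_1)$. Setting $\mathcal{K}f:=\mathcal{K}_1 f-\dB w$ produces the required identity $\mathcal{W}\mathcal{N}_Lf=f^s_{M_1}+\mathcal{K}f$ with $\mathcal{K}f$ analytic in $M_1$.

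The main obstacle I expect is verifying that the matrix-valued analytic parametrix machinery really applies to $\mathcal{M}$ and that the various error terms are genuinely analytic regularizing; this is where Proposition \ref{pr:analytic_PsiDOs} together with the boundary analytic regularity supplied by Lemma \ref{analytic_lemma} does the heavy lifting, and where the requirement that $\partial M_1$ and the extended metric $g$ be analytic is essential.
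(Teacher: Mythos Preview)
Your approach is correct and follows the same overall strategy as the paper: build an analytic left parametrix of $\mathcal{M}$, multiply by a solenoidal-type projection to kill the $\mathcal{P}_{M_2}$ term, and then repair the difference with the true $M_1$-solenoidal part via Lemma~\ref{analytic_lemma}. The difference is only in implementation. You mirror the finite-regularity argument of Section~\ref{Se:Reconstruction formula} by using the approximate projection $\Lambda=\mathrm{Id}-\dB\mathcal{A}\deB$ built from an analytic parametrix $\mathcal{A}$ of $\LaB$, and you then correct $f-\dB\mathcal{A}\deB f$ to $f^s_{M_1}$ through the inhomogeneous problem $\LaB w=(\mathrm{Id}-\LaB\mathcal{A})\deB f$ with analytic data. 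The paper instead uses the \emph{exact} projection $\mathcal{S}_{M_2}$, which Proposition~\ref{pr:analytic_PsiDOs} already identifies as an analytic $\Psi$DO; this lets it set $\mathcal{W}=\mathcal{S}_{M_2}\mathcal{L}_1|D|_g$ and exploit the exact identities $\mathcal{N}_L\mathcal{S}_{M_2}=\mathcal{N}_L$ and $\mathcal{P}_{M_2}\mathcal{S}_{M_2}=0$ without having to check that $\Lambda\mathcal{P}_{M_2}$ and $\Lambda^2-\Lambda$ are analytic regularizing. The correction step in the paper is then a comparison of $f^s_{M_2}$ with $f^s_{M_1}$ via the \emph{homogeneous} problem $\LaB u=0$, $u|_{\partial M_1}=v_{M_2}|_{\partial M_1}$. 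Both routes land on Lemma~\ref{analytic_lemma} for the boundary analyticity; the paper's version is a bit shorter because the algebraic bookkeeping is cleaner with the exact projection, while yours has the virtue of tracking the $C^k$ construction line by line.
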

\begin{proof}
Since $\mathcal M$ is an elliptic analytic $\Psi$DO in $M_1$ we can construct a parametrix $\mathcal{L}=\{\mathcal{L}_1,\mathcal{L}_2\}$ of $\mathcal{M}$ in $M_1$ such that
\begin{equation}\label{analyticParametrix}
\mathcal{L}\mathcal{M}=\mathrm{Id}+\mathcal{K}_1,
\end{equation}
with $\mathcal{L}$ an analytic $\Psi$DO of order $0$ in a neighborhood of $M_1$, and $\mathcal{K}_1$ analytically regularizing in $M_1$. Apply $S_{M_2}$ to the left and right, to \eqref{analyticParametrix} and notice $\mathcal{N}_L\mathcal{S}_{M_2}=\mathcal{S}_{M_2}\mathcal{N}_L=\mathcal{N}_L$, $\mathcal{P}_{M_2}\mathcal{S}_{M_2}=0$. 
We denote 
$\mathcal{W}=\mathcal{S}_{M_2}\mathcal{L}_1|D|_g$, and have
\[
\mathcal{W}\mathcal{N}_L=S_{M_2}+\mathcal{K}_2\quad\text{in}~M_1.
\]
Here $\mathcal{K}_2$ is analytic regularizing in $M_1$.


We need to compare $f^s_{M_1}$ and $f^s_{M_2}$ for $f\in L^2(S\tau'_{M}\otimes^\mathcal{B} S\tau'_{M})$. We write $f^s_{M_i}=f-\mathrm{d}^\mathcal{B}v_{M_i}$. As in the previous section, we have $f^s_{M_1}=f^s_{M_2}+\mathrm{d}^\mathcal{B}u$ in $M_1$ for $u=v_{M_2}-v_{M_1}$. Then $u\in H^1(M_1)$ and  solves
\[
\Delta^\mathcal{B}u
=0\quad\text{in}~M_1, \quad u\vert_{\partial M_1}=v_{M_2}.
\]
We note that since $\p M_1$ is analytic we have $\hbox{Tr}_{M_1}h\in \mathbb{A}(\p M_1)$ for any $h$ analytic near $\p M_1$. As $\mathrm{supp} f$ is disjoint from $\partial M_1$, analytic pseudo-locality yields $v_{M_2}\in\mathbb{A}(\partial M_1)$.
By Lemma \ref{analytic_lemma}, $u\in\mathbb{A}(M_1)$; thus $f\mapsto \mathrm{d}^\mathcal{B}u$ is a linear operator mapping $L^2(M)$ into $\mathbb{A}(M_1)$. Then we use the relation
\[
f^s_{M_1}=f^s_{M_2}+\mathrm{d}^\mathcal{B}u=\mathcal{WN}f-\mathcal{K}_2f+\mathrm{d}^\mathcal{B}u
\]
to complete the proof.
\end{proof}
\subsection{$S$-injectivity of $1+1$ tensors for analytic metrics}
In this subsection we will prove $s$-injectivity for an analytic simple metric $g$.
\begin{lemma}\label{lemm42}
Let $g$ be a smooth, simple metric in $M$ and let $f\in C^\infty(S\tau'_M\otimes^\mathcal{B} S\tau'_M)$. If $Lf=0$, then there exits a tensor field $v$ vanishing on $\partial M$ such that for $\tilde{f}=f-\mathrm{d}^\mathcal{B}v$ we have $\mathrm{Tr}_M (\partial^m\tilde{f})=0$ for any multi-index $m \in \N^3$.

Moreover, if $g$ and $f$ are analytic in a (two-sided) neighborhood of $\partial M$, and $\partial M$ is also analytic, then $v$ can be chosen so that $\tilde{f}=0$ near $\partial M$.
\end{lemma}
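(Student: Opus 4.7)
The idea is to construct $v$ in boundary normal coordinates by matching the normal Taylor jet of $d^\mathcal{B}v$ to that of $f$ order by order, using the vanishing of $Lf$ on near-tangent geodesics to supply the required algebraic data. Fix $x_0\in\partial M$ and introduce boundary normal coordinates $(x',x^3)$ near $x_0$, with $\partial M=\{x^3=0\}$ and $M^{\text{int}}=\{x^3>0\}$. In these coordinates $g_{33}\equiv 1$ and $g_{\alpha 3}\equiv 0$ for $\alpha=1,2$, so that the vertical lines $x'=\text{const}$ are unit-speed geodesics and $\partial_3$ is parallel along them.

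First, I would write $v$ as a formal Taylor series $v(x',x^3)\sim\sum_{k\geq 1}v^{(k)}(x')(x^3)^k/k!$, so that $v|_{\partial M}=0$ automatically. The conditions $\partial^{m}(f-d^\mathcal{B}v)|_{\partial M}=0$ for all $m$ reduce, after peeling off tangential derivatives, to an upper-triangular recursion in which $v^{(k+1)}$ is determined by $v^{(1)},\ldots,v^{(k)}$ and the Taylor coefficients of $f$ of order $\leq k$. Here the key algebraic fact is that the symbol of $d^\mathcal{B}$ in the conormal direction $\xi=(0,0,1)$ is invertible on the space $S\tau'\otimes^\mathcal{B}S\tau'$ restricted to the kernel of $\mu$; this is essentially the Lopatinskij computation of Lemma \ref{le:elliptic_prob} specialised to the symbol $i_\xi^\mathcal{B}$ from Lemma \ref{Le:symbols}. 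Consequently, each recursive step can be solved uniquely for $v^{(k+1)}$, given the matching data on the right-hand side.

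The matching data comes from $Lf=0$. I would use the condition $Lf(y,\omega,\eta)=0$ for $(y,\omega)\in\partial_+(SM)$ near $(x_0,\xi)$ with $\xi\in T_{x_0}\partial M$, and take $\eta\perp\omega$. For such short, nearly tangent geodesics, the substitution $y=(x',\varepsilon)$, $\omega$ close to tangential, and expansion in $\varepsilon$ as in the Pestov-Uhlmann / Stefanov-Uhlmann style argument, yields precisely the algebraic identities that the coefficients $v^{(k)}$ have to satisfy (modulo derivatives of lower-order coefficients previously fixed). This is the step where the natural obstruction $d^\mathcal{B}v$ with $v|_{\partial M}=0$ enters: the rank-deficit of the algebraic system is exactly the image of the principal symbol of $d^\mathcal{B}$, so $Lf=0$ supplies data lying in the correct quotient. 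Once all $v^{(k)}$ are determined, Borel's theorem provides a smooth $v$ on a collar of $\partial M$ realising this jet; cutting off and extending by zero gives the desired $v\in C^\infty_0(M)$ with $v|_{\partial M}=0$ and $\tilde f=f-d^\mathcal{B}v$ flat at $\partial M$.

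For the analytic statement, I would show that, when $g$, $f$ and $\partial M$ are analytic near $\partial M$, the tangential-analyticity of the recursion and Cauchy estimates on the coefficients (exploiting simplicity and the analytic dependence of parallel transport on the base point) produce bounds $\|v^{(k)}\|_{K}\leq C R^k k!$ on a complex neighborhood $K$ of a boundary chart. These estimates force the formal series to converge to an analytic $v$ in a two-sided neighborhood of $\partial M$; then $\tilde f$ is analytic there and vanishes to infinite order at $\partial M$, hence is identically zero on a neighborhood of $\partial M$. The main obstacle I anticipate is Step~3: verifying that differentiating $Lf(y,\omega,\eta)=0$ in $(y,\omega,\eta)$ along tangential directions and at successive orders in the near-tangent parameter produces exactly the inhomogeneous data needed for the triangular system, with no spurious constraint. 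This requires a careful bookkeeping of covariant versus coordinate derivatives, together with the observation, from Lemma~\ref{Le:mu_comp_B=0}, that the trace-free projection $\mathcal{B}$ commutes with differentiation up to lower-order metric terms, so that all ambiguity is absorbed into previously determined coefficients.
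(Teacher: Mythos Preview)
Your plan differs from the paper's proof in its basic mechanism, and one of your claims is incorrect.

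The paper does not construct $v$ as a formal Taylor series and then invoke Borel. Instead, in boundary normal coordinates it singles out the three equations $\tilde f_{i3}=0$, $i=1,2,3$, and observes that these form a \emph{determined} system of first-order ODEs for $(v_1,v_2,v_3)$ along the normal geodesics (first a linear $2\times 2$ system for $v_1,v_2$, then a scalar equation for $v_3$). Solving this with zero Cauchy data on $\partial M$ yields an honest $v$ on a collar, with $\tilde f_{i3}\equiv 0$ there; when $g,f$ are analytic the solution of an analytic ODE is analytic, so the analytic statement follows immediately with no Cauchy-type growth estimates. Having $\tilde f_{i3}\equiv 0$ on a collar, the paper then shows the remaining components $\tilde f_{\alpha\beta}$ are flat at $\partial M$ by a sign argument in the style of Lassas--Sharafutdinov--Uhlmann: if the lowest nonvanishing normal derivative of $\eta^i\tilde f_{ij}\xi^j$ at a boundary point were nonzero for some tangential $\xi$ and $\eta\perp\xi$, then $\langle L\tilde f(x,\xi),\eta\rangle$ would have a definite sign on all sufficiently short nearby near-tangent geodesics, contradicting $L\tilde f=0$. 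Combined with $\tilde f_{i3}=0$ and $\mu\tilde f=0$ this kills everything. No recursion, no Borel.

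Your assertion that ``the symbol of $d^\mathcal{B}$ in the conormal direction is invertible on $S\tau'\otimes^\mathcal{B}S\tau'$'' is false: $i_\xi^{\mathcal B}$ maps the $3$-dimensional space of $1$-forms into the $8$-dimensional space of trace-free $1{+}1$ tensors, so it is injective but far from surjective. The Lopatinskij computation in Lemma~\ref{le:elliptic_prob} gives injectivity, not a two-sided inverse. What your scheme actually needs is that, at every order $k$, the quantity $\partial_3^k f|_{\partial M}$ (modulo terms already fixed) lies in the $3$-dimensional image of $i_\xi^{\mathcal B}$; you acknowledge this but leave it as the unverified Step~3, and that step is exactly the substance of the lemma. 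The paper's ODE\,+\,sign argument sidesteps this bookkeeping and, as noted, makes the analytic case immediate, whereas your proposed Cauchy estimates on the recursion would be a genuinely harder route.
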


\begin{proof}
We fix $x_0\in\partial M$ and take boundary normal coordinates $x=(x',x^3)$ in a neighborhood $U\subset M$ of $x_0$. Then in $U$ we have $g_{i3}=\delta_{i3}$ for any $i=1,2,3$. We aim first to find a tensor field $v$, vanishing on $\partial M$, such that for $\tilde{f}:=f-\mathrm{d}^\mathcal{B}v$ we have 
\begin{equation}\label{zero1}
\tilde{f}_{i3}=0, \quad \hbox{ in some open neighborhood }\widetilde U \subset U \hbox{ of } x_0.
\end{equation}
Due to \eqref{eq:dB}, this is equivalent to
\begin{equation}\label{eq_ftilde}
\begin{split}
&f_{13}-\nabla_3v_1=0, \quad f_{23}-\nabla_3v_2=0, \quad  f_{33}-\nabla_3v_3+\frac{1}{3}(g^{k\ell} \nabla_k v_\ell)=0, \quad \hbox{ in } U
\\
&v\vert_{x^3=0}=0.
\end{split}
\end{equation}
%

Remember that $\nabla_jv_i=\partial_jv_i-\Gamma_{ji}^kv_k$, and the Christoffel symbols  in the boundary normal coordinates, satisfy $\Gamma_{33}^k=\Gamma_{k3}^3=\Gamma_{3k}^3=0$. We first solve the  system of initial value problems
\[
\left\lbrace
\begin{array}{l}
\partial_3v_1-\Gamma_{31}^1 v_1-\Gamma_{31}^2 v_2=\nabla_3v_1=f_{13}, 
\\
\partial_3v_2-\Gamma_{32}^1 v_1-\Gamma_{32}^2 v_2=\nabla_3v_2=f_{23},  
\\
v_1(x',0)=0, \quad v_2(x',0)=0, 
\end{array}
\right.
\]
for $v_1$ and $v_2$, which are given along boundary normal geodesics $\gamma_{(x',0),\nu}(x_3)$. 
Then using $g^{i3}=\delta^{i3}$ we write the last equation of \eqref{eq_ftilde} in a form of the following initial value problem

\[
\partial_3v_3
=\frac{3}{2}\left(f_{33}-G\right),\quad   v_3(x',0)=0,
\]
where  $G$ depends only on $v_i, \: \p_j v_i,\: g^{jk}, \: \Gamma^k_{j\ell}$ for $i \in \{1,2\}$. We have found $v$ near the boundary. Clearly, if $g$ and $f$ are analytic near $\partial M$, so is $v$.

We note that the convexity of the boundary implies that for $(x,\xi)\in \p_+ (SM)$ where $x\in\partial M\cap U$, is close to $x_0$, $|\xi|_g=1$ and the normal component of $\xi$ is small enough, the geodesic issued from $(x,\xi)$ hits the boundary again in $U$. Then the boundary value of $v$ and $Lf=0$ imply $L\tilde{f}(x,\xi)=0$.  We choose the boundary coordinates $x'$ such that $g_{ij}\vert_{x=x_0}=\delta_{ij}$.  We claim
\begin{equation}\label{zero2}
\tilde{f}_{3\alpha}\vert_{x=x_0}=0,\quad\tilde{f}_{\alpha\beta}\vert_{x=x_0}=0,\quad(\tilde{f}_{11}-\tilde{f}_{22})\vert_{x=x_0}=0,
\end{equation}
for $\alpha=1,2$, $\beta=1,2$ and $\alpha\neq\beta$. If this is true, then $(\ref{zero1})$ and 
$
\sum_{i=1}^3\widetilde f_{ii}(x_0)=\mu \widetilde f(x_0) =0,
$
give $\tilde{f}(x_0)=0$.
To prove \eqref{zero2} we let $\xi\in T_{x_0}\partial M$, $|\xi|_g=1$, and take a curve $\delta:(-\epsilon,\epsilon)\rightarrow\partial M$ adapted to $(x_0,\xi)$. Let $\gamma=\gamma_\epsilon\colon [0,1]\rightarrow M$ be the shortest geodesic of the metric $g$ joining the points $x_0$ and $\delta(\epsilon)$, i.e., $\gamma(0)=x_0$ and $\gamma(1)=\delta(\epsilon)$. Let $\eta\in T_{x_0}M$ be perpendicular to $\xi$, and $\eta_\epsilon$ be  the orthogonal projection of $\eta$ to $\dot \gamma_\epsilon(0)$. We also set $\eta(t)=\eta_\epsilon(t)$ to be the parallel transport of $\eta_\epsilon$ along $\gamma_\epsilon$. Since the points $(\gamma(t),\frac{\dot{\gamma}(t)}{|\dot{\gamma}(t)|_g})$ and $(\gamma(t),\eta(t))$ tend  to $(x_0,\xi)$ and $(x_0,\eta)$, respectively, uniformly for $t\in [0,1]$ as $\epsilon\rightarrow 0$, we have
\[
\eta^k
\tilde{f}_{kj}\xi^j=\lim_{\epsilon \to 0}\int_0^1 \eta^k(t)
\tilde{f}_{kj}(\gamma(t))\frac{\dot{\gamma}^j(t)}{|\dot{\gamma}(t)|_g}\mathrm{d}t=\lim_{\epsilon \to 0}\frac{1}{|\dot{\gamma}_\epsilon (0)|_g} \left\langle L\widetilde f\left(x_0,\dot{\gamma}_\epsilon(0)\right), \eta_\epsilon \right\rangle=0.
\]
We set $e_3=\nu(x_0)$, and $e_\alpha=\frac{\p}{\p x'^\alpha}|_{x_0}$ for $\alpha=1,2$. The previous equation implies
\[
\tilde{f}_{m\alpha}=
0, \quad m \in \{1,2,3\}, \: \alpha \in\{1,2\}, \: m\neq \alpha.
\]
To obtain the last equation in \eqref{zero2} we set $\xi=\frac{1}{\sqrt{2}}(e_1+e_2)$, $\eta=\frac{1}{\sqrt{2}}(e_1-e_2)$, thus
\[
\frac{1}{2}(f_{11}-f_{22})=\xi^i\tilde{f}_{ij}\eta^j
=0.
\]
This completes the proof of \eqref{zero2}. Since $x_0$ was an arbitrary point in $\p M \cap U$ we have shown that $\widetilde f$ vanishes at $\p M \cap U$. It remains to show
\begin{equation}
\label{zero4}
\partial^m_{x_3}\tilde f_{ij}\vert_{x=x_0}=0, \quad m=1,2,\cdots. 
\end{equation}
We do not prove this directly but note that, if 
\begin{equation}\label{zero3}
\partial^m_{x_3}\tilde{f}_{3\alpha}\vert_{x=x_0}=0,\quad\partial^m_{x_3}\tilde{f}_{\alpha\beta}\vert_{x=x_0}=0,\quad\partial^m_{x_3}(\tilde{f}_{11}-\tilde{f}_{22})\vert_{x=x_0}=0, \quad \alpha, \beta \in\{1,2\}, \: \alpha \neq \beta.
\end{equation}
holds, then due to 
\[
\sum_{i=1}^3\partial_{x_3} \tilde f_{ii}(x_0)=\mu \left( \partial_{x_3} \tilde f\right)\bigg|_{x=x_0}=(\partial_{x_3} \mu \tilde f)\bigg|_{x=x_0}=0
\]
\eqref{zero4} also holds. The equation above holds since the trace and the covariant derivative commute and Christoffel symbols vanish at $x_0$.

The proof of \eqref{zero3} is similar to the proof of \cite[Theorem 2.1]{lassas2003semiglobal}. We give it here for the sake of completeness. Let $m>0$ be the smallest integer for which \eqref{zero3} does not hold. 
We consider a $2$-tensor 
$
h_{ij}:=\p^m_{x_3}\widetilde f_{ij}|_{x=x_0}
$
acting on $T_{x_0}M$. 
Since \eqref{zero3} does not hold for $m$, there exists 
%
$\xi_0\in T_{x_0}M$ of unit length, tangent to $\partial M$, and $\eta_0\in T_{x_0}M$ that is perpendicular to $\xi_0$, such that $\eta^i_0 h_{ij}\xi_0^j\neq 0$. Then the Taylor expansion of $\tilde{f}$ implies that  $\eta^i\tilde{f}_{ij}\xi^j$ is either (strictly) positive or negative for $x^3>0$ and $|x'-x_0'|_e$ both sufficiently small and $(\xi,\eta)$ close to $(\xi_0,\eta_0)$. Therefore, $\langle L\tilde{f}(x,\xi),\eta\rangle$ is either (strictly) positive or negative for all $(x,\xi)\in\partial_+(SM)$ close enough to $(x_0,\xi_0)$ and $\eta\perp\xi$  close to $\eta_0$. This is a contradiction.

We have completed the construction of $v$ near $x_0$. As in the proof of \cite[Lemma 4.1]{SU}, we can extend the construction of $v$ anywhere near $\partial M$.

If $g$ and $f$ are analytic, then $v$ is analytic near $\partial M$. Then $\tilde{f}$ is analytic and thus $\tilde{f}=0$ near $\partial M$.
\end{proof}

In the following Theorem we use global semi-geodesic coordinates for simple manifold $(M_1,g)$, introduced in \cite[Lemma 4.2]{SU}, under which the metric $g$ has the global representation.
\[
g_{i3}=\delta_{i3},\quad i=1,\,2,\,3.
\]
We use the notations $e_i, \: i \in \{1,2,3\}$ for the corresponding  coordinate vector fields.

\begin{theorem}\label{sinjectivity_analytic}
Let $g$ be a simple metric in $M$, that has an analytic extension. Then $L$ is s-injective.
\end{theorem}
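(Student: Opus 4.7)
The plan is to combine the analytic parametrix of Proposition \ref{analytic_parametrix_thm} with Lemma \ref{lemm42} and close the argument by two Cauchy-Kovalevskaya uniqueness arguments tailored to the overdetermined elliptic operator $\dB$. Given $f \in L^2(S\tau'_M \otimes^{\mathcal B} S\tau'_M)$ with $L f = 0$, I extend $f$ by zero to $M_1$. Because $\mathcal{N}_L f = L^* L f = 0$, Proposition \ref{analytic_parametrix_thm} makes $f^s_{M_1} = \mathcal{K} f$ real analytic on $M_1$. Writing $f = f^s_{M_1} + \dB v_{M_1}$ with $v_{M_1}|_{\partial M_1} = 0$, I check $L^{M_1} f^s_{M_1} = 0$ on $\partial_+ S M_1$: the transform of $\dB v_{M_1}$ vanishes because $v_{M_1}|_{\partial M_1}=0$, while strict convexity of $\partial M$ inside $(M_1,g)$ makes $\gamma \cap M$ a maximal geodesic of $M$ for every maximal geodesic $\gamma$ of $M_1$, so $L^{M_1} f(\gamma) = L f(\gamma\cap M) = 0$. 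The analytic version of Lemma \ref{lemm42} applied to $f^s_{M_1}$ on $M_1$, whose boundary is analytic by construction, then produces a $1$-form $w$ with $w|_{\partial M_1}=0$, analytic in a neighborhood $U$ of $\partial M_1$, such that $f^s_{M_1} = \dB w$ on $U$.

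To conclude $f^s_{M_1}\equiv 0$, I extract from $\dB w = f^s_{M_1}$ the three components carrying a normal index in boundary normal coordinates; these form a determined first-order Cauchy-Kovalevskaya system expressing $\partial_\nu w$ in terms of $w$ and its tangential derivatives. With zero Cauchy data on $\partial M_1$ and analytic right-hand side, the system has a unique analytic solution on a tubular neighborhood of $\partial M_1$; since $(M_1,g)$ is simple, the boundary-based exponential map is a diffeomorphism onto $M_1$, and integration along the normal geodesic flow extends this solution analytically to all of $M_1$, yielding a global analytic extension $w_1$ of $w$. Analytic continuation of the identity $\dB w_1 = f^s_{M_1}$ from $U$ to $M_1$ then gives $\LaB w_1 = \deB f^s_{M_1} = 0$ on $M_1$ with $w_1|_{\partial M_1}=0$, and the uniqueness in Lemma \ref{existence2} forces $w_1 \equiv 0$, hence $f^s_{M_1}\equiv 0$ and $f = \dB v_{M_1}$ on $M_1$.

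It remains to upgrade $v_{M_1}|_{\partial M_1}=0$ to $v_{M_1}|_{\partial M}=0$ so that $v := v_{M_1}|_M$ lies in $H^1_0(S\tau'_M)$. On the collar $M_1 \setminus M$ we have $f \equiv 0$, hence $\dB v_{M_1} = 0$ and $\LaB v_{M_1} = 0$; analytic elliptic regularity up to the analytic boundary $\partial M_1$ makes $v_{M_1}$ real analytic on a neighborhood of $\partial M_1$ within the collar. Running the same Cauchy-Kovalevskaya extraction for $\dB v = 0$ with zero initial data forces $v_{M_1}\equiv 0$ near $\partial M_1$, and analyticity propagates this through the connected collar, yielding $v_{M_1}|_{\partial M}=0$. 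Then $f = \dB v$ with $v \in H^1_0(S\tau'_M)$, which is exactly the $s$-injectivity statement.

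The main technical hurdle I expect is the global extension from $w$ to $w_1$ in the second paragraph: the five redundant compatibility conditions coming from the remaining components of $\dB w = f^s_{M_1}$ have to persist along the entire normal geodesic flow, which is secured by the analytic continuation of an identity that already holds on the open set $U$ together with the simple connectedness of the ambient $M_1$; the analogous Holmgren-type propagation in the final paragraph is simpler because the right-hand side is identically zero.
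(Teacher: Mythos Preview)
Your overall strategy coincides with the paper's, but there is a genuine gap in the global extension of $w$. The assertion that ``since $(M_1,g)$ is simple, the boundary-based exponential map is a diffeomorphism onto $M_1$'' is false: already for the Euclidean ball the inward normal geodesics from the points of $\partial M_1$ all pass through the center, so boundary normal coordinates cover only a collar, never the whole of $M_1$. Hence integrating the first-order system $(\dB w_1)_{i3}=(f^s_{M_1})_{i3}$ along the boundary normal flow does not produce a single-valued field $w_1$ on $M_1$, and without a globally defined analytic $w_1$ you cannot invoke analytic continuation to propagate the remaining components of the identity $\dB w_1=f^s_{M_1}$ from the collar $U$ to the interior. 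The circularity you flag in your last paragraph is therefore real and unresolved.

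The paper repairs exactly this step by replacing boundary normal coordinates with the \emph{global} semi-geodesic coordinates of \cite[Lemma 4.2]{SU}: one fixes a single direction $e_3$ with $g_{i3}=\delta_{i3}$ on all of $M_1$ and integrates the system $(\ref{eq_ftilde1})$ only from the incoming face $(\partial M_1)_+$. Simplicity then guarantees each point of $M_1$ lies on exactly one $e_3$-geodesic issued from $(\partial M_1)_+$, so the solution is globally well defined and analytic in $M_1\setminus E$, where $E\subset\partial M_1$ is the set where $e_3$ is tangent to the boundary. Two additional points need care that your sketch does not address: one must check that $f^\sharp=f^s_{M_1}-\dB(w+v)$ vanishes on a full neighbourhood of $\overline{(\partial M_1)_+}\supset E$ (handled because $\tilde f=f^s_{M_1}-\dB w$ already vanishes near all of $\partial M_1$), and one must verify that the resulting potential $v^\sharp$ vanishes on $(\partial M_1)_-$ as well as $(\partial M_1)_+$ before invoking Lemma \ref{existence2} (done by integrating $(\ref{integral1})$ along geodesics joining $(\partial M_1)_-$ to $(\partial M_1)_+$). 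Your collar argument in the final paragraph, by contrast, is correct and is essentially the paper's concluding lemma.
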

\begin{proof}
Assume that $g\in\mathbb{A}(M)$, and the mixed ray transform of $f\in L^2(S\tau'_M\otimes^\mathcal{B} S\tau'_M)$ vanishes. Then, by Proposition \ref{analytic_parametrix_thm}, we have $f^s_{M_1}\in\mathbb{A}({M}_1)$. Clearly, $Lf^s_{M_1}=0$ as well.

By Lemma \ref{lemm42}, there exists a smooth tensor field $w$ that is analytic near $\p M$ and more over $\tilde{f}:=f^s_{M_1}-\mathrm{d}^\mathcal{B}w$ vanishes near $\partial M_1$. We denote the set of all points $x \in \p M_1$, for which the coordinate vector field $e_3(x) \in \p_{\pm}(SM_1)$, by $(\partial M_1)_\pm$. We aim first to find a second tensor field $v$ that satisfies the following global equation
\[
(\widetilde f-\dB v)_{i3}=0, \quad i\in \{1,2,3\}, \quad  v\vert_{(\partial M_1)_+}=0.
\] 
That is we solve the equations similar to $(\ref{eq_ftilde})$:
\begin{equation}\label{eq_ftilde1}
\begin{split}
&\tilde{f}_{13}-\nabla_3v_1=0, \quad \tilde{f}_{23}-\nabla_3v_2=0, \quad \tilde{f}_{33}-\nabla_3v_3+\frac{1}{3}(g^{k\ell} \nabla_k v_\ell)=0, \quad \hbox{ in } M_1
\\
&v\vert_{(\partial M_1)_+}=0.
\end{split}
\end{equation}
Since $(M,g)$ is simple it follows from the definition of the semi-geodesic coordinates that each point in $M$ can be reached by a geodesic parallel to $e_3$ from a unique point of $(\p M)_+$. Therefore the system \eqref{eq_ftilde1} can be used to define $v$ globally.
As before, we first determine $v_1$ and $v_2$ from the system of linear boundary value problems
\[
\left\lbrace \begin{array}{l}
\partial_3v_1-\Gamma_{31}^1 v_1-\Gamma_{31}^2 v_2=\tilde{f}_{13}, \quad \partial_3v_2-\Gamma_{32}^1 v_1-\Gamma_{32}^2 v_2=\tilde{f}_{23}
\\
v_1|_{(\p M_1)_+}=v_2|_{(\p M_1)_+}=0.
\end{array}
\right.
\]
We note that this system has a unique global solution since $M$ is compact.
Due to analyticity, $v_1,\,v_2$ vanish in a neighborhood $U$ of $\overline{(\partial M_1)_+}$. The last equation of \eqref{eq_ftilde1} takes the form of the following boundary value problem
\[
\begin{split}
\p_3v_3=\frac{3}{2}\left(\widetilde f_{33}-G\right), \quad v_3\vert_{(\partial M_1)_+}=0, 
\end{split}
\]
where  $G$ depends only on $v_i, \: \p_j v_i,\: g^{jk}, \: \Gamma^k_{j\ell}$ for $i \in \{1,2\}$. Thus we have found $v$ and shown that it vanishes in $U$.

\medskip
Now we define $f^\sharp:=f^s_{M_1}-\mathrm{d}^\mathcal{B}w-\mathrm{d}^\mathcal{B}v$. We have $f^\sharp=0$ in $U$ and $f^\sharp_{i3}=0$ in ${M}_1$, $i=1,\,2,\,3$. Moreover, $w+v=0$ on $(\partial M_1)_+$. On the other hand, there is a unique $v^\sharp\in C({M}_1)$ that solves $(\ref{eq_ftilde1})$ with $\tilde{f}$ replaced by $f^s_{M_1}$, and $v^\sharp=0$ on $(\partial M_1)_+$. Therefore $f^\sharp=f^s_{M_1}-\mathrm{d}^\mathcal{B}v^\sharp$, with $v^\sharp=w+v$. Since the coefficients in the system $(\ref{eq_ftilde1})$ are analytic, and so are $f^s_{M_1}$ and $\partial M_1$, tensor field $v^\sharp$ is analytic in ${M}_1\setminus E$, where $E \subset \p M_1$ is the set where $e_3$ is tangential to $\p M$. Thus $f^\sharp$ is analytic in ${M}_1\setminus E$. Due to the fact that $f^\sharp=0$ in $U$ containing $E$, and by analytic continuation, $f^\sharp=0$ in ${M}_1$.

\medskip
We have proven $f^s_{M_1}=\mathrm{d}^\mathcal{B}v^\sharp$ in ${M}_1$, and $v^\sharp=0$ on $(\partial M_1)_+$.  Soon we show that $v^\sharp=0$ also on the complement of $(\partial M_1)_{+}$. If this holds, then we have
\[
\LaB v^\sharp=\deB f^s_{M_1}=0, \quad \hbox{ in } M_1, \quad v^\sharp|_{\p M_1}=0.
\]
Thus Lemma \ref{existence2} implies $v^\sharp=0$, and $f^s_{M_1}=0$.
To prove that $v^\sharp=0$ on $\partial M_1$ we proceed as follows: Let $x\in (\partial M_1)_+$  and $y \in (\partial M_1\setminus (\partial M_1)_+)$. Since $(M_1,g)$ is simple there exists a unique geodesic $\gamma$ connecting $y$ to $x$. Let $\eta \in T_y M_1$ be perpendicular to $\dot \gamma(0)$. Since  $v^\sharp=0$ in $(\partial M_1)_+$ and $Lf^s_{M_1}=0$ we  have the following equation by \eqref{integral1}
\[
v_i^\sharp\eta^i=\langle L \dB v^\sharp, \eta \rangle=\langle Lf^s_{M_1}, \eta \rangle=0.
\]
Perturbing $x$ in the open set $(\partial M_1)_+\subset \p M$ we can show that the previous equation holds for any $\eta$ in the linearly independent set $(\eta_i)_{i=1}^3$. Thus $v^\sharp(y)=0$.

\medskip
So far we have shown $f=\dB v_{M_1}$. Since $\mathrm{supp}\,f\subset{M}$, we have that $\mathrm{supp}\,v_{M_1}\subset M$. Therefore $v_{M_1}=v_{M}$, by the following lemma that is proven analogously to \cite[Proposition 4.3]{SU}. This gives $f^s_M=0$ and completes the proof of the theorem.
\end{proof}

\begin{lemma}
Let $f=\mathrm{d}^\mathcal{B}v$, $v\vert_{\partial M}=0$, and $v\in C^1(M)$. Then $v(y)=0$ for any $y$ such that $f(y)=0$, and $y$ can be connected to a point on $\partial M$ by a path that does not intersect $\mathrm{supp}\,f$.
\end{lemma}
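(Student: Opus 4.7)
The plan is to work in a small open neighborhood $W\subset M\setminus\supp f$ of the given path from $y$ to $\p M$; since the path is compact and disjoint from the closed set $\supp f$, such a tubular neighborhood $W$ exists, is connected, and intersects $\p M$ in a relatively open set on which $v=0$. On $W$ the hypothesis $f=\dB v=0$ combined with the explicit formula \eqref{eq:dB} (in the $(k,\ell)=(1,1)$ case, with $a_1=3$) gives the pointwise identity
\[
\nabla_j v_i = \phi\, g_{ij},\qquad \phi:=\tfrac{1}{3}\delta' v.
\]
From this and $\nabla_\xi\eta=0$ along a geodesic, a direct calculation as in \eqref{dd22} shows that for any geodesic $\gamma\subset W$ with unit tangent $\xi=\dot\gamma$ and any parallel vector field $\eta$ along $\gamma$ with $\eta\perp\xi$, the scalar function $t\mapsto \langle v(\gamma(t)),\eta(t)\rangle$ is constant.

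Next I would use this transport identity to propagate the vanishing of $v$ from the boundary piece $W\cap\p M$ into $W$. Fix $x_0\in W\cap\p M$ and a small ball $B\subset W$ around $x_0$ in $M$. For every $x\in B$ and every boundary point $z\in W\cap \p M$ sufficiently close to $x_0$, simplicity of $g$ provides a unique geodesic $\gamma_z$ joining $z$ to $x$ that remains in $W$. Since $v(z)=0$, the transport identity gives $\langle v(x),\eta\rangle=0$ for every $\eta\in T_xM$ obtained by parallel transport along $\gamma_z$ of any vector perpendicular to $\dot\gamma_z(0)$; equivalently, $v(x)$ is proportional to $\dot\gamma_z(1)$. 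As $z$ ranges over a $2$-dimensional neighborhood in $\p M$, the endpoint tangent vectors $\dot\gamma_z(1)\in T_xM$ sweep out a two-parameter family whose image, by a standard rank argument on the exponential map (this is where I would need the convexity of $\p M$ and simplicity of $g$), contains two linearly independent directions. A nonzero vector cannot be simultaneously parallel to two linearly independent directions, so $v(x)=0$. This establishes that $v\equiv 0$ on a nonempty open subset $V_0$ of $W$.

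To finish, I would show that $V_0$ is both open and closed in $W$, so that by connectedness $V_0=W$ and in particular $v(y)=0$. Closedness is immediate from $v\in C^1$. For openness, let $x_0\in V_0$ and choose a small geodesic ball $B'\subset W$ around $x_0$. By the openness of $V_0$ we may pick an open neighborhood $\Omega\subset V_0$ of $x_0$; for any $x\in B'$, the two-parameter family of short geodesics from points of $\Omega$ to $x$ again yields at least two linearly independent endpoint tangent directions at $x$, and the same parallel-transport identity forces $v(x)=0$. The main obstacle, and the step requiring the most care, is verifying this genericity of the endpoint tangent map: one needs to argue that for a generic $x$ the differential of $z\mapsto \dot\gamma_z(1)\bigm|_x$ along $\p M$ (respectively along $\Omega$) has rank at least $2$, which follows from the fact that $g$ is simple and the exponential map is a local diffeomorphism, combined with the transversality of $\p M$ (respectively the openness of $\Omega$) to the geodesic flow. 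Once this is in place, openness of $V_0$ is established and the lemma follows.
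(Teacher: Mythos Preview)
Your approach is the right one and matches what the paper intends (the paper only cites the analogous \cite[Proposition~4.3]{SU} and gives no proof): use the transport identity
\[
\frac{\mathrm d}{\mathrm dt}\,v_i(\gamma(t))\eta^i(t)=(\dB v)_{ij}\eta^i\dot\gamma^j=0\quad\text{on }W
\]
to propagate $v=0$ from $W\cap\p M$ through the connected set $W$. Two remarks, though.

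\medskip
\textbf{The open--closed step is stated circularly.} You define $V_0$ via ``$v\equiv 0$ on a nonempty open subset $V_0$,'' then say ``closedness is immediate,'' then in the openness argument write ``by the openness of $V_0$ we may pick an open neighborhood $\Omega\subset V_0$ of $x_0$.'' You are using openness of $V_0$ to prove openness of $V_0$. The clean fix is to define $V_0$ as the \emph{interior} of $\{v=0\}\cap W$; it is open by construction and nonempty by your boundary argument. Then prove it is \emph{closed} in $W$: if $x_0\in\overline{V_0}\cap W$, any small geodesically convex ball $B'\subset W$ about $x_0$ meets $V_0$ in a nonempty open set $\Omega$, and your geodesic argument from $\Omega$ to $x\in B'$ gives $v\equiv0$ on $B'$, hence $x_0\in V_0$. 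Connectedness of $W$ then gives $V_0=W$.

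\medskip
\textbf{The ``rank argument'' is simpler than you fear.} You do not need convexity of $\p M$ or a delicate transversality statement. Once you know $\langle v(x),\eta\rangle=0$ for every $\eta\perp\dot\gamma_z$ at $x$, you have $v(x)\parallel \exp_x^{-1}(z)$ for every $z$ in the source set. If $v(x)\neq 0$, this forces every such $z$ to lie on the single geodesic through $x$ in the direction of $v(x)$, a one-dimensional set; but $z$ ranges over a two-dimensional piece of $\p M$ (respectively a three-dimensional open set $\Omega$). Contradiction. No differential of the endpoint map is needed.
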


\section{Generic s-injectivity}
\label{Se:generic_s_inj}
In this section we prove Theorem \ref{maintheorem} for $1+1$ tensor fields using the Fredholm property \eqref{parametrix} of the normal operator, and the s-injectivity result for analytic metrics.  We note that by possibly conjugating all the operators with $ \kappa_g^\sharp$ from left and $\kappa_g^\flat$ from right, we can work with the space $S\tau_M\otimes^\mathcal{B} S\tau'_M$, of trace-free $(1,1)$-tensor fields that is defined independent of any metric structure. 

We are ready to present the proof of Theorem \ref{maintheorem}.
\begin{proof}[Proof of Theorem \ref{maintheorem} ]
Let $g \in C^m(M)$ be a simple metric. By formula \eqref{parametrix}, in Theorem \ref{th:stability} we have
\[
\mathcal{QN}=\mathcal{S}+\mathcal{K}, \quad \quad \mathcal N:=\mathcal{N}_L, \: \mathcal S:=\mathcal{S}_{M_2},
\]
with $\mathcal{SQ}=\mathcal{Q}$, $\mathcal{NS}=\mathcal{N}$. After applying $\mathcal{S}$ from the left to the above identity, we have
\[
\mathcal{QN}
=\mathcal{S}+\mathcal{SK}.
\]
Thus $\mathcal{K}=\mathcal{SK}$ and similarly $\mathcal{KS}=\mathcal{K}$. As $\mathcal{S}$ is self adjoint we also have $\mathcal{K}^*=\mathcal{S}\mathcal{K}^*=\mathcal{K}^*\mathcal{S}$. If we set $\widetilde{\mathcal{Q}}:=\mathcal{S}(\mathrm{Id}+\mathcal{K}^*)\mathcal{Q}$, then previous observations yield
\[
\begin{split}
\widetilde{\mathcal{Q}}\mathcal{N}=&\mathcal{S}(\mathrm{Id}+\mathcal{K}^*)\mathcal{QN}
=\mathcal{S}(\mathrm{Id}+\mathcal{K}^*)(\mathrm{Id}+\mathcal{K})
=\mathcal{S}+\mathcal{K}^*+\mathcal{K}+\mathcal{K}^*\mathcal{K}
=\mathcal{S}+\mathcal{\widetilde{K}},
\end{split}
\]
where $\mathcal{\widetilde{K}}=\mathcal{K}^*+\mathcal{K}+\mathcal{K}^*\mathcal{K}$ is a compact self-adjoint operator $L^2(S\tau'_M\otimes^\mathcal{B} S\tau'_M)\to\mathcal{S}L^2(S\tau'_M\otimes^\mathcal{B} S\tau'_M)$. This implies
\begin{equation}
\label{eq:fredhoml_op}
\widetilde{\mathcal{Q}}\mathcal{N}+\mathcal{P}=\mathrm{Id}+\mathcal{\widetilde{K}}\quad\text{ on }L^2(S\tau'_M\otimes^\mathcal{B} S\tau'_M).
\end{equation}

We are ready to show that the set of $s$-injective metrics is open in $C^m$-topology, for any $m\in \N$ that is large enough. In the following we will indicate the dependence on $g$ by placing the subscript $g$ on the operators $\mathcal{N}$, $\mathcal{S}$, etc.. Suppose that $L_{g_0}$ is $s$-injective for some simple metric $g_0\in C^m({M})$. Then ${\mathcal{N}}_{g_0}$ is $s$-injective as well, and moreover the operator on right hand side of \eqref{eq:fredhoml_op} has a finite dimensional kernel on the space of solenoidal tensor fields. By using the $s$-injectivity of $\mathcal{N}_{g_0}$, as in the proof of \cite[Theorem 1.5]{SU}, we can construct a finite rank operator $\mathcal{Q}_0:L^2(S\tau'_{M_1}\otimes^\mathcal{B} S\tau'_{M_1})\rightarrow L^2(S\tau'_M\otimes^\mathcal{B} S\tau'_M)$ such that
\begin{equation}
\label{eq:fredhoml_op_2}
\mathrm{Id}+\mathcal{K}_{g_0}^\sharp
=(\widetilde{\mathcal{Q}}_{g_0}+\mathcal{Q}_0){\mathcal{N}}_{g_0}+\mathcal{P}_{g_0} \quad \hbox{ on } L^2(S\tau'_M\otimes^\mathcal{B} S\tau'_M),
\end{equation}
is one-to-one, where $\mathcal{K}_{g_0}^\sharp:=\widetilde{\mathcal{K}}_{g_0}+\mathcal{Q}_0{\mathcal{N}}_{g_0}$ is compact. Thus according to Fredholm alternative  $(\mathrm{Id}+\widetilde{\mathcal{K}}_{g_0})^{-1}$ is bounded. We choose $f\in H^1(S\tau'_{M_1}\otimes^\mathcal{B} S\tau'_{M_1})$ and apply the operator $\mathrm{Id}+\mathcal{K}_{g_0}^\sharp$ to the solenoidal part $f^s_{M,g_0}$ of $f$ to obtain
\[
\begin{split}
\|f^s_{M,g_0}\|_{L^2(M)}\leq &C\left(\|\widetilde{\mathcal{Q}}_{g_0}{\mathcal{N}}_{g_0}f\|_{\tilde{H}^1(M_1)}+\|\mathcal{Q}_0{\mathcal{N}}_{g_0}f\|_{L^2(M_1)}\right) \leq  C\|{\mathcal{N}}_{g_0}f\|_{\tilde{H}^2(M_1)}.
\end{split}
\]
This is the stability estimate of Theorem \ref{maintheorem} for $g=g_0$. Next verify the same estimate, with uniform $C$ for $g\in C^m(M)$ that is close enough to $g_0$ with respect to $C^m$-topology, for any $m$ large enough. To do this we first write analogously
\begin{equation}\label{eq:fredholm}
(\widetilde{\mathcal{Q}}_{g}+\mathcal{Q}_0){\mathcal{N}}_{g}+\mathcal{P}_{g}=\mathrm{Id}+\mathcal{K}_{g}^\sharp.
\end{equation}
We note here that the finite rank operator $Q_0$ is the same as in \eqref{eq:fredhoml_op_2}, and the  compact operator $\mathcal{K}_{g}^\sharp:=\widetilde{\mathcal{K}}_{g}+\mathcal{Q}_0{\mathcal{N}}_{g}$, as an operator in $L^2(S\tau'_M\otimes^\mathcal{B} S\tau'_M)$, depends continuously on $g$. Therefore if $k$ is large enough, it holds that the operator $\mathrm{Id}+\mathcal{K}_{g}^\sharp$ remains invertible, with a uniform bound for its inverse, whenever $g$ is close enough to $g_0$ in $C^m$-topology. After applying \eqref{eq:fredholm} to $f=f^s_{M,g}\in H^1(S\tau_{M_1}\otimes^\mathcal{B} S\tau'_{M_1})$, we have
\[
\begin{split}
\|f^s_{M,g}\|_{L^2(M)}
\leq C \left(\|{\mathcal{N}}_{g}f\|_{\tilde{H}^2(M_1)}+\|{\mathcal{N}}_{g}f\|_{L^2(M_1)}\right)
 \leq C\|{\mathcal{N}}_{g}f\|_{\tilde{H}^2(M_1)},
\end{split}
\]
with $C>0$ independent of $g$ in a small neighborhood of $g_0$ in $C^m$-topology. This implies that also $g$ is $s$-injective.

The proof of the theorem can be completed by using $s$-injectivity of $L_g$ for analytic metric $g$ (Theorem \ref{sinjectivity_analytic}), and the fact that analytic metrics are dense in $C^m({M})$.
\end{proof} 

\section{Ellipticity of the normal operator and adaptation of the proofs  for $2+2$ tensor fields}
\label{Se:2+2_case}
In this section, we will first show the ellipticity of the normal operator for $2+2$ tensors (restricted to the subspace of solenoidal tensors). To be more specific, we will show that the operator $\mathcal M=(|D|_g\mathcal{N}_L,\,\mathcal{P}_{M_2})^T$ is elliptic. 
Then we will sketch adaptions needed to prove theorems \ref{maintheorem}, \ref{th:stability} and \ref{sinjectivity_analytic} for $2+2$ tensor fields.

\subsection{Parametrix of the normal operator for solenoidal $2+2$ tensor fields}
In the following we study the action of the principal symbol $\sigma(\mathcal{N}_L)$. We note that in $1+1$ case, the principal symbol \eqref{eq:Msymbol_2} can be written as
\[
\begin{split}
\sigma(\mathcal{N}_L)^{ijk\ell}(x,\xi)
=&
-2\sqrt{\det g(x)}\int_{\R^3} e^{-i \xi\cdot z}\bigg(\delta^k_{u}-\frac{z_{u}z^{k}}{|z|_g}\bigg)g^{uu'}(x)
 \bigg(\delta_{u'}^{i}-\frac{z^iz_{u'}}{|z|_g^2}\bigg)\frac{z^jz^\ell}{|z|_g^{4}} \;\mathrm{d}z
\\
=&\frac{-2\pi}{|\xi|_g}\int_{S_xM\cap \xi^\perp}\left(\delta_u^k-\omega^k\omega_u\right)g^{uu'}(x)\left(\delta^i_{u'}-\omega^i\omega_{u'}\right)\omega^j\omega^\ell\mathrm{d}\omega.
\end{split}
\]
We recall the notation $(P_{\omega})^i_j:=\delta_j^i-\omega^i\omega_j$. Thus for $f\in T_x'M\otimes^\mathcal{B} T_x'M$ we have
\[
\begin{split}
\langle \sigma(\mathcal{N}_L)(x,\xi) f, f\rangle_g
=&\frac{-2\pi}{|\xi|_g}\int_{S_xM\cap \xi^\perp}|(P_\omega)_{u}^{i}\omega^{j}f_{ij}|_g^2\mathrm{d}\omega.
\end{split}
\]
We do not derive an explicit formula for the principal symbol of the normal operator in the case of $2+2$ tensors, but sketch the main steps to conclude that  for any $f\in S^2T_x'M\otimes^\mathcal{B} S^2T_x'M$, we have analogously to the $1+1$ case,
\begin{equation}
\label{eq:2+2_prin_sym}
\langle \sigma(\mathcal{N}_L)(x,\xi) f, f\rangle_g=\frac{2\pi}{|\xi|_g}\int_{S_xM\cap \xi^\perp}|(P_\omega)_{a}^{i}(P_\omega)_{b}^{j}\omega^{k}\omega^{l}f_{ijkl}|_g^2\mathrm{d}\omega.
\end{equation}

Let  $f,h \in S^2\tau'_M\otimes^\mathcal{B} S^2\tau'_M$. Then, for the geodesic $\gamma$ with initial conditions $(z,\omega )\in \p_+SM$, we have
\[
\begin{split}
&\langle Lf,Lh\rangle_{L^2(\beta_2(\partial_+(SM)))}
\\
=&\int_{\partial_+(SM)}\left(\int_0^{\tau(z,\omega)}(\mathcal{T}_{\gamma}^{0,t})_{ab}^{uv}(P_{\omega(t)})_v^i(P_{\omega(t)})_u^jf_{ijk\ell}(x(t))\omega^k(t)\omega^\ell(t)\mathrm{d}t\right)
\\
&\quad\quad\quad \left(\int_0^{\tau(z,\omega)}(\mathcal{T}_{\gamma}^{0,s})_{a'b'}^{u'v'}(P_{\omega(s)})_{v'}^{i'}(P_{\omega(s)})_{u'}^{j'}\bar{h}_{i'j'k'\ell'}(x(s))\omega^{k'}(s)\omega^{\ell'}(s)\mathrm{d}s\right)
g^{aa'}(z)g^{bb'}(z)\mathrm{d}\mu(z,\omega).
 \end{split}
 \]
By an analogous argument to one in Section \ref{Se:normal_op} we show that $\mathcal{N}_L$ is an integral operator,
whose Schwartz kernel near the diagonal can be written as
\color{black} 
\begin{equation}
\label{eq:2+2_kernel}
\begin{split}
K_{ii'jj'kk'\ell'}(x,y)=&\frac{2A^{uu'vv'}(x,y)}{\left(G^{(1)}z\cdot z)\right)^3}\left[\tilde{G}^{(2)}z\right]_\ell\left[\tilde{G}^{(2)}z\right]_{\ell'}\left[G^{(2)}z\right]_j\left[G^{(2)}z\right]_{j'}\frac{|\det G^{(3)}|}{\sqrt{\det g(x)}}
\\
\times& \left(g_{ku}(y)-\frac{\left[\tilde{G}^{(2)}z\right]_k\left[\tilde{G}^{(2)}z\right]_u}{G^{(1)}z\cdot z}\right)\left(g_{k'v}(y)-\frac{\left[\tilde{G}^{(2)}z\right]_{k'}\left[\tilde{G}^{(2)}z\right]_{v}}{G^{(1)}z\cdot z}\right)
\\
\times &\left(g_{iu'}(x)-\frac{\left[G^{(2)}z\right]_i\left[G^{(2)}z\right]_{u'}}{G^{(1)}z\cdot z}\right)\left(g_{i'v'}(x)-\frac{\left[G^{(2)}z\right]_{i'}\left[G^{(2)}z\right]_{v'}}{G^{(1)}z\cdot z}\right), \quad z=x-y,
\end{split}
\end{equation}
where 
\[
A^{uu'vv'}(x,y)=g^{au'}(x)g^{bv'}(x)\left(\mathcal{T}_{\gamma_{x,-\nabla^g_x\rho(x,y)}}^{0,\rho(x,y)}\right)^{uv}_{ab}.
\]
Therefore $\mathcal{N}_L$ is a $\Psi$DO of order $-1$, and formula \eqref{eq:2+2_prin_sym} is valid.

For now on we use the short hand notation $\mathcal{P}=\mathcal{P}_{M_2}$ and aim to show that 
\[
\sigma(\mathcal M) f:=\left(\begin{array}{c}
|\xi|_g\circ \sigma(\mathcal{N}_L) f
\\
\sigma(\mathcal{P})f
\end{array}\right)
=0, \quad \hbox{} (x,\xi)\in T^\ast M_2\setminus\{0\},
\]
implies $f=0$, which proves that the zeroth order operator $\mathcal{M}=(|D|_g\mathcal{N}_L,\,\mathcal{P})^T $ is elliptic.

Let $\xi\in T_xM$. We choose $\omega,\,\tilde{\omega}\in S_xM$ such that $\{\hat{\xi}:=\frac{\xi}{|\xi|_g},\omega,\,\tilde{\omega}\}$ is an orthonormal basis of $T_xM$. We also simplify
\[
Q^{ijkl}_{ab}(\omega):=(P_\omega)_{a}^{i}(P_\omega)_{b}^{j}\omega^{k}\omega^{l}.
\]
If $(|\xi|_g\circ \sigma(\mathcal{N}_L)(x,\xi),\sigma(\mathcal{P}))^T f=0$ , then
\[
j^\mathcal B_\xi \sigma(\mathcal{P})=j^\mathcal B_\xi i^\mathcal B_\xi \sigma((\LaB)^{-1})j^\mathcal B_\xi =j^\mathcal B_\xi,
\]
where $j^\mathcal B_\xi, i^\mathcal B_\xi $ are given in \eqref{eq:i_xi} and \eqref{eq:j_xi}, implies 
\begin{equation}
\label{eq:hat_xi_f}
\hat{\xi}^\ell f_{ijk\ell}=\hat{\xi}^kf_{ijk\ell}=0.
\end{equation}
Thus it suffices to prove that $ f_{ijk\ell}\omega^\ell=0$ and $f_{ijk\ell}\widetilde \omega^\ell=0$. 

Next we note that \eqref{eq:2+2_prin_sym} gives
\[
Q(\omega)f:=Q^{ijkl}_{ab}(\omega)f_{ijkl}=Q^{ijkl}_{ab}(\tilde{\omega})f_{ijkl}=Q^{ijkl}_{ab}\left(\frac{\tilde{\omega}+\omega}{\sqrt{2}}\right)f_{ijkl}=0.
\]
Therefore we have
\[
\hat{\xi}^jf_{ijkl}\omega^k\omega^l=\hat{\xi} Q(\omega)f=0,\quad\quad \hat{\xi}^jf_{ijkl}\tilde{\omega}^k\tilde{\omega}^l=\hat{\xi} Q(\tilde{\omega})f=0,
\]
\[
\hat{\xi}^jf_{ijkl}\omega^k\tilde{\omega}^l=\hat{\xi} Q\left(\frac{\omega+\tilde{\omega}}{\sqrt{2}}\right)f-\frac{1}{2}\hat{\xi}^jf_{ijkl}\omega^k\omega^l-\frac{1}{2}\hat{\xi}^jf_{ijkl}\tilde{\omega}^k\tilde{\omega}^l=0,
\]
and
\begin{equation}
\label{eq:omega_Q_tilde_omega}
\tilde{\omega}^jf_{ijkl}\omega^k\omega^l=\tilde{\omega}Q(\omega)f=0,\quad \omega^jf_{ijkl}\tilde{\omega}^k\tilde{\omega}^l=\omega Q(\tilde{\omega})f=0.
\end{equation}
Since we assumed $f \in S^2T_x'M\otimes^\mathcal{B} S^2T_x'M$, the trace-free condition $\mu f=0$
and \eqref{eq:hat_xi_f}  yield
\[
f_{ijkl}\omega^i\omega^k=f_{ijkl}\omega^j\omega^l=-f_{ijkl}\tilde{\omega}^j\tilde{\omega}^l=-f_{ijkl}\tilde{\omega}^i\tilde{\omega}^k.
\]
After applying \eqref{eq:omega_Q_tilde_omega} to previous equation we get
\[
f_{ijkl}\omega^i\omega^j\omega^k\tilde{\omega}^l=f_{ijkl}\omega^i\tilde{\omega}^j\tilde{\omega}^k\tilde{\omega}^l=
f_{ijkl}\tilde{\omega}^i\tilde{\omega}^j\omega^k\tilde{\omega}^l=f_{ijkl}\tilde{\omega}^i\omega^j\tilde{\omega}^k\omega^l=0.
\]
Then we compute
\[
\begin{split}
& 2(\omega-\tilde{\omega})(\omega-\tilde{\omega})Q\left(\frac{\omega+\tilde{\omega}}{\sqrt{2}}\right)f\\
=&f_{ijkl}(\omega-\tilde{\omega})^i(\omega-\tilde{\omega})^j(\omega+\tilde{\omega})^k(\omega+\tilde{\omega})^l\\
=&f_{ijkl}\omega^i\omega^j\omega^k\omega^l+f_{ijkl}\tilde{\omega}^i\tilde{\omega}^j\tilde{\omega}^k\tilde{\omega}^l-4f_{ijkl}\omega^i\tilde{\omega}^j\omega^k\tilde{\omega}^l\\
=&f_{ijkl}\omega^i\omega^j\omega^k\omega^l+5f_{ijkl}\tilde{\omega}^i\tilde{\omega}^j\tilde{\omega}^k\tilde{\omega}^l\\
=&6f_{ijkl}\omega^i\omega^j\omega^k\omega^l\\
=&0.
\end{split}
\]
Therefore we can conclude that
$f=0$.

The rest of the proof of Proposition \ref{Pr:1_reconst_result} for $2+2$ tensor fields is as presented earlier.

\subsection{A sketch of proof for Theorem \ref{th:stability} in $2+2$ case}
For $x\in M_1\setminus M$, choose $\xi$ such that the geodesic $\gamma=\gamma_{x,\xi}$ hits the boundary $\partial M_1$ before $\partial M$ and minimizes the distance between $x$ and $\partial M_1$.
 As we have proven Proposition \ref{Pr:1_reconst_result} for $2+2$ tensor fields, formula \eqref{eq:potential_as_smoothing_op} holds and  \eqref{integral1} is to be replaced by
\begin{equation}
\label{eq:T_2_new}
[v_{M_1}(x)]_{ijk}\eta^i\eta^j\xi^k=-
\int^{\tau}_0[\mathrm{d}^\mathcal{B}v_{M_1}(\gamma(t))]_{ijk\ell}\eta^i(t)\eta^j(t)\dot{\gamma}^k(t)\dot{\gamma}^\ell(t)\mathrm{d}t,
\end{equation}
where $\eta\perp\xi$. Choose $\eta,\widetilde{\eta}$ such that $B=\{\eta,\widetilde{\eta},\xi\}$ form an orthonormal basis of $T_x(M_1\setminus M)$.

Therefore we have
\[
|[v_{M_1}(x)]_{ijk}\eta^i\eta^j\xi^k|\leq C \left|(\mathcal{T}_1\mathcal{N}_Lf-\mathcal{K}_2f)(x)\right|_{g}.
\]
 We need to show that there exists $C>0$, uniform for any $x\in M_1 \setminus M^{int}$, such that
\begin{equation}
\label{eq:component_estimate}
|[v_{M_1}(x)]_{ijk}w_{m_1}^iw_{m_2}^jw_{m_3}^k| \leq C \left|(\mathcal{T}_1\mathcal{N}_Lf-\mathcal{K}_2f)(x)\right|_{g}, \quad w_{m_h} \in B.
\end{equation}
As $|v_{M_1}(x)|_g^2$ can be estimated by the sum of all the terms $\left|[v_{M_1}(x)]_{ijk}w_{m_1}^iw_{m_2}^jw_{m_3}^k\right|^2$, the following $L^2$-estimate holds
\[
\|v_{M_1}\|_{L^2(M_1\setminus M)}\leq C\|\mathcal{T}_1\mathcal{N}_Lf-\mathcal{K}_2f\|_{L^2(M_1\setminus M)}.
\]
To prove \eqref{eq:component_estimate} we need to repeat the steps between \eqref{eq:u_in_polari} and \eqref{eq:3systems}. First, we have
\[
[v_{M_1}(x)]_{ijk}\left((\eta+\widetilde \eta)^i(\eta+\widetilde \eta)^j\xi^k-\eta^i\eta^j\xi^k-\widetilde \eta^i\widetilde \eta^j\xi^k\right)=2[v_{M_1}(x)]_{ijk}\eta^i \widetilde \eta^j\xi^k,
\]
Then
\[
|[v_{M_1}(x)]_{ijk}\eta^i\widetilde{\eta}^j\xi^k|\leq C \left|(\mathcal{T}_1\mathcal{N}_Lf-\mathcal{K}_2f)(x)\right|_{g}.
\]

For $x$ in a neighborhood of $x_0\in\partial M$, there exists $\epsilon_0>0$ such that $\gamma_{x,\xi-\epsilon\eta}$ meets $\partial M_1$ before meeting $\partial M$ for any $\epsilon<\epsilon_0$. Then we can obtain
\[
|[v_{M_1}(x)]_{ijk}(\eta+\epsilon\xi)^i(\eta+\epsilon\xi)^j(\xi-\epsilon\eta)^k|\leq C \left|(\mathcal{T}_1\mathcal{N}_Lf-\mathcal{K}_2f)(x)\right|_{g}.
\]


Choosing four distinct real numbers $0<\epsilon_1,\epsilon_2,\epsilon_3,\epsilon_4<\epsilon_0$, by invertibility of the Vandermonde matrix
\[
\left(\begin{array}{cccc}
1 &\epsilon_1 &\epsilon_1^2 &\epsilon_1^3\\
1 &\epsilon_2 &\epsilon_2^2 &\epsilon_2^3\\
1 &\epsilon_3 &\epsilon_3^2 &\epsilon_3^3\\
1 &\epsilon_4 &\epsilon_4^2 &\epsilon_4^3\\
\end{array}\right),
\]
we have the estimates
\[
|u_{ijk}\xi^{i}\xi^{j}\eta^k|,
\left|u_{ijk}\left(\xi^i\xi^j\xi^k-2\eta^{i}\xi^{j}\eta^k\right)\right|,
\left|u_{ijk}\left(2\eta^i\xi^j\xi^k-\eta^i\eta^j\eta^k\right)\right|\leq C\left|(\mathcal{T}_1\mathcal{N}_Lf-\mathcal{K}_2f)(x)\right|_{g}.
\]
Here, the constant $C$ depends on $\epsilon_1,\epsilon_2,\epsilon_3,\epsilon_4$, which could be chosen such that $C$ is uniform in a neighborhood of $x_0$. One can then just continue the steps and get the estimates
\eqref{eq:component_estimate} with $C$ uniform in a neighborhood of $x_0$. We omit the details here. Finally, by a compactness argument, we have \eqref{eq:component_estimate}  with $C$ uniform in $M_1\setminus M$.

\medskip
Next we estimate the $H^1$-norm of $v_{M_1}$ in $M_1\setminus M$. As earlier we can estimate $|\nabla v_{M_1}|^2_g$ by the sum of all terms
\[
\left|w_{m_4}^\ell\nabla_\ell[v_{M_1}(x)]_{ijk}w_{m_1}^iw_{m_2}^jw_{m_3}^k\right|^2, \quad w_{m_h}\in B.
\]  Recall that we have
\[
w^\ell \nabla_\ell[v_{M_1}]_{ijk}w_{m_1}^iw_{m_2}^jw^k= \left[\mathrm{d}^\mathcal{B}v_{M_1}\right]_{ijk\ell}w^i_{m_1}w_{m_2}^jw^kw^\ell,
\]
if $w\neq w_{m_1},w_{m_2}$.
We only need to estimate the terms
\begin{equation}
\label{eq:missing_terms_1}
\widetilde w^\ell\nabla_\ell[v_{M_1}(x)]_{ijk}w_{m_1}^iw_{m_2}^jw^k, \quad  w \neq \widetilde w, \: w\neq  w_{m_h}
\end{equation}
and 
\begin{equation}
\label{eq:missing_terms_2}
\widetilde w^\ell\nabla_\ell[v_{M_1}(x)]_{ijk}\widehat{w}^iw^jw^k.
\end{equation}
\color{black}
We start with the term \eqref{eq:missing_terms_1}  and as earlier we work in boundary normal coordinates $(x',x_3)$ of $M$ near some fixed boundary point $x_0\in \p M$. 


We have the following identity analogous to \eqref{eq:to_be_deri}:
\[
[v_{M_1}(x)]_{ij3}w_{m_1}^iw_{m_2}^j=-\int_{x_3}^{\infty} [\mathrm{d}^\mathcal{B}v_{M_1}(\gamma_{x,\xi}(t))]_{ij33}w_{m_1}^i(t)w_{m_2}^j(t)\mathrm{d}t, \quad w_{m_h}\in \{\eta,\widetilde \eta\},
\]
and \eqref{eq:derivated_eq} has become
\begin{equation}
\label{eq:derivated_eq_2}
\begin{split}
X_{(k)}^\ell \nabla_\ell[v_{M_1}(x)]_{ij3}w_{m_1}^iw_{m_2}^j
=&-\int_{x_3}^{\infty} X_{(k)}\left( [\mathrm{d}^\mathcal{B}v_{M_1}(\gamma_{x,\xi}(t))]_{ij33}w_{m_1}^i(t)w_{m_2}^j(t)\right)\mathrm{d}t.
\end{split}
\end{equation}
That is we have estimated \eqref{eq:missing_terms_1}  when $w =\xi$ and $\widetilde w\in \{\eta,\widetilde \eta\}$. To estimate for the remaining case of \eqref{eq:missing_terms_1}  we denote $\widetilde w=\xi$ and $w=\eta$. Then it must hold that $w_{m_h}\in \{\xi,\widetilde \eta\}$ and 
\[
[\mu \nabla  v_{M_1}(x)]_{jk}=
\xi^m\nabla_m[v_{M_1}(x)]_{pjk}\xi^p+\eta^m\nabla_m[v_{M_1}(x)]_{pjk}\eta^p+\widetilde \eta^m\nabla_m[v_{M_1}(x)]_{pjk}\widetilde \eta^p.
\]
Straightforward computation yields
\[
\begin{split}
\left[\mathrm{d}^\mathcal{B}v_{M_1}\right]_{ijk\ell}w_{m_1}^iw_{m_2}^j\eta^k \xi^\ell
=&\bigg(\frac{1}{2}(\nabla_\ell [v_{M_1}(x)]_{ijk}+\nabla_k [v_{M_1}(x)]_{ij\ell})\bigg)w_{m_1}^iw_{m_2}^j\eta^k \xi^\ell
\\
&-\frac{1}{10}\bigg( \delta_{i\ell}\left(\xi^m\nabla_m[v_{M_1}(x)]_{pjk}\xi^p+\eta^m\nabla_m[v_{M_1}(x)]_{pjk}\eta^p\right.
\\
&+\widetilde \eta^m\nabla_m[v_{M_1}(x)]_{pjk}\widetilde \eta^p \bigg)w_{m_1}^iw_{m_2}^j\eta^k \xi^\ell
\\
&+ 
\delta_{j\ell}\bigg(\xi^m\nabla_m[v_{M_1}(x)]_{pik}\xi^p+\eta^m\nabla_m[v_{M_1}(x)]_{pik}\eta^p
\\
&+\widetilde \eta^m\nabla_m[v_{M_1}(x)]_{pik}\widetilde \eta^p\bigg)w_{m_1}^iw_{m_2}^j\eta^k \xi^\ell \bigg).
\end{split}
\]
Taking $w_1,w_2\in \{\xi,\widetilde{\eta}\}$ in the above formula, we can get desired estimates for
\\
$\xi^\ell\nabla_\ell[v_{M_1}(x)]_{ijk}w_{m_1}^iw_{m_2}^j\eta^k$.

It remains to estimate the terms appearing in \eqref{eq:missing_terms_2}. Set $\widetilde w=\xi $ in \eqref{eq:missing_terms_2} and write 

\begin{equation}
\label{eq:equation_for_dB}
\begin{split}
&\left[\mathrm{d}^\mathcal{B}v_{M_1}\right]_{ijk\ell} \widehat{w}^iw^jw^k\xi^\ell \\
=&\bigg(\frac{1}{2}(\nabla_\ell [v_{M_1}(x)]_{ijk}+
\nabla_k [v_{M_1}(x)]_{ij\ell})
\bigg) \widehat{w}^iw^jw^k\xi^\ell 
\\
&-
\frac{1}{10}\bigg( \delta_{i\ell}\left(\xi^m\nabla_m[v_{M_1}(x)]_{pjk}\xi^p+\eta^m\nabla_m[v_{M_1}(x)]_{pjk}\eta^p+\widetilde \eta^m\nabla_m[v_{M_1}(x)]_{pjk}\widetilde \eta^p\right) \widehat{w}^iw^jw^k\xi^\ell
\\
&+ 
\delta_{j\ell}\left(\xi^m\nabla_m[v_{M_1}(x)]_{pik}\xi^p+\eta^m\nabla_m[v_{M_1}(x)]_{pik}\eta^p+\widetilde \eta^m\nabla_m[v_{M_1}(x)]_{pik}\widetilde \eta^p\right) \widehat{w}^iw^jw^k\xi^\ell
\\
&+ 
\delta_{ik}\left(\xi^m\nabla_m [v_{M_1}(x)]_{pj\ell}\xi^p
+\eta^m\nabla_m [v_{M_1}(x)]_{pj\ell}\eta^p+\widetilde\eta^m\nabla_m [v_{M_1}(x)]_{pj\ell}\widetilde \eta^p
\right) \widehat{w}^iw^jw^k\xi^\ell
\\
&+
\left(\xi^m\nabla_m [v_{M_1}(x)]_{pi\ell}\xi^p
+\eta^m\nabla_m [v_{M_1}(x)]_{pi\ell}\eta^p+\widetilde\eta^m\nabla_m [v_{M_1}(x)]_{pi\ell}\widetilde \eta^p
\right) \widehat{w}^i\xi^\ell  \bigg).
\end{split}
\end{equation}

We drop out all the terms in the right hand side of \eqref{eq:equation_for_dB} that do not have the normal derivative $\xi^m\nabla_m$, to obtain
\[
\begin{split}
&\frac{1}{2}\nabla_\ell [v_{M_1}(x)]_{ijk}\widehat{w}^iw^jw^k\xi^\ell 
-\frac{1}{10}\bigg( \delta_{i\ell}\xi^m\nabla_m [v_{M_1}(x)]_{pjk}\xi^p\widehat{w}^iw^jw^k\xi^\ell
\\
+ &
\delta_{j\ell}\xi^m\nabla_m [v_{M_1}(x)]_{pik}\xi^p\widehat{w}^iw^jw^k\xi^\ell
+
\delta_{ik}\xi^m\nabla_m [v_{M_1}(x)]_{pj\ell}\xi^p \widehat{w}^iw^jw^k\xi^\ell
+
\xi^m\nabla_m [v_{M_1}(x)]_{pi\ell}\xi^p \widehat{w}^i\xi^\ell  \bigg).
\end{split}
\]
If $w=\xi$ the simplified version of the right hand side of \eqref{eq:equation_for_dB} is
\[
\begin{split}
&\frac{1}{2}\nabla_\ell [v_{M_1}(x)]_{ijk}\widehat{w}^i\xi^j\xi^k\xi^\ell 
-\frac{1}{10}\bigg( \delta_{i\ell}\xi^m\nabla_m [v_{M_1}(x)]_{pjk}\xi^p\xi^j\xi^k\widehat{w}^i\xi^\ell
\\
+ &
\xi^m\nabla_m [v_{M_1}(x)]_{pik}\xi^p\widehat{w}^i\xi^k
+
\delta_{ik}\xi^m\nabla_m [v_{M_1}(x)]_{pj\ell}\widehat{w}^i\xi^p \xi^j\xi^\ell\xi^k
+
\xi^m\nabla_m [v_{M_1}(x)]_{pi\ell}\xi^p \widehat{w}^i\xi^\ell  \bigg).
\end{split}
\]
Which is always a nonzero multiple of $\nabla_\ell [v_{M_1}(x)]_{ijk}\widehat{w}^i\xi^j\xi^k\xi^\ell$, and thus can be estimated.

For $w=\eta $ or $w=\widetilde \eta$, the situations are analogous, and we only consider the first case. We get
\[
\begin{split}
\frac{1}{2}\nabla_\ell [v_{M_1}(x)]_{ijk}\widehat{w}^i\eta^j\eta^k\xi^\ell -&\frac{1}{10}\bigg( \delta_{i\ell}\xi^m\nabla_m [v_{M_1}(x)]_{pjk}\xi^p\widehat{w}^i\eta^j\eta^k\xi^\ell
\\
&+ 
\delta_{ik}\xi^m\nabla_m [v_{M_1}(x)]_{pj\ell}\xi^p\widehat{w}^i \eta^j\eta^k\xi^\ell
+
\xi^m\nabla_m [v_{M_1}(x)]_{pi\ell}\xi^p \widehat{w}^i\xi^\ell  \bigg),
\end{split}
\]
for the simplified version of the right hand side of \eqref{eq:equation_for_dB}. Here the last two terms have already been estimated and the first term vanishes if $\widehat{w}\neq \xi$. Therefore we have also found a formula for $\nabla_\ell [v_{M_1}(x)]_{ijk}\widehat{w}^i\eta^j\eta^k\xi^\ell$ that contains the  only tangential derivatives and $\mathrm{d}^\mathcal{B}v_{M_1}$. 

As earlier we can find $C>0$ depending only on the distance to $\p M_1$, which satisfies the following pointwise estimate:
\[
\left| w_{m_4}^\ell\nabla_\ell[v_{M_1}(x)]_{ijk}w_{m_1}^iw_{m_2}^jw_{m_3}^k\right|\leq C\left( \sum_{k=1}^2|\chi\nabla_{X_{(k)}}(\mathcal{T}_1\mathcal{N}_Lf)|_g
+|\mathcal{K}_4f|_g\right), \quad  w_{m_h} \in B.
\]

\medskip
To complete the proof of the second claim of Theorem \ref{th:stability} we refine operator $\mathcal{T}_2$ using equation \eqref{eq:T_2_new}. 
The rest of the proof of Theorem \ref{th:stability} is analogous to what we did earlier.

\subsection{A sketch of proof for Theorem \ref{sinjectivity_analytic} in $2+2$ case}
We sketch here the required changes needed for the proofs of Theorems  \ref{sinjectivity_analytic} and \ref{maintheorem} for $2+2$ tensors fields.

First we note that the formula \eqref{eq:2+2_kernel} implies the claim of Proposition \ref{pr:analytic_PsiDOs} for $2+2$ tensor fields. We note that Proposition \ref{analytic_parametrix_thm} is analogous to $1+1$ case, since we have proved Theorem \ref{th:stability} for $2+2$ tensor fields. Then we arrive at Lemma \ref{lemm42}, which requires some modifications.

\begin{proof}[Proof of Lemma \ref{lemm42} in $2+2$ case]
We fix $x_0\in\partial M$ and take the boundary normal coordinates $x=(x',x^3)$ in a neighborhood $U\subset M$ of $x_0$. In these coordinates we have $g_{i3}=\delta_{i3}$, in $U$, for any $i=1,2,3$. We aim to find a (trace-free) $3$-tensor field $v$, vanishing on $\partial M$, such that for $\tilde{f}:=f-\mathrm{d}^\mathcal{B}v$ we have 
\begin{equation}\label{eq:2+2_ODE_sysem0}
\tilde{f}_{ijk3}=0, \quad \hbox{ in some open }\tilde U \subset  U, \hbox{ that contains $x_0$},
\end{equation}
which is, due to \eqref{eq:dB}, equivalent to
\begin{equation}\label{eq:2+2_ODE_sysem1}
\begin{split}
&f_{ijk3}-\frac{1}{2}\nabla_3 v_{ijk}-\frac{1}{2}\nabla_k v_{ij3}
+\frac{1}{10}\bigg( \delta_{j3}g^{mn}\nabla_nv_{imk}+\delta_{i3}g^{mn}\nabla_nv_{jmk}\\
&\quad\quad\quad\quad+ g_{jk}g^{mn}\nabla_nv_{im3}+g_{ik}g^{mn}\nabla_nv_{jm3} \bigg)=0, \quad \hbox{ in } \tilde U.
\end{split}
\end{equation}


The order for determining the components of $v$ is quite similar to what is outlined in the proof of Lemma \ref{existence2}. Let us first consider the case $k=3$. Then the above equation becomes
\begin{equation}\label{eq:2+2_ODE_sysem}
f_{ij33}-\nabla_3 v_{ij3}
+\frac{1}{5}\bigg( \delta_{j3}g^{mn}\nabla_nv_{im3}+\delta_{i3}g^{mn}\nabla_nv_{jm3} \bigg)=0, \quad \hbox{ in } \tilde U.
\end{equation}
Remember that 

\[
\nabla_\ell v_{ijk}=\partial_\ell v_{ijk}-\left(\Gamma_{i\ell}^mv_{mjk}+\Gamma_{j\ell}^mv_{imk}+\Gamma_{k\ell}^mv_{ijm}\right),
\]
and the Christoffel symbols, in the boundary normal coordinates, satisfy $\Gamma_{33}^k=\Gamma_{k3}^3=\Gamma_{3k}^3=0$. 
%
If $i,j\neq 3$ we can write \eqref{eq:2+2_ODE_sysem} as an ODE system, with respect to the travel-time variable $x_3$, for the unknowns $v_{\alpha\beta3}$, $\alpha,\beta\neq 3$, which can thus be determined.

If $i=3$ and $j\neq 3$ we write \eqref{eq:2+2_ODE_sysem} in the form 
\[
\begin{split}
&
f_{3j33}-\frac{6}{5}\nabla_3 v_{3j3}
+\frac{1}{5}g^{\alpha\beta}\nabla_\alpha v_{\beta j3}=0
\end{split}, \quad \hbox{ in } U, \quad  \alpha,\beta\in \{1,2\}
\]
Thus $v_{3j3}$ can be found by solving the corresponding initial value problems. Finally we set $i=j=3$ and the system \eqref{eq:2+2_ODE_sysem} takes the form
\[
\begin{split}
&
f_{3333}-\frac{7}{5}\nabla_3 v_{333}
+\frac{2}{5}g^{\alpha\beta}\nabla_\alpha v_{\beta 33}=0
\end{split}, \quad \hbox{ in } U, \quad  \alpha,\beta\in \{1,2\}.
\]
%
%
%
%
%
%
%
%

Now we have determined $v_{ij3}$, next we consider the case $k\neq 3$. For $i,j\neq 3$, the equation \eqref{eq:2+2_ODE_sysem1} gives an ODE system for $v_{\alpha\beta k}$, $\alpha,\beta\in \{1,2\}$. Then take $i=3$ and $j\neq 3$, we get a system for $v_{3\alpha k}$, $\alpha\neq 3$. Finally, take $i=j=3$, we get a system for $v_{33k}$.

 Thus we have found a tensor field $v$ that vanishes at the boundary and solves \eqref{eq:2+2_ODE_sysem0}. We claim that constructed $v$ is trace-free. To see this, we multiply $g^{jk}$ to both sides of \eqref{eq:2+2_ODE_sysem1} and get
 \[
 -\frac{1}{2}\nabla_3(v_{ijk}g^{jk})+\frac{1}{10}\delta_{i3}\nabla^m (v_{mjk}g^{jk})=0.
 \]
 First take $i\neq 3$, we have 
 \[
 \nabla_3(v_{ijk}g^{jk})=\partial_3(\mu v)_i-\Gamma_{3i}^k(\mu v)_k=0.
 \]
 Since $\Gamma_{3i}^k=0$ for $k=3$, the above identity gives an ODE system for $(v_{1jk}g^{jl},v_{2jk}g^{jl})$. Consequently, $v_{ijk}g^{jk}=0$ for $i\neq 3$.
 Then we take $i=3$ and conclude that $v_{3jk}g^{jk}=0$. The claim is proved.

It is easy to see that if $f$ and $g$ are analytic near $\p M$, so is $v$.\\

Similar to the proof of Lemma \ref{lemm42}, we can show that
\begin{equation}
\label{eq:zeros1}
\eta^i\eta^j
\tilde{f}_{ijk\ell}\xi^k\xi^\ell
=0,
\end{equation}
at the chosen boundary point $x_0$, whenever $\xi$ is tangential to the boundary and $\eta \perp \xi$. We set $e_3=\nu(x_0)$, and $e_\alpha=\frac{\p}{\p x'^\alpha}|_{x_0}$ for $\alpha=1,2$. Setting $\xi=e_\alpha$ and $\eta=e_i$, $\alpha\in\{1,2\}$, $i\in\{1,2,3\}$, $i\neq \alpha$, then the previous equation implies
\[
\tilde{f}_{ii\alpha\alpha}=
0, \quad i\in \{1,2,3\}, \: \alpha \in\{1,2\}, \: i\neq \alpha.
\]
This means that the following terms vanish
\[
\tilde f_{1122},\, \tilde f_{2211},\, \tilde f_{3311},\, \tilde f_{3322}.
\]
Then take $\xi=e_1$, $\eta=\frac{1}{\sqrt{2}}(e_2+e_3)$, we can conclude that $\tilde f_{2311}=0$. Similarly $\tilde f_{1322}=0$. Let us summarize what we have right now:
\[
\tilde{f}_{ij\alpha\alpha}=
0, \quad i,j\in \{1,2,3\}, \: \alpha \in\{1,2\}, \: i,j\neq \alpha.
\]
Take $\xi=\frac{1}{\sqrt{2}}(e_1+e_2)$ and $\eta=e_3$, we obtain that $f_{3312}=0$.

Let $\epsilon>0$. If we set $\eta=e_1+\epsilon e_2$ and $\xi=e_2-\epsilon e_1$, then equation \eqref{eq:zeros1} implies that the coefficients of the powers of the $\epsilon$ satisfy
\begin{equation}
\label{eq:zeros3}
\tilde f_{\alpha\alpha\alpha\alpha} -4\tilde f_{\alpha\beta\alpha\beta} +\tilde f_{\beta\beta\beta\beta}=0, \quad \hbox{and} \quad \tilde f_{\beta\alpha\alpha\alpha}-\tilde f_{\beta\beta\beta\alpha}=0, \quad \alpha\neq \beta, \: \alpha,\beta \in\{1,2\}.
\end{equation}
%
By \eqref{eq:2+2_ODE_sysem0} and the trace-free condition, we have $\tilde{f}_{\alpha\alpha\alpha\alpha}=-\tilde f_{\alpha\beta\alpha\beta}=\tilde f_{\beta\beta\beta\beta}$ and $\tilde f_{\beta\alpha\alpha\alpha}=-\tilde f_{\beta\beta\beta\alpha}$. Together with \eqref{eq:zeros3}, we have
\[
\tilde{f}_{\alpha\alpha\alpha\alpha}=\tilde f_{\alpha\beta\alpha\beta}=\tilde f_{\beta\alpha\alpha\alpha}=0\quad \alpha\neq \beta, \: \alpha,\beta \in\{1,2\}.
\]
Taking $\xi=e_1+\epsilon e_2$ and $\eta=e_3+e_2-\epsilon e_1$ in \eqref{eq:zeros1} and collecting coefficients of $1,\epsilon,\epsilon^2,\epsilon^3$, we have
\[
\begin{split}
-\tilde f_{3222}+2\tilde f_{3112}=0,\\
-\tilde f_{3111}+2\tilde f_{3221}=0.\\
\end{split}
\]
Together with the relation resulted from trace-free condition $\tilde f_{3222}+\tilde f_{3112}=\tilde f_{3111}+\tilde f_{3221}=0$, we obtain $\tilde f_{3222}=\tilde f_{3112}=\tilde f_{3111}=\tilde f_{3221}=0$. Therefore we can conclude that $\tilde f$ vanishes at $x_0$. Since $x_0$ was an arbitrary point in $\p M \cap U$ we have shown that $\widetilde f$ vanishes at $\p M \cap U$. 

Similar to the proof of Lemma \ref{lemm42}, we can prove
\begin{equation}
\label{eq:normal_der}
\partial^p_{x_3}\tilde{f}_{ijk\ell}\vert_{x=x_0}=0, \quad p \in \N, \quad i,j,k,\ell\in \{1,2,3\},
\end{equation}
and conclude the proof.
\end{proof}

The adaptations needed for the proof of Theorem \ref{sinjectivity_analytic} in the $2+2$ case are straightforward and therefore omitted.
The rest of the proof for Theorem \ref{maintheorem} is analogous to $1+1$ case.

\subsection*{Acknowledgements}

MVdH gratefully acknowledges support from the Simons Foundation under the MATH + X program, the National Science Foundation under grant DMS-1815143, and the corporate members of the Geo-Mathematical Imaging Group at Rice University. TS gratefully acknowledges support from the Simons Foundation under the MATH + X program and the corporate members of the Geo-Mathematical Imaging Group at Rice University. Part of this work was carried out during TS's visit to University of Washington, and he is grateful for hospitality and support. GU was partially supported by NSF, a Walker Professorship at UW and a Si-Yuan Professorship at IAS,
HKUST.

The authors want to express their gratitude to the referees for their invaluable comments which have improved this paper significantly. 
\color{black}
\appendix

\section{Linearization of anisotropic elastic travel tomography}
\label{Se:app_1}

In this appendix, we effectively study linearized travel-time tomography problems for polarized elastic waves. 
For our purposes this means the determination of some elastic parameters by measuring  
the travel times of \textit{qS}-polarized waves -- see the definition below.
\color{black}
We use the typical notations and terminologies of the seismological
literature, see for instance \cite{cerveny2005seismic}. We let
$\mathbf{C}=C_{ijkl}(x)$ be a smooth stiffness tensor on $\R^3$ which
satisfies the symmetry
\begin{equation}
\label{eq:symmetry_of_elastic_tensor}
C_{ijk\ell}(x)=C_{jik\ell}(x)=C_{k\ell ij}(x), \quad x\in \R^3.
\end{equation}
We also assume that the density of mass $\rho(x)$ is a smooth function of $x$ and define density--normalized elastic moduli
\[
\mathbf{A}=A_{ijk\ell}(x)=\frac{C_{ijk\ell}(x)}{\rho(x)}.
\]
The elastic wave operator $P$, associated with the elastic moduli $\mathbf{A}$, is
a matrix valued second order partial differential operator
\color{black}
given by
\[
P_{ik}=\delta_{ik}\frac{\p^2}{\p t^2}-\sum_{j,k}\left(A_{ijk\ell}(x)\frac{\p}{\p x^j}\frac{\p}{\p x^\ell}\right)+\hbox{lower order terms.}
\]
For every $(x,p)\in T^\ast\R^3$ we define a square matrix $\Gamma(x,p)$, by
\begin{equation}
\label{eq:Chirstoffel_matrix}
\Gamma_{ik}(x,p):=
\sum_{j,\ell}A_{ijk\ell}(x)p_jp_\ell
.
\end{equation}
This is known as the \textit{Christoffel matrix}. Due to \eqref{eq:symmetry_of_elastic_tensor} the matrix $\Gamma(x,p)$ is symmetric. We also assume that $\Gamma(x,p)$ is positive definite for every $(x,p)\in T^\ast\R^3 \setminus \{0\}$. 

The principal symbol $\sigma(t,x,\omega, p)$  of the operator $P$ is then
a matrix-valued map
\color{black}
given by
\[
\sigma(t, x,\omega, p)=\omega^2I-\Gamma(x,p), \quad (t, x,\omega, p)\in T^\ast\R^{1+3}. 
\] 
Since the matrix $\Gamma(x,p)$ is positive definite and symmetric, it has three positive eigenvalues $G^m(x,p),$ $\: m \in \{1,2,3\}$, which are homogeneous of degree $2$ in the momentum variable $p$.


We assume that 
\begin{equation}
\label{eq:nonvanishing_lamda_der}
G^1(x,p) > G^{m}(x,p), \quad  m \in \{2,3\}\hbox{, $(x,p) \in T^\ast\R^3\setminus \{0\}$}.
\end{equation}
It was shown in \cite{de2019inverse} that $\sqrt{G^1}$ is a Legendre
transform of some Finsler metric $F$. Thus the bicharacteristic curves of $\omega^2-G^1(x,p)$ are given by the co-geodesic flow of $F$.  
We recall that a bicharacteristic curve is a smooth curve on $T^\ast\R^{1+3}$, on which $\omega^2-G^1(x,p)$ vanishes, that solves the Hamilton's equation for the Hamiltonian $\omega^2-G^1(x,p)$. We consider a second order pseudo-differential operator $\square_P:=\frac{\p^2}{\p t^2}-G^1(x,D), \:D:= \mathrm i(\p_{x_1}, \p_{x_2}, \p_{x_3})$. Since $G^1$ is related to a Finsler metric the operator $\square_P$ is of real principal type (the bi-characteristic curves exit any compact set). 
The solutions $u$ of the corresponding scalar PDE $\square_P u=f$ represent $qP$-waves (quasi-pressure waves) and moreover the wavefront set  of $u(t,\cdot)$ propagates along the bicharacteristics of $\omega^2-G^1(x,p)$ \cite{Hormander2, GrUh}. 

Next, we describe the propagation of the slower $qS_1$ and $qS_2$
waves (quasi-shear waves), that are given as solutions to the scalar
equations with the operators $\square_{S_m} := \frac{\p^2}{\p
  t^2}-G^m(x,D), \: m\in\{2,3\}$.  In the anisotropic case the unit
level sets $(G^m)^{-1}\{1\} \subset T^\ast\R^3, \: m\in \{2,3\}$, also
referred to as slowness surfaces, 
typically will have points in common. See \cite{crampin1981shear} for a study of different types of intersections.
The size and codimension of their intersection set depends on the additional symmetries that the stiffness tensor may have. Thus in general the smaller eigenvalues $G^2,\: G^3$ are only continuous. We denote by $D_c=(G^2)^{-1}\{1\}
\cap (G^3)^{-1}\{1\}$ the set of degenerate eigenvalues, and note that
outside this set $G \in \{G^2,G^3\}$ yields a smooth Hamiltonian
$H(x,p) = \frac{1}{2}G(x,p)$.  Let $U \subset (T^\ast\R^3 \setminus
\{0\})\setminus D_c$ be an open set, then a local Hamiltonian flow
$\theta\colon \mathcal{D} \to U$ of $H$ exists, where $\mathcal D$ is
the maximal flow domain of $\theta$ that satisfies
\begin{equation}
\label{eq:flow}
\theta(t,(x,p))\in U, \quad (x,p)\in (U \cap G^{-1}\{1\}). \quad 
\end{equation}
In general it is possible that
the \textit{momentum gradient}
 $D_p H$ vanishes at some point $(x,p) \in U$, which would cause problems in translating between Hamiltonian and Lagrangian formalisms.  For this reason the operators $\square_{S_m}$ may not be of real principal type in $U$. See for instance \cite[Section 1.2]{oksanen2020inverse} for the connection between different definitions for real principal type operators.
We make a standing assumption that $D_p H$ does not vanish in $U$. In other words we exclude the occurrence of inflection points. 
\color{black}
We choose $(x_0,p_0) \in U\cap G^{-1}\{1\}$ and say that the
\textit{elastic travel-time} $\tau_c$ from $(x_0,p_0)$ to $(x,p) \in
\theta(\R_+,(x_0,p_0))\cap U$ is the smallest $t$ for which
$\theta(t,(x_0,p_0))=(x,p)$.

Under these two assumptions for the Hamiltonian $H$ in $U$ we are ready to set an inverse problem for anisotropic elastic travel-times. We suppose that there exists an open set $M\subset \R^3$ and open sets
$\Sigma, \Sigma' \subset \p M$ such that for any $x\in \Sigma, x'
\in\Sigma$ there is a unique characteristic curve of $H$ contained
in $T^\ast M \cap U$ whose spatial projection  $\gamma$ connects $x$ to $x'$, where $T^\ast M$ is
the cotangent bundle of $M$. Thus for any $x\in \Sigma, x' \in\Sigma$
there exists a unique triplet
\[
(\tau_c;(x,p);(x',p'))\in \R_+\times U \times U \quad \hbox{which satisfies } \quad \theta(\tau_c,(x,p))=(x',p')\in G^{-1}\{1\}.
\]
Therefore $\tau_c$ is the elastic travel-time from $(x,p)$ to $(x',p')$ and we call $d_G(x,x'):=\tau_c$ the \textit{elastic distance between} $x$ and $x'$. We arrive in an inverse problem of \textit{anisotropic  elastic travel-time tomography}:

\begin{problem}
\label{Ip_local}
What can one infer about $G$ in $T^\ast M$ when \textit{boundary
  distance data}
\begin{equation}
\label{eq:BDF}
   \{d_G(x,x') \in \R_+: x\in \Sigma, \: x' \in\Sigma'\}
\end{equation}
is given?
\end{problem}
\noindent We note that in general the sets $\Sigma$ and $\Sigma'$ can be very small, and \eqref{eq:BDF} may not contain any information about $G$ in some open set of $T^\ast M$. This is illustrated in the Figure 2.
\begin{figure}[h]
\begin{picture}(250,125)
\put(0,5){\includegraphics[width=9cm]{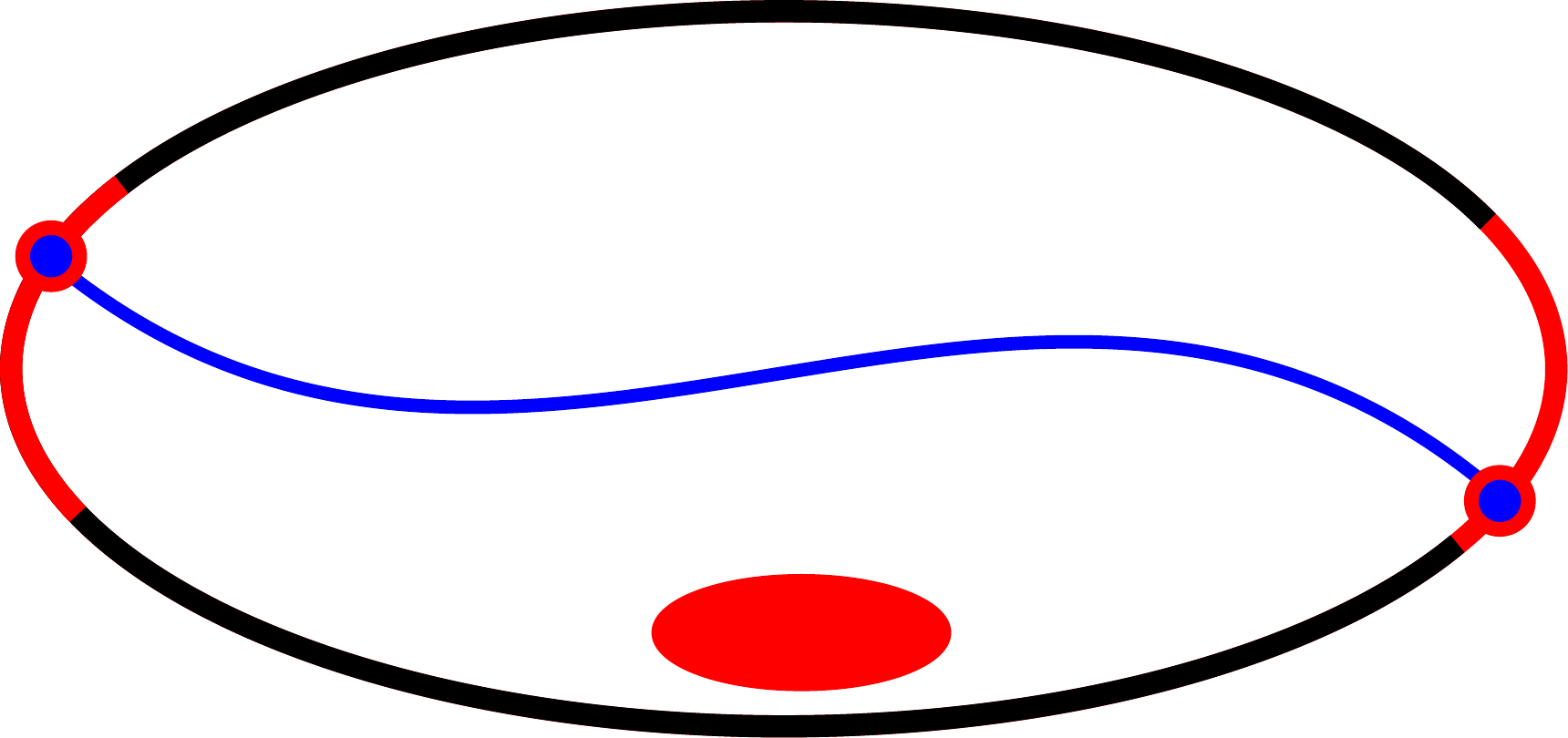}}
\put(-8,37){$\Sigma$}
\put(100,70){$\gamma$}
\put(-7,85){$x$}
\put(255,37){$x'$}
\put(256,85){$\Sigma'$}
\put(150,30){$O$}
\put(150,90){$M$}
 \end{picture}
 \caption{If there exists a set $O \subset M$ such that all characteristic curves whose terminal points are contained in $\Sigma \cup \Sigma'$ avoid the set $T^\ast O$, then data \eqref{eq:BDF} does not provide information about $G$ on $T^\ast O$.}
\end{figure}

%
\noindent Problem \ref{Ip_local} 
is highly non-linear and perturbations to anisotropic elasticity have been largely unresolved. In the following we consider linearizations of this problem. Since we have assumed that on the set $U$ the eigenvalues $G^2$ and $G^3$ are distinct, they and the corresponding unit length
eigenvector fields  are smooth on $U$ (see for instance \cite[Chapter 11, Theorem 2]{evans2010partial}). In the following we recall a coordinate representation of the Hamilton's equation that the characteristic curve $\theta(t,(x_0,p_0))$, $(x_0,p_0) \in U\cap G^{-1}\{1\}$ satisfies.  Let $q=q(x,p)$ be a polarization vector of  $G$  on $U$. In the other words it is the 
unit (with respect to Euclidean metric) eigenvector of the Christoffel matrix associated with the eigenvalue $G$. 
Then we can  write the eigenvalue $G$ as
\[
G(x,p)=\Gamma_{ik}q^iq^k=A_{ijk\ell}q^iq^kp^jp^\ell, \quad (x,p)\in U.
\]
From now on we denote the characteristic curve $\theta(t,(x_0,p_0))$ by $(x(t),p(t))\in U$. Therefore the polarization vector $q=q(x(t),p(t))$ is also seen as a function of the time variable $t$ implicitly. The following Hamilton's equation holds true on $(x(t),p(t))$
\begin{equation}
\label{eq:Hamilton}
\dot x_m =\frac{\p}{\p p_m}H(x,p)=A_{ijkm}q^iq^kp^j \quad \hbox{and} \quad \dot{p}_m=-\frac{\p}{\p x_m}H(x,p)=-\frac{1}{2}\frac{\p A_{ijk\ell}}{\p x_m}q^iq^jp^jp^\ell.
\end{equation}
\color{black}
Next, we recall the linearization scheme for the elastic distance $d_G$, that leads to an integral geometric problem of $4$-tensor fields. This procedure has
been introduced earlier in geophysical literature (see for instance
\cite{chapman1992traveltime}) using Fermat's principle. It is well
known that characteristic curves of Hamiltonian flow satisfy this
principle (see for instance \cite{babich2018elastic}). For the
convenience of the reader we give the proof and the exact claim below.

Since $D_p H$ does not vanish on $U$, the Legendre transform, that maps
a co-vector to a vector, is well defined. Using the inverse of this
map we define a Lagrangian function $L$ on the image of $U$ under this
transform. That is
\[
L(x,y):=p(x,y)\cdot y-H(x,p(x,y)).
\]
If $(x(t),y(t))=(x(t),\dot x(t))$ is the image of a characteristic
curve of $H$, under the Legendre transform, then on this curve $L\equiv 1/2$ and the following
Euler-Lagrange equations hold true
\[
\frac{\p}{\p x_i}L(x,y)-\frac{\mathrm{d}}{\mathrm{d} t}\left( \frac{\p}{\p y_i} L(x,y)\right)=0, \quad \hbox{ for every } i\in\{1,2,3\}.
\]
Let $x(t)$ be the base projection of a characteristic curve and
$x_s(t)$ any smooth one-parameter variation of this curve that fixes
the start point $x=x_s(0)$ and end point $x'=x_s(\tau_c)$. We choose the
notation $V$ for the variation field $V(t)=\frac{\p}{\p  s}x_s(t)|_{s=0}$ of $x_s(t)$. Then using integration by parts, we
obtain
\[
\frac{\mathrm{d}}{\mathrm{d} s}\int_0^{\tau_c}L(x_s(t),\dot x_s(t)) \mathrm{d}t \bigg|_{s=0}=\int_0^{\tau_c}\left(D_xL(x,\dot x)-\frac{\mathrm{d}}{\mathrm{d} t}\left(D_yL(x,\dot x)\right)\right)\cdot  V \mathrm{d}t \bigg|_{s=0}.
\]
Thus Euler-Lagrange equations imply that characteristic curves are the critical points of the energy functional 
\[
\mathcal{L}(\gamma)=\int_0^{\tau_c}L(\gamma(t),\dot \gamma(t)) \mathrm{d}t,
\]   
where $\gamma$ is any $C^1$-smooth curve. This is the version of
Fermat's principle we need. We note that the characteristic curves are
not necessarily local minimizers of the energy functional.

We now have the framework to linearize the travel-time tomography
problem \ref{Ip_local}. Let $\mathbf{A}_s$ be a smooth one-parameter family of
elastic moduli. Suppose that we can choose sets $U, M, \Sigma,
\Sigma'$ such that the discussion above holds for any $s$, if $\mathbf{A}$ is
replaced by $\mathbf{A}_s$. Since the Legendre transforms depend on the parameter $s$ we obtain a family of Lagrangians
\[
L_s(x,y)=p_s(x,y)\cdot y-H_s(x,p_s(x,y)), 
\]
which satisfy
\begin{equation}
\label{eq:der_of_Lag}
\frac{\mathrm{d}}{\mathrm{d} s}L_s(x,y)=\frac{\mathrm{d}}{\mathrm{d} s}p_s\cdot y-\frac{\mathrm{d}}{\mathrm{d} s}H_s-D_pH_s\cdot \frac{\mathrm{d}}{\mathrm{d} s}p_s
\end{equation}
If the data \eqref{eq:BDF} is independent of $s$, then due to Fermat's principle, equations \eqref{eq:Hamilton} \eqref{eq:der_of_Lag}, 
and the assumption that $q_s$ is a unit vector on $U$, we obtain 

\begin{equation}
\label{eq:linearization}
\begin{split}
0=& \frac{\mathrm{d}}{\mathrm{d} s} d_G(x,x')\bigg|_{s=0}
=\frac{\mathrm{d}}{\mathrm{d} s}\int_0^{\tau_c}L_s(x_s(t),\dot x_s(t)) \mathrm{d}t \bigg|_{s=0}
=\int_0^{\tau_c}\frac{\mathrm{d}}{\mathrm{d} s} L_s(x(t),\dot x(t)) \mathrm{d}t \bigg|_{s=0}
\\
=&-\int_0^{\tau_c}\frac{\mathrm{d}}{\mathrm{d} s}H_s(x,p) \mathrm{d}t \bigg|_{s=0}
=-\int_0^{\tau_c}\frac{1}{2}\frac{\mathrm{d}}{\mathrm{d} s}\left[(A_s)_{ijk\ell}q_s^iq_s^k\right]p^jp^\ell \mathrm{d}t \bigg|_{s=0}
=-\int_0^{\tau_c}\frac{A'_{ijk\ell}}{2} q^iq^kp^jp^\ell \mathrm{d}t, 
\end{split}
\end{equation}
where $A'_{ijk\ell} =\frac{\mathrm{d}}{\mathrm{d} s}(A_s)_{ijk\ell}|_{s=0},$ and $(x,p)=(x(t),p(t))$ is the characteristic curve of the reference model. Thus the linearization of ansitropic elastic travel-time $d_G$ leads to an integral problem for $4$-tensor fields.

In the following we will see that a similar linearization scheme applies in the case when
$\mathbf{A}_s$ is isotropic for any $s$. Recall that in isotropic medium the
elastic moduli can be written as
\begin{equation}
\label{eq:iso_elastic_mod}
\mathbf{C}^0=C^0_{ijk\ell}=\lambda\delta_{ij}\delta_{k\ell}+\mu\left(\delta_{ik}\delta_{j\ell} +\delta_{i\ell}\delta_{jk}\right).
\end{equation}
The functions $\lambda(x), \: \mu(x)>0$ are known as the Lam\'e parameters.  

If the mass density $\rho$ is given it follows from \eqref{eq:iso_elastic_mod} that the isotropic Christoffel matrix is
\begin{equation}
\label{eq:iso_Christ_mat_2}
\frac{\lambda}{\rho}p_ip_k+\frac{\mu}{\rho}\left(\delta_{ik}|p|_e^2+p_ip_k\right),
\end{equation}
whose eigenvectors are $p$ and any unit vector $q$ that is perpendicular to $p$. Moreover the corresponding eigenvalues are
\begin{equation}
\label{eq:iso_Christ_mat}
G^1=\Gamma_{ik}\hat p^i \hat p^k=\left(\frac{\lambda+2\mu}{\rho}\right)|p|_e^2, \: \hat{p}=\frac{p}{|p|_e}\quad \hbox{and} \quad G:=G^2=G^3=\Gamma_{ik}q^iq^k=\frac{\mu}{\rho}|p|_e^2.
\end{equation}
In particular $G^1$ or $G$ are both smooth and do not have inflection points. Thus any smooth one-parameter family $\mathbf{A}_s$ of isotropic elastic moduli satisfies all the additional assumptions we had to impose earlier for the general anisotropic case.

We recall that for isotropic elasticity, there are two different wave-speeds, namely,  \textit{P}-wave (Pressure, longitudinal wave) speed $c_P=\sqrt{\frac{\lambda+2\mu}{\rho}}$ and \textit{S}-wave  (Shear, transverse wave) speed $c_S=\sqrt{\frac{\mu}{\rho}}$. Therefore we can consider $M$, in Problem \ref{Ip_local}, as a Riemannian manifold with conformally Euclidean metric $g_P=c_P^{-2}\mathrm{d}s^2$ or $g_S=c_S^{-2}\mathrm{d}s^2$, where $\mathrm{d}s^2$ is the Euclidean metric. 

We repeat the earlier linearization scheme with respect to smaller eigenvalue $G$ and obtain
\[
\begin{split}
0=& \frac{\mathrm{d}}{\mathrm{d} s}d_G(x,x')\bigg|_{s=0}
=-\frac12\int_0^{\tau_c}\left(\log\left(c_S^{2}\right)\right)'\mathrm{d}t.
\end{split}
\]
This equation
follows from \eqref{eq:iso_Christ_mat} and from the initial condition $(x,p)\in G^{-1}\{1\}$. Moreover in this case $\tau_c$ is the
Riemannian distance between $x,x' \in \p M$ with respect to the metric $g_S$. Therefore we have shown that the linearization of elastic travel-times in isotropic case leads to an integral geometry problem on functions.

In our final example we consider the linearization of the averaged
quasi-shear wave travel-times in \textit{weakly anisotropic
  medium}. Suppose that we are given a family $\mathbf{A}_s :=
\frac{\mathbf{C}^0}{\rho} + s \frac{\mathbf{C}}{\rho}$ of elastic
moduli on some open and precompact domain of
  $\R^3$. Here $\mathbf{C}^0$ is an isotropic stiffness tensor having
the form \eqref{eq:iso_elastic_mod}, $\mathbf{C}$ is  an arbitrary
anisotropic stiffness tensor, which satisfies the symmetry \eqref{eq:symmetry_of_elastic_tensor}, and $s$ is a real parameter close to
zero. We note that for $|s|$ small enough the largest eigenvalue
$G_s^1$ of the Christoffel matrix $\Gamma_s$ of $\mathbf{A}_s$ is
always distinct from the smaller ones. Therefore, $G_s^1(x,p)$ and the
corresponding $g_S$-unit eigenvector field $q_s(x,p)$ are smooth in
all variables $(x,p,s)$. 
As the elastic moduli $\mathbf A_s$ is isotropic at $s = 0$, we have the degeneracy of eigenvalues,
  $G_0^2(x,p) = G_0^3(x,p)$ and, hence, $G_s^2(x,p)$ and $G_s^3(x,p)$
  may not be smooth when $s$ tends to zero. However, $G_s^2(x,p) +
  G_s^3(x,p) = \hbox{Tr}\left(\Gamma_s\right)(x,p) - G^1(x,p)$ is
  smooth in $(x,p,s)$, and strictly positive for $p \neq
  0$. Therefore, we introduce the smooth one-parameter family of averaged $qS$-Hamiltonians,
\begin{equation} \label{eq:average_Ham}
   H_s := \frac{1}{4} \left(G_s^2 + G_s^3\right) ,
\end{equation}
inheriting the homogeneity of order $2$ in $p$-variable. Since $H_0$
is conformally Euclidean, we note that, for $|s|$ sufficiently small,
$\sqrt{H_s}$ is, in fact, a smooth family of co-Finsler metrics, with $s$-uniformly lower bounded injectivity radii 
while $H_s$ does not
have inflection points. We briefly analyze the Hamiltonian flow
associated with $H_s$ -- which may be thought of as describing the
propagation of singularities of an artificial wave -- see below. In the following we will show that, up to the first order, the travel time
along this flow can be identified with the average of travel times associated with the two \textit{qS}-waves. This identification has been originally proposed in \cite{vcerveny1982linearized}.

First we note  that it follows from \cite[equations (24) and (26)]{vcerveny1982linearized} that for $x,x' \in \R^3$ that are close enough we can write the average $qS$-travel time as 
\begin{equation}
\label{eq:average_travel_tike}
\begin{split}
\frac{d_{G_s^2}(x,x')+d_{G_s^3}(x,x')}{2}=& \:d_{g_S}(x,x')-s\int_{x_0(t)}\frac 14 \sum_{ijk\ell} \left(\delta_{ik}-c_S^2 p_ip_k \right)p_jp_\ell\frac{C_{ijk\ell}}{\rho} \:\mathrm{d}t + \mathcal O(s^2),
\end{split}
\end{equation}
where $x_0(t)$ is the $g_S$-geodesic connecting $x$ to $x'$ and $p=p(t)$ is the momentum of $x_0(t)$. We recall that the term $\delta_{ik}-c_S^2 p_ip_k$, in \eqref{eq:average_travel_tike}, is the projection onto the orthocomplement of $p$. 

Next we study the linearization of $H_s$-travel times $d_{H_s}(x,x')$. Let $p_0(t) \in T_{x_{0}(t)}^\ast\R^3$ be the momentum of the
$g_S$-geodesic connecting $x$ to $x'$. Thus $(x_0(t),p_0(t))$ is a
characteristic curve of $H_0$, with
initial value $(x,p) \in T^\ast\R^3, \: |p|_{g_S}=1$. In addition we
denote $d_{H_s}(x,x')=\tau_c$ that is the $g_S$-distance between
$x$ and $x'$. We note that due to the uniform lower bound for the injectivity radii of $\sqrt{H_s}$, for every $s \in (-\epsilon,\epsilon)$ there exists a $\sqrt{H_s}$-distance minimizing geodesic $x_s(t)$ from $x$ to $x'$, possibly after choosing $x'$ closer to $x$ and choosing $\epsilon > 0$ small enough.


We consider the following system of linear ODEs 
\begin{equation} \label{eq:ODE_for_parallel_frame}
\begin{split}
&\frac{D}{\mathrm{d} t}e_s^I(t)=-\left\langle e_s^I, \frac{D}{\mathrm{d} t}q_s \right\rangle_{g_S} q_s(t), \quad e_s^I(0)=\eta_s^I \in T_x\R^3,
%
\\
&  \langle \eta_s^I, \eta_s^J \rangle_{g_S}=\delta^{IJ}, \quad  \hbox{ and } \quad  \langle \eta_s^I,q_s(0) \rangle_{g_S}=0,  \quad I,J \in \{1,2\}.
\end{split}
\end{equation}
Here  $q_s(t)=q_s(x_s(t),p_s(t))$ and $\frac{D}{\mathrm{d} t}$ is the covariant derivative with respect to Riemannian metric $g_S$ 
along the $\sqrt{H_s}$-geodesic $x_s(t)$ from $x$ to $x'$.
Both solutions $e_s^I$ of \eqref{eq:ODE_for_parallel_frame} satisfy  $\left\langle e_s^I, q_s \right\rangle_{g_S}\equiv 0,$ due to the assumption $|q_s|_{g_S}\equiv 1$, hence it also holds that $\left\langle e_s^I, e_s^J \right\rangle_{g_S}\equiv \delta^{IJ}$ along $x_s$. Therefore we have shown that for any $s$ the vector fields $\{q_s, e^1_s, e^2_s\}$ form a  $g_S$-orthonormal frame moving along $x_s$. 
%
With respect to this basis we can write 
in $T^\ast_{x_s(t)}\R^3$
\[
G_s^2+G_s^3=(c_S)^{-2}\hbox{Tr}\left(\widehat{\Gamma}_s\right), \quad\quad  \hbox{ for } \widehat{\Gamma}_s
=\left(\begin{array}{cc}
(\Gamma_{s}e^1_s)\cdot e^1_s & (\Gamma_{s}e^1_s)\cdot e^2_s
\\
&
\\
(\Gamma_{s}e^2_s)\cdot e^1_s & (\Gamma_{s}e^2_s)\cdot e^2_s
\end{array}
\right).
\]
Since $e^1_0(t)$ and $e^2_0(t)$ are orthogonal to $\dot{x}_0(t)$, and we assumed that they are $g_S$-normalized, we obtain
\[
\frac{\mathrm{d}}{\mathrm{d} s}\left[(\Gamma_{s}e^J_s)\cdot e^J_s\right]\bigg|_{s=0}=\frac{C_{ijk\ell}}{\rho}p_0^jp_0^\ell \left(e_0^J\right)^i\left(e_0^J\right)^k, \quad J \in \{1,2\}.
\]
Finally we assume that $H_s$-travel-time $d_{H_s}(x,x')$ from $x$ to $x'$ is the constant $\tau_c$. Then we run through the same linearization process as earlier and obtain
\begin{equation}
\label{eq:lin_averaged_tt}
\begin{split}
0=&\frac{\mathrm{d}}{\mathrm{d} s}d_{H_s}(x,x')= -\int_0^{\tau_c}\frac{\mathrm{d}}{\mathrm{d} s}H_s(x_0(t),p_0(t)) \mathrm{d}t \bigg|_{s=0}
 \\
=& - \int_0^{\tau_c}\frac{C_{ijk\ell}}{4c_S^{6}\rho}\dot x^j \dot x^\ell (e^1)^i(e^1)^k \mathrm{d}t-\int_0^{\tau_c}\frac{C_{ijk\ell}}{4c_S^{6}\rho}\dot x^j \dot x^\ell (e^2)^i(e^2)^k \mathrm{d}t, \quad \dot x=\dot{x}_0(t), \: e^I:=e^I_0(t), \: I \in \{1,2\}.
\end{split}
\end{equation}

\noindent In the last equation we also transformed the momentum variable into the velocity variable. 

If we write the right hand side of \eqref{eq:average_travel_tike} using the basis $\{\dot x_0, e_0^1, e_0^2 \}$ we notice that the first order term equals to the right hand side of \eqref{eq:lin_averaged_tt}. Therefore we have verified that $H_s$-travel times and average of travel times associated with the two \textit{qS}-waves coincide up to the first order. 
We also note that
up to a constant
the integrands in the right hand side of \eqref{eq:lin_averaged_tt} are the same as in \cite[Problem 7.1.1]{Shara}.

Finally we note that the polarization vector $q_0(t)$, coincides with the velocity field $\dot{x}_0(t)$ of the geodesic $x_0(t)$ in the reference medium $g_S$. Thus we have, due to \eqref{eq:ODE_for_parallel_frame}, that $e_0^I(t)$ is given by a parallel translation of $\eta_0^I \in T_x\R^3$ along the reference ray $x_0(t)$. 
We have shown that an anisotropic perturbation of an 
averaged
isotropic shear wave travel-time leads to an integral geometry problem on the $4$-tensor field $f_{ijk\ell}:=\frac{1}{2}\frac{C_{ki\ell j}+C_{kj \ell i}}{\rho c_S^6}$ which is related to the mixed ray transform $L_{2,2}f$ for the metric $g_S$. However we want to emphasize that the travel-time $d_{H_s}(x,x)$, of the aforementioned artificial wave, is given only by the averaged $qS$-Hamiltonian $H_s$, in \eqref{eq:average_Ham}, that is independent of the choice of the initial values $\eta^I_s$ in \eqref{eq:ODE_for_parallel_frame}, which yield the shear wave polarization vector fields $e_0^I(t)$  in formula \eqref{eq:lin_averaged_tt}. 

To conclude the first part of the appendix we note that \eqref{eq:lin_averaged_tt} implies that the travel-time data alone only gives us partial information about the mixed ray transform. However, if in addition we include the measurement of the shear wave amplitude, the complete mixed ray transform can be obtained. In the Appendix \ref{dnmap}, we will show how one can recover the mixed ray transform from the linearization of the \text{Dirichlet-to-Neumann} map of an elastic wave equation on $M$, by probing with Gaussian beams. We also refer to \cite[Chapter 7]{Shara} for an alternative derivation of the mixed ray transform.

\section{The Relation of the MRT and the Dirichlet-to-Neumann map}\label{dnmap}


In this section, we give another derivation of the mixed ray transform from the inverse boundary value problem for elastic wave equations. We let $M\subset\mathbb{R}^3$ be a bounded domain with smooth
boundary $\partial M$ and $x=(x^1,x^2,x^3)$ be the Cartesian coordinates. The system of equations describing elastic waves can be written as
\begin{equation}\label{EQ no1}
\begin{split}
&\rho\frac{\partial^2u}{\partial t^2}-\operatorname{div}(\mathbf{C}\varepsilon(u))=0,\quad (t,x)\in (0,T)\times M,\\
&u=h,\quad \text{on } (0,T)\times\partial M,
\quad u(0,x)=\frac{\partial}{\partial t}u(0,x)=0,\quad x\in M.
\end{split}
\end{equation}
Here, $u$ denotes the displacement vector and
\[\varepsilon(u)=(\varepsilon_{ij}(u))=\frac{1}{2}\left(\frac{\partial u_i}{\partial x^j}+\frac{\partial u_j}{\partial x^i}\right)\]
is the linear strain tensor. Furthermore, $\mathbf{C}=(C_{ijk\ell})=(C_{ijk\ell}(x))$ is the stiffness
tensor and $\rho=\rho(x)$ is the density of mass. As in Appendix \ref{Se:app_1} we assume that  $C_{ijk\ell}$ and $\rho$ are all smooth functions, and 
the elastic tensor $\mathbf{C}$ is assumed to have the symmetries as in \eqref{eq:symmetry_of_elastic_tensor}.
In addition we assume  the operator $-\operatorname{div} (\mathbf{C}\varepsilon(\cdot))$ to be elliptic, in the following sense: There exists $\delta>0$ such that for any 
$3\times 3$ real-valued symmetric matrix $(\varepsilon_{ij})$,
\[
   \sum_{i,j,k,\ell=1}^3C_{ijk\ell}\varepsilon_{ij}\varepsilon_{k\ell}\geq\delta\sum_{i,j=1}^3\varepsilon_{ij}^2.
\]

Under these assumptions we let $\Lambda_{\mathbf{C}}$ to be the Dirichlet-to-Neumann map for the elastic wave equation \eqref{EQ no1} (see for instance \cite{de2019unique}), given by 
\[
\Lambda_{\mathbf{C}}:C^2([0,T];H^{1/2}(\partial M))\ni h\mapsto \mathbf{C}\varepsilon(u)\nu \vert_{(0,T)\times\partial M}\in L^2([0,T];H^{-1/2}(\partial M)).
\]
Where $T>0$ is large enough. The following inverse problem is of fundamental importance in seismology : 
\begin{problem}
\label{Ip_DtoN}
Can we reconstruct the elastic tensor $\mathbf{C}$ and the density $\rho$ from the Dirichlet-to-Neuman map  $\Lambda_{\mathbf{C}}$?
\end{problem}

We note that this problem is open for a general anisotropic $\mathbf{C}$. For isotropic medium, the uniqueness is shown under certain geometrical assumptions \cite{bhattacharyya2018local, hansen2003propagation, rachele2000inverse, rachele2003uniqueness,stefanov2017local}. The uniqueness of transversely isotropic tensors is proved under piecewise analytic assumption in \cite{de2019unique}, as well as fully anisotropic tensors under piecewise homogeneous assumption.
In contrast to the elastic problem, the corresponding inverse problem for scalar wave equation has been solved in \cite{belishev1987approach, belishev1992reconstruction}. In this second appendix, instead of studying Inverse Problem \ref{Ip_DtoN} for general anisotropic elastic tensors, we consider a linearization of the problem around isotropic elasticity. We will see that the linearization leads to a family of ray transforms on four tensors.

%

From here we consider a one parameter family of anisotropic perturbations $s\mathbf{C}$ around the isotropic elasticity $\mathbf{C}^0$ of the form \eqref{eq:iso_elastic_mod}, that is we study an elastic tensor $\mathbf{C}_s=\mathbf{C}^0+s\mathbf{C}$.
%
%
We note that the map $\mathbf{C}_s \mapsto \Lambda_{\mathbf{C}_s}$ is Frech\'{e}t differentiable at $\mathbf{C}^0$, and the Frech\'{e}t derivative is
$
\dot{\Lambda}_{\mathbf{C}^0}: \mathbf{C} \mapsto \dot{\Lambda}_{\mathbf{C}^0}(\mathbf{C} ):=\lim_{s\rightarrow 0}\frac{1}{s}(\Lambda_{\mathbf{C}^0+s\mathbf{C}}-\Lambda_{\mathbf{C}^0}).
$
We will study the injectivity of the linear map $\dot{\Lambda}_{\mathbf{C}^0}$, whose action is given by
\begin{equation}\label{integral_identity}
\langle\dot{\Lambda}_{\mathbf{C}^0}(\mathbf{C} )h_1,h_2\rangle_{(0,T)\times\partial M}=\int_{(0,T)\times M}C_{ijk\ell}(x)\partial_{x_i} w_j(x,t)\partial_{x_k} v_\ell(x,t)\mathrm{d}x\mathrm{d}t,
\end{equation}
and $w$ $(v)$ solves the elastic wave equation (backward one) with the isotropic elastic tensor $\mathbf{C}^0$,
\begin{equation}\label{elastic_eq_linearized}
\begin{cases}
&\rho\frac{\partial^2w}{\partial t^2}-\operatorname{div} (\mathbf{C}^0w)=0,\quad \text{in } (0,T)\times M,\\
&w=h_1,\quad \text{on } (0,T)\times\partial M,\\
&w(0,x)=\frac{\partial}{\partial t}w(0,x)=0,\quad x\in M,
\end{cases}
\begin{cases}
&\rho\frac{\partial^2v}{\partial t^2}-\operatorname{div} (\mathbf{C}^0v)=0,\quad\text{in }  (0,T)\times M,\\
&v=h_2,\quad \text{on } (0,T)\times\partial M,\\
&v(T,x)=\frac{\partial}{\partial t}v(T,x)=0,\quad x\in M.
\end{cases}
\end{equation}
A similar linearization for the time-harmonic elastic wave equation can be found in \cite{yang2019unique}.



Next we summarize the construction of Gaussian beam solutions to \eqref{elastic_eq_linearized} used in \cite[Section 3]{uhlmann2019inverse}. We also refer to \cite{feizmohammadi2019recovery} for more discussions on Gaussian beams solutions. Assume that  $M \subset \subset \widetilde M \subset \R^3$, where $\widetilde M$ is open and bounded, the Riemannian metric $g_{P/S}$ with respect to $\mathbf{C}^0$ is known on $\widetilde M$ and the Riemannian manifold $(M,g_{P/S})$ is simple. We choose a maximal unit-speed geodesic $\gamma$ in $(M,g_{P/S})$, and extend it to $\widetilde M$ assuming that once leaving $M$ it will not return back to it. Then $\vartheta(t)=(t+\alpha,\gamma(t))$ is a null-geodesic in the Lorentzian manifold $((0,T)\times \widetilde M,-\mathrm{d}t^2+g_{P/S})$ joining two points on $(0,T)\times \partial M$, as long as for $\alpha>0$ and $T$ large enough.
Let us first take an asymptotic solution $M_\varrho$ to the elastic wave equation on $(0,T)\times\widetilde{M}$,
\[
\rho\frac{\partial^2M_\varrho}{\partial t^2}-\operatorname{div} (\mathbf{C}^0M_\varrho)=\mathcal{O}(\varrho^{-N}),
\]
 representing \textit{S}-waves, of the form
\begin{equation*}
M_\varrho=\chi \left(\sum_{j=0}^{N+1}\varrho^{-j}\mathbf{a}_j\right) e^{\mathrm{i}\varrho\varphi},
\end{equation*}
where $\varrho$ is a large parameter and all the vector fields $(\mathbf{a}_j)_{j=0}^{N+1}$ and the phase function $\varphi$ depend  on time $t$ and on location $x$. Here $\chi$ is a real valued cut-off function that is compactly supported and equal to $1$ in a neighborhood of $\vartheta$.  The phase function $\varphi$ satisfies
$
D\varphi\vert_{\vartheta(t)}=\dot{\gamma}(t),
$
where $D$ is the gradient with respect to the Euclidean metric on $M$. The imaginary part of the spatial Hessian of the phase function $\varphi$ is positive definite, i.e. $\Im(D^2\varphi)>0$.
In addition we have
\begin{equation}
\label{eq:a_0}
\mathbf{a}_0(\vartheta(t))=A_S(\vartheta(t)) \mathbf{e}(\vartheta(t)),
\end{equation}
where $\mathbf{e}(\vartheta(t))=\eta(t)$ is an arbitrary parallel vector field along $\gamma(t)$, perpendicular to $\dot{\gamma}(t)$, that is 
$\frac{D}{\mathrm{d} t} \eta(t)=0$, $\eta(t)\perp\dot{\gamma}(t)$,
and the amplitude $A_S$ can be chosen such that
\begin{equation}
\label{eq_amplitude}
A_S\vert_{\vartheta}=\det(Y_S)^{-1/2}c_S^{-1/2}\rho^{-1/2},
\end{equation}
where $Y_S(x,t)$ is well defined on $\vartheta$ and is given as a solution of a second order ODE. 
Furthermore, we have
\begin{equation}\label{d2YS}
\det(\Im(D^2\varphi))|\det(Y_S)|^2\equiv c_0
\end{equation}
on $\vartheta$ with $c_0$ a constant.  Let $h_1=M_\varrho\vert_{(0,T)\times\partial M}$, then one can determine the remainder $R_\varrho$ satisfying zero boundary and initial conditions, such that
\begin{equation}\label{solution_u}
w=M_\varrho+R_\varrho
\end{equation}
is a solution to the first equation in \eqref{elastic_eq_linearized}. For any $m \in \N$ we can choose large enough $N$ such that the remainder term $R_\varrho$ satisfies the estimate $\|R_\varrho\|_{H^1(M\times (0,T))}=\mathcal{O}(\varrho^{-m})$.
We also take 
\begin{equation}\label{solution_v}
v= \overline{M_\varrho}+R_\varrho'=\chi \left(\sum_{j=0}^{N+1}\varrho^{-j}\overline{\mathbf{a}_j}\right) e^{-\mathrm{i}\varrho\overline{\varphi}}+R_\varrho',
\end{equation}
for a solution of the backward elastic wave equation in \eqref{elastic_eq_linearized} with $h_2=\overline{M_\varrho}\vert_{(0,T)\times\partial M}$. 

We multiply the identity \eqref{integral_identity} by $\varrho^{-\frac{1}{2}}$
and use the representations \eqref{solution_u} for $w$ and \eqref{solution_v} for $v$, then due to the estimate \cite[equation (3.33)]{feizmohammadi2019recovery} and the substitution $u_0:= \chi^2 \partial_{x_i}\varphi\,[\mathbf{a}_0]_j\overline{\partial_{x_k}\varphi\,[\mathbf{a}_0]_\ell} $ we obtain
\begin{equation}\label{integral_uv}
\begin{split}
\varrho^{-\frac{1}{2}}\langle\dot{\Lambda}_{\mathbf{C}^0}(\mathbf{C} )h_1,h_2\rangle_{(0,T)\times\partial M}=&
\varrho^{\frac{3}{2}}\int_{0}^T\int_M e^{-2\varrho\Im\varphi}u_0\,\mathrm{d}x\mathrm{d}t+\mathcal{O}(\varrho^{-1}), \quad \varrho \to \infty.
\end{split}
\end{equation}
Note that one can use the Fermi coordinates $(\tau,x')$, as constructed in \cite{uhlmann2019inverse}, under which the Euclidean volume form is $\mathrm{d}x\mathrm{d}t=c_S^3\mathrm{d}\tau\wedge\mathrm{d}x'$, moreover $\tau=\sqrt{2}t$ and $x'=0$ on $\vartheta$. Notice that $D\Im\varphi=0$ on $\vartheta$. Then after using the method of stationary phase to the integral 
\[
\int e^{-2\varrho\Im\varphi}u_0c_S^3\mathrm{d}x'
\] 
with the phase function $f:=\mathrm{i}2\Im\varphi$ and amplitude $u:=u_0c_S^3$ as in \cite[Theorem 7.5.5.]{hormander1}, we can write the right hand side of \eqref{integral_uv} into the form
\[
\begin{split}
\left(-\mathrm{i}\pi\right)^{\frac{3}{2}}\int_{\vartheta}\left| \det D^2 \Im\varphi(\vartheta(\tau))\right|^{-\frac{1}{2}}
u_0(\vartheta(\tau))c_S^3(\vartheta(\tau))\mathrm{d}\tau+\mathcal{O}(\varrho^{-1}), \quad \varrho \to \infty.
\end{split}
\]
Next we use the  properties \eqref{eq_amplitude} and \eqref{d2YS} to observe that
\[
\begin{split}
u_0(\vartheta(\tau))=& |A_S|^2\p_{x_i}\varphi\,\mathbf{e}_j\overline{\partial_{x_k}\varphi\,\mathbf{e}_\ell}\Big\vert_{\vartheta(t)}=|\det(Y_S)|^{-1}c_S^{-1}\rho^{-1}\dot{\gamma}_i(t)\eta_j(t)\dot{\gamma}_k(t)\eta_\ell(t)
\\
=&\frac{\left| \det D^2 \Im\varphi(\vartheta(\tau))\right|^{\frac{1}{2}}}{\sqrt{c_0}}\rho^{-1}\dot{\gamma}_i(t)\eta_j(t)\dot{\gamma}_k(t)\eta_\ell(t).
\end{split}
\]
Since $c_0$ is a known constant, we have verified that
\begin{equation}
\label{eq:limit_of_int_indent}
\begin{split}
0=&\lim_{\varrho \to \infty}\varrho^{-\frac{1}{2}}\langle\dot{\Lambda}_{\mathbf{C}^0}(\mathbf{C} )h_1,h_2\rangle_{(0,T)\times\partial M}=\int_{\vartheta}\frac{C_{ijk\ell}}{\rho}c_S^2\dot{\gamma}_i(t)\eta_j(t)\dot{\gamma}_k(t)\eta_\ell(t)\mathrm{d}t.
\end{split}
\end{equation}
%
%

Finally we use the notation $v^i$ for raising the indices of a co-vector $v_i$ under the metric $g_S$, that is we have $v_i=c_S^{-2}v^i$. Then due to formula \eqref{eq:limit_of_int_indent} we have recovered the mixed ray transform of the tensor field $f_{ijk\ell}:=\frac{1}{2}\frac{C_{ki\ell j}+C_{kj\ell i}}{\rho c_S^6}\in S^2 \tau'_M \otimes S^2 \tau'_M$, from $\dot{\Lambda}_{\mathbf{C}^0}(\mathbf{C})$  along the arbitrarily chosen geodesic $\gamma$ with respect to the metric $g_S$ for any parallel vector field $\eta$ along $\gamma$ that is perpendicular to $\dot \gamma$.
\color{black}



For \textit{P}-waves, we can construct solutions concentrating near a null geodesic $\vartheta(t)=(t+\alpha,\gamma(t))$ in the Lorentzian manifold $((0,T)\times M, -\mathrm{d}t^2+g_P)$. For the solutions $w,v$ constructed as \eqref{solution_u}, \eqref{solution_v}, we can take
\[
\mathbf{a}_0=A_PD\varphi,
\]
where the $P$-wave amplitude satisfies
\[
A_P\vert_{\vartheta}=\det(Y_P)^{-1/2}c_P^{-1/2}\rho^{-1/2}.
\]
Similar as above, we end up with the (longitudinal) ray transform
\[
0=\int_{\gamma}\frac{C_{ijk\ell}}{\rho c_P^6}\dot{\gamma}^i(t)\dot{\gamma}^j(t)\dot{\gamma}^k(t)\dot{\gamma}^\ell(t)\mathrm{d}t.
\]
\bibliographystyle{abbrv}
\bibliography{biblio}

\begin{thebibliography}{10}

\bibitem{adams2003sobolev}
R.~A. Adams and J.~J. Fournier.
\newblock {\em Sobolev spaces}, volume 140.
\newblock Elsevier, 2003.

\bibitem{AR}
Y.~Anikonov and V.~Romanov.
\newblock On uniqueness of determination of a form of first degree by it
  integrals along geodesics.
\newblock {\em J. Inverse Ill-Posed Probl.}, 5:467--480, 1997.

\bibitem{babich2018elastic}
V.~Babich and A.~Kiselev.
\newblock {\em Elastic Waves: High Frequency Theory}.
\newblock Chapman and Hall/CRC, 2018.

\bibitem{belishev1987approach}
M.~I. Belishev.
\newblock An approach to multidimensional inverse problems for the wave
  equation.
\newblock In {\em Dokl. Akad. Nauk SSSR}, volume 297, pages 524--527, 1987.

\bibitem{belishev1992reconstruction}
M.~I. Belishev and Y.~V. Kuryiev.
\newblock To the reconstruction of a {R}iemannian manifold via its spectral
  data (bc--method).
\newblock {\em Communications in partial differential equations},
  17(5-6):767--804, 1992.

\bibitem{bhattacharyya2018local}
S.~Bhattacharyya.
\newblock Local uniqueness of the density from partial boundary data for
  isotropic elastodynamics.
\newblock {\em Inverse Problems}, 34(12):125001, 2018.

\bibitem{burago2010boundary}
D.~Burago and S.~Ivanov.
\newblock Boundary rigidity and filling volume minimality of metrics close to a
  flat one.
\newblock {\em Annals of mathematics}, pages 1183--1211, 2010.

\bibitem{cerveny2005seismic}
V.~Cerveny.
\newblock {\em Seismic {R}ay {T}heory}.
\newblock Cambridge {U}niversity {P}ress, 2005.

\bibitem{vcerveny1982linearized}
V.~{\v{C}}erveny and J.~Jech.
\newblock Linearized solutions of kinematic problems of seismic body waves in
  inhomogeneous slightly anisotropic media.
\newblock {\em Journal of Geophysics}, 51(1):96--104, 1982.

\bibitem{chapman1992traveltime}
C.~H. Chapman and R.~G. Pratt.
\newblock Traveltime tomography in anisotropic media — {I}. theory.
\newblock {\em Geophysical Journal International}, 109(1):1--19, 1992.

\bibitem{crampin1981shear}
S.~Crampin and M.~Yedlin.
\newblock Shear-wave singularities of wave propagation in anisotropic media.
\newblock {\em Journal of Geophysics| IF 32.18}, 49(1):43--46, 1981.

\bibitem{croke1991rigidity}
C.~B. Croke.
\newblock Rigidity and the distance between boundary points.
\newblock {\em Journal of Differential Geometry}, 33(2):445--464, 1991.

\bibitem{dairbekov2011conformal}
N.~S. Dairbekov and V.~A. Sharafutdinov.
\newblock On conformal {K}illing symmetric tensor fields on {R}iemannian
  manifolds.
\newblock {\em Siberian Advances in Mathematics}, 21(1):1--41, 2011.

\bibitem{Dair}
D.~S. Daribekov.
\newblock Integral geometry problem for nontrapping manifolds.
\newblock {\em Inverse Problems}, 22:431--445, 2006.

\bibitem{de2019inverse}
M.~V. De~Hoop, J.~Ilmavirta, M.~Lassas, and T.~Saksala.
\newblock Inverse problem for compact {F}insler manifolds with the boundary
  distance map.
\newblock {\em arXiv preprint arXiv:1901.03902}, 2019.

\bibitem{de2019unique}
M.~V. de~Hoop, G.~Nakamura, and J.~Zhai.
\newblock Unique recovery of piecewise analytic density and stiffness tensor
  from the elastic-wave {D}irichlet-to-{N}eumann map.
\newblock {\em SIAM Journal on Applied Mathematics}, 79(6):2359--2384, 2019.

\bibitem{de2018mixed}
M.~V. de~Hoop, T.~Saksala, and J.~Zhai.
\newblock Mixed ray transform on simple 2-dimensional {R}iemannian manifolds.
\newblock {\em Proceedings of the American Mathematical Society},
  147(11):4901--4913, November 2019.

\bibitem{de2019inverting}
M.~V. de~Hoop, G.~Uhlmann, and J.~Zhai.
\newblock Inverting the local geodesic ray transform of higher rank tensors.
\newblock {\em Inverse Problems}, 35(11):115009, oct 2019.

\bibitem{Hormander2}
J.~Duistermaat and L.~H\"ormander.
\newblock Fourier integral operators. {II}.
\newblock {\em Acta Math.}, 128:183--269, 1972.

\bibitem{evans2010partial}
L.~C. Evans.
\newblock {\em Partial differential equations}, volume~19.
\newblock American Mathematical Soc., 2010.

\bibitem{feizmohammadi2019recovery}
A.~Feizmohammadi, J.~Ilmavirta, Y.~Kian, and L.~Oksanen.
\newblock Recovery of time dependent coefficients from boundary data for
  hyperbolic equations.
\newblock {\em arXiv preprint arXiv:1901.04211}, 2019.

\bibitem{feizmohammadi2019light}
A.~Feizmohammadi, J.~Ilmavirta, and L.~Oksanen.
\newblock The light ray transform in stationary and static lorentzian
  geometries.
\newblock {\em arXiv preprint arXiv:1911.04834}, 2019.

\bibitem{GrUh}
A.~Greenleaf and G.~Uhlmann.
\newblock Recovering singularities of a potential from singularities of
  scattering data.
\newblock {\em Comm. Math. Phys.}, 157:549--572, 1993.

\bibitem{hansen2003propagation}
S.~Hansen and G.~Uhlmann.
\newblock Propagation of polarization in elastodynamics with residual stress
  and travel times.
\newblock {\em Mathematische Annalen}, 326(3):563--587, 2003.

\bibitem{holman2009generic}
S.~Holman.
\newblock {\em Generic uniqueness in polarization tomography}.
\newblock University of Washington, 2009.
\newblock https://personalpages.manchester.ac.uk/staff/sean.holman/Thesis.pdf.

\bibitem{hormander1}
L.~H{\"o}rmander.
\newblock {\em The analysis of linear partial differential operators I:
  Distribution theory and Fourier analysis}.
\newblock Springer, 2015.

\bibitem{kreyszig1978introductory}
E.~Kreyszig.
\newblock {\em Introductory functional analysis with applications}, volume~1.
\newblock Wiley New York, 1978.

\bibitem{krishnan2018s}
V.~P. Krishnan, R.~K. Mishra, and F.~Monard.
\newblock On s-injective and injective ray transforms of tensor fields on
  surfaces.
\newblock {\em Journal of Inverse and Ill-posed Problems (to appear)}, 2018.

\bibitem{lassas2003semiglobal}
M.~Lassas, V.~Sharafutdinov, and G.~Uhlmann.
\newblock Semiglobal boundary rigidity for {R}iemannian metrics.
\newblock {\em Mathematische Annalen}, 325(4):767--793, 2003.

\bibitem{michel1981rigidite}
R.~Michel.
\newblock Sur la rigidit{\'e} impos{\'e}e par la longueur des
  g{\'e}od{\'e}siques.
\newblock {\em Inventiones mathematicae}, 65(1):71--83, 1981.

\bibitem{Muk2}
R.~G. Mukhometov.
\newblock On the problem of integral geometry (russian).
\newblock {\em Math. problems of geophysics. Akad. Nauk SSSR, Sibirsk., Otdel.,
  Vychisl., Tsentr, Novosibirsk}, 6, 1975.

\bibitem{Muk1}
R.~G. Mukhometov.
\newblock On a problem of reconstructing {R}iemannian metrics.
\newblock {\em Sibirsk. Mat. Zh.}, 22:119--135, 1987.

\bibitem{oksanen2020inverse}
L.~Oksanen, M.~Salo, P.~Stefanov, and G.~Uhlmann.
\newblock Inverse problems for real principal type operators.
\newblock {\em arXiv preprint arXiv:2001.07599}, 2020.

\bibitem{otal1990spectre}
J.-P. Otal.
\newblock Le spectre marqu{\'e} des longueurs des surfaces {\`a} courbure
  n{\'e}gative.
\newblock {\em Annals of Mathematics}, 131(1):151--162, 1990.

\bibitem{PSU}
G.~Paternain, M.~Salo, and G.~Uhlmann.
\newblock Tensor tomography on simple surfaces.
\newblock {\em Invent. Math.}, 193:229--247, 2013.

\bibitem{Pestov}
L.~Pestov.
\newblock {\em Well-posedness questions of the ray tomography problems (in
  {R}ussian)}.
\newblock Siberian Science Press, Novosibirsk, 2003.

\bibitem{PS}
L.~Pestov and V.~A. Sharafutdinov.
\newblock Integral geometry of tensor fields on a manifold of negative
  curvature.
\newblock {\em Siberian Math. J.}, 29:427--441, 1988.

\bibitem{pestov2005two}
L.~Pestov and G.~Uhlmann.
\newblock Two dimensional compact simple {R}iemannian manifolds are boundary
  distance rigid.
\newblock {\em Annals of mathematics}, pages 1093--1110, 2005.

\bibitem{rachele2000inverse}
L.~Rachele.
\newblock An inverse problem in elastodynamics: uniqueness of the wave speeds
  in the interior.
\newblock {\em Journal of Differential Equations}, 162(2):300--325, 2000.

\bibitem{rachele2003uniqueness}
L.~Rachele.
\newblock Uniqueness of the density in an inverse problem for isotropic
  elastodynamics.
\newblock {\em Transactions of the American Mathematical Society},
  355(12):4781--4806, 2003.

\bibitem{sharafutdinov2004ray}
V.~Sharafutdinov.
\newblock Ray transform and some rigidity problems for {R}iemannian metrics.
\newblock In {\em Geometric methods in inverse problems and PDE control}, pages
  215--238. Springer, 2004.

\bibitem{Shara}
V.~A. Sharafutdinov.
\newblock {\em Integral geometry of tensor fields}, volume~1.
\newblock Walter de Gruyter, 1994.

\bibitem{stefanov2004stability}
P.~Stefanov and G.~Uhlmann.
\newblock Stability estimates for the {X}-ray transform of tensor fields and
  boundary rigidity.
\newblock {\em Duke Mathematical Journal}, 123(3):445--467, 2004.

\bibitem{SU}
P.~Stefanov and G.~Uhlmann.
\newblock Boundary ridigity and stability for generic simple metrics.
\newblock {\em Journal of Amer. Math. Soc.}, 18:975--1003, 2005.

\bibitem{stefanov2016boundary}
P.~Stefanov, G.~Uhlmann, and A.~Vasy.
\newblock Boundary rigidity with partial data.
\newblock {\em Journal of the American Mathematical Society}, 29(2):299--332,
  2016.

\bibitem{stefanov2017local}
P.~Stefanov, G.~Uhlmann, and A.~Vasy.
\newblock Local recovery of the compressional and shear speeds from the
  hyperbolic {DN} map.
\newblock {\em Inverse Problems}, 34(1):014003, 2017.

\bibitem{SUV2}
P.~Stefanov, G.~Uhlmann, and A.~Vasy.
\newblock Inverting the local geodesic x-ray transform on tensors.
\newblock {\em Journal {d'}Analyse Mathematique}, 136(1):151--208, 2018.

\bibitem{treves1}
F.~Treves.
\newblock {\em Introduction to pseudodifferential and Fourier integral
  operators Volume 1: Pseudodifferential perators}, volume~1.
\newblock Springer Science \& Business Media, 1980.

\bibitem{UV}
G.~Uhlmann and A.~Vasy.
\newblock The inverse problem for the local geodesic ray transform.
\newblock {\em Invent. Math.}, 205:83--120, 2016.

\bibitem{uhlmann2019inverse}
G.~Uhlmann and J.~Zhai.
\newblock On an inverse boundary value problem for a nonlinear elastic wave
  equation.
\newblock {\em arXiv preprint arXiv:1912.11756}, 2019.

\bibitem{wloka1995boundary}
J.~T. Wloka, B.~Rowley, and B.~Lawruk.
\newblock {\em Boundary value problems for elliptic systems}.
\newblock Cambridge University Press, 1995.

\bibitem{yang2019unique}
Y.~Yang and J.~Zhai.
\newblock Unique determination of a transversely isotropic perturbation in a
  linearized inverse boundary value problem for elasticity.
\newblock {\em Inverse Problems \& Imaging}, 13(6):1309, 2019.

\end{thebibliography}
\end{document}